\documentclass[10pt,a4paper,oneside]{amsart}

\usepackage{amsmath} 
\usepackage{amssymb} 
\usepackage{amsthm} 
\usepackage[english]{babel} 
\usepackage{subcaption}
\usepackage[nodayofweek]{datetime}
\usepackage[shortlabels]{enumitem} 
\usepackage[T1]{fontenc} 
\usepackage[a4paper]{geometry} 
\usepackage[utf8]{inputenc} 
\usepackage{ifthen} 
\usepackage{mathabx} 
\usepackage{mathtools} 
\usepackage[dvipsnames]{xcolor} 
\usepackage[pdftex,  colorlinks=true,pagebackref=true]{hyperref} 
    \hypersetup{urlcolor=RoyalBlue, linkcolor=RoyalBlue,  citecolor=black}
\usepackage{setspace} 
\usepackage{tikz-cd} 
\usepackage{xfrac} 
\usepackage{mathrsfs} 
\usepackage[capitalize]{cleveref} 
\usepackage{cprotect}
\usepackage{multicol,multirow}

\setenumerate{label=\textup{(\roman*)}}

\renewcommand*{\backref}[1]{}
\renewcommand*{\backrefalt}[4]
{
    \ifcase #1
        No citation in the text.
    \or
        Cited on Page #2.
    \else
        Cited on Pages #2.
    \fi
}

\newcounter{commentcounter}

\makeatletter
\def\@tocline#1#2#3#4#5#6#7{\relax
  \ifnum #1>\c@tocdepth 
  \else
    \par \addpenalty\@secpenalty\addvspace{#2}%
    \begingroup \hyphenpenalty\@M
    \@ifempty{#4}{%
      \@tempdima\csname r@tocindent\number#1\endcsname\relax
    }{%
      \@tempdima#4\relax
    }%
    \parindent\z@ \leftskip#3\relax \advance\leftskip\@tempdima\relax
    \rightskip\@pnumwidth plus4em \parfillskip-\@pnumwidth
    #5\leavevmode\hskip-\@tempdima
      \ifcase #1
       \or\or \hskip 1em \or \hskip 2em \else \hskip 3em \fi%
      #6\nobreak\relax
    \dotfill\hbox to\@pnumwidth{\@tocpagenum{#7}}\par
    \nobreak
    \endgroup
  \fi}
\makeatother

\newtheorem{thm}{Theorem}[section]
\newtheorem{lemma}[thm]{Lemma}
\newtheorem{corollary}[thm]{Corollary}
\newtheorem{prop}[thm]{Proposition}

\newtheorem{claim}[thm]{Claim}

\AddToHook{env/lemma/begin}{\crefalias{thm}{lemma}}
\AddToHook{env/corollary/begin}{\crefalias{thm}{corollary}}
\AddToHook{env/prop/begin}{\crefalias{thm}{prop}}
\AddToHook{env/conjecture/begin}{\crefalias{thm}{conjecture}}
\AddToHook{env/claim/begin}{\crefalias{thm}{claim}}

\crefname{thm}{Theorem}{Theorems}
\crefname{lemma}{Lemma}{lemmas}
\crefname{prop}{Proposition}{Propositions}
\crefname{conjecture}{Conjecture}{Conjectures}
\crefname{claim}{Claim}{Claims}

\newtheorem{thmx}{Theorem}

\AddToHook{env/corx/begin}{\crefalias{corx}{thmx}}

\crefname{thmx}{Theorem}{Theorems}
\crefname{corx}{Corollary}{Corollaries}

\theoremstyle{definition}
\newtheorem{defn}[thm]{Definition}
\newtheorem{remark}[thm]{Remark}
\newtheorem{remarks}[thm]{Remarks}

\newtheorem{quest}[thm]{Question}
\newtheorem{example}[thm]{Example}
\newtheorem{examplex}[thmx]{Example}

\AddToHook{env/defn/begin}{\crefalias{thm}{defn}}
\AddToHook{env/remark/begin}{\crefalias{thm}{remark}}
\AddToHook{env/remarks/begin}{\crefalias{thm}{remarks}}
\AddToHook{env/quest/begin}{\crefalias{thm}{quest}}
\AddToHook{env/example/begin}{\crefalias{thm}{example}}

\crefname{defn}{Definition}{Definitions}
\crefname{remark}{Remark}{Remarks}
\crefname{remarks}{Remarks}{Remarks}
\crefname{quest}{Question}{Questions}
\crefname{example}{Example}{Examples}

\AddToHook{env/examplex/begin}{\crefalias{thmx}{examplex}}
\crefname{examplex}{Example}{Examples}

    \newtheoremstyle{TheoremNum}
        {8.0pt plus 2.0pt minus 4.0pt}{8.0pt plus 2.0pt minus 4.0pt} 
        {\itshape} 
        {-0.15cm} 
        {\bfseries} 
        {.} 
        { }  
        {\thmname{#1}\thmnote{ \bfseries #3}}
    \theoremstyle{TheoremNum}
    \newtheorem{duplicate}{}

\newcommand*{\claimproofname}{My proof}
\newenvironment{claimproof}[1][\claimproofname]{\begin{proof}[#1]}{\end{proof}}

\DeclareMathOperator{\aster}{\textnormal{\LARGE{\textasteriskcentered}}}

\usepackage{stackengine}

\newcommand\sqsubsetdot{\mathrel{{\ensurestackMath{%
  \stackinset{c}{0.3ex}{c}{-0.0ex}{\cdot}{\sqsubset}}}}}

\newcommand\nsqsubsetdot{\mathrel{{\ensurestackMath{%
  \stackinset{c}{0.3ex}{c}{-0.05ex}{\cdot}{\nsqsubset}}}}}

\DeclareMathOperator{\Aut}{\mathrm{Aut}}
\DeclareMathOperator{\Out}{\mathrm{Out}}
\DeclareMathOperator{\Inn}{\mathrm{Inn}}

\DeclareMathOperator{\im}{\mathrm{Im}}
\DeclareMathOperator{\Fix}{\mathrm{Fix}}

\DeclareMathOperator{\End}{\mathrm{End}}

\DeclareMathOperator{\id}{id}
\DeclareMathOperator{\Stab}{Stab}
\DeclareMathOperator{\Gal}{Gal}
\DeclareMathOperator{\Fitt}{Fitt}
\DeclareMathOperator{\bass}{Bass}
\DeclareMathOperator{\diag}{diag}

\newcommand{\cala}{{\mathcal{A}}}
\newcommand{\calb}{{\mathcal{B}}}
\newcommand{\calc}{{\mathcal{C}}}
\newcommand{\cald}{{\mathcal{D}}}

\newcommand{\calg}{{\mathcal{G}}}
\newcommand{\calh}{{\mathcal{H}}}

\newcommand{\calo}{{\mathcal{O}}}

\newcommand{\cals}{{\mathcal{S}}}
\newcommand{\calt}{{\mathcal{T}}}
\newcommand{\calu}{{\mathcal{U}}}

\newcommand{\calw}{{\mathcal{W}}}

\newcommand{\scra}{{\mathscr{A}}}

\newcommand{\GL}{\mathrm{GL}}

\newcommand{\OO}{\mathrm{O}}

\newcommand{\AGL}{\mathrm{AGL}}

\newcommand{\BS}{\mathrm{BS}}
\newcommand{\LM}{\mathrm{LM}}

\newcommand{\CAT}{\mathrm{CAT}}

\newcommand{\EE}{\mathbb{E}}

\newcommand{\MM}{\mathbf{M}}

\newcommand{\NN}{\mathbb{N}}
\newcommand{\ZZ}{\mathbb{Z}}

\newcommand{\RR}{\mathbb{R}}
\newcommand{\QQ}{\mathbb{Q}}

\usepackage{tikz}
\usetikzlibrary{arrows,quotes,calc,positioning}
\tikzstyle{blackNode}=[fill=black, draw=black, shape=circle]

\usetikzlibrary{decorations.markings}
\tikzset{middlearrow/.style n args={5}{
		decoration={
			markings,
			mark=at position #1 with {\arrow[rotate=#2]{#3},\node[transform shape,#5] {#4};}},postaction={decorate}},
	middlearrow/.default={.5}{0}{>}{}{below}	}

\newcommand{\ind}[2]{#1{\upharpoonright}_{\!#2}}

\title[Outer automorphisms of Leary--Minasyan groups]{Outer automorphisms of HNN extensions\\ of abelian groups}
\author{Naomi Andrew}
\address{Naomi Andrew, Universit\'e Paris-Saclay, CNRS, Laboratoire de math\'ematiques d’Orsay, 91405, Orsay, France}
\email{naomi.andrew@universite-paris-saclay.fr}
\urladdr{https://naomigandrew.wordpress.com/}
\author{Sam Hughes}
\address{Sam Hughes, Max-Planck-Institut für Mathematik, Vivatsgasse 7, 53111 Bonn, Germany}
\email{sam.hughes.maths@gmail.com; hughess@mpim-bonn.mpg.de}
\urladdr{https://samhughesmaths.github.io}
\author{Motiejus Valiunas}
\address{Motiejus Valiunas, Instytut Matematyczny, Uniwersytet Wroc{\l}awski, plac Grunwaldzki 2, 50-384 Wroc{\l}aw, Poland}
\email{motiejus.valiunas@math.uni.wroc.pl}
\urladdr{https://valiunas.wordpress.com/}
\date{\today}

\begin{document}
\begin{abstract}
    Let $G$ be a commensurating HNN extension of a finitely generated free abelian group.  We characterise when $\mathrm{Out}(G)$ is finitely generated and derive structural information about the group.  We give soluble and non-soluble examples of commensurating HNN extensions of finitely generated free abelian groups with outer automorphism group not finitely generated.    
\end{abstract}
\dedicatory{To Ian Leary on the occasion of his sixtieth (sixty-second) birthday}
\maketitle

\section*{Introduction}
Every countable group is the outer automorphism group of a finitely generated group \cite{BumaginWise2005}.  In fact every countable group is the automorphism group of an HNN-extension of a triangle group \cite{Logan2019}.  On the other hand for many well-studied classes of groups the structure of $\Out(G)$ is well understood.  For a free abelian group $\ZZ^n$ it is isomorphic to $\GL_n(\ZZ)$.  Vastly more generally, Baues--Grunewald \cite{BauesGrunewald2006} showed that the outer automorphism group of any virtually polycyclic group is an arithmetic group. The study of outer automorphisms of free and surface groups is a central theme in geometric group theory, often drawing parallels with the study of arithmetic groups; see for example the surveys and books \cite{Bestvina2002, Vogtmann2002, BridsonVogtmann2006, FarbMargalit2011, Margalit2019, AndrewHenselHughesWade2026}.  In each case the outer automorphism group admits a finite index subgroup with finite Eilenberg--Maclane space \cite{BorelSerre1973, CullerVogtmann1986, Harvey1979, Harvey1981}. In many cases the growth rates of elements are either exponential or polynomial and fall into finitely many growth types, see \cite{Nielsen1944,Thurston1988,Levitt2009}; similar results have been proven for outer automorphism groups of hyperbolic groups \cite{Coulon2025} and virtually compact special groups \cite{Fioravanti2026}.

The one-relator groups $\BS(r,s)=\langle a,t\ |\ (a^r)^t=a^s\rangle$ were introduced by Baumslag and Solitar in \cite{BaumslagSolitar1962} to provide simple examples of non-Hopfian groups.  
A complete characterisation of their Hopficity was provided by Collins--Levin \cite{CollinsLevin1983}, namely, $\BS(r,s)$ is Hopfian if and only if either $r=\pm 1$, or $s=\pm1$, or $r$ and $s$ have the same prime divisors.  One can also consider so called \emph{$m$-generalised Baumslag--Solitar groups} (GBS$_m$ groups) which are given by graphs of groups with all edge and vertex groups isomorphic to $\ZZ^m$.  For $m = 1$, these are simply called \emph{generalised Baumslag--Solitar groups} (GBS groups).

Since their inception Baumslag--Solitar groups and GBS groups have received considerable attention from numerous perspectives: quasi-isometric rigidity \cite{FarbMosher1998,FarbMosher1999,Whyte2001,Margolis2021}, BNS invariants \cite{CashenLevitt2016,SgobbiWong2023,AbbottBalasubramanyaRasmussen2023}, measure equivalence rigidity \cite{Kida2014,HoudayerRaum2015}, and lattices in the universal cover of the natural presentation complex \cite{Forester2024,Verma2025}; as well as many other topics \cite{Levitt2015_quosubBS,Levitt2015_genBSfi,Button2025,LopezdeGamizZearraShepherd2025,BanerjeeSchreve2025,CarderiGaboriauLeMaitreStalder2025,Wykowski2025,CohenWykowski2026}.  

The isomorphism problem for GBS$_m$ groups has also received considerable attention \cite{Levitt2007,ClayForester2008,ClayForester2009,Dudkin2017,CasalsRuizKazachkovZakharov2021,Valiunas2022,Wang2025,AscariCasalsKazachkov2025i,AscariCasalsKazachkov2025ii,AscariCasalsKazachkov2025iii,AscariCasalsKazachkov2025iv}, but remains unsolved in general, even in the case $m = 1$.  Returning to Baumslag--Solitar groups, a shadow of the difficulty of the isomorphism problem for GBS groups is already visible in trying to understand $\Out(\BS(r,s))$.  For example, Collins--Levin showed that $\Out(\BS(2,4))$ is not finitely generated \cite{CollinsLevin1983}, whereas $\Out(\BS(2,3))$ is finite \cite{Collins1978,GilbertHowieMetaftsisRaptis2000}.

A special class of GBS$_m$ groups are given by the so-called \emph{Leary--Minasyan groups}, introduced in \cite{LearyMinasyan2021} to give examples of $\CAT(0)$ but not biautomatic groups.  The $\CAT(0)$ examples were studied in a number of different contexts \cite{HuangPrytula2020,Hughes2022,Hughes2025,Button2025,CavallucciSambusetti2025}.  Further examples were given in \cite{Hughes2022,HughesValiunas2024,HughesValiunas2025}.  The non-biautomaticity of Leary--Minasyan groups has been used to give information about the biautomaticity of GBS groups \cite{ShepherdValiunas2025} and the structure of biautomatic groups \cite{Valiunas2023}.  

A Leary--Minasyan group $\LM(A,L)$ is a commensurating HNN extension of $\ZZ^m$, i.e.\ it is an HNN extension of $\ZZ^m$ where the edge group includes with finite index.  Equivalently, a Leary--Minasyan group is defined (although not uniquely, see \cite{Valiunas2022}) by a matrix $A\in\GL_m(\QQ)$ and a finite index subgroup $L\leqslant \ZZ^m$ such that $AL\leqslant \ZZ^m$.  From here we identify $\ZZ^m$ with $\langle x_1,\dots, x_m\rangle$ and define
\begin{equation}\label{eqn:LMAL}
\LM(A,L)=\langle x_1,\dots,x_m, t \mid [x_i,x_j] \text{ for } 1\leq i<j\leq m,\ tx^vt^{-1}=x^{Av}\ \forall v\in L\rangle, 
\end{equation}
where $x^v \coloneqq x_1^{v_1} \cdots x_m^{v_m}$ for $v = (v_1,\ldots,v_m) \in \ZZ^m$.  In the remainder of the paper, we abuse notation by identifying $x^v$ with $v$ and viewing $v$ and $Av$ as elements of $\LM(A,L)$.
Note that $\BS(r,s)\cong\LM(\frac{s}{r},r\ZZ)$.

\subsection*{Results}
We have split our main theorem depending on whether $\LM(A,L)$ is metabelian or not.  By work of the third author \cite[\S2.4]{Valiunas2022}, this can be completely determined from the data $(A,L)$.  Indeed, $\LM(A,L)$ is polycyclic if and only if $\ZZ^m=L=AL$; $\LM(A,L)$ is metabelian but not polycyclic if exactly one of $\ZZ^m=L$ or $\ZZ^m=AL$ holds; otherwise $\LM(A,L)$ is not metabelian.  We first state our result on finite generation of outer automorphism groups of non-metabelian Leary--Minasyan groups. 

\begin{duplicate}[\Cref{thmx:not_metabelian}]
    Let $G=\LM(A,L)$ and suppose $G$ is not metabelian.  $\Out(G)$ is finitely generated if and only if $L$ and $AL$ do not properly contain each other.
\end{duplicate}

It remains a long-standing open problem to decide if every virtually $\CAT(0)$ group is in fact a $\CAT(0)$ group.  In \Cref{thm:CATs}, we show that if a group $\Gamma$ contains a $\CAT(0)$ centreless Leary--Minasyan group as a finite index normal subgroup, then $\Gamma$ is also $\CAT(0)$.

We now describe our results in the case when the group $\LM(A,L)$ is metabelian.  This is equivalent to the group being soluble or amenable \cite[\S2.4]{Valiunas2022}, and is exactly the case where $\LM(A,L)$ splits as an ascending HNN extension of $\ZZ^m$.  In this case we obtain a semi-direct product decomposition $\left(\bigcup_{s\in\ZZ} A^s L\right)\rtimes_A\ZZ$, where the normal subgroup is a not necessarily finitely generated abelian group of rank $m$.

\begin{defn}[Essentially Jordan gap free]
    Let $A \in \GL_m(\QQ)$ and let $\lambda$ be an eigenvalue of $A$. We say an integer $d \geq 1$ is a \emph{$\lambda$-Jordan gap} for $A$ if the Jordan normal form for $A$ has no $\lambda$-Jordan blocks of size $d$, but has at least one $\lambda$-Jordan block of size $d+1$. We say $A$ is \emph{essentially Jordan gap free} if all $\lambda$-Jordan gaps for $A$ occur when $\lambda$ is an algebraic unit.
\end{defn}

\begin{duplicate}[\Cref{thmx:metabelian}]
     Let $G=\LM(A,L)$ and suppose $G$ is metabelian.  $\Out(G)$ is finitely generated if and only if $A$ is essentially Jordan gap free.
\end{duplicate}

Of course in the case of $\BS(1,n)$, a metabelian Baumslag--Solitar group, $A  = \begin{pmatrix} n \end{pmatrix}$ consists of a single Jordan block of size one, and so $\Out(G)$ is always finitely generated, as proved by Collins \cite[Proposition A]{Collins1978}. The lack of finite generation here is a higher rank phenomenon.

\begin{remark}
    The case when $\LM(A,L)$ is polycyclic is actually already covered by a deep theorem of Baues--Grunewald \cite{BauesGrunewald2006}.  In this case $\Out(G)$ is an arithmetic group and is always finitely generated \cite[Theorem~6.5]{BorelHarishChandra1962}.  Even though we do not directly prove finite generation of $\Out(G)$ for polycyclic $G = \LM(A,L)$, instead relying on the result of Baues--Grunewald, we note that in that case all the eigenvalues of $A$ are algebraic units, and so Theorem~\ref{thmx:metabelian} still holds.
\end{remark}

Our \Cref{thmx:metabelian} shows that Baues--Grunewald's work cannot hope to extend from polycyclic to solvable groups. Elia Fioravanti asked us for an example of a finitely presented soluble group $G$ with $\Out(G)$ not finitely generated.  Here we present an explicit example of a type $\mathsf{F}$ (hence finitely presented) metabelian (hence soluble) group $G$ with $\Out(G)$ not finitely generated.  Note that we provide more details for this example near the end of \Cref{sec:metabelian}.

\begin{duplicate}[\Cref{ex:intro}]
Let $G=\langle a,b,t \mid [a,b],\ a^t=a^2b,\ b^t=b^2\rangle.$  Note that this is the Leary--Minasyan group $\LM(A,L)$ where $A = \begin{pmatrix}
    2 & 1 \\
    0 & 2
\end{pmatrix}$ and $L=\ZZ^2$.  The group $G$ is metabelian but not polycyclic and is of type $\mathsf{F}$ since it is an ascending HNN extension of $\ZZ^2$.  Moreover, $\Out(G)$ is not finitely generated.  In fact, $\Out(G)$ is isomorphic to $\ZZ[\frac{1}{2}] \times \ZZ/2\ZZ$.
\end{duplicate}

\begin{quest}
    If $\LM(A,L)$ is metabelian and $\Out(\LM(A,L)$ is finitely generated, what other finiteness properties must it satisfy? (For example, for each $n$, does there exist a metabelian Leary--Minasyan group $\LM(A,L)$ such that $\Out(\LM(A,L))$ is type $\mathsf{F}_n$ but not $\mathsf{F}_{n+1}$?)
\end{quest}

\subsection*{Overview of the methods}

In the case of a non-ascending HNN extension $G = \LM(A,L)$ we use the \emph{deformation space} of $G$-trees, introduced by Forester \cite{Forester2002Deform}, and further studied by Clay \cite{Clay05}.  In this case, the deformation space is unique (Proposition~\ref{prop:D-AutG-inv}), and therefore admits an $\Out(G)$-action.  If $L$ and $AL$ do not properly contain each other (i.e., they are either equal or incomparable), then the deformation space turns out to contain a unique reduced tree (\Cref{prop:rigid_iff_no_proper_containment}); in that case, we can show that $\Out(G)$ is finitely generated and describe it explicitly, up to taking a subgroup of index at most two.

In the case when one of $L$ and $AL$ properly contains the other ($AL<L$ or $L<AL$), we consider a lattice of lattices to form a deformation retract of the deformation space.  More precisely, following the work of Clay on the deformation spaces of Baumslag--Solitar groups \cite{Clay09}, we describe a simplicial structure on the deformation space.  In our case, the space consists of copies of a complex with vertices $G$-trees of type $\calt(\Lambda)$, where $\Lambda$ is a lattice in $\QQ^m$ containing $L$, that are connected by joining them to $G$-trees of type $\cals$; see Figure~\ref{fig:gog}. We then show the following:

\begin{duplicate}[\Cref{thmx:deformation_retract}]
    If $G = \LM(A,L)$ with either $L < AL < \ZZ^m$ or $AL < L < \ZZ^m$, then the deformation space of reduced $G$-trees $\Out(G)$-equivariantly deformation retracts to a simplicial tree.
\end{duplicate}

Such a description, by considering the vertex stabilisers under the resulting $\Out(G)$-action on a tree, allows us to express $\Out(G)$ as the fundamental group of an (infinite) graph of groups (Theorem~\ref{thm:OutG-semidir-nonfg}), from which we can deduce a very explicit structure to prove that it is not finitely generated.  

In the case that the Leary--Minasyan group is an ascending HNN extension we study the problem via commutative algebra and relate finite generation to the structure of the Jordan blocks of the defining endomorphism.  The proof is technical and involves a careful analysis of the $\ZZ$-module structure on a finite rank abelian group $K=\bigcup_{s\in\ZZ}A^sL$.  A key step is showing that $\Out(G)$, where $G = \LM(A,L)$, is finitely generated if and only if $C=C_{\GL(K)}(A)$ is finitely generated (\Cref{prop:metab-Out}).  More precisely we show that $\Aut(G)\cong C\ltimes K$ and $\Inn(G)\cong \langle A\rangle \ltimes (I-A)K$.  So $\Out(G)\cong (C/\langle A\rangle)\ltimes F$, where $F$ is a finitely generated abelian group.   The remainder of the work is then to describe the structure of $C$.  We give a more detailed overview of the proof strategy and a worked example in \Cref{sec:metabelian}.

\begin{remark}
Whilst the authors were preparing this paper Der\'e and Pengitore \cite{DerePengitore2025} (building on \cite{ArapuraNori1999} and generalising \cite{BauesGrunewald2006}) announced a number of structural results about $\Aut(G)$ when $G$ is a virtually torsion-free soluble group of finite abelian ranks whose fitting subgroup is $S$-arithmetic.  Most relevant to our work is \cite[Theorem~1.6]{DerePengitore2025} which gives sufficient conditions for $\Aut(G)$ of such a soluble group $G$ to be an $S$-arithmetic group.  It seems that our results on finite generation here cannot be derived from their work for two reasons: firstly, they allow $S$-arithmetic lattices in non-reductive algebraic groups and these lattices are not necessarily finitely generated (e.g. $\ZZ[1/2]$); secondly, the fitting subgroup of a metabelian Leary--Minasyan group may not be $S$-arithmetic. For example, $G=\LM(A,L)$ with $A=\diag(2,3)$ and $L=\ZZ^2$ has $\Fitt(G)\cong \ZZ[1/2]\times \ZZ[1/3]$, which is not $S$-arithmetic for any $S$.  Note here that $G\cong\langle a,b,t\mid [a,b],\ a^t=a^2,\ b^t=b^3\rangle$.
\end{remark}

\subsection*{Structure of the paper}
In \Cref{sec:prelims} we recount the necessary background on Bass--Serre theory, irreducible actions on trees, and deformation spaces of trees.

In \Cref{sec:non_metab_intro} we introduce a deformation space of trees for a Leary--Minasyan group $G$.  In the case where the Bass-Serre tree for the standard HNN splitting of a Leary--Minasyan group is rigid we provide detailed information about finiteness properties of $\Aut(G)$ and $\Out(G)$, see \Cref{thm:nonMet_VF_Foo_fin}.  We then prove that if $G$ is a group containing a $\CAT(0)$ Leary--Minasyan group with trivial centre as a finite index subgroup, then $G$ is $\CAT(0)$, see \Cref{thm:CATs}.  The remainder of the section studies the case of standard splitting for a Leary--Minasyan group $G$ being non-rigid.  More precisely, we analyse the stabilisers of $\Out(G)$ acting on certain $G$-trees.

In \Cref{sec:deform} we prove \Cref{thmx:not_metabelian}. To do this we analyse a lattice of lattices to construct a deformation retract of the deformation space of trees onto a tree. That is, we characterise the reduced trees in (a subspace of) the deformation space of trees in terms of their vertex groups, identified with certain lattices in $\QQ^n$, and use this characterisation to construct an explicit deformation retract. These arguments generalise those of \cite{Clay09} in the Baumslag--Solitar group case.  We then analyse the tree obtained from the deformation retraction in and give a detailed structural description of $\Out(G)$ in the non-rigid case, see \Cref{thm:OutG-semidir-nonfg}.  

In \Cref{sec:metabelian} we prove \Cref{thmx:metabelian} establishing the characterisation in the metabelian case.  We also provide the details for \Cref{ex:intro} and summarise our results in the case of Baumslag--Solitar groups, see \Cref{cor.BS}. We emphasise that with the exception of the final corollary, the results in \Cref{sec:metabelian} are completely independent of those in the earlier sections.

\subsection*{AI declaration} The authors did not use AI in the preparation of this article.

\tableofcontents

\section*{Acknowledgements}
All three authors would like to thank Ian Leary for the constant support, enthusiasm, kind natured humour, and mentorship he has shown us over the years.  We are all indebted to him.

The authors thank Adrien Abgrall for useful discussions about deformation spaces, Mark Pengitore for answering their questions about \cite{DerePengitore2025} and Elia Fioravanti for encouragement.  

{\small NA acknowledges support from the European Research Council (ERC) under Grant 101040507 Artin-Out-ME-OA. NA was also supported by the Royal Society of Great Britain.  NA and SH were supported by ERC Grant no. 850930 Fibring. SH was supported by a Humboldt Research Fellowship at Universit\"at Bonn and by the Deutsche Forschungsgemeinschaft (EXC-2047/1 - 390685813).    MV was partially supported by the National Science Centre (Poland) grants No.\ 2022/47/D/ST1/00779 and 2025/58/E/ST1/00143. The authors are grateful for the hospitality and support they received during visits to Bonn (NA and MV), Orsay (SH), Oxford (MV), Wroc\l aw (NA and SH), and at the Isaac Newton Institute for Mathematical Sciences during the programme Operators, Graphs, Groups (EPSRC grant EP/Z000580/1s).}

\section{Preliminaries}\label{sec:prelims}

Let $G$ be a group.  We denote a subgroup of $G$ by $H\leqslant G$ and a proper subgroup of $G$ by $H<G$.  We use the same convention for subsets and partial orders.

\subsection{Bass--Serre theory}
In this section we establish our notation and definitions for the Bass--Serre theory we will use.  For a broad introduction the reader is referred to \cite{Serre2002}.

\begin{defn}[Graph]
A \emph{graph} $X$ consists of a tuple of sets $(V(X), E(X))$, where $V(X)$ is called the \emph{vertex set}, and $E(X)$ the \emph{edge set}, together with a pair of functions $\iota\colon E(X) \to V(X)$ and $\tau\colon E(X) \to V(X)$, and a fixed-point-free involution $\bar{\ }\  \colon E(X) \to E(X)$, such that for every edge $e \in E(X)$ we have that $\iota(e) = \tau(\bar{e})$.
\end{defn}

\begin{defn}[Graph of groups]
A \emph{graph of groups} $\calg$ is a triple $(X, \calg_{\bullet}, \iota_{\bullet})$ where $X$ is a graph, $\calg_{\bullet}$ is an assignment of a group $\calg_v$ to every vertex $v \in V(X)$ and a group $\calg_e$ to every edge $ e\in E(X)$ satisfying $\calg_e = \calg_{\bar{e}}$, and $\iota_{\bullet}$ determines monomorphisms $\iota_e \colon \calg_e \rightarrowtail \calg_{\iota(e)}$ for all edges $e \in E(X)$. We also define the monomorphism $\tau_e \colon \calg_e \rightarrowtail \calg_{\tau(e)}$, to be $\tau_e = \iota_{\bar{e}}$ for each $e \in E(X)$. Throughout the paper we will assume that the underlying graph $X$ is finite.
\end{defn}

\begin{defn}[Bass group and fundamental group of a graph of groups]
Let $\mathcal G$ be a graph of groups and denote by $F_{E(X)}$ the free group with basis $E(X)$. The \emph{Bass group}, $\bass(\calg)$ is defined to be
\[ \bass(\calg)  =  \left(\left(\bigast_{v \in V(X)} \calg_v \right) \aster F_{E(X)}  \right) / \langle\!\langle e\bar{e} = 1,\, \bar{e}\iota_e(g)e = \tau_e(g) \rangle\!\rangle.\]
For a vertex $v \in V(X)$, the \emph{fundamental group of $\calg$ based at v}, denoted by $\pi_1(\calg,v)$, is the subgroup of $\bass(\calg)$ given by the elements of the form
\[
a_0e_1a_1e_2\cdots e_n a_n
\]
where $e_i \in E(X)$, $a_i \in \calg_{\tau(e_i)} = \calg_{\iota(e_{i+1})}$  (when the indices occur) and the edge path $e_1\cdots e_n$ is a closed loop based at $v$ in the graph $X$.   We will omit the basepoint and simply write $\pi_1(\calg)$ when there is no risk of confusion.
\end{defn}

\begin{defn}[Bass--Serre tree]
    Let $X_\calg$ be a graph and fix $v_0\in V(X_\calg)$.  The universal cover of a graph of groups $\calg=(X_\calg,\calg_\bullet,\iota_\bullet)$ is the graph $T=T_\calg$ with vertices
\[V(T)=\coprod_{v\in V(X_\calg)} \pi_1(\calg,v)/\calg_v,\]
 edges
\[E(T)=\coprod_{e\in E(X_\calg)}\pi_1(\calg,\iota(e))/\iota_e(\calg_e),\]
and the adjacency map $\iota$ given by inclusions of cosets.  We call $T$ the \emph{Bass--Serre tree} of $\calg$, it is indeed a tree by \cite{Serre2002}.  The graph $T$ comes equipped with an action of $\pi_1(\calg,v_0)$ with cell stabilisers conjugates of the groups in $\calg_\bullet$.
\end{defn}

\subsection{Irreducible Actions on Trees and Outer Automorphisms}

It is convenient to view a group action on a tree as a homomorphism $G \to \Aut(T)$. Rather than viewing trees purely combinatorially, we allow metric simplicial trees: every combinatorial edge is assigned a positive length, and the whole tree is viewed as a geodesic metric space by summing the lengths along an embedded path joining two points. From this perspective, vertices of valence $2$ with the same stabiliser as the incident edges are ``invisible'', and so we are free to assume there are none. A $G$-tree $T$ is called \emph{irreducible} (following Culler and Morgan \cite{CullerMorgan1987}) if $G$ does not stabilise a point, line or end of $T$; an equivalent characterisation of this property is that $G$ contains a non-abelian free subgroup acting freely on $T$. A useful consequence of this property \cite{CullerMorgan1987, AlperinBass1987} is that the translation length function $\|{-}\|_T\colon G \to \ZZ$, recording the minimal displacement of each element of $G$ (it is a standard fact that in trees the infimum is realised), uniquely characterises the action (up to unique equivariant isometry, and provided there is not a proper invariant subtree).

The same group may have many actions on trees. One way to produce an a priori different action is to precompose with an automorphism of the group: $G \xrightarrow{\varphi} G \rightarrow \Aut(T)$. This gives an action of $\Aut(G)$ on the set of $G$-trees. Note that the action has swapped sides: $\Aut(G)$ has a \emph{right} action on the set of \emph{left} $G$-trees. Inner automorphisms do not affect the equivariant isometry class of a $G$-tree, so up to this equivalence relation we can view the action as factoring through the outer automorphism group $\Out(G)$.

\subsection{Deformation spaces and trees}

\subsubsection{Deformation spaces of trees}
Following Forester \cite{Forester2002Deform} we say that two $G$-trees, always viewed up to equivariant isometry, lie in the same \emph{deformation space} if they have the same elliptic subgroups. Irreducibility is an invariant of the deformation space. If the set of elliptic subgroups is preserved by automorphisms then the deformation space is $\Out(G)$-invariant, and we can use this action to study $\Out(G)$ (as for instance with Culler--Vogtmann Outer Space and the outer automorphisms of a free group).

Deformation spaces are contractible \cite[Theorem~6.7]{Clay05} (see also \cite[Theorem~6.1]{GuirardelLevitt2007_defom}) and have the structure of a simplicial complex with some missing faces. Often it is convenient to study the \emph{spine} of a deformation space: this is an equivariant (with respect to the $\Out(G)$ action) deformation retract of the whole space which is a true simplicial complex. It is the geometric realisation of a poset whose elements are the equivariant homothety classes of $G$-trees in the deformation space, and where $T \prec T'$ if $T$ can be obtained from $T'$ by collapsing certain orbits of edges.

\subsubsection{Moves on trees}
A $G$-tree $T$ is \emph{reduced} if whenever there is an edge group equal to an adjacent vertex group ($G_e=G_{\tau(e)}$) its endpoints are in the same orbit. If a $G$-tree is not reduced, then by collapsing orbits of edges violating this condition we obtain a reduced tree in the same deformation space: reduced trees correspond to minimal elements in the poset described above.  Clay and Forester \cite{ClayForester2009} provide a set of moves (defined below), called \emph{slides, inductions} and \emph{$\scra^{\pm1}$-moves}, and prove that any two reduced trees in the same deformation space can be transformed into one another using a combination of these moves. A tree is said to have an \emph{ascending loop} (the terminology comes from considering the quotient graph of groups) if there is an edge whose endpoints are in the same orbit and whose stabiliser is equal to the stabiliser of one of its endpoints.   The property of having an ascending loop in the graph of groups is not an invariant of the deformation space: they can be created or destroyed by the $\scra^{\pm1}$-moves. However, when no tree in the deformation space contains an ascending loop, slide moves suffice to connect any two reduced trees. Note that although we follow Clay and Forester's definitions we have situated them in the tree rather than the graph of groups. 

\begin{defn}[Moves]\label{def:slide_induction_A_moves}
Suppose $T$ is a reduced and irreducible $G$-tree.  In all these definitions, the phrase ``expand $v$ into an edge $e$ stabilised by $H$'' assumes that $H \leq G_v$ and is shorthand for introducing a set of edges indexed by $G_v/H$ with a common initial vertex also denoted $v$. Letting $e$ be the representative corresponding to the coset $H$, we have $G_e=G_{\tau(e)}=H$, $G_{\iota(e)} = G_v$, and for every edge $f$ with $\iota(f)=v$ a $G_v$-equivariant (re)assignment of $\iota(f)$ to some $g\tau(e)$ or to $\iota(e)$. Of course, if $G_f \nleq H^g$ for any $g \in G_v$ then it must be assigned to $\iota(e)$. To preserve the property of being a $G$-tree, every expansion (and their inverses, the collapses) are assumed to be $G$-equivariant: the behaviour described is duplicated across all relevant $G$-orbits.
    
    \begin{enumerate}[(i)]
        \item \textbf{Slide move.} Suppose $e$ and $f$ are edges of $T$ with $\iota(e)=\iota(f)=v$, and that $G_e \leq G_f$. Then there is a \emph{slide move} given by equivariantly sliding the attaching point of $e$ over $f$: first expand $v$ into an edge $e'$, stabilised by $G_f$, with $e$, $f$, and the $G_e$-orbit of $f$ attached to $\tau(e')$, and the remaining edges at $v$ attached to $\iota(e')$. Then collapse $e$.
        
        \item \textbf{Induction moves.} Suppose that $\iota(e)$ and $\tau(e)$ are in the same $G$-orbit, and that $G_{\tau(e)} = G_e < G_{\iota(e)}$. \begin{enumerate}[(a)]
            \item Suppose that $H$ satisfies $G_{\tau(e)} < H \leq G_{\iota(e)}$. Then there is an \emph{increasing induction move} associated to $H$: first expand $\iota(e)$ into $e'$, stabilised by $H$, setting $\iota(e)=\tau(e')$, and leaving all other orbits of edges attached at $\iota(e')$. Then collapse $e$.
            \item Suppose that $H < G_{\tau(e)}$ and that, for every other edge $f$ with $\iota(f)=\tau(e)=v$ there is a representative of its $G_v$-orbit with $G_f \leq H$. Then there is a \emph{decreasing induction move} associated to $H$: first expand $\tau(e)$ into $e'$ stabilised by $H$, setting $\tau(e)=\iota(e')$ and attaching all other orbits of edges to an appropriate $g\tau(e')$. Then collapse $e$.
        \end{enumerate}
        
        \item \textbf{$\scra$-move.}  Suppose that $\iota(e)$ and $\tau(e)$ are in the same $G$-orbit, that $e'$ is an edge in the same $G$-orbit as $e$ with $\iota(e')=\tau(e) = v$ and $G_{e'} < G_e < G_v$. Then there is an \emph{$\scra$-move} consisting of first expanding $v$ into an edge $f$ stabilised by $G_e$, attaching $\tau(e)$ and $\iota(e')$ to $\tau(f)$ and the remaining edges to $\iota(f)$. Then perform an increasing induction associated to any $H$ with $G_{\tau(e)} < H \leq G_{\iota(e)}$.
        
        \item \textbf{$\scra^{-1}$-move.} Suppose that $\iota(e)$ and $\tau(e)$ are in the same $G$-orbit, that $G_{\tau(e)} = G_e < G_{\iota(e)}$, that $e'$ is an edge in the same $G$-orbit as $e$ with $\iota(e')=\tau(e) = v$, and there is exactly one more $G_{\tau(e)}$-orbit of edges incident at $\tau(e)$, one of whose representatives $f$ has stabiliser $G_f$ satisfying $G_f < G_{e'}$. Then there is an \emph{$\scra^{-1}$ move}: first perform the decreasing induction associated to $G_{f}$, resulting in a tree that is not reduced: $f$---which was not a loop by assumption---is now attached at a vertex with the same stabiliser. Collapse $f$: provided $G_{f}$ was a proper subgroup of its other vertex group, this produces a reduced tree without the ascending loop.
    \end{enumerate}
\end{defn}

We have illustrated some of these moves in the case of the Leary--Minasyan groups, see Figures~\ref{fig:unique} and \ref{fig:expand}.

\begin{remark} We record a number of remarks.
    \begin{enumerate}
        \item In a locally finite $G$ tree with multiple orbits of edges, the valence at the operative vertex (orbit) increases after an increasing induction move and decreases after a decreasing one. It follows that increasing inductions are possible whenever the graph of groups contains an ascending loop, but decreasing ones may not be. 

        \item The result of a slide or a decreasing induction depends on the choice of an edge and not just its orbit.

        \item An induction with $H=G_{\tau(e)}$ could in principle be allowed (as it is in \cite{ClayForester2009}, for instance)---however, it would be ineffective (essentially because the vertex created after the expansion is invisible).  We therefore, without loss of generality, do not allow these types of inductions.  Note also that an increasing induction with $H=G_{\iota(e)}$ is allowed, but agrees with sliding all other edges at $\iota(e)$ across $\overline{e}$, and a decreasing induction where $H=G_f$ with $f$ in the same orbit as $e$ and $\iota(f)=\tau(e)$ is the same as sliding across $e$.

        \item Decreasing inductions (as in the definition of an $\scra^{-1}$-move) and also slides do not necessarily preserve the property of being reduced.  
    \end{enumerate}
\end{remark}

\subsubsection{Actions on trees}
Forester \cite{Forester2002Deform} and Levitt \cite{Levitt2005_rigid} characterised the \emph{rigid} actions on trees: these have the property that they are the only reduced tree in their deformation space. In particular, whenever the deformation space is $\Out(G)$-invariant, they are preserved by every outer automorphism.

\begin{thm}~\cite[Theorem 1]{Levitt2005_rigid}
\label{thm:rigidity}
    Let $T$ be a reduced cocompact $G$-tree which is not an ascending HNN extension. It is rigid if and only if, given two oriented edges $e, f$ with the same origin $v$ such that $G_e \leq G_f$, one of the following holds: \begin{enumerate}
        \item \label{rigidity-1} $e$ and $f$ are in the same $G_v$ orbit.
        \item \label{rigidity-2} $e$ and $\overline{f}$ are in the same $G$-orbit, and $G_e = G_f$.
        \item \label{rigidity-3} there is an edge $f'$ with origin $v$, in the same $G$-orbit as $\overline{f}$ such that $G_f = G_{f'} = G_v$. Furthermore, there are only three $G_v$-orbits of edges with origin $v$ (those of $e, f, f'$).
    \end{enumerate}
\end{thm}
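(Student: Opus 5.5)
The plan is to reduce rigidity to the behaviour of single elementary moves, using the result of Clay and Forester \cite{ClayForester2009} recalled above: any two reduced trees in a deformation space are joined by a finite sequence of slides, inductions and $\scra^{\pm1}$-moves. Rigidity of $T$ therefore means that every such move, applied to $T$ and followed by collapsing to a reduced tree if needed, returns a tree equivariantly isometric to $T$. Since $T$ is not an ascending HNN extension, I would first check that no tree in the deformation space has a configuration in which an induction or an $\scra^{\pm1}$-move changes anything. My approach here is that such configurations force $G_{\tau(e)}=G_e<G_{\iota(e)}$ along a loop, and in a cocompact reduced tree this would exhibit $G$ as an ascending HNN extension. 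The remaining inductions either have this form or degenerate into slides, by the remark following \Cref{def:slide_induction_A_moves}. This would reduce the problem to slides, which is the main simplification.

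\emph{Sufficiency.} Assume the three conditions hold and consider a slide of $e$ over $f$ at $v$, with $G_e\le G_f$.
\begin{itemize}
\item In case \ref{rigidity-1}, the slide reattaches $e$ to a vertex $g\tau(e')$ with $g\in G_v$. Up to an equivariant isometry fixing the orbit structure, this is the identity.
\item In case \ref{rigidity-2}, $\bar f$ lies in the orbit of $e$ and the two edge groups are equal. The slide moves $e$ once around the loop, which amounts to composing the action with a twist and again yields an isometric $G$-tree.
\item In case \ref{rigidity-3}, the slide produces a vertex of valence two whose stabiliser equals an incident edge group. The tree is then not reduced, and the forced collapse returns $T$. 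The hypothesis that there are exactly three $G_v$-orbits at $v$ is what guarantees that no other edges get moved in the process.
\end{itemize}
Hence every move from $T$ ends at $T$, so $T$ is the unique reduced tree and is rigid.

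\emph{Necessity.} Suppose instead that a pair $e,f$ at $v$ with $G_e\le G_f$ violates all three conditions. I would perform the slide of $e$ over $f$ and argue that the resulting tree $T'$ is reduced but not equivariantly isometric to $T$. Reducedness should follow because the failure of \ref{rigidity-3} rules out the creation of a collapsible vertex. To distinguish $T'$ from $T$, I would use a deformation-space invariant that slides alter, namely the multiset of $G$-orbits of pairs (vertex stabiliser, conjugacy classes of incident edge groups) at each vertex orbit. The failure of \ref{rigidity-1} and \ref{rigidity-2} should show that the orbit of $e$ has genuinely moved from $v$ to the vertex $\tau(f)$. A fallback distinguishing argument is to compare translation length functions, since both trees are irreducible and so are determined by their length functions \cite{CullerMorgan1987}.

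I expect the main obstacle to be this distinguishing step. Sliding changes where edges attach rather than which vertex groups occur, so the invariant must be chosen carefully, especially when $\tau(f)$ is itself in the orbit of $v$. The first thing I would try is to compare the number of $G_w$-orbits of edges at each vertex orbit $w$ together with their edge groups up to $G_w$-conjugacy.
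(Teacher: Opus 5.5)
The paper does not prove this statement: it is quoted from Levitt and used as a black box, so your argument has to stand on its own. Your overall reduction is sound: by Clay--Forester, $T$ is rigid iff every elementary move from $T$ returns $T$. The genuine gap is the claim that you may restrict to slides. Not being an ascending HNN extension only excludes the one-edge-orbit tree in which an edge group equals a vertex group. It does not exclude inductions or $\scra$-moves, either at $T$ or elsewhere in the deformation space. A reduced tree can carry an ascending loop next to other edge orbits (the trees $T_n$ of \Cref{fig:BAD_trees}). An $\scra$-move needs no ascending loop at all, only some $e'\in Ge$ at $v=\tau(e)$ with $G_{e'}<G_e<G_v$.

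The paper's own tree $S$ for $\LM(A,L)$ with $AL<L<\ZZ^m$ (e.g.\ the standard tree of $\BS(2,4)$) defeats your necessity argument. At the $H$-vertex $v$, take $e=[v,tv]$ with $G_e=AL$ and $f=[v,t^{-1}v]$ with $G_f=L$. Then $\overline f=t^{-1}e\in Ge$ and $G_e<G_f$, so all three conditions fail. Yet sliding $e$ over an edge of its own reversed orbit is not an elementary move; the proof of \Cref{lem:reduced} notes that only $\scra$-moves are available at a tree of type $\cals$. The different reduced tree $T_1$ comes from an $\scra$-move. So necessity needs a separate treatment of the case $\overline f\in Ge$, $G_e\neq G_f$, via an induction or an $\scra$-move. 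This is exactly the case in which excluding ascending HNN extensions matters.

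For sufficiency, restricting to slides is legitimate, but for a reason you did not give. Suppose $e$ is an ascending loop with $G_e=G_{\tau(e)}$, or part of an $\scra$-configuration, and $e'\in Ge$ has origin $v=\tau(e)$. Then the pair $(e',\overline e)$ at $v$ violates all three conditions. So under the hypotheses, no induction or $\scra^{\pm1}$-move is available at $T$.

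Your case analysis and the distinguishing step also need repair.
\begin{itemize}
\item In case \ref{rigidity-2} there is no slide at all, since $\overline f\in Ge$. Moreover, identifying the new tree with $T\cdot\varphi$ proves nothing until you check $\varphi\in\Stab_{\Aut(G)}(T)$ (compare the warning after \Cref{lem:same-type-is-iso}).
\item In case \ref{rigidity-3} the mechanism you describe does not occur. Since $G_f=G_v$, each $u\in Gv$ is the origin of a unique edge $f_u\in Gf$, and these edges form lines. After the slide, each vertex of $Gv$ inherits the $e$-edges of its predecessor, so no valence-two vertex appears and nothing is collapsed. Instead, $u\mapsto\tau(f_u)$ (identity on the other vertex orbits) is a $G$-equivariant isomorphism from $T$ to the slid tree.
\item The three-orbit condition is what makes this shift respect every edge at $v$. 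With a second non-loop edge $e''=[v,w'']$ at $v$, sliding $e=[v,w]$ alone changes $d(w,w'')$ from $2$ to $3$.
\item That example also shows your proposed local invariant cannot distinguish the trees in the necessity direction. If the stable letter of the $f$-loop centralises $G_v$, every vertex-orbit datum is unchanged by the slide, although the tree is not rigid. You need a global invariant instead, e.g.\ the translation length of $bc$ for suitable $b\in G_w$, $c\in G_{w''}$, which goes from $4$ to $6$.
\item Your claim that failure of \ref{rigidity-3} keeps the slid tree reduced is false when $f$ is an ascending loop. Take $\BS(1,2)\ast_{a^2=b^3}\langle b\rangle$ with $f=[v,tv]$ and $e=[v,w]$, where $G_e=G_f=G_{tv}=\langle a^2\rangle$. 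Here the pair $(e,f)$ violates all three conditions, but after the slide $e$ becomes a collapsible non-loop edge.
\end{itemize}
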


Given an $\Out(G)$-invariant deformation space $\cald$, one can (in principle, and in our case in practice) understand $\Out(G)$ through studying this action and its stabilisers. In particular, we will need to study the $\Aut(G)$ (or $\Out(G)$) stabiliser of a $G$-tree $T$. This study is made easier by the observation (derived from the uniqueness of the equivariant isometries of \cite{CullerMorgan1987} and \cite{AlperinBass1987}) that $\Stab_{\Aut(G)}(T)$ itself acts on $T$, in a way which is \emph{compatible} with the $G$-action via \[\varphi \cdot (g \cdot x) = \varphi(g) \cdot (\varphi \cdot x).\] Note that a consequence of this compatibility is that if $\varphi$ fixes a vertex $v$, it preserves its stabiliser too.  Namely, we have $v = \varphi \cdot (G_v \cdot v) = \varphi(G_v) \cdot (\varphi \cdot v) = \varphi(G_v) \cdot v$ for one inclusion, and the same argument with $\varphi^{-1}$ for the other.

Within the literature one finds a small number of similar calculations of automorphisms of graphs of groups. We mention two general versions of the theory: Levitt \cite{Levitt2005_autHyp} covers many cases but is poorly suited to our situation since a running assumption is that the edge groups have trivial outer automorphism group, or at least that the subgroup that can be induced given the other constraints of the situation is trivial. In our case this is usually false, since the edge groups are finite index and hence we would expect the automorphisms of the vertex group to induce many automorphisms of the edge groups. Bass and Jiang's \cite{BassJiang1996} covers our case, but brings many technicalities that can be avoided since our edge and vertex groups are all abelian. We indicate the broad strokes of the common strategy here:
\begin{enumerate}
    \item After passing to a finite index subgroup, we can assume that $G$-orbits are preserved by the $\Stab_{\Aut(G)}(T)$-action.
    \item Now there are quotient maps to the (outer) automorphism group of each vertex group. (Outer automorphisms are needed here since there is a choice of conjugating element involved in defining the quotient maps.) The image is usually much smaller than the whole group, since it needs to preserve a ``peripheral'' structure coming from the incident edge groups.
    \item The common kernel of these maps consists of \emph{(bi)twists}: on the level of the Bass group these are automorphisms replacing an edge $e$ by a path $geh$, where $g$ and $h$ are elements drawn from the initial and terminal vertex groups of the edge. (In the general theory, not every vertex group element will give rise to an automorphism.)
\end{enumerate}

\section{The non-metabelian case: introducing the deformation space and first results}
\label{sec:non_metab_intro}

Let $G=\LM(A,L)$ be a Leary--Minasyan group and suppose that $G$ is not metabelian, or equivalently that $G$ is not an ascending HNN extension. 
This is sufficient (and indeed necessary) to give that the action of $G$ on the Bass--Serre tree $S$ arising from its presentation as an HNN extension is irreducible: for instance, since there exists an element $g \in \ZZ^m \smallsetminus (L \cup AL)$ and for such an element $t$ and $tg$ are two independent loxodromic elements. 
A graph of groups associated to this action is shown in \Cref{fig:gog_for_LMAL}. We fix once and for all an isomorphism between $\ZZ^m$ and $H = \langle x_1, \dots, x_m \rangle$, our preferred vertex stabiliser. 

\begin{figure}[ht]
\centering
\begin{tikzpicture}
\draw [thick, middlearrow={.5}{0}{>}{$t~$}{left, rotate=90}] (0,0) arc [start angle =0, end angle =360, x radius =1cm, y radius=1cm] (0,0);
\draw [fill=blue] (0,0) circle (2pt) node [right] {$H$};
\node at (0.15,0.6) {$AL$} ;
\node at (0.05,-0.6) {$L$} ;
\end{tikzpicture}
\caption{The graph of groups splitting for $\LM(A,L)$ as in \eqref{eqn:LMAL}.}
\label{fig:gog_for_LMAL}
\end{figure}

We consider the deformation space $\cald$ of $G$-trees containing $S$, the Bass--Serre tree of the defining presentation of $G$ as a HNN extension \eqref{eqn:LMAL}.

\begin{table}[ht]
    \centering
    \begin{tabular}{|c|c|}
       \hline
        $G=\LM(A,L)$ & $A\in \GL_m(\QQ)$ and $L \neq \ZZ^m \neq AL$ \\
        $S$ & The Bass--Serre tree of \Cref{fig:gog_for_LMAL} \\
        $\cald$ & The deformation space of $G$-trees containing $S$ \\
        $\calc^+$ & $\{M \in \GL_m(\ZZ) : MAM^{-1}=A, M(L)=L \}$ \\
        $\calc^-$ & $\{M \in \GL_m(\ZZ) : MAM^{-1}=A^{-1}, M(L)=AL \}$ \\
        $\calc = \calc^+ \sqcup \calc^-$ & The subgroup $\calc^+ \times \{+1\} \cup \calc^- \times \{-1\} \leq \GL_m(\ZZ) \times \{\pm1\}$ \\
        $\gamma\colon \calc \to \Aut(G)$ & The automorphisms $\gamma_{M,\varepsilon}|_H \equiv M$ and $\gamma_{M,\varepsilon}(t) = t^\varepsilon$ for $(M,\varepsilon) \in \calc$ \\
        $\beta\colon H \to \Aut(G)$ & The automorphisms $\beta_h|_H \equiv \id_H$ and $\beta_h(t) = ht$ for $h \in H$ \\
        $\delta\colon H \rtimes \calc \to \Aut(G)$ & The common extension of $\beta$ and $\gamma$ \\
        \hline
        $T_n$ & The Bass--Serre tree of \Cref{fig:BAD_trees}, for $n \in \ZZ_{>0}$ \\
        $L_n$ & The subgroup $t^{-n}Lt^n$, $n \in \ZZ_{\geq 0}$ \\
        $L_\infty$ & The subgroup $\bigcup_{n=0}^\infty L_n$ \\
        $\beta_n\colon L_n \to \Aut(G)$ & The automorphisms $\beta_{n,\ell}|_H \equiv \id_H$ and $\beta_{n,\ell}(t) = \ell t$ for $\ell \in L_n$ \\
        $\delta_n\colon L_n \rtimes \calc \to \Aut(G)$ & The common extension of $\beta_n$ and $\gamma$ \\
        \hline
    \end{tabular}
    \caption{Table of notation for \Cref{sec:non_metab_intro}.  The second part of the table is only applicable when $AL < L$.}
    \label{tab:notation_non_metab}
\end{table}

\begin{prop} \label{prop:D-AutG-inv}
    The deformation space $\cald$ is $\Aut(G)$-invariant.
\end{prop}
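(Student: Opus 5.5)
Since a deformation space is determined by its set of elliptic subgroups, it suffices to show that this set is preserved by every $\varphi\in\Aut(G)$. For the Bass--Serre tree $S$, a subgroup is elliptic if and only if it is contained in a conjugate of $H=\ZZ^m$. So the claim reduces to showing that $\varphi(H)$ is conjugate into $H$ for every $\varphi\in\Aut(G)$. Applying the same fact to $\varphi^{-1}$ then shows that $\varphi$ maps the family of conjugates of subgroups of $H$ bijectively onto itself.

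I would characterise the elliptic subgroups intrinsically, as the subgroups commensurable (up to conjugacy) with subgroups of $H$. A cleaner route is to use that $H$ is \emph{commensurated}, and that elements of $H$ have a distinguishing algebraic property. Since the edge groups $L$ and $AL$ have finite index in $H$, every conjugate of $H$ meets $H$ in a finite-index subgroup. Hence, for every $h\in H$ and $g\in G$, some power of $h$ lies in $gHg^{-1}$. That is, every $h\in H$ has some power fixing every vertex of the tree in the orbit $G\cdot v_0$. Equivalently, the centraliser of a suitable power of $h$ contains conjugates of finite-index subgroups of $H$ at every vertex.

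Conversely, a loxodromic element $g$ of $S$ is never elliptic-like in this sense. I would show that $g$ is not conjugate into $H$, and more precisely that the subgroup generated by $g$ is not commensurated in $G$. Suppose $\langle g\rangle$ were commensurated. Since the action is irreducible (as shown above, $G$ is not an ascending HNN extension), we can pick an element $k$ that is loxodromic with axis different from that of $g$. Then $k\langle g\rangle k^{-1}\cap\langle g\rangle$ is trivial, because the two axes are distinct and the intersection of the two cyclic groups would have to preserve both axes. This contradicts commensuration.

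This yields the characterisation: an element $g\in G$ is elliptic in $S$ if and only if $\langle g\rangle$ is commensurated in $G$ by all of $G$. The forward direction holds because $\langle h\rangle$ meets each conjugate $gHg^{-1}$ in a finite-index subgroup, and $H$ is abelian with finite-index intersections. I would double check this forward direction carefully via powers.

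Since commensuration is preserved by automorphisms, $\varphi$ preserves the set of elliptic elements. A subgroup $K$ all of whose elements are elliptic fixes a point of $S$ when $K$ is finitely generated, by Serre's lemma. Since $\varphi(H)$ is finitely generated, it is therefore elliptic, i.e.\ conjugate into $H$, as required.

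The main obstacle is the forward direction: showing that $\langle h\rangle$ is commensurated for every $h\in H$. This needs, for each $g\in G$, a power $h^k$ with $g h^k g^{-1}\in\langle h\rangle$, and this is false in general, since conjugation by $t$ acts as $A$ and maps $h$ to $Ah$, which need not be proportional to $h$. So I would instead use the weaker invariant that every power-orbit stays elliptic: $g$ is elliptic if and only if for every $k\in G$ some nontrivial power of $g$ lies in $kHk^{-1}$. Equivalently, $g$ is elliptic if and only if the subgroup $\langle g^k\rangle$ generated by all $G$-conjugates of some power of $g$ is ``small''.

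The cleanest fix is probably to note that an element is elliptic if and only if it has a power whose centraliser has finite index in some conjugate of $H$. Centralisers of loxodromic elements are virtually cyclic-by-(elliptic kernel) and are far from containing $\ZZ^m$ with $m\ge1$ in the right way. I would verify this dichotomy using the structure of centralisers acting on the axis, and this is the step I would expect to require care.
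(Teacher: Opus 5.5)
Your reduction matches the paper's: since $\varphi(H)$ is finitely generated, Serre's lemma reduces the problem to showing that automorphisms send elliptic elements of $S$ to elliptic elements. This needs a characterisation of ellipticity that is visibly $\Aut(G)$-invariant, and the paper imports exactly this from \cite[Proposition~2.7]{Valiunas2022}. Your proposal never produces such a characterisation, and that is a genuine gap.

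The first candidate ($\langle g\rangle$ commensurated) fails in the forward direction, as you observe: $t\langle h\rangle t^{-1}\cap\langle h\rangle$ is trivial unless $h$ is an eigenvector of $A$ with rational eigenvalue. The replacements do not work either:
\begin{itemize}
\item The criterion ``some power of $g$ lies in every $kHk^{-1}$'' is true, but it is phrased in terms of $H$. Its invariance under $\varphi$ is therefore exactly what you are trying to prove.
\item The centraliser criterion is false. Take $A=\diag(1,2)$ and $L=\ZZ\oplus 2\ZZ$, so $AL<L<\ZZ^2$. Then $h=(1,0)\in L$ satisfies $tht^{-1}=Ah=h$, so $h$ is central and every power of $h$ has centraliser $G$. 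Your criterion would declare the elliptic element $h$ non-elliptic.
\end{itemize}

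The missing idea is to test $g$ against commensurated abelian subgroups containing it, rather than against $\langle g\rangle$ itself. The characterisation is: $g$ is elliptic in $S$ if and only if $g$ lies in some abelian subgroup $P\leq G$ that is commensurated in $G$.
\begin{itemize}
\item Forward direction: $H\cap tHt^{-1}=AL$ has finite index in both $H$ and $tHt^{-1}$. So the commensurator of $H$ contains $H$ and $t$, hence is all of $G$, and the same holds for every conjugate of $H$.
\item Converse: suppose $g\in P$ were loxodromic. For each $k\in G$, some power $g^n$ lies in $kPk^{-1}$, which is abelian and contains $kgk^{-1}$. Commuting loxodromic elements share an axis, so $k\cdot\mathrm{Ax}(g)=\mathrm{Ax}(g)$ for every $k$. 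Then $G$ would preserve a line, contradicting irreducibility of $S$.
\end{itemize}
This property is clearly preserved by automorphisms. Combined with your Serre's lemma step, applied to both $\varphi$ and $\varphi^{-1}$, it completes the proof along the same lines as the paper.
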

\begin{proof}
    By \cite[Lemma 3.1(1)]{Valiunas2022}, $H=\langle x_1 , \dots, x_m \rangle$ represents the unique conjugacy class of maximal elliptic subgroups in $S$. We need to show it is $\Aut(G)$-invariant. Since it is a finitely generated group, it is enough to check that $\varphi(g)$ is elliptic for every $g \in H$ and $\varphi \in \Aut(G)$. (A finitely generated subgroup is elliptic if and only all of its elements are.) But \cite[Proposition 2.7]{Valiunas2022} provides an algebraic characterisation of these elements, and so this holds.
\end{proof}

We have the following characterisation of Forester--Levitt rigidity among these groups and trees. 

\begin{prop} \label{prop:rigid_iff_no_proper_containment}
    Suppose $G$ is a Leary--Minasyan group that is not an ascending HNN extension. Then the Bass--Serre tree $S$ corresponding to its presentation as an HNN extension is rigid if and only if the associated subgroups $L$ and $AL$ are either incomparable or equal.
\end{prop}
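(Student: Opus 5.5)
We will apply Levitt's criterion, \Cref{thm:rigidity}, to the Bass--Serre tree $S$. Its hypotheses hold: $S$ is cocompact with one orbit of vertices and one orbit of edges. It is not an ascending HNN extension because $G$ is not metabelian, so $L \neq \ZZ^m \neq AL$. It is reduced, since the only edge orbit is a loop in the quotient.

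Fix the vertex $v$ with $G_v = H$. The oriented edges with origin $v$ fall into two $H$-orbits:
\begin{itemize}
\item edges $he$ ($h\in H$) in the orbit of one orientation, with stabiliser $L$;
\item edges $h\bar e'$ in the orbit of the reverse orientation, with stabiliser $AL$.
\end{itemize}
Here $e$ and $e'$ are the two edges of the $t$-orbit at $v$. Since $H$ is abelian, every edge in the first $H$-orbit has stabiliser exactly $L$, and every edge in the second has stabiliser exactly $AL$. So the criterion only needs to be checked on pairs $(e_1,e_2)$ drawn from these two orbits, comparing the subgroups $L$ and $AL$.

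\textbf{Sufficiency.} Take two edges $e_1,e_2$ at $v$ with $G_{e_1}\le G_{e_2}$.
\begin{itemize}
\item If they lie in the same $H$-orbit, condition~\ref{rigidity-1} holds.
\item Otherwise one has stabiliser $L$ and the other $AL$, and $G_{e_1}\le G_{e_2}$ says one of $L$, $AL$ contains the other.
\begin{itemize}
\item If $L$ and $AL$ are incomparable, this case cannot occur.
\item If $L=AL$, then $G_{e_1}=G_{e_2}$. Also $e_1$ and $\bar e_2$ lie in the same $G$-orbit, because the two $H$-orbits at $v$ are exactly the two orientations of the single $G$-orbit of edges. So condition~\ref{rigidity-2} holds.
\end{itemize}
\end{itemize}
Hence $S$ is rigid.

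\textbf{Necessity.} Suppose, say, $AL<L$; the case $L<AL$ is symmetric. Take $e_1$ with stabiliser $AL$ and $e_2$ with stabiliser $L$, so that $G_{e_1}<G_{e_2}$ and they lie in different $H$-orbits. We check that none of the three conditions holds.
\begin{itemize}
\item Condition~\ref{rigidity-1} fails by the choice of $e_1,e_2$.
\item Condition~\ref{rigidity-2} fails because $G_{e_1}\neq G_{e_2}$.
\item Condition~\ref{rigidity-3} would need $G_{e_2}=G_v$, i.e.\ $L=\ZZ^m$. This is excluded since $G$ is not an ascending HNN extension.
\end{itemize}
So $S$ is not rigid.

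For intuition, non-rigidity here corresponds to a slide of one end of the loop over the other, changing the edge group to a conjugate lattice and producing trees $T_n$. The argument does not need this, since \Cref{thm:rigidity} is an equivalence.

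The main point needing care is the identification of edge stabilisers with the correct subgroups under the orientation conventions $\iota,\tau$ and the relation $tx^vt^{-1}=x^{Av}$. Since both orientations occur at $v$, any consistent labelling gives the same dichotomy, so this is routine bookkeeping.
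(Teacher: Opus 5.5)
Your proof is correct and is essentially the paper's argument. Both apply Levitt's criterion (Theorem~\ref{thm:rigidity}) at the vertex $v$ fixed by $H$, whose two $H$-orbits of incident edges have stabilisers $L$ and $AL$ and are the two orientations of the single $G$-orbit of edges. The only difference is cosmetic: you rule out condition~\ref{rigidity-3} because it would force $L=\ZZ^m$, whereas the paper notes that it requires three $G_v$-orbits at $v$ and $S$ has only two.

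Your closing intuition is slightly off: the paper observes in the proof of Lemma~\ref{lem:reduced} that the only moves available from a type-$\cals$ tree are $\scra$-moves (e.g.\ $S$ to $T_1$), not slides. Nothing in your argument depends on it.
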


\begin{proof}
    Observe that $S$ is reduced since there is only one orbit of vertices and so each edge necessarily has its endpoints in the same orbit. Also note that since $G_v$ is abelian, if two incident edges $e,e'$ are in the same $G_v$-orbit then $G_e=G_{e'}$. In the light of \Cref{thm:rigidity} we need to show that our hypotheses are equivalent to every pair of edges $\{e,f\}$ originating at a common vertex satisfying either condition \ref{rigidity-1} or \ref{rigidity-2} (the third condition is irrelevant since there are only two $G_v$-orbits of edges at each vertex). It is no loss of generality to suppose that the originating vertex $v$ is stabilised by $H$ and so the incident edge stabilisers are all equal to $L$ or $AL$ (depending on the $G_v$-orbit of the edge).  There are two cases depending on whether $L$ and $AL$ are incomparable or equal.

    For the first case suppose that $L$ and $AL$ are incomparable.  Let $e$ and $f$ be vertices with the same origin so that $G_e \leq G_f$. But since the two $G_v$-orbits of edges have incomparable stabilisers, we must have that $e$ and $f$ are in the same $G$-orbit (and in fact $G_e=G_f$). Conversely, if every pair of edges with $G_e \leq G_f$ are in the same $G_v$-orbit, then $L$ and $AL$ must be incomparable.  This completes the first case.

    For the second case suppose that $L=AL$.  As in the first case we have a pair of edges $e,f$ with the same origin and $G_e \leq G_f$. By the observations above, this means that $G_e=G_f$. If they are not in the same $G_v$-orbit, then (since there is only one $G$-orbit of edges) there is an element $g$ (it is contained either in $G_vtG_v$ or in $G_vt^{-1}G_v$) so that $ge=\overline{f}$.  Thus, $\{e,f\}$ satisfy item \ref{rigidity-2} of \Cref{thm:rigidity}.   Conversely, suppose $e$ and $f$ are adjacent edges satisfying item \ref{rigidity-2}. Since they are in distinct $G_v$-orbits, we can assume (without losing generality) that $G_e=L$ and $G_f=AL$. But then we have that $L=G_e=G_f=AL$.  This completes the second case and the proof of the proposition.
\end{proof}

\subsection{The stabiliser of a one-edge tree}
In this section we calculate the $\Out(G)$-stabiliser of the action on $S$, as in \Cref{fig:gog_for_LMAL}. This will complete the proof in the rigid case, where this tree is $\Out(G)$-invariant, and will also be required in our analysis of the non-rigid case.

\begin{lemma}
\label{find_reps_of_stab_S}
    If $\left|\det(A)\right|\neq 1$ then, after composing with an inner automorphism, any element of $\Stab_{\Aut(G)}(S)$ preserves $H$ and sends $t$ to $ht$ for some element $h$ of $H$. If $\left|\det(A)\right|=1$ then these automorphisms form a subgroup of index at most 2 in $\Stab_{\Aut(G)}(S)$, and any element outside this subgroup (after composing with inner automorphisms) preserves $H$ and sends $t$ to $t^{-1}h$ for some $h \in H$. 
    Conversely, any automorphism preserving $H$ and sending $t$ to $ht$ or $t^{-1}h$ (with $h \in H$) is contained in $\Stab_{\Aut(G)}(S)$.
\end{lemma}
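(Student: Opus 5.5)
We use the compatible action of $\Stab_{\Aut(G)}(S)$ on $S$ described in the preliminaries. Let $\varphi \in \Stab_{\Aut(G)}(S)$, and let $v_0$ be the vertex with stabiliser $H$. Since $S$ has a single $G$-orbit of vertices, there is $g \in G$ with $\varphi\cdot v_0 = g v_0$. Replacing $\varphi$ by $\iota_{g^{-1}}\circ\varphi$ (the conjugation $\iota_{g^{-1}}$ stabilises $S$, acting by the element $g^{-1}$), we may assume $\varphi$ fixes $v_0$. By the compatibility remark, $\varphi(H)=H$.

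Next consider the edges at $v_0$. There are two $H$-orbits of edges at $v_0$: $e$ with stabiliser $L$, joining $v_0$ to $t^{-1}v_0$, and $f$ with stabiliser $AL$, joining $v_0$ to $tv_0$ (with orientation conventions fixed by the presentation). Since $\varphi$ fixes $v_0$ and acts on $S$ by an isometry, it permutes the edges at $v_0$. By compatibility it maps $H$-orbits to $H$-orbits, namely $\varphi(h e)=\varphi(h)\,\varphi\cdot e$. So either both orbits are preserved, or they are swapped.

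\emph{Orbits preserved.} Then $\varphi\cdot f = h f$ for some $h\in H$, so $\varphi\cdot(tv_0)=htv_0$. Compatibility gives $\varphi(t)\varphi\cdot v_0 = \varphi(t) v_0 = ht v_0$. Hence $\varphi(t)\in htH$, say $\varphi(t)=htk$ with $k \in H$. Composing with the inner automorphism by $k$ would disturb $H$ only trivially, since $H$ is abelian: conjugation by $k^{-1}$ fixes $H$ pointwise and sends $htk$ to $k^{-1}htk\cdot k^{-1}\cdots$. More cleanly, conjugating $\varphi$ by an element $k'\in H$ fixes $H$ and sends $htk$ to $k'htk k'^{-1}$; choosing $k'=k$ gives $k\,h\,t\,k\,k^{-1}$, which becomes $(kh)t$. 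Thus after an inner automorphism by an element of $H$ we get $\varphi(t)=h't$ with $h' \in H$.

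\emph{Orbits swapped.} Similarly $\varphi(t)\in Ht^{-1}H$, and an $H$-conjugation yields the form $t^{-1}h$. Here $\varphi|_H=M\in\GL_m(\ZZ)$ must satisfy $M(L)=AL$ and $M(AL)=L$ as edge stabilisers. Hence $[\ZZ^m:L]=[\ZZ^m:AL]=|\det A|\,[\ZZ^m:L]$, which forces $|\det A|=1$. So if $|\det A|\ne1$ this case cannot occur. When $|\det A|=1$, the orbit-preserving automorphisms form the kernel of the homomorphism from $\Stab_{\Aut(G)}(S)$ to the permutations of the two $H$-orbits at $v_0$, modulo inners. This needs care: the map must be well defined after fixing $v_0$, but different choices differ by elements of $H$, which preserve each $H$-orbit. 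So the kernel has index at most $2$.

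\emph{Converse.} Given $\varphi$ with $\varphi(H)=H$ and $\varphi(t)=ht$ or $t^{-1}h$, define a map on vertices $gv_0\mapsto\varphi(g)v_0$. It is well defined because $\varphi(H)=H$, and it is $\varphi$-equivariant. It preserves adjacency because $\varphi(t^{\pm1})v_0$ is adjacent to $v_0$ and $\varphi(H)=H$. Its inverse comes from $\varphi^{-1}$, so it is a tree automorphism. Therefore the twisted action $G\xrightarrow{\varphi}G\to\Aut(S)$ is equivariantly isometric to $S$, i.e.\ $\varphi \in \Stab_{\Aut(G)}(S)$.

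The main obstacle is the index computation, namely showing that swapping forces $M(L)=AL$ and that $|\det A|\neq 1$ excludes it. This follows since edge stabilisers are mapped accordingly by $\varphi$.
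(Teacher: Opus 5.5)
Your proposal is correct and follows essentially the same route as the paper: normalise by an inner automorphism so the $H$-vertex is fixed, then split according to whether the two $H$-orbits of incident edges are preserved or swapped. Swapping is ruled out via $[\ZZ^m:L]=[\ZZ^m:AL]$ when $\left|\det A\right|\neq 1$, and for the converse you check that $\varphi$ preserves the double cosets $Ht^{\pm1}H$ that govern adjacency.

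The only difference is cosmetic. You read off $\varphi(t)\in htH$ (resp.\ $ht^{-1}H$) directly from $\varphi(t)v_0=htv_0$ and then conjugate by an element of $H$. The paper instead first conjugates so that a neighbouring vertex is fixed, and then uses that $H$ is self-normalising. You should also delete your garbled first attempt at the conjugation computation and keep only the clean version.
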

\begin{proof}
Let $\varphi \in \Stab_{\Aut(G)}(S)$. The action of $\Stab_{\Aut(G)}(S)$ on $S$ has one orbit of edges and of vertices (though some care is needed; the new action might have edge inversions). Let $v$ be the (unique) vertex stabilised by $H$, and note that after composing with an inner automorphism, $v$ is also stabilised by $\varphi$. This implies that $\varphi$ preserves $H$. 

Now consider the edges adjacent to $v$, of which there are two $H$-orbits. They are either preserved or swapped by $\varphi$, and the second case is only possible if there is an automorphism $\varphi'$ of $H$ exchanging $L$ and $AL$: indeed, since $\Stab(\varphi(e))=\varphi(\Stab(e))$ and $H$ is abelian, edges adjacent to $v$ and in the same $H$-orbit will have the same stabiliser. In particular, $L$ and $AL$ must have the same index in $H$, and so $\left|\det(A)\right|=1$.

Suppose the $H$-orbits of edges adjacent to $v$ are swapped by $\varphi$.  Consider $tv$, stabilised by $tHt^{-1}$. After composing with another inner automorphism (by an element of $H$), we can assume that $\varphi(H)=H$ and $\varphi(tv)=t^{-1}v$, the latter vertex stabilised by $t^{-1}Ht$. In particular, we see $\varphi(tHt^{-1})=t^{-1}Ht$, and expanding and rearranging we find that $t\varphi(t)$ normalises $H$. But $H$ is self-normalising (it is not equal to any of its incident edge stabilisers) and so $t\varphi(t)$ is contained in $H$, and in particular $\varphi(t)=t^{-1}h$ for some element $h$ of $H$.

Now consider the subgroup (of index at most 2) where the $H$-orbits of edges adjacent to $v$ are not interchanged by $\varphi$. This time, apply another inner automorphism (by an element of $H$) so that $\varphi(H)=H$ and $\varphi(t^{-1}Ht) = t^{-1}Ht$, equivalent to fixing $t^{-1}v$ as well as $v$. This implies that $\varphi(t)t^{-1}$ normalises $H$ and so is an element of $H$. In particular $\varphi(t) = ht$ for some element $h$ of $H$.

We now show the converse. Since $\varphi(H)=H$, its conjugacy class is preserved; since $H$ is self-normalising the conjugates of $H$ are in one-to-one correspondence with the vertices of $S$. So to claim that $\varphi$ stabilises $S$, it is enough to show that two conjugates stabilise adjacent vertices if and only if their images under $\varphi$ do.  We have that inner automorphisms stabilise every tree.  Thus it is enough to show that $H$ and some distinct $H^g$ stabilise adjacent vertices if and only if their images under $\varphi$ do.   We have that $H$ and $H^g$ stabilise adjacent vertices if and only if $g \in HtH$ or $g \in Ht^{-1}H$. If $\varphi(t)=t^{-1}h$ we have that $\varphi(g) \in Ht^{-1}hH=Ht^{-1}H$ or $\varphi(g) \in Hh^{-1}tH=HtH$. Alternatively, if $\varphi(t)=ht$, then $\varphi(g) \in HhtH=HtH$ or $\varphi(g) \in Ht^{-1}h^{-1}H = Ht^{-1}H$, so the adjacency condition is preserved by all of these automorphisms.
\end{proof}

Let $\calc^+$ be the subgroup of $\GL_m(\ZZ)$ commuting with $A$ and fixing $L$ setwise; that is,
\[
\calc^+ \coloneqq \{M \in \GL_m(\ZZ) : MAM^{-1}=A, M(L)=L \}.
\]
Note that this implies that elements of $\calc^+$ fix $AL$ setwise, and that $\calc^+$ is a finite index subgroup of the centraliser of $A$ in $\GL_m(\ZZ)$. 
Also, let
\[
\calc^- \coloneqq \{ M \in \GL_m(\ZZ): MAM^{-1}=A^{-1}, M(L)=AL\}.
\]
Sometimes $\calc^- = \varnothing$, which must in particular happen if we have ${\left|\det A\right| \neq 1}$. Otherwise, it follows that for any $M \in \calc^-$, we have $M(AL)=MA(L)=A^{-1}M(L)=A^{-1}(AL)=L$: the subgroups $L$ and $AL$ are exchanged by $M$.

Note that if $\calc^-$ is not disjoint from $\calc^+$, then $A = A^{-1}$ and $L = AL$, and consequently $\calc^- = \calc^+$.  Note also that the product of two elements of $\calc^-$ is contained in $\calc^+$, and the product of an element of $\calc^+$ and an element of $\calc^-$ (in either order) is contained in $\calc^-$.  Thus $\calc^-$ is either empty, or equal to $\calc^+$, or disjoint from $\calc^+$ and makes $\calc^+ \cup \calc^-$ into a subgroup of $\GL_m(\ZZ)$. 

Now let
\[
\calc = \calc^+ \times \{+1\} \cup \calc^- \times \{-1\} \subseteq \GL_m(\ZZ) \times \{\pm1\},
\]
and note that $\calc$ is a subgroup of $\GL_m(\ZZ) \times \{\pm1\}$ by the above discussion.  There is a homomorphism $\gamma\colon \calc \rightarrow \Aut(G)$, where the image $\gamma_{M,\varepsilon}$ of $(M,\varepsilon)$ for a matrix $M$ and $\varepsilon \in \{\pm 1\}$ is given by $\gamma_{M,\varepsilon}(h) = M(h)$ for elements $h \in H$ (through the fixed isomorphism $H \cong \ZZ^m$), and $\gamma_{M,\varepsilon}(t)=t^\varepsilon$. This is an endomorphism of $G$ since the relations are all satisfied, and an automorphism since $\gamma_{M^{-1},\varepsilon}$ is an inverse.

In the following arguments, we will slightly abuse the notation by identifying $\calc^+$ and $\calc^-$ with the corresponding subsets $\calc^+ \times \{+1\}$ and $\calc^- \times \{-1\}$ of $\calc$.

\begin{prop}
\label{prop:centraliser_to_aut_nonmet}
    The following conclusions hold:
    \begin{enumerate}
        \item The map $\gamma\colon \calc \rightarrow \Aut(G)$ is injective.
        \item $\im(\gamma)\cap \Inn(G)=\{1\}$.
        \item $\im(\gamma)=\{\varphi \in \Aut(G): \varphi(H)=H,  \varphi(t)=t^{\pm 1}\}$. 
        \item $\gamma(\calc^+)=\{\varphi \in \Aut(G): \varphi(H)=H,  \varphi(t)=t\}$.
    \end{enumerate} 
\end{prop}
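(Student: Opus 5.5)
I will take the four items in the order (i), (iii)/(iv), (ii). Each one comes from comparing what an automorphism does on the generators $H \cup \{t\}$ with what the normal form for HNN extensions (Britton's lemma) allows. The main tool is Britton's lemma applied to $G = H\ast_{t}$, whose associated subgroups $L$ and $AL$ are both proper in $H$ (because $G$ is not ascending).

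For (i), note that $\gamma_{M,\varepsilon}$ restricts to $M$ on $H$ and sends $t$ to $t^{\varepsilon}$. If $\gamma_{M,\varepsilon}$ is trivial, then $M = I$ and $t^\varepsilon = t$. Since $t$ has infinite order, this forces $\varepsilon = 1$, so $\gamma$ is injective.

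For (iii), the inclusion $\subseteq$ holds by construction. Conversely, suppose $\varphi(H) = H$ and $\varphi(t) = t^{\varepsilon}$, and let $M \in \GL_m(\ZZ)$ be the restriction $\varphi|_H$.
\begin{itemize}
\item If $\varepsilon = 1$, apply $\varphi$ to the relation $tvt^{-1} = Av$ for $v \in L$. This gives $t(Mv)t^{-1} = MAv \in H$. By Britton's lemma, $t u t^{-1} \in H$ with $u \in H$ holds exactly when $u \in L$, and then $tut^{-1} = Au$. Hence $M(L) \subseteq L$ and $AMv = MAv$ for all $v \in L$. Since $L$ has finite index in $H$, $AM = MA$ on $\QQ^m$. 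Applying the same argument to $\varphi^{-1}$, which also preserves $H$ and fixes $t$, gives $M^{-1}(L) \subseteq L$, so $M(L) = L$ and $M \in \calc^+$.
\item If $\varepsilon = -1$, the same computation gives $t^{-1}(Mv)t = MAv$. Here Britton's lemma says $t^{-1}ut \in H$ exactly when $u \in AL$, and then $t^{-1}ut = A^{-1}u$. Therefore $M(L) \subseteq AL$ and $A^{-1}M = MA$ on $L$, hence everywhere, i.e.\ $MAM^{-1} = A^{-1}$. Using $\varphi^{-1}$ (which also inverts $t$) and comparing indices (or symmetry) gives $M(L) = AL$, so $M \in \calc^-$.
\end{itemize}
In both cases $\varphi = \gamma_{M,\varepsilon}$, since the two automorphisms agree on generators. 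Item (iv) is the $\varepsilon = 1$ part of this argument, together with the observation that $\gamma_{M,-1}(t) = t^{-1} \neq t$. One subtlety in (iv): when $\calc^- = \calc^+$, a given $M$ appears with both signs, so I must take $\gamma(\calc^+)$ to mean the image of $\calc^+ \times \{+1\}$, in line with the abuse of notation fixed above.

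The step I expect to be the main obstacle is (ii), showing that $\im(\gamma)$ meets $\Inn(G)$ trivially. Suppose conjugation by $g$ equals $\gamma_{M,\varepsilon}$.
\begin{enumerate}
\item Since $g$ normalises $H$ and $H$ is self-normalising (as used in the proof of \Cref{find_reps_of_stab_S}), we get $g \in H$.
\item Because $H$ is abelian, conjugation by $g$ is trivial on $H$, so $M = I$.
\item Then $gtg^{-1} = t^{\varepsilon}$. The value $\varepsilon = -1$ is impossible, for instance by mapping to $\ZZ$ via $t \mapsto 1$ and $H \mapsto 0$.
\item So $g$ commutes with $t$, and it remains to show $g = 1$. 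Rewrite $gtg^{-1} = t$ as $tg^{-1}t^{-1} = g^{-1}$, which lies in $H$. Britton's lemma then gives $g^{-1} \in L$ and $Ag = g$.
\item Iterating, $g \in \bigcap_n L_n$-type subgroups, namely the elements fixed by $A$ and lying in $A^kL$ for all $k$.
\end{enumerate}
Here I need a genuine argument: $g$ lies in the subgroup $\ker(A - I) \cap \bigcap_{k \in \ZZ} A^k L$, and this must be trivial. My first attempt is to say that if such a $g \ne 1$ existed, then $\langle g\rangle$ would be central in $G$. That is not automatically a contradiction, since centres of Leary--Minasyan groups can be nontrivial. So the claim itself may be false as stated unless such central elements are excluded. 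I would therefore check, using the characterisation of the centre in \cite{Valiunas2022}, whether a nontrivial $g$ with $Ag = g$ and $g \in L \cap AL$ gives $\gamma_{I,1} = \mathrm{id}$ anyway. It does: $M = I$ and $\varepsilon = 1$ mean the inner automorphism is already the identity. So the intersection is trivial regardless, and (ii) follows. The real work is therefore just steps 1--3, and no analysis of $g$ itself is needed.
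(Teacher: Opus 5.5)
Your proposal is correct and follows the paper's proof. The exponent-sum map to $\mathbb{Z}$ rules out $\varepsilon=-1$, and for (ii) self-normality of $H$ together with commutativity of $H$ forces $M=I$. For (iii) and (iv) you read off $M(L)$ and the relation $MA=A^{\pm1}M$ from the defining relations; the paper obtains $M(L)=L$ (resp.\ $M(L)=AL$) in one step from $L=H\cap t^{-1}Ht$ (resp.\ $AL=H\cap tHt^{-1}$) rather than via Britton's lemma and $\varphi^{-1}$.

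The detour in steps 4--5 of (ii) is unnecessary, as you eventually note. Once $M=I$ and $\varepsilon=1$, the automorphism is $\gamma_{I,1}=\mathrm{id}$, which is exactly where the paper stops.
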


\begin{proof}
    First, for any $M \in \calc^-$, the map $\gamma_{M,-1}$ is neither trivial nor inner (exponent sum on the stable letter is preserved by inner automorphisms), so for the first two claims it suffices to consider the image of $\calc^+$. Since $\gamma_{M,1}$ acts like the matrix $M \in \calc^+$ on the subgroup~$H$, it is trivial if and only if $M$ was the identity matrix. This is also the only element of $\calc^+$ mapped into $\Inn(G)$: suppose for some $M \in \calc^+$ and $g \in G$ that $\gamma_{M,1}(x) = gxg^{-1}$ for all $x \in G$. The subgroup $H$ is preserved by $\gamma_{M,1}$, and is its own normaliser, and so we have $g \in H$. But since $H$ is abelian, this implies that $\gamma_{M,1}$ is trivial on $H$, and hence that $M=\id$.

    Note that by definition the images of $\calc$ and $\calc^+$ under $\gamma$ are contained in the claimed subgroups. We show the other inclusions, and to see this note that the fact that $\varphi(H) = H$ implies that there exists $M \in \GL_m(\ZZ)$ satisfying $\varphi(h) = M h$ for all $h \in H$.

    For the subgroup satisfying $\varphi(t)=t$, note that $t^{-1}Ht$ is also preserved and so too is the intersection $H \cap t^{-1}Ht = L$. Hence, $M(L)=L$. 
    Finally we check that $M$ commutes with $A$. For all $h \in L$, we have
\[
MA(h) = M(tht^{-1}) = \varphi(tht^{-1}) = t\varphi(h)t^{-1} = A\varphi(h) = AM(h),
\] implying that $MA = AM$. So $\varphi$ agrees with $\gamma_{M,1}$ for some $M \in \calc^+$, giving the reverse inclusion.

For the general statement, suppose $\varphi(t)=t^{-1}$. Then $\varphi(t^{-1}Ht)=tHt^{-1}$, and so for the intersection $\varphi(L)=\varphi(H \cap t^{-1}Ht) = H \cap tHt^{-1}= AL$. In particular, we have $M(L)=AL$. For all $h \in L$, we have \[
MA(h)=M(tht^{-1})=\varphi(tht^{-1}) = t^{-1}\varphi(h)t = A^{-1}(\varphi(h))= A^{-1}M(h),
\] implying that $MAM^{-1}=A^{-1}$. So $\varphi$ agrees with $\gamma_{M,-1}$ for some $M \in \calc^-$.\qedhere

\end{proof}

Note that this implies that the image of $\gamma$ stabilises $S$. We now study those automorphisms in the stabiliser of $S$ which fix $H$ pointwise. To enable this, consider a map $\beta\colon H \to \Aut(G)$ where the image $\beta_h$ fixes $H$ pointwise and with $\beta_h(t)=ht$. Note that all the relations are satisfied and that $\beta_{h^{-1}}$ is an inverse map.

\begin{prop}
\label{prop:H_to_aut_nonmet}
    The following conclusions hold:
    \begin{enumerate}
        \item The map $\beta\colon H \to \Aut(G)$ is injective.
        \item $\im(\beta)=\{\varphi \in \Aut(G) : \varphi(x) = x \text{ for } x \in H, \varphi(t)=ht \text{ for some } h \in H \}$.
        \item $\beta_h$ is inner if and only if $h$ lies in $(I-A)L$.  In particular, the natural projection $\overline{\beta}\colon H \to \Out(G)$ has image isomorphic to $H/(I-A)L$.
    \end{enumerate}
     
\end{prop}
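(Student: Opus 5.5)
Parts (i) and (ii) are direct checks; the real content is (iii). For (i): $\beta_h$ is trivial exactly when $ht=t$, i.e.\ $h=1$. Also $\beta_h\beta_{h'}(t)=\beta_h(h't)=h'ht$, so $\beta$ is a homomorphism because $H$ is abelian, and it is therefore injective. For (ii), the inclusion of $\im(\beta)$ in the right-hand side holds by definition. Conversely, an automorphism $\varphi$ fixing $H$ pointwise with $\varphi(t)=ht$ agrees with $\beta_h$ on the generators $x_1,\dots,x_m,t$, so $\varphi=\beta_h$.

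For (iii), first suppose $h=(I-A)\ell$ with $\ell\in L$. Conjugation by $\ell^{-1}$ fixes $H$ pointwise, since $H$ is abelian. Using the relation $t\ell t^{-1}=A\ell$, it sends $t$ to
\[
\ell^{-1}t\ell=\ell^{-1}(t\ell t^{-1})t=(A\ell-\ell)t,
\]
written additively in $H$. This is $\beta_{(A-I)\ell}$, and replacing $\ell$ by $-\ell$ shows $\beta_h$ is inner. The sign convention only needs checking once, since $(I-A)L=(A-I)L$.

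Now suppose $\beta_h$ equals conjugation by some $g\in G$. Since $g$ centralises $H$, it normalises $H$, and $H$ is self-normalising, as used in the proof of \Cref{prop:centraliser_to_aut_nonmet}; hence $g\in H$. Write $g=k$. Then $\beta_h(t)=ktk^{-1}=ht$ forces $tk^{-1}t^{-1}\in H$. By Britton's lemma, or equivalently because $H\cap t^{-1}Ht=L$ as in the previous proof, this gives $k^{-1}\in L$, so $k\in L$. Then $tkt^{-1}=Ak$, and $ktk^{-1}t^{-1}=k-Ak=(I-A)k$ additively, so $h=(I-A)k\in(I-A)L$. The only point needing care is justifying $k\in L$; I would do this by identifying $H\cap tHt^{-1}=AL$ via the normal form in the HNN extension.

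Finally, $\beta$ is an injective homomorphism and $\beta^{-1}(\Inn(G))=(I-A)L$. Hence the image of $\overline\beta$ is $\beta(H)/(\beta(H)\cap\Inn(G))\cong H/(I-A)L$, which completes (iii). Note that $(I-A)L\subseteq H$ because $L\subseteq H$ and $AL\subseteq H$.
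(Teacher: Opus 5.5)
Your proof is correct and follows the same route as the paper. Parts (i) and (ii) are immediate from the definition. For (iii), you write $h = gtg^{-1}t^{-1}$ and use normal forms in the HNN extension to force $g\in L$, giving $h=(I-A)g$.

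Your explicit step that $g$ centralises $H$, and so lies in $H$ because $H$ is self-normalising, is a genuine improvement. The paper's direct appeal to normal forms leaves this implicit, and it is needed: for example, $g=t$ also satisfies $gtg^{-1}t^{-1}\in H$.
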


\begin{proof}
    Injectivity follows immediately: if $\beta_h = \id$ then $h = \beta_h(t)t^{-1}=1$. For surjectivity, note that by definition any $h \in H$ gives rise to an automorphism $\beta_h$.
    
    To understand when these automorphisms are inner, suppose $\beta_h$ agrees with conjugation by some element $g$ of $G$; in particular we will have that $ht=gtg^{-1}$ and so $h = gtg^{-1}t^{-1}$.

    So by HNN normal forms, we must have that $g^{-1}$ (and so also $g$) is an element of $L$. 
    Rewriting $h = g (tg^{-1}t^{-1})$ via our standard abuse of notation (and switching to additive notation), this gives that $h \in (I-A)L$, as claimed. Conversely, every element $g$ of $L$ gives rise to a $\beta_h$ by setting $h = gtg^{-1}t^{-1}$. The final claim follows immediately.
\end{proof}

Note that the homomorphisms $\beta \colon H \to \Aut(G)$ and $\gamma \colon \calc \to \Aut(G)$ extend to a homomorphism $\delta \colon H \rtimes \calc \to \Aut(G)$, where the action of $\calc$ on $H$ is the natural one, by considering the matrix in the first coordinate of $\calc$.  Denote by $\overline{\calb}$ the image of $H$ under $\overline{\beta}: H \to \Out(G)$.

\begin{prop}\label{prop:stab_S}
    The stabiliser of $S$ under the action of $\Out(G)$ is isomorphic to $\overline \calb \rtimes \calc$, with the isomorphism induced by $\delta \colon H \rtimes \calc \to \Aut(G)$. 
    In particular, if $\left|\det(A)\right| \neq 1$, then this stabiliser is isomorphic to $\overline \calb \rtimes \calc^+$.
\end{prop}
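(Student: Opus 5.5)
The plan is to show that the composite $H \rtimes \calc \xrightarrow{\delta} \Aut(G) \to \Out(G)$ has image exactly $\Stab_{\Out(G)}(S)$ and kernel exactly $(I-A)L \times \{1\}$. Since $(I-A)L$ is $\calc$-invariant, the quotient of $H \rtimes \calc$ by this kernel is then $\overline{\calb} \rtimes \calc$, where $\overline{\calb} \cong H/(I-A)L$ by \Cref{prop:H_to_aut_nonmet}. The final sentence of the statement follows at once: if $|\det A| \neq 1$ then $\calc^- = \varnothing$, as noted after the definition of $\calc^-$, so $\calc = \calc^+$.

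\emph{Image.} Every $\delta(h,(M,\varepsilon)) = \beta_h\gamma_{M,\varepsilon}$ preserves $H$ and sends $t$ to $ht^{\varepsilon}$, so it lies in $\Stab_{\Aut(G)}(S)$ by the converse part of \Cref{find_reps_of_stab_S}.

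For the reverse inclusion, take $\varphi \in \Stab_{\Aut(G)}(S)$. By \Cref{find_reps_of_stab_S}, after composing with an inner automorphism we may assume $\varphi(H)=H$ and either $\varphi(t)=ht$ or $\varphi(t)=t^{-1}h$ for some $h \in H$.
\begin{itemize}
\item If $\varphi(t)=ht$, then $\beta_h^{-1}\varphi$ preserves $H$ and fixes $t$. Hence it equals $\gamma_{M,1}$ for some $M \in \calc^+$ by \Cref{prop:centraliser_to_aut_nonmet}(iv).
\item If $\varphi(t)=t^{-1}h$, write $t^{-1}h = h' t^{-1}$ with $h' = t^{-1}ht$. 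For this to make sense we need $h' \in H$, which is the point to check. Alternatively, compose instead with an inner automorphism by a power of $t$, or with $\beta$ on the other side: $\varphi\beta_{h''}$ sends $t$ to $\varphi(h'')t^{-1}h$. Choosing $h'' = -\varphi^{-1}(h)$ conjugated suitably makes this $t^{-1}$ up to an element of $H$ on the left.

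The cleanest route is to note that $t\mapsto t^{-1}h$ equals $h'' t^{-1}$ after conjugating by an element of $H$. Indeed, conjugation by $g \in H$ sends $t^{-1}h$ to $g t^{-1} h g^{-1} = g t^{-1}(h-g)$, and taking $g=h$ gives $ht^{-1}$. So after an inner automorphism by an element of $H$, which still preserves $H$, we have $\varphi(t)=ht^{-1}$. Then $\beta_h^{-1}\varphi$ sends $t$ to $t^{-1}$ and preserves $H$, so it is $\gamma_{M,-1}$ for some $M\in\calc^-$ by \Cref{prop:centraliser_to_aut_nonmet}(iii).
\end{itemize}
Either way, $\varphi \in \Inn(G)\,\delta(H\rtimes\calc)$.

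\emph{Kernel.} Suppose $\delta(h,(M,\varepsilon)) = \beta_h\gamma_{M,\varepsilon}$ is inner. Exponent sum in $t$ is preserved by inner automorphisms, so $\varepsilon=1$. The automorphism preserves $H$, and $H$ is self-normalising, so the conjugating element lies in $H$. Since $H$ is abelian, $M=\id$ on $H$, and hence $M=I$. Then $\beta_h$ is inner, which forces $h\in(I-A)L$ by \Cref{prop:H_to_aut_nonmet}(iii). Conversely, such $(h,I)$ do map to inner automorphisms.

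\emph{Main obstacle.} I expect the fiddliest step to be the bookkeeping in the $\varepsilon=-1$ case: normalising $t^{-1}h$ to the form $ht^{-1}$ via an $H$-conjugation, and checking that the semidirect product conventions for $\delta$ match, i.e.\ that $\gamma_{M,\varepsilon}\beta_h\gamma_{M,\varepsilon}^{-1} = \beta_{Mh}$ in the orientation-preserving case and the appropriate twisted formula when $\varepsilon=-1$. This is needed so that $\delta$ is a homomorphism and the kernel computation is legitimate. The paper asserts that $\delta$ exists, so I will take it as given and only verify that the kernel is $\calc$-normal.
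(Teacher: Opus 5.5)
Your overall strategy is sound: show that $(h,c)\mapsto[\beta_h\gamma_c]$ maps onto $\Stab_{\Out(G)}(S)$ with kernel $(I-A)L$. The case $\varepsilon=1$ and the kernel computation are correct, and use the same ingredients as the paper.

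\textbf{Error in the orientation-reversing case.} The computation there is false. The automorphism $\beta_h^{-1}=\beta_{h^{-1}}$ does not merely remove the left factor $h$; it also sends $t^{-1}$ to $(h^{-1}t)^{-1}=t^{-1}h$. So $\beta_h^{-1}\circ(c_h\circ\varphi)$ sends $t$ to $ht^{-1}h$, not to $t^{-1}$.

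The conjugation detour is unnecessary. \Cref{find_reps_of_stab_S} gives $\varphi(t)=t^{-1}h$ directly, and since $\beta_h(t^{-1})=t^{-1}h^{-1}$ we get $(\beta_h\circ\varphi)(t)=t^{-1}$. Hence $\beta_h\circ\varphi=\gamma_{M,-1}$ with $M\in\calc^-$ by \Cref{prop:centraliser_to_aut_nonmet}, i.e.\ $\varphi=\beta_{h^{-1}}\gamma_{M,-1}$.

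The same slip occurs in your image paragraph. There $\beta_h\gamma_{M,\varepsilon}(t)=(ht)^{\varepsilon}$, which for $\varepsilon=-1$ is $t^{-1}h^{-1}$ rather than $ht^{-1}$. This is harmless, since $t^{-1}h^{-1}$ is exactly the form covered by the converse of \Cref{find_reps_of_stab_S}.

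\textbf{The deferred homomorphism check.} This is not a formality, and it is where your route genuinely differs from the paper's. One computes that $\gamma_{M,-1}\beta_h\gamma_{M,-1}^{-1}$ fixes $H$ and sends $t\mapsto t\,(Mh)^{-1}$. By Britton's lemma this lies in $\beta(H)$ only when $Mh\in L$. Since $L\neq H$, $\gamma(\calc^-)$ therefore does not normalise $\beta(H)$ in $\Aut(G)$.

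Composing with conjugation by $(Mh)^{-1}\in H$ turns this automorphism into $\beta_{(Mh)^{-1}}$. So in $\Out(G)$ the element $(M,\varepsilon)$ acts on $\overline\calb$ by $h\mapsto\varepsilon Mh$ (additively), not by $h\mapsto Mh$. For instance, for $\BS(r,\pm r)$ with $r\geq 2$, the automorphism $a\mapsto a$, $t\mapsto t^{-1}$ inverts $\overline\beta_a$, which is what \Cref{cor.BS} uses. With this action your map $H\rtimes\calc\to\Out(G)$ is a homomorphism, and your image/kernel argument goes through unchanged.

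\textbf{Comparison with the paper.} The paper sidesteps this issue. It maps $\Stab_{\Out(G)}(S)$ to $\GL_m(\ZZ)\times\Out(\ZZ)$, using restriction to the self-normalising $H$ and the induced map on $G/\ker\chi\cong\ZZ$. It identifies the kernel with $\overline\calb$ via \Cref{prop:H_to_aut_nonmet} and uses $\gamma$ as a section. The semidirect product thus arises internally with the conjugation action, so nothing about the action needs to be known beforehand. When $|\det A|\neq 1$ the distinction disappears, since then $\calc=\calc^+$.
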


\begin{proof}
    First, using \Cref{find_reps_of_stab_S}, we see that any element of $\Stab_{\Out(G)}(S)$ has a representative $\varphi$ (perhaps more than one) satisfying $\varphi(H)=H$ and $\varphi(t)=(ht)^{\pm 1}$, with $h\in H$. So there is a quotient map to $\GL_m(\ZZ)$ given by considering the restriction $\varphi|_H$ (note that this is independent of the choices made in the construction of the representative $\varphi$ since $H$ is self-normalising). 
    There is also a map to $\Out(\ZZ)$ by considering the quotient $\chi\colon G \to \ZZ$ given by sending $h \mapsto 0$ for $h \in H$ and $t \mapsto 1$. The kernel of $\chi$ is generated by the elliptic elements of $G$, and as in \Cref{prop:D-AutG-inv} there is an algebraic characterisation of these elements, and so the kernel is characteristic and there is an induced map $\Out(G) \to \Out(\ZZ)$. 
    Assemble these maps into a product $\Stab_{\Out(G)}(S) \to \GL_m(\ZZ) \times \Out(\ZZ)$. 
    
    The kernel of this map consists of elements of $\Out(G)$ represented by some $\varphi$ fixing $H$ pointwise and sending $t$ to $ht$. By \Cref{prop:H_to_aut_nonmet}, these automorphisms are described by the image of $\beta: H \to \Aut(G)$, and so the natural quotient to $\Out(G)$ is exactly equal to $\overline{\calb}$.

    Suppose $\varphi$ is non-trivial and in the image of $\gamma$. Then by the characterisation of this image (\Cref{prop:centraliser_to_aut_nonmet}), $\varphi$ has non-trivial image in the projection $\Stab_{\Out(G)}(S) \to \GL_m(\ZZ) \times \Out(\ZZ)$. On the other hand, for any $\varphi \in \Stab_{\Out(G)}(S)$, the previous paragraph (together with the description in \Cref{find_reps_of_stab_S}) implies that after composing with an element in the kernel of the projection map, $\varphi$ is contained in $\{\varphi \in \Aut(G): \varphi(H)=H,  \varphi(t)=t^{\pm 1}\}$. In particular, the image of $\Stab_{\Out(G)}(S)$ agrees with the image of the subgroup $\gamma(\calc)$, and so there is a splitting (given by~$\gamma)$.

    The final claim follows since in this case $\calc = \calc^+$.
\end{proof}

\begin{lemma}\label{lem:Inn_G_HNN}
    $\Inn(G)$ is an HNN extension of a finitely generated abelian group.
\end{lemma}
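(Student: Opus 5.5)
The plan is to show that $G$ has trivial centre, so that $\Inn(G)\cong G/Z(G)=G$, and then to observe that $G$ is, by its defining presentation \eqref{eqn:LMAL}, an HNN extension of the finitely generated abelian group $H\cong\ZZ^m$. If the centre turns out not to be trivial in full generality, we instead show that $Z(G)$ lies in $H$, and check that $G/Z(G)$ is again an HNN extension of the finitely generated abelian group $H/Z(G)$. I would carry out this second, more robust version.

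The first step is to locate $Z(G)$. Since $G$ is not metabelian, its action on $S$ is irreducible, as noted at the start of the section. A central element $z$ acts on the tree, and so does the normal subgroup $\langle z\rangle$. If $z$ were loxodromic, its axis would be $G$-invariant, contradicting irreducibility. So $z$ is elliptic, and its fixed subtree is a nonempty $G$-invariant subtree, hence all of $S$ by minimality. Therefore $Z(G)$ lies in the kernel $N$ of the action, which is the intersection of all vertex stabilisers. In particular $Z(G)\leqslant H$, and in fact $Z(G)\leqslant L\cap AL$, since $N$ lies in every edge stabiliser. Concretely, $Z(G)=\{v\in L: Av=v\}$. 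Indeed, central elements of $H$ must commute with $t$, so they lie in $L$ and are fixed by $A$. Conversely, any $v\in L$ with $Av=v$ commutes with $H$ and with $t$, so it is central. This gives $Z(G)=\ker(A-I)\cap L$, a direct summand-type subgroup of $H$.

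The second step is to identify $G/Z(G)$. Write $Z=Z(G)$. The quotient has the presentation obtained from \eqref{eqn:LMAL} by killing $Z$. Because $Z$ is normal and contained in $H$, and also in both associated subgroups $L$ and $AL$, with $t$ inducing the identity on $Z$, the quotient is the HNN extension of $H/Z$ with associated subgroups $L/Z$ and $AL/Z$. The associated isomorphism is the one induced by $A$, which is well defined because $A$ fixes $Z$ pointwise. I would check this via the universal property: both groups are defined by the same generators and relations together with the relations $z=1$ for $z\in Z$. Since $H/Z$ is finitely generated abelian, the lemma follows from $\Inn(G)\cong G/Z$.

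The main obstacle is the care needed in the first step: the argument that a central element cannot be loxodromic, and that an elliptic central element fixes all of $S$. I would handle this by applying the irreducibility of the action directly, as recalled in the preliminaries from Culler--Morgan: $G$ fixes no end or line. An alternative is to note that $t$ and $tg$, for $g\in H\smallsetminus(L\cup AL)$, are independent loxodromics, so any commuting element must preserve both axes. The remaining checks are routine.
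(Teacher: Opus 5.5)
Your proof is correct. It shares its key step with the paper and differs only in how the last step is done. In both arguments, irreducibility of the action on $S$ forces every central element to act trivially, so $Z(G)$ lies in the kernel of $G\to\Aut(S)$.

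The paper then lets $\Inn(G)\cong G/Z(G)$ act on $S$. It notes that the quotient graph is still a single loop, with vertex and edge stabilisers $H/Z(G)$ and $L/Z(G)$, and reads off the HNN structure from the structure theorem of Bass--Serre theory. It never computes $Z(G)$.

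You instead use $Z(G)\leqslant L\cap AL$ together with $tzt^{-1}=z$ to see that $A$ fixes $Z(G)$ pointwise. You then check that the presentation of $G/Z(G)$ coincides with that of the HNN extension of $H/Z(G)$. This is more elementary, and it makes the HNN data explicit: $Z(G)=\ker(A-I)\cap L$, associated subgroups $L/Z(G)$ and $AL/Z(G)$, and the map induced by $A$. The paper's route is shorter because it avoids identifying the centre at all.

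One small inaccuracy, which you do not use: $\ker(A-I)\cap L$ need not be a direct summand of $H$. For $\BS(2,2)$ it is $2\ZZ<\ZZ$, giving $\Inn(G)\cong \ZZ/2\ZZ\ast\ZZ$. So $H/Z(G)$ can have torsion, which is why the lemma only claims a finitely generated abelian vertex group.
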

\begin{proof}
    We have that $G$ acts on the tree $S$ without a global fixed point or fixed end.  
    In particular, the kernel of the action of $G$ on $S$ contains $Z(G)$.  
    Thus, $\Inn(G)\cong G/Z(G)$ acts on $S$ and the graph $S/\Inn(G)$ is isomorphic to the graph $S/G$.  
    For a vertex or edge $\sigma$ of $S$, its stabiliser is $\Stab_{\Inn(G)}(\sigma) =G_\sigma/(Z(G)\cap G_\sigma)$ which is a quotient of the finitely generated free abelian group $G_\sigma$.  
    Whence, the lemma. 
\end{proof}

\subsection{Stronger finiteness properties}
For a group $G$ a \emph{model for a $\mathsf{K}(G,1)$ space} is an aspherical CW complex $X$ with $\pi_1(X)\cong G$.  A group $G$ is \emph{type $\mathsf{F}_\infty$} if there exists a model $X$ for a $\mathsf{K}(G,1)$ with at most finitely many cells in each dimension.  A group $G$ is \emph{type $\mathsf{F}$} if there exists a model for a $\mathsf{K}(G,1)$ with finitely many cells.  A group $G$ is \emph{type $\mathsf{VF}$} if it is virtually type $\mathsf{F}$.

We are now in a position to prove a strengthening of the rigid case of \Cref{thmx:not_metabelian}.

\begin{thm}\label{thm:nonMet_VF_Foo_fin}
    Suppose $G=\LM(A,L)$ is a Leary--Minasyan group with $L$ and $AL$ either equal or incomparable. The following conclusions hold:
    \begin{enumerate}
        \item \label{it:Inn-Finfty} $\Inn(G)$ is type $\mathsf{F}_\infty$;
        \item \label{it:Aut-Finfty} $\Aut(G)$ is type $\mathsf{F}_\infty$;
        \item \label{it:Out-VF} $\Out(G)$ is type $\mathsf{VF}$;
        \item \label{it:Out-finite} $\Out(G)$ is finite if and only if $A$ has finite centraliser in $\GL_m(\ZZ)$ and does not have $1$ as an eigenvalue.
    \end{enumerate}
\end{thm}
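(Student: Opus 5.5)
In the rigid case, \Cref{prop:rigid_iff_no_proper_containment} makes $S$ the unique reduced tree in $\cald$. By \Cref{prop:D-AutG-inv}, $\cald$ is $\Aut(G)$-invariant, so every automorphism stabilises $S$ and $\Out(G)=\Stab_{\Out(G)}(S)$. \Cref{prop:stab_S} then gives $\Out(G)\cong \overline\calb\rtimes\calc$, where $\overline\calb\cong H/(I-A)L$ is a finitely generated abelian group (\Cref{prop:H_to_aut_nonmet}). The plan is to derive all four items from this decomposition together with $1\to\Inn(G)\to\Aut(G)\to\Out(G)\to 1$.

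For \eqref{it:Inn-Finfty}, by \Cref{lem:Inn_G_HNN} (and its proof), $\Inn(G)\cong G/Z(G)$ acts on $S$ with one orbit of vertices and one orbit of edges. The stabilisers are quotients of $\ZZ^m$, hence finitely generated abelian and so of type $\mathsf{F}_\infty$. A group acting cocompactly on a tree with $\mathsf{F}_\infty$ cell stabilisers is $\mathsf{F}_\infty$ (Brown's criterion, or building a $\mathsf{K}(\pi,1)$ from the graph of groups). Hence $\Inn(G)$ is $\mathsf{F}_\infty$.

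For \eqref{it:Out-VF}, the group $\calc^+$ is a finite index subgroup of the centraliser $C_{\GL_m(\ZZ)}(A)$, and $\calc^+$ has index at most $2$ in $\calc$. The centraliser is the group of $\ZZ$-points of the algebraic group $C_{\GL_m}(A)$ over $\QQ$. By Borel--Harish-Chandra and Borel--Serre \cite{BorelHarishChandra1962,BorelSerre1973}, this arithmetic group is virtually torsion-free with a finite $\mathsf{K}(\pi,1)$, so it is of type $\mathsf{VF}$. Write $\overline\calb\cong \ZZ^r\oplus F$ with $F$ finite. The subgroup $F$ is characteristic in $\overline\calb$, so $\calc$ acts on it, and a finite index subgroup $\calc_0\le \calc$ that is torsion-free, of type $\mathsf F$, and acts trivially on $F$ exists. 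Then the torsion-free subgroup $\ZZ^r\rtimes \calc_0$ has finite index in $\overline\calb\rtimes\calc_0$, and it is an extension of type $\mathsf{F}$ groups, hence type $\mathsf{F}$. Thus $\Out(G)$ is of type $\mathsf{VF}$. For \eqref{it:Aut-Finfty}, $\Aut(G)$ is an extension of the $\mathsf{F}_\infty$ group $\Inn(G)$ by the $\mathsf{F}_\infty$ group $\Out(G)$, so $\Aut(G)$ is $\mathsf{F}_\infty$.

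For \eqref{it:Out-finite}, $\Out(G)$ is finite if and only if both $\calc$ and $H/(I-A)L$ are finite. Since $[C_{\GL_m(\ZZ)}(A):\calc^+]<\infty$ and $[\calc:\calc^+]\le 2$, $\calc$ is finite exactly when the centraliser is finite. Since $L$ has finite index in $H\cong\ZZ^m$ and $A$ is rational, $(I-A)L$ is a subgroup of $\ZZ^m$ of rank $\operatorname{rank}(I-A)$. So $H/(I-A)L$ is finite if and only if $I-A$ is invertible, that is, $1$ is not an eigenvalue of $A$.

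I expect the main obstacle to be \eqref{it:Out-VF}: making precise that $\calc^+$ is arithmetic in the right algebraic group, given that the condition $M(L)=L$ cuts out a congruence-type finite index subgroup, and then handling the semidirect product with the torsion in $\overline\calb$ carefully.
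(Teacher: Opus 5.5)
Your overall route matches the paper's. Rigidity of $S$ and $\Aut(G)$-invariance of $\cald$ give $\Out(G)=\Stab_{\Out(G)}(S)\cong\overline{\calb}\rtimes\calc$. Item (i) then comes from the one-loop action of $\Inn(G)=G/Z(G)$ on $S$; Brown's criterion is interchangeable with the paper's explicit $\mathsf{K}(\pi,1)$. Item (ii) follows from the extension, and item (iv) from finiteness of the two factors. All of that is fine.

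The one step that fails as written is the removal of torsion in (iii). An abstract splitting $\overline{\calb}\cong\ZZ^r\oplus F$ need not have a $\calc_0$-invariant complement $\ZZ^r$, even when $\calc_0$ acts trivially on $F$. So $\ZZ^r\rtimes\calc_0$ need not be a subgroup of $\overline{\calb}\rtimes\calc_0$, and this already happens under the hypotheses of the theorem. Take $A=\left(\begin{smallmatrix}1&1\\0&1\end{smallmatrix}\right)$ and $L=\ZZ\oplus2\ZZ$, so $AL=L\neq\ZZ^2$. Then:
\begin{itemize}
\item $(I-A)L=2\ZZ\oplus0$, so $\overline{\calb}\cong\ZZ/2\ZZ\oplus\ZZ$ with $F$ the $\ZZ/2\ZZ$ factor;
\item $\calc^+=\{\pm\left(\begin{smallmatrix}1&b\\0&1\end{smallmatrix}\right): b\in\ZZ\}$;
\item the element $M_1$ (with $b=1$) generates a torsion-free, finite-index, type-$\mathsf{F}$ subgroup acting trivially on $F$;
\item yet $M_1$ acts on $\overline{\calb}$ by $(\bar x,y)\mapsto(\bar x+\bar y,y)$, swapping the only two complements $\langle(0,1)\rangle$ and $\langle(1,1)\rangle$ of $F$.
\end{itemize}

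The repair is easy. Replace $\ZZ^r$ by $|F|\cdot\overline{\calb}$. This subgroup is characteristic in $\overline{\calb}$, hence normalised by all of $\calc$; it is torsion-free, and it has finite index. Then, for any torsion-free finite-index type-$\mathsf{F}$ subgroup $\calc_0\le\calc^+$, the group $|F|\overline{\calb}\rtimes\calc_0$ is a genuine finite-index torsion-free subgroup of $\Out(G)$. It is an extension of $\ZZ^r$ by a type-$\mathsf{F}$ group, hence of type $\mathsf{F}$, and the condition that $\calc_0$ act trivially on $F$ becomes unnecessary. Alternatively, pass to the finite-index subgroup of $\calc_0$ fixing each of the finitely many complements of $F$.

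The paper avoids the issue differently. It uses residual finiteness of $\overline{\calb}\rtimes\calc^+$ to get a finite-index subgroup missing $F\smallsetminus\{1\}$, implicitly combined with a torsion-free finite-index subgroup of $\calc^+$.
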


\begin{proof}
    Note that \ref{it:Aut-Finfty} easily follows from \ref{it:Inn-Finfty} and \ref{it:Out-VF} along with the exact sequence $1\to\Inn(G)\to\Aut(G)\to \Out(G)\to 1$.

    We first prove \ref{it:Inn-Finfty}.  The construction we now describe is completely standard and can be found in for example \cite[Chapter 1.B]{Hatcher2002}.  We include a brief sketch for the reader.   
    By \cref{lem:Inn_G_HNN}, we see that $\Inn(G)$ is an HNN extension of a finitely generated abelian group $A$ with associated subgroups $B,B'$.  
    The abelian group $A$ admits a $\mathsf{K}(A,1)$ space $Y$ with finitely many cells in each dimension (in fact, it is the direct product of a $k$-torus and finitely many spaces of the form $S^\infty/(\ZZ_{d_i})$).  
    A $\mathsf{K}(B,1)$ space $Z$ for the subgroups $B\cong B'$ admit a similar description.  
    Now, (by cellular approximation if necessary) fix cellular maps  $i\colon Z\to X$ and $i'\colon Z\to X'$ which induce the inclusions of $B$ and $B'$ on $\pi_1$.  
    An aspherical CW complex $X$ with $\pi_1(X)\cong \Inn(G)$ is given by the space $(Y \coprod Z\times[0,1])/\sim$ where $\sim$ is defined by $(z,0)\sim i(z)$ and $(z,1)\sim i'(z)$, see \cite[Theorem 1B.11]{Hatcher2002}.
    If $Y$ has $c_{Y,n}$ many $n$-cells and $Z$ has $c_{Z,n}$ many $n$-cells, then $X$ admits a CW structure with $c_{Y,n}+c_{Z,n-1}$ many $n$-cells.  
    Hence, $\Inn(G)$ is type $\mathsf{F}_\infty$ as required.

    We now prove \ref{it:Out-VF}.  By \Cref{prop:rigid_iff_no_proper_containment}, $S$ is a rigid tree for $G$, and so $\Out(G) \cong \Stab_{\Out(G)}(S)$, which is described in \Cref{prop:stab_S}. It has a subgroup of index at most 2 fitting into a split short exact sequence \[ 1 \to \overline{\calb} \to \Out^+(G) \to \calc^+ \to 1.\]
    Here $\calc^+$ is a finite index subgroup of a centraliser in $\GL_m(\ZZ)$, hence it is an arithmetic group by \cite[Theorem~6.5]{BorelHarishChandra1962} (see also \cite[Algorithm B]{GrunewaldSegal1980} and \cite{EickHofmannOBrien2019}) and so is type $\mathsf{VF}$ by \cite{BorelSerre1973}. On the other hand, $\overline{\calb}$ is a finitely generated abelian group, so has a finite index torsion free subgroup which is type $\mathsf{F}$. Since the short exact sequence is split, $\overline \calb$ is finitely generated,
    and both $\overline\calb$ and $\calc^+$ are residually finite, there is a finite index subgroup of $\Out^+(G)$ avoiding the torsion subgroup of $\overline{\calb}$, and hence $\Out(G)$ has a finite index subgroup which is type $\mathsf F$.

    Finally, we prove \ref{it:Out-finite}.   Notice that $\Out(G)$ is finite if and only if both $\overline{\calb}$ and $\calc^+$ are. In the first instance this is equivalent to $(I-A)L$ being finite index in $H$, which happens if and only if $I-A$ is injective (as a map from $L \cong \ZZ^m$ to $H \cong \ZZ^m$), or equivalently, $1$ is not an eigenvalue of~$A$. In the second instance, this is equivalent to $C_{\GL_m(\ZZ)}(A)$ being finite.
\end{proof}

\begin{remarks}
With $G$ as in \Cref{thm:nonMet_VF_Foo_fin}, we record two remarks:
\begin{enumerate}
    \item It is plausible that one cannot conclude $\Aut(G)$ is type $\mathsf{VF}$.  Indeed, $\Aut(G)$ is an extension of $\Out(G)$, which is virtually an extension of an abelian group $\overline{\calb}$ by a linear arithmetic group $\calc^+$, by the group $G/Z(G)$.  Now, in general $G/Z(G)$ contains torsion and it is already a difficult problem to decide when an extension of an arithmetic group by a finite group is residually finite.  In fact, often such extensions are not residually finite \cite{Deligne1978,PrasadRapinchuk1996}.  See \cite{Stover2025} for a survey.
    \item There are algorithms to produce a generating set for the centraliser of an element of $\GL_m(\ZZ)$ \cite{Grunewald1980,EickHofmannOBrien2019} and to decide if a finitely generated subgroup of $\GL_m(\ZZ)$ is finite \cite[\S4.2]{DetinkoFlanneryOBrien2013}.  In particular, it is decidable whether $\Out(G)$ is finite.
\end{enumerate}
\end{remarks}

\subsection{An aside on non-positive curvature}
It is a long standing open question whether every virtually $\CAT(0)$ group is in fact $\CAT(0)$.  We verify here that any group which contains a centreless
$\CAT(0)$ Leary--Minasyan group as a finite index normal subgroup is in fact $\CAT(0)$.

\begin{thm}\label{thm:CATs}
    Let $G=\LM(A,L)$ be a Leary--Minasyan group with trivial centre and with $A$ 
    conjugate in $\GL_m(\RR)$ to a matrix in $\OO(m)$.  If $Q$ is a finite group, then any group $\Gamma$ fitting into an extension \[1\to G\to \Gamma\to Q\to 1\] is $\CAT(0)$.
\end{thm}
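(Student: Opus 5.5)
Since $G$ has trivial centre, conjugation gives an injection $G \to \Inn(G) \leq \Aut(G)$. The extension $1\to G\to\Gamma\to Q\to 1$ induces a homomorphism $\psi\colon\Gamma\to\Aut(G)$ by conjugation. Its kernel is $C_\Gamma(G)$, and $C_\Gamma(G)\cap G = Z(G)=1$, so $C_\Gamma(G)$ embeds in $Q$ and is finite. The plan is to show that $\Gamma$ itself acts geometrically on a $\CAT(0)$ space. I would do this by building an action of the finite-index subgroup $\psi(\Gamma)\leq\Aut(G)$ (which contains $\Inn(G)\cong G$ with finite index) on a $\CAT(0)$ space $X = S\times \EE^m$, and then checking that $\Gamma$ acts properly through $\psi$. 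Properness is not affected by the finite kernel $C_\Gamma(G)$, since a finite kernel only enlarges point stabilisers by a finite group.

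\textbf{Step 1: the tree.} The image $\psi(\Gamma)$ contains $\Inn(G)$, and $\Inn(G)$ stabilises $S$. I would first show that $\psi(\Gamma)$ stabilises a suitable tree. In the rigid case (Proposition~\ref{prop:rigid_iff_no_proper_containment}), $\Out(G)$ fixes $S$, so $\psi(\Gamma)\leq\Stab_{\Aut(G)}(S)$. In the non-rigid case I would instead use that the image of $Q$ in $\Out(G)$ is finite, so it fixes a point of the contractible deformation space $\cald$ (or of its spine, via a fixed point theorem for finite groups acting on the contractible simplicial complex). This gives a reduced tree $T\in\cald$ preserved by $\psi(\Gamma)$. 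By the compatibility discussed after Theorem~\ref{thm:rigidity}, $\psi(\Gamma)$ then acts on $T$ compatibly with $G$. Vertex stabilisers in $G$ are conjugates of finite-index subgroups of lattices in $\QQ^m$ commensurable with $H$.

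\textbf{Step 2: the Euclidean factor.} Leary--Minasyan's $\CAT(0)$ structure comes from a map $\rho\colon G\to \mathrm{Isom}(\EE^m)$: $H$ acts by translations via $\ZZ^m\subset\RR^m$, and $t$ acts by the isometry $P A P^{-1}$ with $PAP^{-1}\in\OO(m)$ after a change of coordinates $P$. The group $G$ acts geometrically and diagonally on $T\times\EE^m$. I would extend $\rho$ to $\psi(\Gamma)$. Each $\varphi\in\psi(\Gamma)$ maps $H$ to a conjugate of a commensurable lattice and thus induces a rational linear map $M_\varphi$ on $H\otimes\QQ=\QQ^m$, normalising $\langle A\rangle$ up to elliptic parts. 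One then defines $\rho(\varphi)$ as an affine map with linear part $PM_\varphi P^{-1}$. Finiteness of $\psi(\Gamma)/\Inn(G)$ should force the linear parts to generate a group that is bounded modulo the image of $G$. Such a group then preserves some inner product, obtained by averaging over the finite quotient, and so acts by isometries. The translational parts are fixed by choosing a $\psi(\Gamma)$-equivariant $\rho$, solving a cohomological equation whose obstruction lies in $H^1$ of a finite group with coefficients in a $\QQ$-vector space. That cohomology vanishes, so the obstruction dies.

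\textbf{Step 3: geometricity.} $\Gamma$ acts diagonally on $T\times\EE^m$ via $\psi$. The action is cocompact because $G$ already acts cocompactly. It is proper because cell stabilisers in $G$ act properly on $\EE^m$ by translations, $G$ has finite index in $\Gamma$, and the kernel of $\psi$ is finite. The main obstacle is Step 2: producing a genuine isometric extension of $\rho$ to $\psi(\Gamma)$. This needs a metric on $\EE^m$ that is simultaneously invariant under $A$ and under the linear parts from $\Gamma$. I expect to handle it by averaging the $A$-invariant inner product over the finite group $\psi(\Gamma)/\Inn(G)$. This requires checking that $\langle A\rangle$ together with these linear parts generates a group that is compact modulo a normal subgroup, which should follow from $M_\varphi A M_\varphi^{-1}=A^{\pm1}$ up to finite-order discrepancy.
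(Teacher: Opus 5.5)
Your overall plan matches the paper's. You map $\Gamma$ to $\Aut(G)$ with finite kernel and use rigidity of $S$ to let the image act on $S$. You then average an $A$-invariant inner product over the finitely many linear parts $M$ in the image of $\Gamma$ in $\calc$, and act diagonally on $S\times\EE^m$. These $M$ satisfy $MAM^{-1}=A^{\pm1}$ exactly by \Cref{prop:centraliser_to_aut_nonmet}, so your ``up to elliptic parts'' hedging is unnecessary.

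\textbf{The genuine gap is the translational part of Step~2.} You need each $\varphi\in\psi(\Gamma)$ to be realised by an affine map $a_\varphi$ with $a_\varphi\rho(g)a_\varphi^{-1}=\rho(\varphi(g))$ for all $g\in G$. This is what makes the new action restrict on $G$ to the Leary--Minasyan action, and that is where properness comes from. After composing with an inner automorphism, $\varphi|_H=M$ and $\varphi(t)=ht$ or $t^{-1}h$ with $h\in H$ (\Cref{find_reps_of_stab_S}). Writing $a_\varphi=T_vM$ with $T_v$ a translation and testing on $g=t$ gives $(I-A)v=h$, respectively $(A-I)v=h$.

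This is a genuine condition, not a vanishing of finite-group cohomology. In Hochschild--Serre terms it is the $Q$-invariance of the class of the translation cocycle of $\rho$ in $H^1(G,\RR^m)$. That invariance must be established before the vanishing of $H^2(Q,(\RR^m)^G)$ can be used, and $H^1(Q,(\RR^m)^G)$ only measures non-uniqueness of extensions. The equation has no solution if $1$ is an eigenvalue of $A$ and $h\notin(I-A)\RR^m$.

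The missing input is the centreless hypothesis, which you only use to embed $G$ in $\Aut(G)$. A nonzero $v\in L$ with $Av=v$ commutes with $t$ and with $H$, so trivial centre forces $I-A$ to be invertible. Then $v=\pm(I-A)^{-1}h$ works. Moreover, the centraliser of $\rho(G)$ in $\mathrm{Aff}(\RR^m)$ is trivial: anything commuting with all translations by $\ZZ^m$ is a translation $T_w$, and $T_wA=AT_w$ forces $(I-A)w=0$. Hence $a_\varphi$ is unique, and uniqueness makes $\varphi\mapsto a_\varphi$ a homomorphism extending $\rho$. Its linear parts lie in the group generated by $A$ and those finitely many $M$, so your averaging then finishes the argument.

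The paper's own write-up lets each twist $\beta_h$ act trivially on $\EE^m$. That cannot be right, since $\iota_\ell=\beta_{(I-A)\ell}$ for $\ell\in L$ must act as translation by $\ell$. So your suspicion that the translations need care is well founded, and translation by $(I-A)^{-1}h$ is the correct choice.

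\textbf{Step~1 is also mis-organised.} Since $A$ is conjugate into $\OO(m)$, we have $|\det A|=1$, so $[\ZZ^m:L]=[\ZZ^m:AL]$ and $L$, $AL$ are equal or incomparable. The non-rigid case therefore never arises. That is just as well: the fixed-point claim you would use there is not available, since finite groups can act on contractible complexes without fixed points, and nothing in the paper supplies such a result for $\cald$.

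Conversely, you omit the case $L=AL=\ZZ^m$. There $G$ is polycyclic, $S$ is a line, and neither \Cref{prop:rigid_iff_no_proper_containment} nor the $\Aut(G)$-invariant deformation space $\cald$ is available. This case has to be ruled out, as the paper does. Then $A$ lies in the discrete group $\GL_m(\ZZ)$ and in a compact conjugate of $\OO(m)$, so $A^k=I$ for some $k\geq1$ and $t^k$ is central. In all remaining cases $L$ and $AL$ have index at least $2$, $S$ is rigid, and $\psi(\Gamma)\leq\Stab_{\Aut(G)}(S)$, as you want.
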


\begin{proof}
    Since $A$ is conjugate to an orthogonal matrix, the lattices $L$ and $AL$ have the same index in $\ZZ^m$.  Note also that since $G$ is centreless, we can identify it with a subgroup of $\Aut(G)$. After replacing $\Gamma$ with $\Gamma/N$ if necessary, where $N$ is the kernel of the map $\Gamma \to \Aut(G)$ (note that $N$ is finite and $N \cap G = \{1\}$), we can further identify $\Gamma$ with a subgroup of $\Aut(G)$.

    Suppose first that $L$ and $AL$ are equal to $\ZZ^m$.  Then we have $G = \ZZ^m \rtimes_A \ZZ$ for a matrix $A \in \GL_m(\ZZ)$.  But then, since $A$ is conjugate to an orthogonal matrix, it follows that $A$ has finite order (since $\GL_m(\ZZ)$ is discrete in $\GL_m(\RR)$ and any $\GL_m(\RR)$-conjugate of $\OO(m)$ is compact), contradicting the fact that $G$ has trivial centre.
    
    Suppose now that $L$ and $AL$ have index $\geq 2$ in $\ZZ^m$.  We first note that we are in the rigid case: indeed, since the lattices $L$ and $AL$ have the same index in $\ZZ^m$, they are are either incomparable or equal, and are not equal to $\ZZ^m$ by the assumption.  Hence, we may apply \Cref{prop:rigid_iff_no_proper_containment} to conclude that the tree $S$ as in Figure~\ref{fig:gog_for_LMAL} is rigid.  In particular, by Proposition~\ref{prop:D-AutG-inv}, the $G$-action on $S$ 
    extends to an $\Aut(G)$-action, and so to a $\Gamma$-action.

    Now note that since $A$ is conjugate to a matrix in $\OO(m)$, there exists an $A$-invariant inner product $\langle {-},{-} \rangle$ on $\RR^m$.  Let $\zeta$ be the composite $\Gamma \to \Out(G) \to \calc$, where the second map is the one appearing (implicitly) in Proposition~\ref{prop:stab_S}, and note that $\zeta(G)$ is trivial, implying that $\zeta(\Gamma)$ is finite.  Define an inner product $\langle {-},{-} \rangle'$ on $\RR^m$ by setting
    \[
    \langle x,y \rangle' \coloneqq \sum_{(M,\varepsilon) \in \zeta(\Gamma)} \langle Mx,My \rangle,
    \]
    and note that this inner product is $\zeta(\Gamma)$-invariant.  Moreover, for any $(M,\varepsilon) \in \calc$ we have $MAM^{-1}=A^{\varepsilon}$, implying that
    \[
    \langle Ax,Ay \rangle' = \sum_{(M,\varepsilon) \in \zeta(\Gamma)} \langle MAx,MAy \rangle = \sum_{(M,\varepsilon) \in \zeta(\Gamma)} \langle A^{\varepsilon}Mx,A^{\varepsilon}My \rangle = \langle x,y \rangle'
    \]
    for all $x,y \in \RR^m$, and so the inner product $\langle {-},{-} \rangle'$ is also $A$-invariant.
    
    We set $\EE^m$ to be the $m$-dimensional Euclidean space defined by the inner product $\langle {-},{-} \rangle'$.  Since $\langle {-},{-} \rangle'$ is $A$-invariant, $G$ acts on $\EE^m$ by isometries, with $H$ and $t$ acting as translations and multiplication by $A$, respectively.  We furthermore have an action of $\cala \coloneqq \delta(H \rtimes \zeta(\Gamma))$ on $\EE^m$ by isometries (with $\delta$ as defined before Proposition~\ref{prop:stab_S}), defined by letting each $\beta_h$ act trivially and each $\gamma_{M,\varepsilon}$ act as multiplication by $M$.  By construction and Proposition~\ref{prop:stab_S}, we then have $\Gamma \leqslant \cala$.  Since $\cala$ acts on $\EE^m$ by isometries, so does $\Gamma$.

    Now consider the diagonal action of $\Gamma$ on $\EE^m \times S$.  The action of the finite-index subgroup $G$ of $\Gamma$ is geometric (see \cite[Theorem~7.2]{LearyMinasyan2021}), implying that so is the action of $\Gamma$, and so $\Gamma$ is $\CAT(0)$.
\end{proof}


\subsection{The non-rigid case: new reduced trees and their stabilisers}

In the non-rigid case (which will turn out to correspond to $\Out(G)$ not being finitely generated) we need to understand the deformation space.  We do this
in the sequel by constructing a further deformation retract of its spine, this culminates in \Cref{thmx:deformation_retract}. Before this we content ourselves by exhibiting an infinite family of reduced trees of the form in \Cref{fig:BAD_trees}. Here (and for the rest of the section) we adopt the convention that $AL <L$: this is no loss of generality, since if $L <AL$ then swapping $t$ and $t^{-1}$ in the presentation will also swap the inclusion. Notice that taking \Cref{fig:BAD_trees} with $n=0$ gives a tree which is not reduced.  Indeed, the separating edge can be collapsed, giving back \Cref{fig:gog_for_LMAL}.

The remainder of this section is dedicated to computing the $\Out(G)$ stabiliser of the tree in \Cref{fig:BAD_trees}.  The attentive reader may have observed that to meaningfully understand $\Out(G)$ we would need to understand the $\Out(G)$ stabiliser of every reduced tree in $\cald$.  However, it turns that it is enough to consider only the trees corresponding to the graphs of groups in \Cref{fig:gog_for_LMAL,fig:BAD_trees}, this is the content of \Cref{sec:deform}. A diligent reader could verify that $T_n$ can be transformed into $T_{n+1}$ by sliding the separating edge around the loop, and that $T_1$ can be transformed into the graph of groups in \Cref{fig:gog_for_LMAL} by an $\cala^{-1}$ move: these exercises may be viewed as warm-ups to the proof in the next section.

\begin{figure}[ht]
\centering
\begin{tikzpicture}
\draw [thick, middlearrow={.5}{0}{>}{$t~$}{left, rotate=90}] (0,0) arc [start angle =0, end angle =360, x radius =1.3cm, y radius=1.3cm] (0,0);
\draw [thick] (0,0) -- (2,0) node [midway,below] {$L$};
\draw [fill=blue] (0,0) circle (2pt) node [left] {$t^{-n}Lt^n$};
\draw [fill=yellow] (2,0) circle (2pt) node [right] {$H$};
\node at (0.3,-1) {$t^{-n}Lt^n$} ;
\node at (0.7,1) {$t^{1-n}Lt^{n-1}$} ;
\end{tikzpicture}
\caption{A graph of groups corresponding to some non rigid tree $T_n$ in $\cald$ for $n\in\ZZ_{>0}$.}
    \label{fig:BAD_trees}
\end{figure}

Let $L_0 = L$ and for any $n > 0$, denote by $L_n$ the subgroup $t^{-n}Lt^n$, the vertex group attached to the valence 3 vertex of the graph of groups in \Cref{fig:BAD_trees}. Note that $L_n < L_{n+1}$ for $n \geq 0$, and denote $\bigcup_{n=0}^\infty L_n$ by $L_\infty$. 

\begin{lemma}
\label{find_reps_of_stab_T}
    After composing with an inner automorphism, any element of $\Stab_{\Aut(G)}(T_n)$ preserves $H$ and $L$ and sends $t$ to $\ell t$ for some element $\ell$ of $L_n$. Conversely, any automorphism preserving $H$ and $L$ and sending $t$ to $\ell t$ (with $\ell \in L_n$) is contained in $\Stab_{\Aut(G)}(T_n)$.
\end{lemma}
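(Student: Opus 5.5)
The plan mirrors the proof of \Cref{find_reps_of_stab_S}, using the explicit shape of $T_n$. Its quotient has two vertex orbits. The first is $w$, stabilised by $H$. The second is $u$, stabilised by $L_n$. There is a separating edge $e$ from $u$ to $w$ with $G_e=L$, and a loop edge $f$ at $u$ with $G_f=L_n=G_u$ at one end, whose other end is $t^{\pm1}u$ and contributes the subgroup $L_{n-1}$.

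\textbf{Step 1: orbits are preserved.} Take $\varphi\in\Stab_{\Aut(G)}(T_n)$, acting compatibly on $T_n$. First I will check that $\varphi$ cannot exchange the two vertex orbits. The group $H$ is maximal elliptic. The group $L_n=t^{-n}Lt^n$ lies properly in the conjugate $t^{-n}Ht^n$, since $L<H$. A vertex stabiliser is sent to a vertex stabiliser, so an $H$-type vertex cannot go to an $L_n$-type vertex.

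The separating edge orbit is then preserved, since it is the only edge orbit joining the two vertex types. The loop orbit is preserved with its orientation: at one end its stabiliser equals the vertex stabiliser ($L_n$), while at the other end it is the proper subgroup $L_{n-1}<L_n$, and $\varphi$ respects equalities of stabilisers.

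\textbf{Step 2: normalising $\varphi$.} Composing with an inner automorphism, I will assume $\varphi$ fixes $w$, so $\varphi(H)=H$. Composing further with conjugation by an element of $H$, which acts transitively on the edges of the separating orbit at $w$, I will assume $\varphi$ fixes $e$. Then $\varphi(L)=L$, and $\varphi$ fixes $u$, so $\varphi(L_n)=L_n$.

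Among the edges at $u$, the loop edge $f$ with $G_f=G_u$ is the unique edge of its $G_u$-orbit, because $G_u$ fixes it. Hence $\varphi$ fixes $f$ and therefore its other endpoint $t^{\pm1}u$. Compatibility gives $\varphi(t)^{\mp1}t^{\pm1}\in G_u=L_n$. Depending on the orientation convention, this yields either $\varphi(t)=\ell t$ with $\ell\in L_n$ directly, or $\varphi(t)=t\ell'=(t\ell't^{-1})t$ with $\ell'\in L_n$. In the second case $t\ell't^{-1}\in L_{n-1}\subseteq L_n$, which again has the required form.

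\textbf{Step 3: the converse.} Let $\varphi$ satisfy $\varphi(H)=H$, $\varphi(L)=L$ and $\varphi(t)=\ell t$ with $\ell\in L_n$. The key point is to show $\varphi(L_n)=L_n$. I will show by induction on $k\le n$ that $(\ell t)^{-k}L(\ell t)^k$ is a conjugate of $L_k$ by an element of $L_n$. This uses that $L_{k}\le L_n$ and that conjugation by $t^{-1}$ carries $L_{k}$ to $L_{k+1}$. Since $L_n$ is abelian, the conjugate equals $L_k$, and taking $k=n$ gives $\varphi(L_n)=\varphi(t^{-n}Lt^n)=L_n$.

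With this in hand, I will define a $\varphi$-twisted equivariant map $T_n\to T_n$ on cosets: $gH\mapsto\varphi(g)H$, $gL_n\mapsto\varphi(g)L_n$, and similarly on edge cosets. The edge cosets are $gL$, joining $gL_n$ to $gH$, and $gL_n$, joining $gL_n$ to $gt^{\pm1}L_n$. The map is well defined because $\varphi$ preserves $H$, $L$ and $L_n$. Adjacency is preserved because $\varphi(t)^{\pm1}L_n=t^{\pm1}L_n$ up to right multiplication by elements of $L_n$. The inverse $\varphi^{-1}$ has the same form, so the map is a bijection and $\varphi$ stabilises $T_n$.

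\textbf{Expected difficulty.} The main obstacle is the bookkeeping of the loop edge's orientation, i.e.\ which end carries $L_n$ versus $L_{n-1}$, together with the verification that $\varphi(L_n)=L_n$ in the converse. Both rely on the chain $L_{n-1}<L_n$ and on commutativity of $L_n$.
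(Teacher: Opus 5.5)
Your proposal is correct and follows essentially the same route as the paper. You normalise $\varphi$ by inner automorphisms so that it fixes the $H$-vertex and the adjacent $L_n$-vertex $u$, use the singleton $L_n$-orbit of the loop edge at $u$ to get $\varphi(t)\in L_nt$, and for the converse show $(\ell t)^{-n}L(\ell t)^n=L_n$ inductively using commutativity of $L_n$ before building the $\varphi$-twisted map on cosets. The only thing to tidy is the sign hedging: since $G_{tu}=L_{n-1}$ and $G_{t^{-1}u}=L_{n+1}$, the loop edge with stabiliser $L_n$ must join $u$ to $t^{-1}u$, so the loop-orbit edge cosets are $gL_n$ joining $gL_n$ to $gt^{-1}L_n$ (the ``$+$'' version is not even well defined on cosets of $L_n$).
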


\begin{proof}
    Since the local pictures at vertices stabilised by a conjugate of $H$ or of $L_n$ are different (e.g.\ only the latter is adjacent to vertices in the same orbit), this partition is preserved by any $\varphi \in \Stab_{\Aut(G)}(T_n)$. Since $H$ properly contains its adjacent edge group, it is self-normalising and there is a unique vertex $v$ stabilised by $H$; composing with an inner automorphism we can assume $\varphi$ fixes $v$ and so preserves $H$.

    There is a unique orbit of edges incident at $v$, and so, composing by a further inner automorphism (by an element of $H$) we can assume that the vertex $u$ stabilised by $L_n$ is also fixed, and so $L_n$ is preserved as well as the edge stabiliser $L$. There are two further $L_n$-orbits of incident edges at $u$, exactly one of which consists of a single edge (joining $u$ to $t^{-1}u$, the latter vertex stabilised by $t^{-1}L_nt=t^{-1}t^{-n}Lt^nt=L_{n+1}$.  The vertex $t^{-1}u$ is therefore also stabilised by $\varphi$, and in particular we have that $t^{-1}u=\varphi(t^{-1}u)=\varphi(t^{-1}) \cdot \varphi(u)$, which implies that $\varphi(t)$ is an element of $\Stab(u) t = L_n t$.

    We prove the converse by building an automorphism of $T_n$ for every element of this form. First note that any automorphism of the given form also preserves every $L_k = t^{-k}Lt^k$, since this is sent to $(\ell t)^{-k} L (\ell t)^k$, and expanding this to $t^{-1} \ell^{-1} \cdots t^{-1}\ell^{-1} L \ell t \cdots \ell t$ we see that since $L < L_k$ (and this is an abelian group) the innermost conjugation $\ell^{-1} L \ell$ reduces to $L$, allowing us to consider $(\ell t)^{-(n-1)} t^{-1} L t (\ell t)^{(n-1)}$, where the middle term is (by definition) equal to $L_1$. Repeating the argument by induction we see that $L_n$ is preserved.
    
    Continue to let $v$ be the unique vertex stabilised by $H$ and $u$ the adjacent vertex stabilised by $L_n$. The vertices of $T_n$ are given by translates $gv$ and $gu$ of these two vertices, and $\varphi$ induces a map on the vertex set sending $gv$ to $\varphi(g) v$ and $gu$ to $\varphi(g) u$. These maps are well defined since if $g_1v=g_2v$ then $g_1^{-1}g_2 \in G_v = H$, and $H$ is preserved by $\varphi$, and similarly for $u$ and $L_n$.

    We now need to show that this map is a graph automorphism, i.e.\ that it preserves adjacency. There are two orbits of edges in $T_n$: one represented by the edge joining $u$ and $v$, and the other by the edge joining $u$ and $tu$ (this last vertex stabilised by $tL_nt^{-1}=L_{n-1}$). Now, $g_1v$ and $g_2u$ are adjacent if and only if $g_1^{-1}g_2$ is contained in the edge stabiliser $L$; again, this condition is preserved by $\varphi$. Finally $g_1u$ and $g_2tu$ are adjacent if and only if $g_1^{-1}g_2$ is contained in the edge stabiliser $L_{n-1}$, which is once again preserved by $\varphi$.
\end{proof}

Note that \Cref{prop:centraliser_to_aut_nonmet} made no claims about the tree under consideration, so we will be able to use its conclusions as is. We now consider the automorphisms that fix $H$ pointwise.

Observe that there is a map $\beta_n \colon L_n \to \Aut(G)$ where $\beta_{n,\ell} \coloneqq \beta_n(\ell)$ fixes $H$ pointwise and sends $t$ to $\ell t$. (Note that $\beta_{n,\ell}$ is not an automorphism, or even an endomorphism, without the assumption that $AL$ is contained in $L$.) These maps are compatible with the inclusion $L_n \rightarrowtail L_{n+1}$ (viewed as subgroups of $G$) in the sense that $\beta_{n,\ell} = \beta_{n+1,\ell}$.

\begin{prop}
\label{prop:Ln_to_aut_nonmet}
    Suppose $G=\LM(A,L)$ is neither metabelian nor rigid. Then the map $\beta_n\colon L_n \to \Aut(G)$ is injective. Its image agrees with $\{\varphi \in \Aut(G) \mid \varphi(x) = x \text{ for } x \in H, \varphi(t)=\ell t \text{ for some } \ell \in L_n \}$ and $\beta_{n,\ell}$ is inner if and only if $\ell$ lies in $(I-A)L$. In particular, the natural projection $\overline{\beta}_n\colon L_n \to \Out(G)$ has image isomorphic to $L_n/(I-A)L$.
\end{prop}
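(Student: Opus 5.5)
The plan is to mirror the proof of \Cref{prop:H_to_aut_nonmet}, with $H$ replaced by $L_n$. First I check that $\beta_{n,\ell}$ really is an automorphism. For $v \in L$ we need $\ell t v t^{-1}\ell^{-1} = Av$. This holds because $tvt^{-1} = Av \in AL < L \leq L_n$ and $L_n$ is abelian, so conjugation by $\ell$ does nothing. Here $L_n = t^{-n}Lt^n$ is abelian, being a conjugate of $L$, and $\ell$ lies in $L_n$, which contains $L$. Since $\ell \in L_n \leq G$ need not lie in $H$, I also need that the image of $t$ still satisfies the relations; with the other generators fixed, this is exactly the computation just made. The inverse is $\beta_{n,\ell^{-1}}$. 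To see this, compute $\beta_{n,\ell^{-1}}(\beta_{n,\ell}(t)) = \beta_{n,\ell^{-1}}(\ell)\,\ell^{-1}t$. This equals $t$ provided $\beta_{n,\ell^{-1}}$ fixes $\ell$, which in turn requires it to fix $L_n$ pointwise. To prove that, I argue by induction on $k$ that every $\beta_{n,\ell}$ fixes $L_k$ pointwise for $k \leq n$. An element of $L_k$ has the form $t^{-k}vt^k$ with $v \in L$. Its image is $(\ell t)^{-k} v (\ell t)^k$, and since $\ell$ commutes with everything in $L_n \supseteq L_j$ for all $j \leq n$, the $\ell$'s can be stripped off from the inside out, leaving $t^{-k} v t^k$. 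This is the same telescoping used in \Cref{find_reps_of_stab_T}.

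Injectivity is immediate: if $\beta_{n,\ell} = \id$, then $\ell = \beta_{n,\ell}(t)t^{-1} = 1$. The description of the image is by definition. Any $\varphi$ fixing $H$ pointwise with $\varphi(t) = \ell t$ agrees with $\beta_{n,\ell}$ on generators, so it equals $\beta_{n,\ell}$.

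The main step is the innerness criterion. Suppose $\beta_{n,\ell}$ is conjugation by $g$. Then $g$ centralises $H$, so $g$ lies in $C_G(H)$. Since $H$ is self-normalising, as in \Cref{prop:centraliser_to_aut_nonmet}, we get $g \in H$. Then $\ell t = g t g^{-1}$ gives $\ell = g t g^{-1} t^{-1}$, so $t g^{-1} t^{-1} \in H$ (indeed in $L_n$). By HNN normal forms (Britton's lemma), this forces $g^{-1} \in L$, because otherwise $t g^{-1} t^{-1}$ is reduced and not in $H$. In additive notation, $\ell = g - Ag \in (I-A)L$. Conversely, for $g \in L$, set $\ell = g\,tg^{-1}t^{-1} \in L + AL = L \leq L_n$. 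Then conjugation by $g$ fixes $H$ pointwise and sends $t \mapsto \ell t$, so it equals $\beta_{n,\ell}$. Thus the kernel of $\overline{\beta}_n$ is $(I-A)L$, which gives the isomorphism onto $L_n/(I-A)L$.

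I expect the only delicate point to be verifying that $\beta_{n,\ell}$ is a genuine automorphism. This is where $AL < L$ (and hence the commutativity of $L_n$ with $L$ and with $t^{\pm}$-shifts) is used, together with the pointwise fixing of $L_n$ needed for the inverse. The non-metabelian and non-rigid hypotheses are used only to guarantee the standing setup: $H$ is self-normalising and $AL < L$.
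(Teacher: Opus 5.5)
The gap is in the innerness criterion, at the step ``$\ell = g\,tg^{-1}t^{-1}$, so $tg^{-1}t^{-1}\in H$ (indeed in $L_n$)''. Since $g\in H$, we have $tg^{-1}t^{-1}=g^{-1}\ell$, and this lies in $H$ if and only if $\ell\in H$. In \Cref{prop:H_to_aut_nonmet} that was automatic, because the twisting element lay in $H$. Here $\ell$ is only known to lie in $L_n=t^{-n}Lt^n$, which is not contained in $H$ once $n\geq 1$. By Britton's lemma, for $v\in L$ one has $t^{-n}vt^n\in H$ if and only if $v\in A^nL$, so $L_n\cap H=L$, while $A^nL<L$. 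So for $\ell\in L_n\smallsetminus L$ your premise is exactly what is in question. The parenthetical ``indeed in $L_n$'' would need $g\in L_n\cap H=L$, which is the conclusion you are after. The argument is therefore circular precisely where the $L_n$ case differs from the $H$ case. The automorphism check you flagged as delicate is fine; it is this step that needs an extra idea.

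The fix, which is what the paper does, is to conjugate by $t^n$ to move $\ell$ into $L\leq H$ before applying Britton's lemma. Write $\ell=t^{-n}\ell't^n$ with $\ell'\in L$. Then the word $t^n\,g\,t\,g^{-1}\,t^{-(n+1)}(\ell')^{-1}$ represents the identity and contains $2n+2$ stable letters, so it must contain a pinch. Since $g\in H$, the only subword of the form $t\,h\,t^{-1}$ or $t^{-1}h\,t$ with $h\in H$ is $t\,g^{-1}t^{-1}$, hence $g\in L$.

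Alternatively, argue in the tree $S$. If $g\notin L$, then $g$ fixes $v$ but not the edge $[v,tv]$, whose stabiliser is $AL<L$, while $tg^{-1}t^{-1}$ fixes $tv$ but not $v$. Their fixed subtrees are then disjoint, so $\ell$ would be hyperbolic, contradicting that it lies in the elliptic subgroup $L_n$.

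Your deduction $g\in C_G(H)\leq N_G(H)=H$, which the paper leaves implicit, is correct and is what makes this Britton argument go through. The remaining parts of your proposal are correct: the verification that $\beta_{n,\ell}$ is an automorphism fixing $L_n$ pointwise (which the paper takes for granted), injectivity, the description of the image, and the converse with $\ell=(I-A)g$.
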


\begin{proof}
    For injectivity, note that if $\beta_{n,\ell}$ is the identity then $\ell = \beta_{n,\ell}(t)t^{-1} = 1$. For the claim about the image, note that any $\ell \in L_n$ gives rise to an automorphism $\beta_{n,\ell}$.

    Now suppose that $\beta_{n,\ell}$ is an inner automorphism: that is, it agrees with conjugation by some element $g$ of $G$. In particular, we have that $\ell t = gtg^{-1}$, so $\ell = gtg^{-1}t^{-1}$. Write this as $t^{-n}\ell't^n = gtg^{-1}t^{-1}$, giving $\ell' = t^n g t g^{-1} t^{-1-n}$ as an element of $L < H$. Applying Britton's lemma, the only reduction possible is at $t g^{-1} t^{-1}$, and so $g$ is contained in $L$. (It follows that $tgt^{-1}$ is contained in $AL < L$, and so we learn nothing new in subsequent applications of Britton's lemma.)

    So $\ell = g(tg^{-1}t^{-1})$, with $g$ an element of $L$. Via our standard abuse of notation (and switching to additive notation), this implies that $\ell \in (I-A)L$. Conversely, any element $g \in L$ gives rise to an inner automorphism $\beta_{n,\ell}$ by setting $\ell = gtg^{-1}t^{-1}$. The final claim follows immediately.
\end{proof}

Note that, since $AL < L$, we have $\left| \det A \right| > 1$, implying that $\calc^- = \varnothing$ and hence $\calc = \calc^+$.  As before, the homomorphisms $\beta_n\colon L_n \to \Aut(G)$ and $\gamma \colon \calc^+ \to \Aut(G)$ extend to a homomorphism $\delta_n\colon L_n \rtimes \calc^+ \to \Aut(G)$. Denote by $\overline{\calb}_n$ the image of $L_n$ under $\overline{\beta}_n$.

\begin{prop}\label{prop:stab_Tn}
    The stabiliser of $T_n$ under the action of $\Out(G)$ is isomorphic to $\overline \calb_n \rtimes \calc^+$, with the isomorphism induced by $\delta_n \colon L_n \rtimes \calc^+ \to \Aut(G)$. 
\end{prop}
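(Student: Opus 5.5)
The plan is to mirror the proof of \Cref{prop:stab_S}, using \Cref{find_reps_of_stab_T} in place of \Cref{find_reps_of_stab_S} and \Cref{prop:Ln_to_aut_nonmet} in place of \Cref{prop:H_to_aut_nonmet}.

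\emph{Step 1 (normal form).} By \Cref{find_reps_of_stab_T}, every element of $\Stab_{\Out(G)}(T_n)$ has a representative $\varphi$ with $\varphi(H)=H$, $\varphi(L)=L$ and $\varphi(t)=\ell t$ for some $\ell\in L_n$.

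\emph{Step 2 (the projection).} Define $\Stab_{\Out(G)}(T_n)\to\GL_m(\ZZ)$ by $[\varphi]\mapsto \varphi|_H$. This is well defined because $H$ is self-normalising: two representatives preserving $H$ differ by conjugation by an element of $H$, which acts trivially on the abelian group $H$. No $\Out(\ZZ)$ factor is needed here, since $\varphi(t)=\ell t$ always preserves the exponent sum of $t$.

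\emph{Step 3 (the image lies in $\calc^+$).} Write $\varphi|_H=M$. We have $M(L)=L$ by Step 1. For $h\in L$, using $\ell\in L_n$ abelian with $L<L_n$,
\[
\varphi(tht^{-1})=\ell t M(h) t^{-1}\ell^{-1}=\ell\,(AMh)\,\ell^{-1}.
\]
Here $AMh\in AL<L\leqslant L_n$, so this element equals $AMh$. Comparing with $\varphi(Ah)=MAh$ gives $MA=AM$ on $L$, hence $MA=AM$. So the image lies in $\calc^+$. It is all of $\calc^+$, because $\gamma(\calc^+)$ stabilises $T_n$ by the converse part of \Cref{find_reps_of_stab_T} (take $\ell=1$). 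Moreover $\gamma$ splits the projection, since $\gamma_M|_H=M$.

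\emph{Step 4 (the kernel).} An element of the kernel has a representative with $\varphi|_H=\id$ and $\varphi(t)=\ell t$, $\ell\in L_n$. This automorphism is exactly $\beta_{n,\ell}$, so by \Cref{prop:Ln_to_aut_nonmet} the kernel is $\overline\calb_n\cong L_n/(I-A)L$. It remains to check that these representatives compose correctly and that the map $\ell\mapsto[\beta_{n,\ell}]$ is injective modulo $(I-A)L$ in $\Out(G)$; both are supplied by \Cref{prop:Ln_to_aut_nonmet}.

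\emph{Conclusion.} The action of $\calc^+$ on $L_n$ is well defined, because $M$ commutes with $A$ and preserves $L$, hence preserves each $L_k=A^{-k}L$. Combining this with Steps 2--4, $\delta_n$ induces an isomorphism $\overline\calb_n\rtimes\calc^+\to\Stab_{\Out(G)}(T_n)$.

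\emph{Main obstacle.} The delicate point is in Step 4. The kernel of the restriction map consists of outer classes, so a representative must be chosen fixing $H$ pointwise, and we must know that any two such representatives differ by a $\beta_{n,\ell}$ with $\ell\in(I-A)L$, not merely $\ell\in H$. Choosing a representative that fixes $H$ pointwise forces the conjugating element to lie in the centraliser of $H$, which is $H$ itself (as $H$ is self-normalising, its centraliser lies in $H$). So the two representatives differ by conjugation by some $h\in H$ that also preserves $L_n$ and the form $\ell t$. Britton's lemma, as in the proof of \Cref{prop:Ln_to_aut_nonmet}, should then force $h\in L$, which gives $\ell\in(I-A)L$.
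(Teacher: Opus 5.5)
Your proposal is correct and follows essentially the same route as the paper: normal-form representatives from \Cref{find_reps_of_stab_T}, restriction to $H$ (well defined because $H$ is self-normalising) onto $\calc^+$ split by $\gamma$, and the kernel identified with $\overline{\calb}_n$ via \Cref{prop:Ln_to_aut_nonmet}. The differences are cosmetic. You verify $MA=AM$ directly instead of citing \Cref{prop:centraliser_to_aut_nonmet}. The ``main obstacle'' you flag is exactly the statement that $\beta_{n,\ell}$ is inner if and only if $\ell\in(I-A)L$, combined with $\beta_n$ being a homomorphism; this is already in \Cref{prop:Ln_to_aut_nonmet}, so no further Britton's lemma argument is needed.
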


\begin{proof}
    Using \Cref{find_reps_of_stab_T} we see that any element of $\Stab_{\Out(G)}(T_n)$ has a representative $\varphi$ that preserves $H$ and sends $t$ to $\ell t$, for some $\ell \in L_n$. So there is a quotient map to $\GL_m(\ZZ)$ given by considering the restriction $\varphi|_H$ (as before, this is independent of the choices made in the construction of $\varphi$ since $H$ is self-normalising).

    By \Cref{prop:centraliser_to_aut_nonmet}, the image agrees with $\calc^+$, and there is a section given by $\gamma$. We now consider the kernel, which consists of elements fixing $H$ pointwise and sending $t$ to $\ell t$ for some $\ell \in L_n$. By \Cref{prop:Ln_to_aut_nonmet}, this subgroup is exactly $\overline{\calb}_n$. 
\end{proof}

\begin{remarks} \label{rem:inters} We record two remarks:
    \begin{enumerate}
        \item There are natural inclusions $\Stab_{\Out(G)}(T_n)\rightarrowtail \Stab_{\Out(G)}(T_{n+1})$ for any $n\geq 1$. 
        \item We have that $\Stab_{\Out(G)}(S)\cap \Stab_{\Out(G)}(T_n)$ is isomorphic to $L/(I-A)L \rtimes \calc^+$ for any $n\geq 1$.
    \end{enumerate}
\end{remarks}

\section{The deformation space in the non-rigid case}\label{sec:deform}
The aim of this section is to prove \Cref{thmx:deformation_retract}. We do this in stages: first we pass to a subcomplex $W$ of the spine of $\cald$, then we ``barycentrically un-subdivide'' this complex to give $\widehat{W}$, introduce a new (one-dimensional) complex spanned by certain vertices of $\widehat{W}$, and finally construct a sequence of elementary collapses enabling us to equivariantly deformation retract $\widehat{W}$ to this subcomplex.

\begin{table}[ht]
    \centering
    \begin{tabular}{|c|c|}
       \hline
        $G=\LM(A,L)$ & $A\in \GL_m(\QQ)$ and $AL < L < \ZZ^m$ \\
        $\Phi\colon G \to \AGL_m(\QQ)$ & The map sending $H \xrightarrow{\cong} \ZZ^m$ and $t \mapsto A$ \\
        $L_n$ & The subgroup $t^{-n}Lt^n$, $n \in \ZZ_{\geq 0}$ \\
        $L_\infty$ & The subgroup $\bigcup_{n=0}^\infty L_n$ \\
        Type $\cals$ & The $G$-trees with one edge orbit, see Figure~\ref{sfig:gog-small} \\
        Type $\calt(\Lambda_1,\ldots,\Lambda_k)$ & The $G$-trees with $k+1 \geq 2$ edge orbits, see Figure~\ref{sfig:gog-standard} \\
        $\cald$ & The deformation space of $G$-trees containing a tree of type $\cals$ \\
        $W$ & The subcomplex of $\cald$ spanned by trees with all edges surviving \\
        $\widehat{W}$ & A complex with $W$ as a barycentric subdivision (Proposition~\ref{prop:What}) \\
        $\widehat{X}$ & The subcomplex of $\widehat{W}$: trees of type $\cals$ or $\calt(\Phi(L_n))$ for $n > 0$ \\
        $\prec$ & The partial order: $T_1 \prec T_2$ if $T_2$ collapses to $T_1$ \\
        $\sqsubsetdot$ & The relation: $T_1 \sqsubsetdot T_2$ if an incr.\ induction or $\scra$-move maps $T_1$ to $T_2$ \\
        $\sqsubset$ & The transitive closure of $\sqsubsetdot$ \\
        \hline
    \end{tabular}
    \caption{Table of notation for \Cref{sec:deform}.}
    \label{tab:notation_deform}
\end{table}

\subsection{Surviving edges and the subcomplex \texorpdfstring{$W$}{W}}\label{sec:survivor}

Throughout \Cref{sec:survivor} we assume that $G$ is a finitely generated group and $T$ is a minimal irreducible $G$-tree with finitely many orbits of edges.  Note that irreducible here means that $T$ contains two axes for elements in $G$ that intersect at most in a finite segment.

Following \cite{Clay09} (in turn following \cite{GuirardelLevitt2007_defom}), suppose $T'$ is a reduced tree in the same deformation space as $T$.   We say that an edge $e$ in $T$ is \emph{surviving in $T'$} if $T$ collapses to $T'$ and $e$ is not collapsed.  We say $e$ is \emph{surviving} if it is surviving in $T'$ for some reduced tree $T'$ in the same deformation space as $T$.  

In a deformation space $\cald$, let $W$ be the subcomplex of the spine spanned by the set of trees all of whose edges are surviving.   The usefulness of this subcomplex comes from the following theorem of Clay:

\begin{thm}[Clay {\cite[Section 1.2]{Clay09}}]
\label{thm:dr-to-W}
    There is an equivariant deformation retraction from the spine of $\cald$ to $W$. In particular, $W$ is contractible and if $\cald$ is $\Out(G)$-invariant then so is $W$.
\end{thm}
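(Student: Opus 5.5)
We follow Clay's argument in \cite{Clay09}, which adapts the retraction of Guirardel--Levitt \cite{GuirardelLevitt2007_defom}.

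\textbf{Setting up the retraction.} For each tree $T$ in the spine of $\cald$, let $T^s$ denote the tree obtained from $T$ by collapsing every $G$-orbit of non-surviving edges. We will check three things:
\begin{itemize}
\item[(a)] $T^s$ is a well-defined $G$-tree in $\cald$;
\item[(b)] every edge of $T^s$ is surviving, so $T^s \in W$;
\item[(c)] the assignment $T \mapsto T^s$ is order-preserving for $\prec$, is the identity on $W$, and is $\Out(G)$-equivariant.
\end{itemize}
Given these, $T^s \prec T$, since $T$ collapses onto $T^s$. A poset map $f$ with $f(x) \preceq x$ for all $x$ gives a homotopy between $f$ and the identity on geometric realisations, by Quillen's order-homotopy lemma: the realisations are joined through the simplices spanned by the chains $f(x) \preceq x$. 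This yields the deformation retraction of the spine onto $W$. Contractibility of $W$ then follows from contractibility of $\cald$ \cite[Theorem~6.7]{Clay05}.

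\textbf{Verifying (a)--(c).}
\begin{itemize}
\item \emph{Equivariance.} The property of being surviving is defined purely in terms of collapses and reducedness within $\cald$. An automorphism $\varphi$ preserving $\cald$ maps reduced trees to reduced trees and collapses to collapses, so $\varphi$ carries surviving edges of $T$ to surviving edges of $T\cdot\varphi$. Hence $(T\cdot\varphi)^s = T^s\cdot\varphi$, and $W$ is invariant.
\item \emph{Collapsing stays in $\cald$.} Collapsing any $G$-invariant set of edge orbits keeps us in the same deformation space, provided the result is not a point. If every edge were non-surviving, then collapsing $T$ to some reduced tree would collapse all of it, contradicting irreducibility. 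So $T^s$ is a non-trivial tree in $\cald$.
\item \emph{Monotonicity.} If $T' \prec T$, then an edge of $T'$ corresponds to an uncollapsed edge of $T$, and it survives in $T'$ exactly when the corresponding edge survives in $T$. This holds because reduced collapses of $T'$ are precisely the reduced collapses of $T$ that factor through $T'$. It follows that $(T')^s \prec T^s$.
\end{itemize}

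\textbf{Main obstacle: (b).} The hard step is showing that the surviving edges of $T$ can all be retained simultaneously. We need that the set $\Sigma$ of surviving edges of $T$ is itself realised, meaning that $T_\Sigma$, the tree obtained by collapsing everything outside $\Sigma$, has all its edges surviving.

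Here I would argue by the characterisation of non-surviving edges. An edge $e$ fails to survive exactly when, in every reduced collapse of $T$, it must be collapsed. Guirardel--Levitt show this happens when $e$ lies in a subtree whose collapse is forced: for example, a chain of edges $G_e = G_{\tau(e)}$ with endpoints in different orbits, repeated after any collapse of surviving edges.

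So I would show the following: if $e$ and $f$ are each surviving, witnessed by reduced collapses $T_e$ and $T_f$, then one can find a reduced collapse of $T_\Sigma$ keeping both. The natural approach is to start from $T_\Sigma$ and successively collapse non-reduced edges, namely those with $G_e = G_{\tau(e)}$ and endpoints in distinct orbits. At each step, we check that a surviving edge never becomes forced to collapse. The reason is that being forced to collapse is a local condition on stabilisers, and that condition only becomes weaker under collapsing other edges.

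This is the technical core of \cite[Section 1.2]{Clay09}. I expect to be able to cite it once I have confirmed that its hypotheses hold: $G$ finitely generated, and $T$ minimal, irreducible and cocompact.
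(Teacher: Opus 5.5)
Your skeleton is the right one: collapse the non-surviving edge orbits to get $T^s$, note $T^s\preceq T$ with equality on $W$, and apply the order-homotopy lemma equivariantly. The step you call the main obstacle is where the proposal goes wrong.

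\textbf{Step (b).} The claim you set out to prove, that two surviving edges can be kept by a single reduced collapse, is false. A tree of type $\calt(\Lambda_1,\Lambda_2)$ lies in $W$, so both of its loop edges are surviving, yet every reduced collapse keeps exactly one of them (see the proof of Lemma~\ref{lem:Psi-surjective}). Nor is being forced to collapse a local stabiliser condition that only weakens under collapses: the example in (c) below shows a surviving edge becoming non-surviving after another edge is collapsed. Deferring this step to \cite{Clay09} is also circular, since it is the content of the statement.

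In fact (b) is immediate from the definition. Let $N$ be the set of non-surviving edges of $T$. By definition, every collapse of $T$ onto a reduced tree of $\cald$ collapses all of $N$, so it factors as $T\to T^s\to R$. Given an edge $e$ of $T^s$, take the reduced collapse $T\to R_e$ witnessing that $e$ survives in $T$. It factors through $T^s$ without collapsing $e$, so $e$ survives in $T^s$. Membership in $W$ only asks each edge to survive in \emph{some} reduced collapse, not all edges simultaneously.

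\textbf{Steps (a) and (c).} Two supporting claims are false as stated, though what you actually need still holds.

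In (a), collapsing an arbitrary invariant set of edge orbits can leave $\cald$. Collapsing a loop orbit makes a hyperbolic element elliptic. Collapsing an edge whose stabiliser is proper in both endpoint stabilisers makes the subgroup they generate elliptic. Argue instead as follows. Since $T$ has finitely many edge orbits, collapsing non-reduced orbits one at a time reaches a reduced $R\in\cald$. The factorisation $T\to T^s\to R$ then sandwiches the elliptic subgroups of $T^s$ between those of $T$ and $R$, which coincide.

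In (c), ``survives in $T'$ if and only if it survives in $T$'' fails. By the remark after Lemma~\ref{lem:W}, the non-loop edge of a tree of type $\calt(\Phi(L),\Lambda_2)$ is surviving. It is not surviving in the collapse of type $\calt(\Phi(L))$ obtained by collapsing the loop edge stabilised by $K_2$. Only ``surviving in $T'$ implies surviving in $T$'' holds, by composing collapses, and that is exactly what monotonicity needs. Every non-surviving edge of $T$ is either collapsed in $T\to T'$ or non-surviving in $T'$. Hence it is collapsed in $T\to (T')^s$, which gives $(T')^s\preceq T^s$.

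With these repairs the order-homotopy argument yields an $\Out(G)$-equivariant strong deformation retraction of the spine onto $W$.
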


As Clay observes, this $W$ recovers the ``reduced spine'' of Culler--Vogtmann outer space. Next we provide a lemma to show that a tree in $W$ is uniquely determined by the reduced trees it collapses to. Observe that in a reduced tree all edges are (by definition) surviving, and so all reduced trees are contained in $W$.

Two trees are \emph{compatible} if there is a third tree admitting collapse maps to both of them. Recall that, given two $G$-trees $T_1$ and $T_2$, we write $T_1 \preceq T_2$ if $T_2$ collapses to $T_1$.

\begin{lemma}
    \label{lem:coll-same-red}
    Given $G$-trees $T$ in $W$ and $T'$ in the spine of $\cald$, we have $T \preceq T'$ if and only if any reduced tree $S$ that satisfies $S \preceq T$ also satisfies $S \preceq T'$.

    Conversely, suppose $S_1,\dots,S_k$ are pairwise compatible reduced trees. Then there is a tree $T$ in $W$ such that $S_i \preceq T$ for each $i$.
\end{lemma}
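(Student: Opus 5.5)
The plan is to view a tree in the spine as determined by its set of edge orbits. A collapse $T \to S$ is the same as choosing a $G$-invariant set of edges of $T$ to collapse, so the reduced trees below $T$ correspond to certain subsets of the edge orbits of $T$, namely those whose complement survives. The forward direction of the first claim is immediate: if $T \preceq T'$ and $S \preceq T$, then composing collapses gives $S \preceq T'$.

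For the reverse direction, suppose every reduced $S \preceq T$ also satisfies $S \preceq T'$. Since every edge of $T$ is surviving, each edge orbit $e$ of $T$ is uncollapsed in some reduced $S_e \preceq T$. I will use the standard characterisation of collapses through elliptic subgroups and translation lengths (following Guirardel--Levitt). Concretely, $T_1 \preceq T_2$ if and only if there is an equivariant map $T_2 \to T_1$ that is a collapse, and such a map exists if and only if every arc stabiliser of $T_2$ is elliptic in $T_1$ in a compatible way. Equivalently, by Culler--Morgan/Alperin--Bass, one can phrase this via length functions: $\ell_{T_1} \le \ell_{T_2}$ with additive decomposition. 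I would set up the following. Take $T'$ and consider the common refinement of $T'$ with the reduced trees $S_e$. The key step is to show that the collapse maps $T' \to S_e$ jointly separate the edges of $T$. That is, define a map $T' \to T$ by noting that $T$ is itself the minimal common refinement of the family $\{S_e\}$, since each edge of $T$ survives in some $S_e$, and $T'$ refines all of them. The universal property of the minimal common refinement (the core of the product $\prod S_e$, as in Guirardel's core construction) then yields a collapse $T' \to T$.

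So the main technical step is this: a tree in $W$ equals the minimal common refinement of the reduced trees it collapses to. To verify it, I would check that the natural map $T \to \prod_e S_e$ is injective on edges, because distinct edge orbits are distinguished by which $S_e$ retains them and an edge within one orbit is retained isometrically. I would also check that its image is a subtree whose edges are exactly the pieces that must appear in any tree refining all $S_e$. This is where I expect the difficulty to lie. Compatibility of trees in a deformation space guarantees that a common refinement exists (Guirardel--Levitt), but one must argue minimality, namely that any $T'$ collapsing onto all $S_e$ factors through $T$. I would attempt this via length functions: $\ell_T = \sum_e$ contributions, with $\ell_{T'} \ge$ the same sum.

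The converse statement is handled by induction on $k$ using the same refinement machinery. Given pairwise compatible reduced trees $S_1,\dots,S_k$ in $\cald$, pairwise compatibility implies that the whole family has a common refinement, by Guirardel's result that pairwise compatible trees are jointly compatible (through the core, or by the compatibility of length-function sums). Take $T$ to be the minimal common refinement. Every edge of $T$ projects nontrivially to some $S_i$, so it survives in that reduced tree and $T \in W$. It also lies in $\cald$, because its elliptic subgroups are those of the $S_i$. Hence $S_i \preceq T$ for each $i$.
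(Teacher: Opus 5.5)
Your overall architecture matches the paper's. The easy direction is transitivity. For the reverse direction you identify $T$ with the minimal common refinement (the join of Guirardel--Levitt, Prop.~A.26) of its reduced collapses and use the join's universal property. For the converse you take the join and use its minimality.

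The gap is in the step you yourself flag as the crux, $T=\bigvee_e S_e$. As written, your plan for it is circular. The ``minimality'' you say must be argued, namely that any $T'$ collapsing onto all $S_e$ factors through $T$, is exactly the reverse implication you are proving. Your proposed tools would not supply it either:
\begin{enumerate}
\item The claim that distinct edge orbits are distinguished by which $S_e$ retains them is false. A reduced tree of type $\calt(\Lambda)$ has two edge orbits, and both are retained in its only reduced collapse, namely itself. Injectivity of $T\to\prod_e S_e$ on edges holds for another reason, but it gives no universal property anyway.
\item The ellipticity criterion you quote characterises domination, not collapse; all trees of $\cald$ dominate one another.
\item A pointwise inequality $\ell_{T'}\ge\ell_T$ holds, after rescaling $T'$, for any two trees of $\cald$, because there is always a Lipschitz equivariant map between them. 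So it cannot detect collapses. You would need an additive splitting $\ell_{T'}=\ell_T+\ell_{T''}$, and producing that from the separate splittings coming from each $S_e\preceq T'$ is precisely what the join construction does.
\end{enumerate}

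The missing idea is that you never need to prove minimality of $T$ directly. Let $J=S_1\vee\dots\vee S_k$ be the join of the reduced collapses of $T$. It exists by pairwise compatibility and lies in $\cald$.
\begin{enumerate}
\item Applying the universal property of $J$ to $T$ itself gives $J\preceq T$.
\item If this were strict, some edge orbit of $T$ would be collapsed by $T\to J$. It would then be collapsed in every reduced collapse of $T$, since these all factor through $J$. That orbit would not be surviving, contradicting $T\in W$.
\item Hence $T=J$, and the universal property applied to $T'$ gives $T\preceq T'$.
\end{enumerate}

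For the second claim your argument is the paper's, but you should say why every edge of the join survives in some $S_i$. If an edge orbit were collapsed in every collapse to the $S_i$, collapsing it would give a strictly smaller common refinement, contradicting minimality.
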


\begin{proof}
    By transitivity of $\preceq$, if $T \preceq T'$ then every $S$ (reduced or not) satisfying $S \preceq T$ also satisfies $S \preceq T'$. We now show the converse: suppose that $T$ and $T'$ are elements of $W$, and let $S_1, \dots, S_k$ be the reduced trees that $T$ collapses to (note that this is a finite set since $T$ has finitely many edge orbits), and suppose that for every $i$ we have that $S_i \preceq T'$.

    The trees $S_i$ are pairwise compatible (since $T$ collapses to all of them) and so by \cite[Proposition A.26]{GuirardelLevitt2017_JSJbook} there is a tree $S_1 \vee \dots \vee S_k$ satisfying: \begin{enumerate}
        \item  (Part 1 \emph{loc.~cit.}) For each $i$, $S_i \preceq S_1 \vee \dots \vee S_k$, and $S_1 \vee \dots \vee S_k$ is minimal for this property: if $T$ is some other tree with $S_i \preceq T$ for all $S_i$ then $S_1 \vee \dots \vee S_k \preceq T$. In particular, this is true for $T$ and $T'$.
        \item The tree $S_1 \vee \dots \vee S_k$ is also in the deformation space $\cald$. (This follows from the second sentence of Part 3 of the reference, since all $S_i$ are in $W$---and so in $\cald$---and so dominate each other, meaning $S_1 \vee \dots \vee S_k$ is also in $\cald$.)
    \end{enumerate}

    We now show that if $T \neq S_1 \vee \dots \vee S_k$, then $T$ is not in $W$. Suppose that $S_1 \vee \dots \vee S_k \prec T$, so that there is an edge $e$ whose orbit is collapsed in the map from $T$ to $S_1 \vee \dots \vee S_k$. But then $e$ cannot be a surviving edge, for there are subsequent collapses to all of the reduced trees $S_1, \dots ,S_k$, and these are all of the reduced trees that can be obtained by collapsing $T$, implying that $T$ is not in $W$. Taking the contrapositive, we see that since $T$ is in $W$, $T = S_1 \vee \dots \vee S_k$ and so $T \preceq T'$. 

    For the final statement, consider the minimality of $T = S_1 \vee \dots \vee S_k$. This implies that if $T'$ is obtained by collapsing one of its edges $e$, there is some $S_i$ so that $S_i \npreceq T'$. But then $e$ must be surviving in $S_i$. So all edges of the tree $T$ are surviving, thus it is contained in $W$.
\end{proof}

A priori it is not true that the surjective function which sends a set of pairwise compatible reduced trees to the minimal common refinement in the spine of $\cald$ is a bijection.  A key step in our subsequent work is to prove a comparable statement for Leary--Minasyan groups (see Lemma~\ref{lem:Psi-surjective}).

Clay \cite[Proposition~1.13]{Clay09} (generalising \cite[Corollary 7.5(1)]{GuirardelLevitt2007_defom}) characterises the surviving edges in a tree $T$ as those which are contained in a \emph{shelter}, see also \cite[Figure 1]{Clay09}.

\begin{defn}~\cite[Definition 1.12]{Clay09}
    Let $T$ be a $G$-tree. A segment $\gamma$ of $T$ is a \emph{shelter} if its projection to $G \backslash T$ is one of the following: \begin{enumerate}[(S1)]
        \item a segment where edge inclusions at the endpoints are not surjections, but are at every interior vertex;
        \item a cycle where every edge inclusion is a surjection, except perhaps at one vertex;
        \item a cycle where there is a coherent choice of orientation so that $\iota(G_e)=G_{\iota(e)}$ for every edge $e$ in that orientation.
    \end{enumerate}
\end{defn}

\begin{prop}~\cite[Proposition 1.13]{Clay09}
    If $T$ is in $W$, then every edge of $T$ is contained in a shelter.
\end{prop}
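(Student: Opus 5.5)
This result is quoted from Clay, so the plan is to reconstruct his argument rather than build anything new. The idea is to show directly that an edge \emph{not} lying in any shelter cannot be surviving. The statement is already phrased for $T$ in $W$, where every edge is surviving. So fix an edge $e$ of $T$ that survives in some reduced tree $T'$ with $T' \preceq T$. Write $\pi\colon T\to T'$ for the collapse map, and let $\bar e$ denote the image of $e$ in the quotient graph of groups $G\backslash T$.

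The first step is to study the collapsed subforest $F = \pi^{-1}(\text{vertices of } T')$. Consider the component $C$ of $F$ containing an endpoint $v$ of $e$. Its stabiliser $G_C$ is the vertex group $G_{\pi(v)}$ of $T'$, and $G_C \backslash C$ is a connected subgraph of $G\backslash T$. Since collapsing $C$ produces the vertex group $G_{\pi(v)}$, the elliptic subgroups of $C$ all lie in it. Because $T$ and $T'$ lie in the same deformation space, the whole of $G_C$ must fix a vertex of $C$ itself.

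Choose a vertex $w \in C$ fixed by $G_C$. Take the path in $C$ from $v$ to $w$, possibly followed around a loop in the quotient if $G_C$ is only captured after passing through an ascending loop. Along this path every edge inclusion is surjective in the direction of $w$, since each vertex group on it is already the full group $G_C$ or is pinched onto it. Do the same at the other endpoint of $e$.

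The second step is to concatenate these two paths through $e$ and read off the image in $G\backslash T$. There are three possibilities.
\begin{enumerate}[(i)]
\item The two ends reach vertices where the inclusion is not surjective. This happens because $T'$ is reduced: $G_{\bar e}\neq G_{\pi(v)}$ unless the endpoints of $\pi(e)$ are in the same orbit. The result is a segment of type (S1).
\item The endpoints of $\pi(e)$ are in one orbit, so the path closes up in the quotient. This gives a cycle with surjections except at one vertex, which is type (S2).
\item The closing-up happens through a strictly ascending configuration. This gives a coherently oriented cycle of type (S3).
\end{enumerate}

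The main obstacle is the third case, and more generally the bookkeeping of when the path through $C$ can close up. Handling it needs Forester's characterisation that trees in the same deformation space are related by collapse/expansion along edges with $G_e = G_{\iota(e)}$. I would first try to use that characterisation to show that every edge of $C$ has a surjective inclusion on one side. I would then argue by finiteness of $G\backslash T$: following surjective inclusions either terminates at a non-surjective vertex, giving (S1) or (S2), or cycles, giving (S3). This yields a shelter containing $\bar e$, and lifting it to $T$ gives a shelter containing $e$.
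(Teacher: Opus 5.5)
The paper gives no proof of this statement; it quotes it from Clay. Judged on whether it would establish the result, your sketch has a genuine gap.

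Your first step is fine once it is justified properly. $G_C=G_{\pi(v)}$ is elliptic in $T'$, hence in $T$. Since it preserves the subtree $C$, it fixes the projection to $C$ of one of its fixed points, and hence a vertex $w\in C$. So $G_u\leq G_C=G_w$ for every vertex $u$ of $C$. That is all it gives, however. The vertex groups on $[v,w]$ are not ``already the full group $G_C$''. You only get a chain
$G_e\leq G_v=G_{f_1}\leq G_{u_1}=G_{f_2}\leq\cdots\leq G_w$.
Thus each edge of $[v,w]$ has the stabiliser of its endpoint farther from $w$, while the inclusion of the other path edge at each vertex (including $e$ itself at $v$) may be proper. (S1) requires \emph{both} segment-edge inclusions to be surjective at every interior vertex, so that the interior of an (S1) shelter has constant stabiliser $G_e$. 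Hence your concatenated path is in general not a shelter.

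For instance, take a tree of type $\calt(\Lambda_1,\Lambda_2)$ with $\Phi(L)<\Lambda_1$. The non-loop edge survives after collapsing the orbit of the cycle edge labelled $K_1$. Your path projects to $H - K_1 - K_2$, with the proper inclusion $L<K_1$ at its interior vertex, whereas the actual shelter is the non-loop edge alone. What is missing is to truncate each side at the first proper inclusion in the chain. Reducedness of $T'$ ($G_e\neq G_{\pi(v)}$) is exactly what guarantees such an inclusion exists.

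That guarantee holds on both sides only when $\pi(e)$ is not an ascending loop. If, say, $G_e=G_{C_x}$, then reducedness forces $C_y=gC_x$, so you may take $w_y=gw_x$. The chain on the $x$-side is then constant and there is nothing to stop at. Instead, take the path $[w_x,gw_x]$ through $e$ and check that every edge on it, oriented towards $gw_x$, satisfies $G_f=G_{\iota(f)}$. On the $x$-side this holds because the stabiliser is constantly $G_e$; on the $y$-side it follows from the chain above. Then extract from its projection a coherently oriented cycle through $\bar e$, which gives (S3).

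This is precisely the case you leave open, and Forester's characterisation is not what is needed. The one-sided surjectivity you want from it already follows from the fixed vertex $w$.

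Your trichotomy is also keyed to the wrong tree. Whether a truncated segment gives (S1) or (S2) depends on whether its endpoints lie in one $G$-orbit of $T$, not on whether $\pi(e)$ is a loop in $G\backslash T'$. For example, let $G\backslash T$ be a two-edge cycle on vertices $a,b$ with $G_{e_2}=G_b$ and $G_{e_1}$ proper in both $G_a$ and $G_b$. Collapsing $e_2$ makes $\pi(e_1)$ a non-ascending loop, yet $e_1$ alone is an (S1) shelter.

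Finally, you still need an argument that the projection of the truncated path to $G\backslash T$ is an embedded segment or cycle, or a way to extract one through $\bar e$. No lifting step is involved, since your construction already lives in $T$.
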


\subsection{Understanding \texorpdfstring{$W$}{W} for the non-metabelian Leary--Minasyan groups}
In this subsection, we let $G$ be a non-metabelian Leary--Minasyan group with $AL < L$ and work to characterise the $G$-trees corresponding to vertices of $W$.  We first observe some properties of the fixed points of $H$ and $L$ in any tree of $W$.

\begin{lemma} \label{lem:find_H_and_L}Suppose $T$ is a $G$-tree where every edge is surviving. Then there is a unique vertex of $T$ stabilised by $H$, and there is an $H$-orbit of edges adjacent to it stabilised by $L$.    
\end{lemma}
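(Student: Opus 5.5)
Throughout, $T$ lies in the deformation space $\cald$, so the elliptic subgroups of $T$ are exactly those of $S$. The proof has two parts: find the vertex fixed by $H$ and show it is unique, then locate edges at it stabilised by $L$.

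\emph{Existence and uniqueness of the vertex.} Since $H$ is elliptic in $S$, it is elliptic in $T$, so $\Fix_T(H)$ is a nonempty subtree. Suppose $\Fix_T(H)$ contains an edge $e$. Then $H \leqslant G_e$. Every vertex group of $T$ is elliptic in $S$, hence conjugate into $H$, so $G_e \leqslant G_{\iota(e)} \leqslant gHg^{-1}$ for some $g \in G$.

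We claim that $H \leqslant gHg^{-1}$ forces $g \in H$. Write $g = h_0t^{\epsilon_1}h_1\cdots$ and consider the action on $S$. If $g \notin H$, the vertices $v$ and $gv$ are distinct, and $H$ fixes both, so $H$ fixes the geodesic between them. Then $H$ fixes an edge of $S$, giving $H \leqslant$ a conjugate of $L$ or $AL$. This is impossible because $L$ and $AL$ have finite index in $H$ but are proper subgroups: comparing indices of finite-index subgroups of $H$ shows $H$ is not contained in a conjugate of a proper finite-index subgroup of itself. Concretely, $H \cap g'Lg'^{-1}$ fixes an edge at $v$, so it lies in an $H$-conjugate of $L$ or $AL$, which is again $L$ or $AL$ since $H$ is abelian.

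Hence $g \in H$, and $G_e = G_{\iota(e)} = G_{\tau(e)} = H$. So $e$ is an edge whose stabiliser equals both adjacent vertex stabilisers. Such an edge cannot be surviving. If its endpoints lie in different orbits, collapsing it keeps us in the deformation space, and any reduced tree it collapses to must also collapse $e$. The cleaner route is via shelters: $e$ must lie in a shelter of type (S1), (S2) or (S3), and an edge with surjective inclusions at both ends cannot be the endpoint edge of (S1). One checks that an (S2) or (S3) cycle would force an ascending-loop structure in which the vertex group is contained in a proper conjugate of itself, contradicting what was just shown. Therefore $\Fix_T(H)$ is a single vertex $v$.

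\emph{Edges stabilised by $L$.} Since $G$ is not elliptic, $v$ has incident edges, and $G_v = H$ by the argument above. Every edge stabiliser is a finite-index subgroup of a conjugate of $H$ (it is commensurable with $H$, since $G$ is commensurating). Consider $t^{-1}v$. Since $t^{-1}Ht \cap H = L$, the group $L$ fixes the geodesic $[v, t^{-1}v]$, and $v \neq t^{-1}v$ because $t \notin H$. Let $e$ be the first edge of this geodesic, so $L \leqslant G_e \leqslant H$.

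It remains to show $G_e = L$. The main obstacle is ruling out $G_e$ strictly between $L$ and $H$. For this I would use that every element of $G_e$ fixes $t^{-1}v$ in $T$ together with the ellipticity correspondence. In $T$ we have $\Fix(H) = \{v\}$ and $\Fix(t^{-1}Ht) = \{t^{-1}v\}$. The plan is to show that the pointwise stabiliser of the geodesic is exactly $H \cap t^{-1}Ht = L$, by arguing that the intermediate vertices have stabilisers contained in $H$ (since they lie on the path from $v$) and that the last edge lies in $t^{-1}Ht$. The difficulty is that $G_e$ need not fix the whole geodesic. To handle this I would instead choose $e$ as an edge at $v$ whose stabiliser is minimal among those containing $L$. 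I would then use the surviving/shelter condition along the path $[v, t^{-1}v]$: any edge on it with stabiliser strictly larger than $L$ would make the path project to a non-shelter configuration. Taking the $H$-orbit of $e$ then gives the required $H$-orbit of edges stabilised by $L$.
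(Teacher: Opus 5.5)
Your first part is sound and is essentially the paper's argument: maximality of $H$ among elliptic subgroups plus self-normalisation, so every point of $\Fix_T(H)$ has stabiliser exactly $H$. The step saying an edge $e$ with $G_e=H$ cannot survive should be justified directly rather than via shelters. A collapse does not change the stabiliser of an uncollapsed edge. Moreover, no edge $e'$ of a reduced tree in $\cald$ has $G_{e'}=H$: both endpoints would be stabilised by $H$, hence lie in one orbit by reducedness, hence coincide by self-normalisation. Your shelter version also omits the case where $e$ is an interior edge of an (S1) shelter.

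The genuine gap is in the second part: you never establish the upper bound $G_e\le L$. Choosing an edge at $v$ whose stabiliser is minimal among those containing $L$ gives no upper bound. The proposed shelter obstruction along $[v,t^{-1}v]$ also does not exist. In the reduced trees $T_n$, let $w$ be the vertex stabilised by $L_n$. Then the geodesic $[v,t^{-1}v]$ is $v,w,t^{-1}w,t^{-1}v$, with edge stabilisers $L$, $L_n$, $L_1$; the last two strictly contain $L$, yet every edge survives.

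The missing idea is to compare with $S$. Since $G_v=H$, you have $G_e=H\cap G_u$, where $u$ is the other endpoint of $e$, and $G_u$ is elliptic in $S$.
\begin{itemize}
\item If $G_u\not\le H$, then $\Fix_S(G_u)$ misses the vertex $v_S$ of $S$ fixed by $H$. So $G_e$ fixes the first edge of the arc from $v_S$ to $\Fix_S(G_u)$, whose stabiliser is $L$ or $AL$. Since $L\le G_e$ and $AL<L$, this forces $G_e=L$.
\item If $G_u\le H$, then $G_e=G_u$, and this is where the surviving hypothesis is really used. Pass to a reduced collapse in which $e$ survives; its image is still adjacent to the $H$-vertex and has the same stabiliser. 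The far endpoint's stabiliser must now properly contain it. Otherwise reducedness makes that stabiliser a conjugate $gHg^{-1}\le H$, and your part-one argument gives $G_e=H$, which is impossible. So the first case applies.
\end{itemize}
Since $H$ is abelian, every edge in the $H$-orbit of $e$ then has stabiliser $L$.
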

\begin{proof}
    Recall that $H$ represents the unique conjugacy class of maximal elliptic subgroups in any tree in $\cald$. Consider the fixed subtree of $H$ in any such tree: the stabiliser of every edge and vertex must be exactly $H$ by maximality. If there is no vertex $v \in \Fix(H)$ and non-trivial $g \in G$ with $gv$ distinct but contained in $\Fix(H)$, then it follows that collapsing $\Fix(H)$ to a point does not change the deformation space, and so in every reduced tree $\Fix(H)$ must be a single vertex. If $T$ is any tree in $\cald$ then no surviving edge can be stabilised by $H$ (because no edges are stabilised by $H$ in the reduced trees) and so in a tree where every edge is surviving $\Fix(H)$ is again just a single vertex. 

    Suppose there is some $v$ and some $g$ so that $v$ and $gv$ are both contained in $\Fix(H)$. 
    Both $v$ and $gv$ must be stabilised by $H$ (by maximality), and so $g$ normalises $H$. But $H$ is self-normalising, since it properly contains the two incident edge groups in $S$, the tree of Figure~\ref{fig:gog_for_LMAL} which corresponds to the original presentation of $G$ as a Leary--Minasyan group. So $g$ must be an element of $H$, and so $gv=v$, and we are in the situation of the previous paragraph.

    For the second claim, we will compare the edge stabilisers in $T$ with those of $S$, the tree of Figure~\ref{fig:gog_for_LMAL}. Since $H$ is abelian, its conjugation action on its subgroups is trivial and so the edge stabilisers are constant across an $H$-orbit of adjacent edges. Take $v$ to be the vertex stabilised by $H$ and $t$ to be the original stable letter for the presentation of $G$: in particular we have $L=H \cap t^{-1}Ht$. Consider the arc $[v,t^{-1}v] \subset T$, which must be stabilised by $L$, and let $e=[v,u]$ be the first edge in this arc. Now we have that $L \leq G_e$, and we aim to show the other inclusion.

    Working in $S$, let $v'$ be the vertex stabilised by $H$, and consider the fixed subtree associated to $G_u$. Suppose $v'$ is not contained in this subtree so there is a nearest point $u'$ to $v'$, and we consider the arc $[v',u'] \subset S$, which is stabilised by $G_e = H \cap G_u$. The first edge in this arc is stabilised by either $L$ or $AL$, giving either $G_e \leq L$ or $G_e \leq AL$. The second case is absurd, since $AL < L \leq G_e$, and so we have $G_e=L$.

    Finally suppose $v'$ is contained in the fixed subtree associated to $G_u$: this implies that $G_u < H$ and $G_e = G_u$. Since every edge is surviving, we can consider instead a reduced tree $T$ where the first edge $e$ in $[v,t^{-1}v]$ survives. In such a tree the inclusion of $G_e$ into $G_u$, the other endpoint, must be proper: otherwise we would have that $H$ is conjugate to a subgroup (and hence an overgroup) of itself, again contradicting the maximal ellipticity of $H$. So the argument of the previous paragraph applies to the reduced tree, and again we have $G_e=L$.
\end{proof}

We now fix an affine representation $\Phi$ of $G$, as follows. Recalling that $\AGL_m(\QQ) \cong \QQ^m \rtimes \GL_m(\QQ)$, define $\Phi: G \to \AGL_m(\QQ)$ on generators as $\Phi(H) = \ZZ^m < \QQ^m$ (that is, according to our fixed isomorphism $H \to \ZZ^m$, and taking the standard embedding of $\ZZ^m$ into $\QQ^m$) and $\Phi(t)=A$. This map cannot be injective (for instance because this would imply residual finiteness of $G$), but since it is injective on $H$ it must also be injective on every subgroup of every conjugate of $H$, i.e. on each elliptic subgroup of $G$. In particular it is injective on each $L_i$ and so their union $L_\infty$. Each subgroup $L_i$ is sent to a lattice $\Phi(L_i)$ in $\QQ^m$, but their union $\bigcup_{i=0}^\infty \Phi(L_i)$ = $\Phi(L_\infty)$ cannot be a lattice (because any lattice $\Lambda < \Phi(L_\infty)$ is finitely generated and so must be contained in some $\Phi(L_i)$). 
Also notice that there is a further quotient to $\GL_m(\QQ)$, where the image of $G$ is the subgroup $\langle A \rangle$. Since $\QQ^m$ (viewed as the normal subgroup in the semidirect product structure) is abelian, the conjugation action of any $\Phi(g)$ on any lattice (or subgroup) $\Lambda \leq \QQ^m$ agrees with the application of some power of $A$, 
with the power determined by the exponent sum of $t$ in $g$. Finally note that since $\Phi(L_\infty)$ is the union $\bigcup_{i=0}^\infty A^{-i}(\Phi(L))$, it is invariant under all powers of $A$. In particular, if $\Lambda < \Phi(L_\infty)$, then also $A^{-1}\Lambda < \Phi(L_\infty)$.

Note that in what follows $\Lambda_i$ (as well as $\Phi(L_j)$) will always be lattices in $\AGL_m(\QQ)$, whereas $K_i$ (as well as $L_j$) will always be subgroups of $G$.

\begin{defn} 
We define some types of trees we will make frequent use of:
\begin{itemize}
    \item A $G$-tree has \emph{Type $\cals$} if it has a fundamental domain  consisting of a single edge, with endpoints stabilised by $H$ and $\widehat{t}H\widehat{t}^{-1}$, where $\widehat{t} \in G$ conjugates a finite-index subgroup of $H$ to a proper subgroup of itself.
    \item Suppose $\Phi(L) \leq \Lambda_1 < \dots <\Lambda_k \leq A^{-1}\Lambda_1$ are lattices in $\Phi(L_\infty)$. A $G$-tree has \emph{Type $\calt(\Lambda_1, \dots, \Lambda_k)$} if it has a fundamental domain consisting of a segment whose vertices are stabilised (in order) by $H, K_1, \dots, K_k, \widehat{t}^{-1}K_1\widehat{t}$, where $K_1 < \dots < K_k \leq \widehat{t}^{-1}K_1\widehat{t}$ and $\Phi(K_i)= \Lambda_i$ for all $i$, and where the vertices stabilised by $H,K_1,\dots,K_k$ are in different $G$-orbits. 
    
    \end{itemize}
In both cases, we call $\widehat{t}$ a \emph{stable letter} of the corresponding $G$-tree.
\end{defn}

Quotient graphs of trees of these types (with the given fundamental domains) are shown in \Cref{fig:gog};  as implied by the notation, the trees $S$ of \Cref{fig:gog_for_LMAL} and $T_n$ of \Cref{fig:BAD_trees} are of types $\cals$ and $\calt(\Phi(L_n))$ respectively. Note that for a chain of lattices as in the definition of a tree of the second type, inclusion of all the lattices in $\Phi(L_\infty)$ is automatic as soon as we have $\Lambda_1 < \Phi(L_\infty)$ (by $A^{-1}$-invariance). 

\begin{figure}[ht]
\begin{subfigure}[t]{0.4\textwidth}
\centering
\begin{tikzpicture}
\draw [thick] (-1.5,0) circle (1.5);
\draw [fill=yellow] (0,0) circle (2pt) node [right] {$H$} (-3,0) node [left,xshift=2pt] {$L$};
\end{tikzpicture}
\caption{The graph of groups corresponding to the $G$-tree of type $\cals$; here the edge group homomorphisms are given by the subgroup inclusion and the map $\tau\colon L \to H$ satisfying $\Phi(\tau(h)) = A\Phi(h)$ for $h \in L$.}
\label{sfig:gog-small}
\end{subfigure}\hfill
\begin{subfigure}[t]{0.55\textwidth}
\centering
\begin{tikzpicture}
\draw [thick] (-1.5,1.5) arc (90:0:1.5) node [pos=0.5,xshift=6pt,yshift=6pt] {$K_k$};
\draw [thick] (0,0) arc (0:-180:1.5) node [pos=0.25,xshift=6pt,yshift=-6pt] {$K_1$} node [pos=0.75,xshift=-6pt,yshift=-6pt] {$K_2$};
\draw [thick,dashed] (-3,0) arc (180:90:1.5);
\draw [thick] (0,0) -- (2,0) node [midway,below,yshift=2pt] {$L$};
\draw [fill=blue] (0,0) circle (2pt) node [left] {$K_1$} (-1.5,-1.5) circle (2pt) node [above] {$K_2$} (-3,0) circle (2pt) node [right] {$K_3$} (-1.5,1.5) circle (2pt) node [below] {$K_k$};
\draw [fill=yellow] (2,0) circle (2pt) node [right] {$H$};
\end{tikzpicture}
\caption{The graph of groups corresponding to the $G$-tree of type $\calt(\Lambda_1,\dots,\Lambda_k)$, where $\Lambda_i = \Phi(K_i)$; here $L \leq K_1 < \dots < K_k$, and the edge group homomorphisms are all subgroup inclusions apart from $\tau\colon K_k \to K_1$, which satisfies $\Phi(\tau(h)) = A\Phi(h)$ for $h \in K_k$.}
\label{sfig:gog-standard}
\end{subfigure}
\caption{Some graphs of groups for $G$; in both cases, the description of the map $\tau$ follows from Lemma~\ref{lem:action-stable-letter}.}
\label{fig:gog}
\end{figure}

\begin{lemma} \label{lem:action-stable-letter}
    Let $T$ be a $G$-tree of type $\cals$ or $\calt(\Lambda_1, \dots, \Lambda_k)$ and $\widehat{t}$ its stable letter. Then $\Phi(\widehat{t}k\widehat{t}^{-1}) = A \Phi(k)$ for all $k \in K$, where $K$ is any elliptic subgroup of $G$.
\end{lemma}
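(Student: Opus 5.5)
The plan is to reduce the statement to computing a single integer, namely the exponent sum of $t$ in $\widehat{t}$. Let $\chi\colon G \to \ZZ$ be the homomorphism from the proof of \Cref{prop:stab_S}, with $H \mapsto 0$ and $t \mapsto 1$. Composing $\Phi$ with the quotient $\AGL_m(\QQ) \to \GL_m(\QQ)$ gives $g \mapsto A^{\chi(g)}$, so for any $g \in G$ we can write $\Phi(g) = (w_g, A^{\chi(g)})$.

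First, every elliptic subgroup $K$ is conjugate into $H$, so $\chi(K) = 0$ and each $\Phi(k)$, $k \in K$, is a pure translation in $\QQ^m$. Conjugating a translation $v$ by $(w, B)$ gives the translation $Bv$. Hence
\[
\Phi(\widehat{t}k\widehat{t}^{-1}) = A^{\chi(\widehat{t})}\Phi(k) \quad \text{for all } k \in K,
\]
and it suffices to prove $\chi(\widehat{t}) = 1$. This also shows that $\Phi$ transforms lattices coming from elliptic subgroups under conjugation by $\widehat{t}$ via multiplication by $A^{\chi(\widehat{t})}$.

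Second, I show $\chi(\widehat{t}) = \pm 1$. In both types the quotient graph $G\backslash T$ is a single loop, with the chosen fundamental domain closed up by $\widehat{t}$ (see \Cref{fig:gog}). By Bass--Serre theory, $G$ is generated by the vertex stabilisers of the fundamental domain together with $\widehat{t}$. Since $\chi$ vanishes on all vertex stabilisers (they are elliptic) and $\chi$ is surjective, $\chi(\widehat{t})$ generates $\ZZ$, so $\chi(\widehat{t}) = \pm 1$.

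Third, I fix the sign by comparing covolumes, using that $AL < L$ forces $\left|\det A\right| > 1$. For type $\cals$, $\widehat{t}$ conjugates a finite-index subgroup $M \leq H$ to a proper subgroup of $M$. Applying $\Phi$ gives $A^{\chi(\widehat{t})}\Phi(M) \subsetneq \Phi(M)$ as lattices in $\QQ^m$. The index of the image is $\left|\det A\right|^{\chi(\widehat{t})} > 1$, which rules out $\chi(\widehat{t}) = -1$. For type $\calt(\Lambda_1,\dots,\Lambda_k)$, we have $K_1 < K_k \leq \widehat{t}^{-1}K_1\widehat{t}$. Applying $\Phi$ gives $\Lambda_1 = A^{\chi(\widehat{t})}\Phi(\widehat{t}^{-1}K_1\widehat{t})$, where $\Phi(\widehat{t}^{-1}K_1\widehat{t})$ properly contains $\Lambda_1$. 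Thus $A^{\chi(\widehat{t})}$ shrinks a lattice properly, and again $\chi(\widehat{t}) = 1$.

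The only delicate point is justifying the generation claim in the second step, i.e.\ that $\widehat{t}$ really is the stable letter of the one-loop graph of groups for the given fundamental domain. This follows from the standard description of $\pi_1$ of a graph of groups with a segment as fundamental domain whose endpoints are identified by $\widehat{t}$.
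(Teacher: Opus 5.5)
Your proof is correct and is essentially the paper's argument: the paper also passes to the quotient of $G$ by the normal closure of the elliptic elements (identified with $\pi_1$ of the quotient graph and with $\Phi(G)/(\Phi(G)\cap\QQ^m)\cong\langle A\rangle$), observes that $\widehat{t}$ maps to a generator so that $\Phi(\widehat{t})$ acts on translations by $A^{\pm1}$, and rules out $A^{-1}$ using $\left|\det A\right|>1$; your homomorphism $\chi$ and the explicit conjugation formula in $\AGL_m(\QQ)$ simply make these steps concrete. One small slip: for $k=1$ the strict inclusion $K_1<K_k$ is not available, but you do not need properness there, since $\Lambda_1\subseteq\Lambda'\coloneqq\Phi(\widehat{t}^{-1}K_1\widehat{t})$ together with $\Lambda_1=A^{-1}\Lambda'$ would force $[\Lambda':\Lambda_1]=\left|\det A\right|^{-1}<1$, which is absurd.
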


\begin{proof}
    Consider $\Theta\colon G \to \pi_1(\Gamma) \cong \ZZ$, where $\Gamma$ is (the underlying graph of) $G \backslash T$.  Then $\Theta(\widehat{t})$ is a generator of $\ZZ$ by construction.  On the other hand, $\Theta$ is just the quotient map by the normal closure of the elliptic elements of $G$ acting on $T$.  Thus, passing to $\AGL_m(\QQ)$ this corresponds to taking the quotient by the translation subgroup.  That is, for 
    $$\Xi\colon \Phi(G) \to \Phi(G) / (\Phi(G) \cap \QQ^m) = \langle A \rangle$$ 
    we have $\Xi \circ \Phi = \Psi \circ \Theta$ for an isomorphism $\Psi \colon \pi_1(\Gamma) \to \langle A \rangle$. So it follows that the conjugation action of $\Phi(\widehat{t})$ on $\QQ^m$ must be either $A$ or $A^{-1}$.  But $\left| \det A \right| > 1$, hence the latter case is impossible since $\widehat{t}$ conjugates an elliptic subgroup $K' < G$ to a proper subgroup of itself and $\Phi$ is injective on $K'$, where $K'$ is a finite-index subgroup of $H$ if $T$ has type $\cals$ or a subgroup mapped by $\Phi$ isomorphically to $\Lambda_1$ if $T$ has type $\calt(\Lambda_1,\ldots,\Lambda_k)$.
\end{proof}

\begin{lemma}
Let $T$ be a tree either of type $\cals$ or of type $\calt(\Lambda_1, \dots, \Lambda_k)$.
\label{lem:same-type-is-iso}\begin{enumerate}
    \item Let $v$ be the unique vertex of $T$ stabilised by $H$. If $T$ is of type $\calt(\Lambda_1, \dots, \Lambda_k)$, then every edge adjacent to $v$ is stabilised by $L$; if $T$ has type $\cals$ then one $H$-orbit of edges is stabilised (pointwise) by $L$ and the other by $AL$. 
    \item Suppose $T'$ is a tree of the same type as $T$. Then there is an automorphism $\psi$ of $G$ so that $T \cdot \psi$ is equivariantly isometric to $T'$.
\end{enumerate}
\end{lemma}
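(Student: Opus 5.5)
For part (1) I will work directly with the fundamental domains in the definitions and with Lemma~\ref{lem:action-stable-letter}. In type $\calt(\Lambda_1,\dots,\Lambda_k)$ the vertex $v$ stabilised by $H$ lies in a single $G$-orbit of vertices. The quotient graph of groups (Figure~\ref{sfig:gog-standard}) shows that $v$ has valence one there, via the edge to the $K_1$-vertex. Hence all edges at $v$ form a single $H$-orbit, and since $H$ is abelian they all have the same stabiliser, namely $H\cap K_1$. I expect to identify this group as $L$ using the second part of Lemma~\ref{lem:find_H_and_L}: that lemma applies since every edge is surviving (each edge lies in a shelter of type (S1) or (S3)), so some $H$-orbit at $v$ is stabilised by $L$, and as there is only one orbit the claim follows. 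Alternatively, I would simply take the edge stabiliser $L$ as part of the data shown in the figure.

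For type $\cals$, $v$ has exactly two $H$-orbits of incident edges, coming from the two ends of the loop. One end gives the stabiliser $H\cap \widehat t^{-1}H\widehat t$ and the other gives $H\cap \widehat tH\widehat t^{-1}$. Applying $\Phi$, which is injective on elliptic subgroups, together with Lemma~\ref{lem:action-stable-letter}, these become the lattices $\ZZ^m\cap A^{-1}\ZZ^m$ and $\ZZ^m\cap A\ZZ^m$. I will then compare with $S$: the same computation with $t$ in place of $\widehat t$ gives $L$ and $AL$. Since the right-hand sides depend only on $A$ and not on the stable letter, the stabilisers in $T$ are $L$ and $AL$. To justify $H\cap tHt^{-1}=AL$ and $H\cap t^{-1}Ht=L$ in $G$ itself, I will use Britton's lemma, as in Proposition~\ref{prop:centraliser_to_aut_nonmet}.

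For part (2), the plan is to build $\psi$ explicitly. Let $\widehat t$ and $\widehat t'$ be the stable letters of $T$ and $T'$, and in the $\calt$ case let $K_i$ and $K_i'$ be the vertex groups, with $\Phi(K_i)=\Phi(K_i')=\Lambda_i$. By Lemma~\ref{lem:action-stable-letter}, $\Phi$ intertwines conjugation by either stable letter with $A$. So the map fixing $H$ pointwise and sending $\widehat t\mapsto\widehat t'$ respects the relations of the graph-of-groups presentations. Concretely, for type $\cals$ we have $G=\langle H,\widehat t\mid \widehat t h\widehat t^{-1}=\Phi^{-1}(A\Phi(h)),\ h\in L\rangle$. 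In the $\calt$ case the presentation uses the edge map $K_k\to K_1$, and I need the identification $K_i\to K_i'$ given by $\Phi^{-1}$ on each lattice; here the two sets of vertex groups must be matched through $\Phi$. Since both trees give presentations of $G$ by Bass--Serre theory, the map sending generators to generators is an isomorphism $\psi$ between the two presentations, and therefore an automorphism of $G$. By construction $\psi$ carries the vertex and edge stabilisers of the fundamental domain of $T'$ to those of $T$, so $T\cdot\psi$ and $T'$ have the same quotient graph of groups data. I expect to obtain the equivariant isometry by building it on the fundamental domain and extending equivariantly, as in the converse part of Lemma~\ref{find_reps_of_stab_T}.

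The main obstacle is that the $K_i$ are not a priori subgroups of $H$, so the matching $K_i\to K_i'$ is not the identity and the images of the $K_i$ under $\psi$ need care. I would try to show first that $K_i=K_i'$ whenever $T$ and $T'$ share the vertex $v$ and its $L$-edge, arguing that both are determined as $\Phi$-preimages within $L_\infty$. After conjugating, $K_1\ge L$ and $\widehat t^{-1}L\widehat t\le K_k$. I would aim to prove $\widehat t\equiv t$ modulo something that fixes the $K_i$, thereby reducing $\psi$ to a map of the form $\beta_{n,\ell}$ composed with an inner automorphism.
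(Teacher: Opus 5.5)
\textbf{There is a genuine gap in part (1) for type $\cals$.} Your argument relies on $\Phi(H\cap\widehat t^{-1}H\widehat t)=\ZZ^m\cap A^{-1}\ZZ^m$, but this equality is false in general.
\begin{itemize}
\item $\Phi$ is injective on each elliptic subgroup separately, not on $H\cup\widehat t^{-1}H\widehat t$. So you only get the inclusion $\Phi(H\cap\widehat t^{-1}H\widehat t)\subseteq\ZZ^m\cap A^{-1}\ZZ^m$. An element of the right-hand side has one preimage in $H$ and another in $\widehat t^{-1}H\widehat t$, and these can be different elements of $G$.
\item The inclusion is strict already for $S$ itself. In $\BS(2,4)=\LM(2,2\ZZ)$ (so $AL<L<\ZZ$), Britton's lemma gives $H\cap t^{-1}Ht=L=2\ZZ$, whereas $\ZZ\cap\tfrac12\ZZ=\ZZ$. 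Concretely, $\Phi(t^{-1}a^2t)=1=\Phi(a)$, yet $t^{-1}a^2t\neq a$.
\item More generally, $L$ can be any finite-index subgroup of $\ZZ^m\cap A^{-1}\ZZ^m$. So the edge groups at $v$ are not determined by $A$ alone, and the step ``the right-hand sides depend only on $A$, hence the stabilisers are $L$ and $AL$'' fails.
\end{itemize}
Your type-$\calt$ argument via Lemma~\ref{lem:find_H_and_L} is what the paper does.

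\textbf{What is needed for type $\cals$.} Use Lemma~\ref{lem:find_H_and_L} there as well; it applies since type $\cals$ trees are reduced.
\begin{enumerate}
\item It shows that one $H$-orbit of edges at $v$ is stabilised by $L$.
\item The two edge groups at $v$ are $E=H\cap\widehat t^{-1}H\widehat t$ and $\widehat tE\widehat t^{-1}=H\cap\widehat tH\widehat t^{-1}$. By Lemma~\ref{lem:action-stable-letter} and injectivity of $\Phi$ on $H$, the other orbit is therefore stabilised by $AL$ or by $A^{-1}L$.
\item The option $A^{-1}L$ must be ruled out by a genuine invariant. The paper notes it would give $G\cong\LM(A,A^{-1}L)$, which \cite[Theorem~1.1]{Valiunas2022} forbids, since $L$ and $A^{-1}L$ have different indices in $H$.
\end{enumerate}
Your type-$\cals$ presentation in part (2) assumes exactly this, so it inherits the gap.

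\textbf{Part (2), type $\calt$.} This is essentially the paper's argument: identity on $H$ and $L$, $\Phi$-matching $K_i\to K_i'$, compatibility of the loop map via Lemma~\ref{lem:action-stable-letter}, then Bass's isomorphism theorem for graphs of groups, which yields both $\psi$ and the equivariant isometry. The ``main obstacle'' you raise is not one. You do not need $K_i=K_i'$ or any relation between $\widehat t$ and $t$. You only need that the $\Phi$-matching $K_1\to K_1'$ restricts to the identity on $L$. This holds because $L\le K_1\cap K_1'$ by part (1) and $\Phi$ is injective on $K_1'$.
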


\begin{proof}

    If $T$ was of type $\calt(\Lambda_1, \dots, \Lambda_k)$, then there is a single $H$-orbit of edges adjacent to $v$, and by Lemma~\ref{lem:find_H_and_L} they are pointwise stabilised by $L$, proving the first claim in this case.
    
    If $T$ was of type $\cals$, then there are two $H$-orbits of edges adjacent to $v$ and Lemma~\ref{lem:find_H_and_L} shows that one of these orbits is pointwise stabilised by $L$. Since $\Phi$ is injective on $H$ and $\Phi(\widehat{t})$ acts as $A$ on $H$ by Lemma~\ref{lem:action-stable-letter}, it follows that the other $H$-orbit of edges is pointwise stabilised by either $AL$ or $A^{-1}L$.  However, the latter case is impossible, since this would result in an isomorphism $\LM(A,L) = G \cong \LM(A,A^{-1}L)$, which is impossible by \cite[Theorem~1.1]{Valiunas2022} (for instance, since $L$ and $A^{-1}L$ have different indices in $H$ whenever $A^{-1}L \leq H$).  This proves the first claim.  

    We now consider the second claim; in fact we will show that $G \backslash T$ and $G \backslash T'$ are isomorphic as graphs of groups, which will imply following \cite[Section 2]{Bass1993} (precisely combining Definition 2.12, \S2.11, and Proposition 2.4) that there is an isomorphism between their fundamental groups and an equivariant (with respect to this isomorphism) isometry between the trees. Since both trees were $G$-trees to begin with, composing with isomorphisms coming from the fundamental theorem of Bass--Serre theory gives us the desired automorphism (and similarly for the equivariant isometry).

    Suppose that $T$ and $T'$ are of type $\cals$. Then in each tree there is a vertex stabilised by $H$, and it follows from the first claim that the two $H$-orbits of incident edges are stabilised by $L$ and $AL$ respectively, with the two edge group homomorphisms $L \to H$ given by the inclusion and the matrix $A$. This means that there is a graph of groups isomorphism between their quotient graphs where all edge and vertex maps are the identity. 

    Now suppose that $T$ and $T'$ are of type $\calt(\Lambda_1, \dots, \Lambda_k)$. For the leaf vertex in the graphs of groups and the incident edges, the first claim implies that the vertex group is $H$ and the edge group is $L$, so we can simply take the corresponding vertex and edge maps to be the identity. For the vertices and edges around the loop (including the inclusion of the leaf edge group), note that $\Phi$ is injective on elliptic subgroups, and in particular on $\widehat{t}^{-1}K_1\widehat{t}$ and $({\widehat{t}}')^{-1}K_1'{\widehat{t}}'$, where $\widehat{t}$ and ${\widehat{t}}'$ are the stable letters of $T$ and $T'$, respectively, and where $K_1$ and $K_1'$ are the corresponding stabilisers of a vertex adjacent to the one stabilised by $H$.  Using the map $\left(\Phi|_{({\widehat{t}}')^{-1}K_1'{\widehat{t}}'}\right)^{-1} \circ \Phi|_{\widehat{t}^{-1}K_1\widehat{t}}$ gives us an 
isomorphism between the vertex groups respecting all of the edge inclusions (where the last edge inclusion is respected by Lemma~\ref{lem:action-stable-letter}). So here too we have an isomorphism of the quotient graphs of groups. 
\end{proof}

We emphasise that the previous lemma does not show that every tree of a given type is equivariantly isometric, and indeed it is possible to construct trees where the translation length of $t$ is arbitrarily long: notice that Propositions~\ref{prop:stab_S} and~\ref{prop:stab_Tn} imply that there is an automorphism $\psi$ of $G$ fixing $H$ and sending $t$ to $gt^{-n}\ell t^{n+1}$, where $g \in H \smallsetminus L$ and $\ell \in L \smallsetminus AL$. These choices of $g$ and $\ell$ ensure that the image of $t$ is cyclically reduced as written, and it follows that the translation length of $t$ in $S \cdot \psi$ is $2n+1$. 

\begin{lemma} \label{lem:unique}
Let $T$ be a $G$-tree of type $\calt(\Lambda)$ for some sublattice $\Lambda < \Phi(L_\infty)$ with $\Phi(L) \leq \Lambda$.
\begin{enumerate}[label=\textup{(\roman*)}]
\item \label{it:ind-unique} Given a lattice $\Lambda'$ with $\Lambda < \Lambda' \leq A^{-1}\Lambda$, there is a unique $G$-tree $\ind{T}{\Lambda'}$ of type $\calt(\Lambda')$ obtained from $T$ using an increasing induction.
\item \label{it:slide-unique} There is a unique $G$-tree obtained by sliding the non-loop edge of $G \backslash T$ clockwise around the loop edge, and it coincides with the $G$-tree $\ind{T}{A^{-1}\Lambda}$ from part~\ref{it:ind-unique}.
\item \label{it:slide=ind} Suppose that $\Lambda \geq \Phi(L_1)$. Then sliding the non-loop edge of $G \backslash T$ anti-clockwise around the loop edge is equivalent to a decreasing induction move.
\item \label{it:ind-exp} Suppose that $\Lambda \leq \Phi(L_1)$, and let $S'$ be a $G$-tree (of type $\cals$) obtained from $T$ using an $\scra^{-1}$-move. Then there is a unique $G$-tree $T'$ that can be obtained both from $T$ using a decreasing induction and from $S'$ using an expansion move.
\item \label{it:ind-ind} Given lattices $\Lambda_1,\Lambda_2$ with $\Lambda < \Lambda_1 < \Lambda_2 \leq A^{-1}\Lambda$, we have $\ind{T}{\Lambda_2} = \ind{(\ind{T}{\Lambda_1})}{\Lambda_2}$.
\end{enumerate}
\end{lemma}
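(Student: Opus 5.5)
The plan is to work throughout in the fundamental domain of $T$: the segment with vertices $v$, $u$, $\widehat{t}^{-1}u$, stabilised by $H$, $K$, $\widehat{t}^{-1}K\widehat{t}$, where $\Phi(K)=\Lambda$. By Lemma~\ref{lem:action-stable-letter}, the vertex $\widehat{t}^{-1}u$ is stabilised by a subgroup $K'$ with $\Phi(K')=A^{-1}\Lambda$. Because $\Phi$ is injective on each elliptic subgroup, and in particular on $K'$, every subgroup of $K'$ is determined by its image under $\Phi$. So every subgroup $H'$ with $K<H'\leq K'$ corresponds bijectively to a lattice $\Lambda'$ with $\Lambda<\Lambda'\leq A^{-1}\Lambda$. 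With this dictionary, each item becomes bookkeeping of which vertex stabilisers appear after the moves of Definition~\ref{def:slide_induction_A_moves}.

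For \ref{it:ind-unique}, the loop edge $e=[u,\widehat{t}u]$ in the orientation with $G_{\tau(e)}=G_e$ is the ascending edge. Its larger endpoint has stabiliser $K'$, after translating by $\widehat{t}^{-1}$. The increasing induction associated to $H'=\Phi|_{K'}^{-1}(\Lambda')$ is defined purely in terms of $H'$. It expands $\iota(e)$ into an edge stabilised by $H'$, keeping the leaf edge (stabilised by $L\leq K<H'$) at the outer vertex, and then collapses $e$. Reading off the resulting fundamental domain gives stabilisers $H, H'$ and a loop. So the result is of type $\calt(\Lambda')$, and it is unique because there is exactly one $G$-orbit of ascending edges and no choice is involved beyond $H'$. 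For \ref{it:ind-ind}, I would compare the two results directly. Performing the induction for $\Lambda_1$ and then for $\Lambda_2$ yields a tree whose vertex stabilisers are $H$ and the subgroup with image $\Lambda_2$, with the same leaf attachment. By the uniqueness in \ref{it:ind-unique}, it coincides with $\ind{T}{\Lambda_2}$. To make this airtight I would check that the trees have the same elliptic subgroups with the same incidences, i.e.\ that they are equivariantly isometric as in Lemma~\ref{lem:same-type-is-iso}.

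For \ref{it:slide-unique}, the clockwise slide moves the leaf edge $f=[u,v]$ (with $G_f=L\leq K=G_e$) across the loop edge. The $G_u$-orbit of that loop edge contains a single edge, so there is no choice, and the leaf becomes attached at the vertex with stabiliser $K'$. This is exactly the increasing induction with $H=G_{\iota(e)}$ of the Remark following Definition~\ref{def:slide_induction_A_moves}, so it equals $\ind{T}{A^{-1}\Lambda}$. For \ref{it:slide=ind}, the anticlockwise slide attaches the leaf at the vertex with stabiliser $\widehat{t}K\widehat{t}^{-1}$, which has $\Phi$-image $A\Lambda$. This requires $L\leq \widehat{t}K\widehat{t}^{-1}$, i.e.\ $\Phi(L)\leq A\Lambda$, which is exactly the hypothesis $\Lambda\geq\Phi(L_1)$. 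The same Remark identifies this slide with the decreasing induction associated to $G_f$ for an edge $f$ in the orbit of $e$. I only need to verify that the leaf edge orbit meets the hypothesis of the decreasing induction.

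Item \ref{it:ind-exp} is the main obstacle. Here $\Lambda\leq\Phi(L_1)$, so the anticlockwise slide is unavailable, but an $\scra^{-1}$-move exists: perform the decreasing induction associated to $G_f=L$, then collapse $f$. My plan is to take $T'$ to be the intermediate, non-reduced tree produced by the decreasing induction to $L$. That tree is obtained from $T$ by a decreasing induction, and it collapses to $S'$, so $S'$ expands to it. This gives existence. For uniqueness, I would use Lemma~\ref{lem:coll-same-red}-style reasoning. A tree obtained from $S'$ by one expansion that is also a decreasing induction of $T$ must have two edge orbits, with one vertex stabilised by $H$ and the other by a subgroup $L'$ satisfying $L\leq L'$. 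Being a decreasing induction from $T$ forces $L'<K$, and I expect to show $L'\leq L$ as well, which gives $L'=L$. The delicate point is justifying $L'\leq L$: an expansion of $S'$ at the $H$-vertex must separate the $L$-edge from the $AL$-edge, and this separation is what should pin the new stabiliser down to $L$.
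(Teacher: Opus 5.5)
\textbf{Part (iv): the uniqueness argument misses the real source of non-uniqueness.} Showing that the non-$H$ vertex of $T'$ is stabilised by exactly $L$ is the easy part: a tree that is a decreasing induction of $T$ and collapses to the type-$\cals$ tree $S'$ must have type $\calt(\Phi(L))$. It does not give uniqueness. Consider an expansion of $S'$ at its $H$-vertex $v'$ producing a tree of type $\calt(\Phi(L))$. It is specified by choosing an $L$-orbit $U$ of upward edges at $v'$, which is detached together with the downward edge $e$ (stabilised by $L$) and reattached at the new vertex $w'$. There are $[H:L]$ such orbits. All the resulting $G$-trees have vertex stabilisers $H$ and $L$, and all collapse to $S'$, yet in general they are distinct. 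Already for $\BS(2,4)$ and $S'=S$, the two choices give trees in which $t$ has combinatorial translation length $1$ and $3$. Similarly, a decreasing induction of $T$ associated to $L$ involves a choice of where the orbit of upward edges is attached.

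So pinning the stabiliser down to $L$ leaves every candidate in play. Lemma~\ref{lem:coll-same-red}-style reasoning cannot separate them either: these trees are not in $W$ (their non-loop edge is not surviving), and they all share the single reduced collapse $S'$. What is missing is an invariant, preserved by the decreasing induction from $T$, that singles out $U$. The paper uses connectivity. In $T$ the edges $\epsilon$ and $\widehat t\epsilon$ at $w$ lie in the same component of the forest obtained by deleting the $G$-orbit of the $H$-vertex, and induction moves preserve this. Hence in $T'$ the corresponding edges must lie in the same component after deleting the orbit of $v'$, which forces $U$ to be the $L$-orbit of $\widehat t e$. Your existence argument, taking the intermediate tree of the $\scra^{-1}$-move, is fine.

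\textbf{Part (v): the appeal to (i) does not apply.} The uniqueness in (i) is among trees obtained from $T$ by a \emph{single} increasing induction. The tree $\ind{(\ind{T}{\Lambda_1})}{\Lambda_2}$ is a priori obtained by two, and two trees of type $\calt(\Lambda_2)$ need not coincide. Lemma~\ref{lem:same-type-is-iso} only relates them via an automorphism of $G$, as the paper stresses right after it, so your fallback reference does not give equality either. You can repair this in one of two ways:
\begin{itemize}
\item Use the paper's concrete description of $\ind{T}{\Lambda'}$: it is the quotient of $T$ identifying, at each $g\widehat t^{-1}w$, the upward edges in each $gK'g^{-1}$-orbit. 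The two-step quotient by $K_1\le K_2$ is then visibly the one-step quotient by $K_2$.
\item Compare fundamental domains literally. Both trees have a segment fundamental domain with stabilisers $H$, $K_2$, $\widehat t^{-1}K_2\widehat t$ and stable letter $\widehat t$. Here the subgroup of $\widehat t^{-1}K_1\widehat t$ with image $\Lambda_2$ equals the one in $\widehat t^{-1}K\widehat t$, by injectivity of $\Phi$ on elliptic subgroups. These data determine the $G$-tree as $G/H\sqcup G/K_2$ with edges $G/L$ and $G/K_2$.
\end{itemize}

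\textbf{Part (ii): the cited Remark is not a literal match.} The Remark after Definition~\ref{def:slide_induction_A_moves} speaks of sliding all other edges at $\iota(e)$, not just the leaf orbit at $\tau(e)$. So you should check the identification of the clockwise slide with $\ind{T}{A^{-1}\Lambda}$ directly. Either use the same fundamental-domain comparison, with stabilisers $H$, $\widehat t^{-1}K\widehat t$, $\widehat t^{-2}K\widehat t^{2}$, or use the paper's explicit equivariant bijection $gv\mapsto[gv]$, $gw\mapsto[g\widehat t w]$. Part (iii) then follows by inverting the slide, as in the paper.
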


\begin{proof}
Note that the tree $T$ can be described as follows. It has two orbits of vertices, \emph{yellow} and \emph{blue}, where for our choice of fundamental domain, their representatives are stabilised by subgroups $H$ and $K$, with $\Phi(H) = \ZZ^m$ and $\Phi(K)=\Lambda$, respectively (see Figure~\ref{sfig:gog-standard}). Any yellow vertex has $[H:L]$ edges incident to it, all corresponding to the non-loop edge of $G \backslash T$. Any blue vertex is adjacent to $[K:L]$ \emph{rightward} edges (corresponding to the non-loop edge of $G \backslash T$), one \emph{downward} edge and $[\Lambda:A\Lambda] = \left|\det A\right|$ \emph{upward} edges (corresponding to the loop in $G \backslash T$). See Figure~\ref{fig:unique}, left. We have a similar description of a $G$-tree of type $\cals$: it has one orbit of vertices, and each vertex has $[H:L]$ \emph{downward} and $[H:\widehat{t}L\widehat{t}^{-1}] = \left| \det A \right| [H:L]$ \emph{upward} incident edges, where $\widehat{t}$ is a stable letter of the $G$-tree.

\begin{figure}[ht]
\centering
\begin{tikzpicture}
\begin{scope}[yscale=-1]
\draw [thick] (2,0) -- (0,0) -- (0,2) -- (0.5,2.1) (0,2) -- (0.5,1.9) (-1,0) -- (0,2) -- (0.1,1.5) (0.2,1.5) -- (0,2) -- (0,2.5) (0,0) -- (0.5,0.15) (1.85,0.45) -- (2,0) -- (2.4,0.3) (1.85,-0.45) -- (2,0) -- (2.4,-0.3) (-0.5,0.1) -- (-1,0) -- (-0.5,-0.1) (-1.15,-0.5) -- (-1,0) -- (-1.05,-0.5) (-0.95,-0.5) -- (-1,0) -- (-0.85,-0.5);
\foreach \x in {-1,0,1,2} {
	\draw [thick] (0,0) -- (\x,-2) -- (\x+0.5,-1.9) (\x,-2) -- (\x+0.5,-2.1) (\x-0.15,-2.5) -- (\x,-2) -- (\x-0.05,-2.5) (\x+0.05,-2.5) -- (\x,-2) -- (\x+0.15,-2.5);
	\draw [fill=blue] (\x,-2) circle (2pt);
}
\draw [fill=yellow] (2,0) circle (2pt);
\draw [fill=blue] (0,0) circle (2pt) (0,2) circle (2pt) (-1,0) circle (2pt);
\end{scope}

\begin{scope}[xshift=5cm,yshift=3cm,yscale=-1]
\draw [thick] (2,0) -- (0,0) -- (-0.25,1) -- (0,2) -- (0.5,2.1) (0,2) -- (0.5,1.9) (-1,0) -- (-0.25,1) (0.2,1.5) -- (0,2) -- (0,2.5) (0,0) -- (0.5,0.15) (1.85,0.45) -- (2,0) -- (2.4,0.3) (1.85,-0.45) -- (2,0) -- (2.4,-0.3) (-0.5,0.1) -- (-1,0) -- (-0.5,-0.1) (-1.1,-0.5) -- (-1,0) -- (-0.9,-0.5);
\draw [green!70!black,->] (-0.25,1) -- (-0.375,0.5);
\draw [green!70!black,->] (-0.25,-1) -- (-0.375,-1.5);
\draw [green!70!black,->] (0.75,-1) -- (1.05,-1.5);
\foreach \x/\y in {-1/-0.25,0/-0.25,1/0.75,2/0.75} {
	\draw [thick] (0,0) -- (\y,-1) -- (\x,-2) -- (\x+0.5,-1.9) (\x,-2) -- (\x+0.5,-2.1) (\x-0.1,-2.5) -- (\x,-2) -- (\x+0.1,-2.5);
	\draw [fill=blue] (\x,-2) circle (2pt);
	\draw [fill=green!70!black] (\y,-1) circle (2pt);
}
\draw [fill=yellow] (2,0) circle (2pt);
\draw [fill=blue] (0,0) circle (2pt) (0,2) circle (2pt) (-1,0) circle (2pt);
\draw [fill=green!70!black] (-0.25,1) circle (2pt);
\end{scope}

\begin{scope}[xshift=10cm,yshift=3cm,yscale=-1]
\draw [thick] (2,0) -- (-0.5,0) -- (0,2) -- (0.5,2.15) (0.5,2.05) -- (0,2) -- (0.5,1.95) (0,1.5) -- (0,2) -- (0.5,1.85) (0,2) -- (0.1,1.5) (0.2,1.5) -- (0,2) -- (0,2.5) (0,0.2) -- (-0.5,0) -- (0,0.1) (-0.5,0) -- (0,-0.1) (-0.75,-0.5) -- (-0.5,0) -- (-0.65,-0.5) (1.85,0.45) -- (2,0) -- (2.4,0.3) (1.85,-0.45) -- (2,0) -- (2.4,-0.3);
\foreach \x in {-0.5,1.5} {
	\draw [thick] (-0.5,0) -- (\x,-2) -- (\x+0.5,-1.85) (\x+0.5,-1.95) -- (\x,-2) -- (\x+0.5,-2.05) (\x,-2) -- (\x+0.5,-2.15) (\x-0.15,-2.5) -- (\x,-2) -- (\x-0.05,-2.5) (\x+0.05,-2.5) -- (\x,-2) -- (\x+0.15,-2.5);
	\draw [fill=blue] (\x,-2) circle (2pt);
}
\draw [fill=yellow] (2,0) circle (2pt);
\draw [fill=blue] (-0.5,0) circle (2pt) (0,2) circle (2pt);
\end{scope}

\begin{scope}[xshift=5cm,yshift=-3.5cm,yscale=-1]
\draw [thick] (2,0) to[out=180,in=-14] (0,0.6) -- (0.4,0.7) (0,0) -- (0,2) (-1,0) -- (0,2) -- (0.1,1.5) (0.2,1.5) -- (0,2) -- (0,2.5) (1.85,0.45) -- (2,0) -- (2.4,0.3) (1.85,-0.45) -- (2,0) -- (2.4,-0.3) (-0.3,0.5) -- (-0.7,0.6) -- (-0.3,0.7) (-1.15,-0.5) -- (-1,0) -- (-1.05,-0.5) (-0.95,-0.5) -- (-1,0) -- (-0.85,-0.5);
\draw [green!70!black,->] (-0.13,0.4) -- (-0.13,0.9);
\draw [green!70!black,->] (-0.95,0.4) -- (-0.7,0.9);
\draw [green!70!black,->] (-0.13,-1.6) -- (-0.13,-1.1);
\draw [green!70!black,->] (-0.95,-1.6) -- (-0.7,-1.1);
\draw [green!70!black,->] (0.65,-1.6) -- (0.4,-1.1);
\draw [green!70!black,->] (1.45,-1.63) -- (0.95,-1.13);
\foreach \x in {-1,0,1,2} {
	\draw [thick] (0,0) -- (\x,-2) (\x*0.7+0.4,-1.3) -- (\x*0.7,-1.4) -- (\x*0.7+0.4,-1.5) (\x-0.15,-2.5) -- (\x,-2) -- (\x-0.05,-2.5) (\x+0.05,-2.5) -- (\x,-2) -- (\x+0.15,-2.5);
	\draw [fill=blue] (\x,-2) circle (2pt);
	\draw [fill=green!70!black] (\x*0.7,-1.4) circle (2pt);
}
\draw [fill=yellow] (2,0) circle (2pt);
\draw [fill=blue] (0,0) circle (2pt) (0,2) circle (2pt) (-1,0) circle (2pt);
\draw [fill=green!70!black] (0,0.6) circle (2pt) (-0.7,0.6) circle (2pt);
\end{scope}

\begin{scope}[xshift=10cm,yshift=-3.5cm,yscale=-1]
\draw [thick] (2,0) to[out=180,in=-21.8] (0.5,1.8) -- (0,2) -- (0,0) -- (0.5,-0.2) (0,2) -- (0.5,2.2) (-1,0) -- (0,2) -- (0.1,1.5) (0.2,1.5) -- (0,2) -- (0,2.5) (0,0) -- (0.5,0.2) (1.85,0.45) -- (2,0) -- (2.4,0.3) (1.85,-0.45) -- (2,0) -- (2.4,-0.3) (-0.5,0.2) -- (-1,0) -- (-0.5,-0.2) (-1.15,-0.5) -- (-1,0) -- (-1.05,-0.5) (-0.95,-0.5) -- (-1,0) -- (-0.85,-0.5);
\draw [thick,densely dotted] (-0.5,0.2) arc (21.8:-21.8:0.5385) (0.5,0.2) arc (21.8:-21.8:0.5385) (0.5,2.2) arc (21.8:-21.8:0.5);
\foreach \x in {-1,0,1,2} {
	\draw [thick] (0,0) -- (\x,-2) (\x+0.5,-1.8) -- (\x,-2) -- (\x+0.5,-2.2) (\x-0.15,-2.5) -- (\x,-2) -- (\x-0.05,-2.5) (\x+0.05,-2.5) -- (\x,-2) -- (\x+0.15,-2.5);
	\draw [thick,densely dotted] (\x+0.5,-1.8) arc (21.8:-21.8:0.5385);
	\draw [fill=blue] (\x,-2) circle (2pt);
}
\draw [fill=yellow] (2,0) circle (2pt);
\draw [fill=blue] (0,0) circle (2pt) (0,2) circle (2pt) (-1,0) circle (2pt);
\end{scope}

\draw [->] (1,3) to[bend left=20] node [midway,above,xshift=-5mm] {expansion} (3,4);
\draw [->] (1,-2.5) to[bend right=20] node [midway,below,xshift=-5mm] {expansion} (3,-3.5);
\draw [->] (6.5,2) to node [midway,above] {collapse} (8.5,2);
\draw [->] (6.5,-4.5) to node [midway,above] {collapse} (8.5,-4.5);
\end{tikzpicture}
\caption{\emph{Left}: a $G$-tree $T$ of type $\calt(\Lambda)$; in this case $[H:L] = 5$, $[K:L] = 2$ and $\left| \det A \right| = 4$. \emph{Above}: an increasing induction $T \to T'$, where $T'$ is of type $\calt(\Lambda')$ and $[\Lambda':\Lambda] = 2$. \emph{Below}: a slide of the non-loop edge of $G \backslash T$ clockwise around the loop in $G \backslash T$; at the end of this operation, each blue vertex has eight rightward edges.}
\label{fig:unique}
\end{figure}

Note that, since $T$ has type $\calt(\Lambda)$, it has as a fundamental domain a segment with vertices $v,w,\widehat{t}^{-1}w$ stabilised by $H,K,\widehat{t}^{-1}K\widehat{t}$, respectively, where $\Phi(K) = \Lambda$ and $K \leq \widehat{t}^{-1}K\widehat{t}$.  At the vertex $w$, the rightward edges are then of the form $[w,gv]$ for $g \in K$, the upward edges of the form $[w,g\widehat{t}w]$ for $g \in K$, and the downward edge is $[w,\widehat{t}^{-1}w]$.

Let $\Lambda'$ be a lattice with $\Lambda < \Lambda' \leq A^{-1}\Lambda$. Since the map $\Phi|_{K}\colon K \to \Lambda$ is bijective, so is $\Phi|_{\widehat{t}^{-1}K\widehat{t}}\colon \widehat{t}^{-1}K\widehat{t} \to A^{-1}\Lambda$ (where the codomain is given by Lemma~\ref{lem:action-stable-letter}), hence there exists a unique subgroup $K' \leq \widehat{t}^{-1}K\widehat{t}$ such that $\Phi(K') = \Lambda'$. Then an increasing induction move resulting in a $G$-tree of type $\calt(\Lambda')$ is possible. However, any such move corresponds to the following operation: for each vertex $w' = g\widehat{t}^{-1}w \in T$ (stabilised by $g (\widehat{t}^{-1}K\widehat{t}) g^{-1}$) in the orbit of $w$, we fold each $gK'g^{-1}$-orbit of upward edges incident to $w'$ onto a single edge (see Figure~\ref{fig:unique}, above). Such an operation is uniquely defined by the choice of $K'$, and therefore by the choice of $\Lambda'$, proving \ref{it:ind-unique}.

On the other hand, the slide move described in \ref{it:slide-unique} corresponds to sliding all the rightward edges incident to $w'$ along the downward edge incident to $w'$, for each blue vertex $w' \in T$ (see Figure~\ref{fig:unique}, below). Such a move is unique since any vertex has only one downward edge incident to it.  We may check that the increasing induction move corresponds to identifying the edges $[g\widehat{t}^{-1}w,gw]$ and $[g\widehat{t}^{-1}w,ghw]$ (and thus the vertices $gw$ and $ghw$) for $h \in \widehat{t}^{-1}K\widehat{t}$, and the clockwise slide move corresponds to detaching the edge $[gv,gw]$ from the vertex $gw$ and reattaching it to the vertex $g\widehat{t}^{-1}w$. Writing $T'$ for the tree resulting from the clockwise slide, and using the notation $- \mapsto [-]$ for the quotient map $T \to \ind{T}{A^{-1}L'}$, consider the map $\varphi\colon V(T') \to V(\ind{T}{A^{-1}\Lambda})$ defined by $\varphi(gv) = [gv]$ and $\varphi(gw) = [g\widehat{t}w]$. 
It is then easy to check that the map $\varphi$ is a well-defined $G$-equivariant bijection and preserves adjacency, i.e.\ vertices $u,u'$ in $T'$ are adjacent if and only if $\varphi(u),\varphi(u')$  are adjacent in $\ind{T}{A^{-1}\Lambda}$. This shows that $T'$ and $\ind{T}{A^{-1}\Lambda}$ are $G$-equivariantly isomorphic, proving \ref{it:slide-unique}.

Suppose now that a $G$-tree $T'$ is obtained from $T$ using an anti-clockwise slide (this is possible if and only if $L$ stabilises some---equivalently, any---upward edge at $w$, or equivalently $K \geq \widehat{t}^{-1}L\widehat{t}$, i.e.\ $\Lambda \geq A^{-1}\Phi(L) = \Phi(L_1)$). Thus, $T$ is obtained from $T'$ using a clockwise slide, and therefore, by \ref{it:slide-unique}, an increasing induction.  But this implies that $T'$ is obtained from $T$ using a decreasing induction, proving \ref{it:slide=ind}.

Suppose that a $G$-tree $S'$ is obtained from $T$ using an $\scra^{-1}$-move (this is possible if and only if the stabiliser of some---equivalently, any---upward edge at $w$ also stabilises $v$, or equivalently $K \leq \widehat{t}^{-1}L\widehat{t}$, i.e.\ $\Lambda \leq A^{-1}\Phi(L) = \Phi(L_1)$).  Note that $S'$ has a vertex $v'$ stabilised by $H$ and a downward edge $e$ stabilised by $L$ incident to $v'$. Then any $G$-tree $T'$ obtained from $S'$ using an expansion that is the start of an $\scra$-move---in which case $T'$ must have type $\calt(\Phi(L))$---can be constructed as follows: pick an $L$-orbit $U$ of upward edges incident to~$v'$, add a vertex $w'$ and an edge $f = \{v',w'\}$ to $S'$, detach each edge in $\{ e \} \cup U$ from $v'$ and reattach it to $w'$, and repeat this process equivariantly; see Figure~\ref{fig:expand}. Note, however, that if the edges $\epsilon$ and $\widehat{t}\epsilon$ of $T$ are adjacent to $w$, they are in the same connected component in the forest obtained by removing the orbit of $v$ from $T$, and this type of property is not changed under induction moves: that is, if $T'$ can be obtained from $T$ using a decreasing induction, then the corresponding edges $\epsilon'$ and $\widehat{t}\epsilon'$ are in the same connected component in the forest obtained by removing the orbit of $v'$ from $T'$. This implies that $U$ must coincide with the $L$-orbit of the edge $\widehat{t}e$ in the construction above, and so the $G$-tree $T'$ can be defined uniquely, proving~\ref{it:ind-exp}.

\begin{figure}[ht]
    \centering
\begin{tikzpicture}
    \begin{scope}
    \foreach \x/\i/\c in {0/-1/black,1/-1/black,2/-1/black,3/1/red,4/1/red,5/1/red} {
        \draw [thick] ({\x-2.5},2) -- ({0.9*\x-2.25+0.2*\i},1.5) ({\x-2.75},2.5) -- ({\x-2.5},2) -- ({\x-2.65},2.5) ({\x-2.55},2.5) -- ({\x-2.5},2) -- ({\x-2.45},2.5) ({\x-2.35},2.5) -- ({\x-2.5},2) -- ({\x-2.25},2.5);
        \draw [thick,\c] (0,0) -- ({\x-2.5},2);
        \draw [fill=yellow] ({\x-2.5},2) circle (2pt);
    }
    \foreach \i/\c in {-1/black,1/red} {
        \draw [thick] (\i,-2) -- ({\i*0.85},-1.5) ({\i*0.95},-1.5) -- (\i,-2) -- ({\i*1.05},-1.5) ({\i*1.15},-1.5) -- (\i,-2) -- ({\i*1.25},-1.5) ({\i*0.85},-2.5) -- (\i,-2) -- ({\i*1.15},-2.5);
        \draw [thick,\c] (0,0) -- (\i,-2);
        \draw [fill=yellow] (\i,-2) circle (2pt);
    }
    \draw [fill=yellow] (0,0) circle (2pt) node [right] {$v'$};
    \node [right,red] at (0.4,-0.8) {$e$};
    \end{scope}
    \draw [->] (2.5,0) -- (4,0);
    \begin{scope}[xshift=7cm]
    \foreach \x/\i in {0/-1,1/-1,2/-1,3/1,4/1,5/1} {
        \draw [thick] ({\i*1.5},0) -- ({\x-2.5-\i*0.25},2) -- ({\x-2.5-\i*0.25},2.5) ({(\x-2.5-\i*1.25)*0.75+\i*1.5},1.5) -- ({\x-2.5+\i*0.25},2) -- ({\x-2.5+\i*0.25},2.5) ({\x-2.85},2.5) -- ({\x-2.75},2) -- ({\x-2.65},2.5) ({\x-2.35},2.5) -- ({\x-2.25},2) -- ({\x-2.15},2.5) ({\x-2.75},2) -- ({\x-2.25},2);
        \draw [fill=blue] ({\x-2.25},2) circle (2pt) ({\x-2.75},2) circle (2pt);
        \draw [fill=yellow] ({\x-2.5},2) circle (2pt);
    }
    \foreach \i in {-1,1} {
        \draw [thick] (0,0) -- ({\i*1.5},0) -- ({\i*0.5},-2) -- ({\i*0.5},-2.5) ({\i*0.6},-1.5) -- ({\i*0.5},-2) -- ({\i*0.45},-1.5) ({\i*2.15},-1.5) -- ({\i*2},-2) -- ({\i*2},-1.5) ({\i*2},-2.5) -- ({\i*2},-2) -- ({\i*1.85},-1.5) ({\i*2},-2) -- ({\i*0.5},-2);
        \draw [fill=blue] (\i*1.5,0) circle (2pt) (\i*0.5,-2) circle (2pt) (\i*2,-2) circle (2pt);
        \draw [fill=yellow] (\i*1.25,-2) circle (2pt);
    }
    \draw [fill=yellow] (0,0) circle (2pt) node [below] {$v'$};
    \node [above] at (0.75,0) {$f$};
    \node [right] at (1.5,0) {$w'$};
    \end{scope}
\end{tikzpicture}
    \cprotect\caption{Construction of the $G$-tree $T'$ (right) from $S'$ (left) in the proof of Lemma~\ref{lem:unique}\ref{it:ind-exp}; in this case, $[H:L] = 2$ and $\left| \det(A) \right| = 3$.  The edges $\{e\} \cup U$ of $S'$ are coloured in red.}
    \label{fig:expand}
\end{figure}

Finally, let $\Lambda_1,\Lambda_2$ be lattices satisfying $\Lambda < \Lambda_1 < \Lambda_2 \leq A^{-1}\Lambda$. Since the map $\Phi|_{K}\colon K \to \Lambda $ is bijective, so is $\Phi|_{\widehat{t}^{-1}K\widehat{t}}\colon \widehat{t}^{-1}K\widehat{t} \to A^{-1}\Lambda$, hence there exist unique subgroups $K_1,K_2 \leq \widehat{t}^{-1}K\widehat{t}$ such that $\Phi(K_1) = \Lambda_1$ and $\Phi(K_2) = \Lambda_2$. Then $T_1 \coloneqq \ind{T}{\Lambda_1}$ is obtained by identifying the edges $[gw,g\widehat{t}^{-1}w]$ and $[ghw,g\widehat{t}^{-1}w]$ for $h \in K_1$ in $T$, 
and $\ind{T_1}{\Lambda_2}$ is obtained by identifying the edges $[gw,g\widehat{t}^{-1}w]$ and $[ghw,g\widehat{t}^{-1}w]$ for $h \in K_2$ in $T_1$. This is clearly equivalent to identifying the edges $[gw,g\widehat{t}^{-1}w]$ and $[ghw,g\widehat{t}^{-1}w]$ for $h \in K_2$ in $T$, showing that $\ind{T}{\Lambda_2} = \ind{T_1}{\Lambda_2}$ and thus proving~\ref{it:ind-ind}.
\end{proof}

Clearly not every tree in $\cald$ has one of the above types. But our next goal is to show that every tree in $W$ does; as a first observation notice that since $H$ is a maximal elliptic subgroup it cannot stabilise a surviving edge, and so every tree in $W$ has a 
unique vertex stabilised by $H$.

First we characterise the reduced trees in $\cald$ (all of which are in $W$).

\begin{lemma} \label{lem:reduced}
The reduced $G$-trees in $\cald$ are exactly the trees of type $\cals$ or $\calt(\Lambda)$, where $\Lambda < \Phi(L_\infty)$ is a lattice satisfying $\Phi(L) < \Lambda < A^{-1}\Lambda$. 
\end{lemma}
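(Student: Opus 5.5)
We prove the two inclusions separately; the reverse inclusion, that every reduced tree has one of the two types, is where the work lies.

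\textbf{Trees of these types are reduced and lie in $\cald$.} A tree of type $\cals$ has a single orbit of vertices, so it is reduced. By Lemma~\ref{lem:same-type-is-iso} it is $S\cdot\psi$ for some $\psi\in\Aut(G)$, so it lies in $\cald$ by Proposition~\ref{prop:D-AutG-inv}.

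Now take a tree of type $\calt(\Lambda)$. The loop edge has both endpoints in one orbit, so it causes no problem. The separating edge is stabilised by $L$, and its endpoints have stabilisers $H$ and $K$ with $\Phi(K)=\Lambda$. This edge is collapsible exactly when $L=H$ or $L=K$. The first fails since $L<\ZZ^m$, and the second fails since $\Phi(L)<\Lambda$. The condition $\Lambda<A^{-1}\Lambda$ is automatic because $|\det A|>1$, so the tree is reduced.

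For membership in $\cald$, we start from $T_1$, which is obtained from $S$ by an expansion (an $\scra$-type expansion followed by induction, as in the warm-up before Figure~\ref{fig:BAD_trees}). Increasing inductions (Lemma~\ref{lem:unique}\ref{it:ind-unique},\ref{it:ind-ind}) and clockwise slides (Lemma~\ref{lem:unique}\ref{it:slide-unique}) then reach every $\calt(\Lambda)$ with $\Phi(L)<\Lambda$. None of these moves changes the elliptic subgroups, so all these trees lie in $\cald$. Alternatively, we can build $\calt(\Lambda)$ directly as the Bass--Serre tree of the graph of groups in Figure~\ref{sfig:gog-standard}. Its fundamental group is $G$, because the loop with vertex group $K$ and stable letter acting by $A$ amalgamated with $H$ along $L$ gives back $\LM(A,L)$ via Tietze moves. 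Its elliptic subgroups are the conjugates of subgroups of $H$ and $K$, and $K$ is conjugate into $H$.

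\textbf{Every reduced tree has one of the two types.} Let $T$ be reduced in $\cald$. Every elliptic subgroup is conjugate into $H$ and is finite index in a conjugate of $H$, since edge groups are commensurable. By Lemma~\ref{lem:find_H_and_L} there is a unique vertex $v$ fixed by $H$, with an $H$-orbit of adjacent edges stabilised by $L$.

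The quotient graph $\Gamma=G\backslash T$ has $\pi_1\cong\ZZ$, because $G$ modulo its elliptic elements is $\ZZ$ (as in Lemma~\ref{lem:action-stable-letter}). Hence $\Gamma$ is a circle with trees attached. Every edge group is commensurable with $H$, so every vertex group is too, and there are no valence-one vertices other than possibly $[v]$. The reason is that a leaf vertex group $G_u$ containing its edge group properly would be elliptic and conjugate into $H$, so by maximality and the uniqueness of $\Fix(H)$ the only such leaf is $[v]$.

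Reducedness forbids valence-two non-loop vertices with equal adjacent groups. Using the maximality of $H$ and the self-normalising argument, we should show that at most one non-loop edge remains, namely the edge at $[v]$. So $\Gamma$ is either a single loop at $[v]$, or a loop of length $k$ with one hair ending at $[v]$.

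In the first case, the stable letter conjugates a finite-index subgroup of $H$ properly into $H$, and $T$ has type $\cals$. In the second case, reducedness around the loop forces strict inclusions $K_1<\dots<K_k$ oriented coherently by Lemma~\ref{lem:action-stable-letter}. We must then show that $k=1$. A reduced loop with several vertices would need, at each loop vertex, the edge group to be proper in both adjacent vertices. Combined with $G_e=L$ at the attaching vertex and the nesting of commensurable lattices under the $A$-action, this contradicts the fact that $\Phi$ restricted to the vertex groups along the loop realises a chain $\Lambda_1<\dots<\Lambda_k\le A^{-1}\Lambda_1$: all inclusions except the one at the loop edge are surjective, so intermediate vertices are collapsible.

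\textbf{Main obstacle.} The hardest part is this structural step: ruling out extra hairs and multiple loop vertices using only reducedness and maximal ellipticity of $H$. I would first try to handle it by looking at where $\Fix(L)$ meets the axis of the stable letter.
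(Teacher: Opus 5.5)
Your first inclusion is essentially fine, but the reverse inclusion, which you rightly flag as the heart of the matter, is not proved, and the sketch you give for it would fail. The reduction of $\Gamma=G\backslash T$ to the two shapes is left as ``we should show''. Your argument for $k=1$ assumes that all inclusions around the cycle but one are surjective. That is the (S3)-shelter structure of trees in $W$ (Lemma~\ref{lem:W}), and it is not available for an arbitrary reduced tree. Indeed, if the cycle has length $k\ge 2$, each of its edges joins distinct $G$-orbits, so reducedness forces \emph{every} inclusion along it to be proper, and no vertex is collapsible. The contradiction must come from maximal ellipticity instead. Your leaf argument has the same hole: ``conjugate into $H$'' does not give maximality.

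The missing idea is this. Suppose $u\notin Gv$. Then $G_u$ is elliptic in $S$, so $G_u\le gHg^{-1}$ for some $g$, while $\Fix(gHg^{-1})=\{gv\}$ by Lemma~\ref{lem:find_H_and_L}. Hence $G_u$ fixes the first edge $f$ of $[u,gv]$, so $G_f=G_u$, and reducedness forces $f$ to project to a loop edge of $\Gamma$. Since $\beta_1(\Gamma)=1$ there is at most one loop edge, so every vertex other than $[v]$ carries it. An Euler characteristic count then leaves only two shapes: a single loop at $[v]$, or a loop at some $[w]\ne[v]$ plus one edge $[w]$--$[v]$. The same argument shows $K=G_w$ is a loop-edge stabiliser, i.e.\ $K\le\widehat t^{-1}K\widehat t$ for a suitable choice of $\widehat t$.

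You would then still have to verify the defining conditions of the types, which you only assert:
\begin{itemize}
\item that $\Phi(\widehat t)$ acts by $A^{\pm1}$, which after orienting gives the stable-letter condition and $\Lambda<A^{-1}\Lambda$;
\item that the hair edge is stabilised by $L$, so that $\Phi(L)<\Lambda$ by reducedness;
\item that $\Lambda<\Phi(L_\infty)$.
\end{itemize}
The last is not automatic from $\Phi(L)\le\Lambda$ and $A\Lambda\le\Lambda$. For example, take $A=\diag(2,1)$, $L=2\ZZ^2$ and $\Lambda=\ZZ^2$, where $\Phi(L_\infty)=\ZZ[1/2]\times 2\ZZ$. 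It does follow from the tree: $K$ fixes some $gv$, hence the hair edge at $gv$ on $[w,gv]$. That edge's stabiliser has the form $g'Lg'^{-1}$, whose $\Phi$-image is $A^q\Phi(L)\subseteq\Phi(L_\infty)$ by $A^{\pm1}$-invariance.

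The paper bypasses this structural analysis altogether. It uses Clay--Forester's theorem that reduced trees in $\cald$ are connected by slides, inductions and $\scra^{\pm1}$-moves. It then checks move by move that the set of trees of types $\cals$ and $\calt(\Lambda)$, with the stated constraints, is closed under these moves.

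Finally, there is a slip in your first inclusion. Increasing inductions and clockwise slides only enlarge the lattice, so starting from $T_1$ you reach only $\Lambda\supseteq\Phi(L_1)$. For example, for $\BS(2,8)$ (with $A=4$, $L=2\ZZ$) the lattice $\Lambda=\ZZ$ is missed. The paper instead applies an $\scra$-move to $S$ to get $\calt(\Lambda\cap\Phi(L_1))$, then inducts through $\Lambda\cap\Phi(L_i)$. Your graph-of-groups alternative already covers membership in $\cald$, so this slip does not affect the conclusion.
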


\begin{proof}
We first note that any $G$-tree of the form $\cals$ or $\calt(\Lambda)$, where $\Lambda < \Phi(L_\infty)$ is a lattice with $\Phi(L) < \Lambda < A^{-1}\Lambda$, is reduced: edges either have both endpoints in the same orbit or both inclusions proper. Note that by definition $\cald$ contains the $G$-tree  $S$ (of \Cref{fig:gog_for_LMAL}), which has type $\cals$ and where $t \in G$ acts as the stable letter.

We now show that for any $\Lambda$ as in the hypotheses, $\cald$ contains a tree of type $\calt(\Lambda)$. Since $\Phi$ is injective on $L_\infty$, there is a unique subgroup $K < L_\infty$ satisfying $\Phi(K) = \Lambda$, and since $\Lambda < A^{-1}\Lambda$, it follows that $K < t^{-1}Kt$. Since $K$ is finitely generated, there is a minimal $L_j$ which contains $K$ (that is, $K \nleq L_{j-1}$). 
The proof is by induction on $j$. For $1 \leq i \leq j$, let $K_i = K \cap L_i$ (so $K_j = K$) and note that for every $i$, $\Phi(K_i) = \Phi(K \cap L_i) = \Lambda \cap \Phi(L_i)$, and $\Phi(t^{-1}K_it)= A^{-1}\Phi(K_i)$. We have: \begin{enumerate}[(i)]
    \item (Base case) Since $L < K_1 \leq L_1$, there is a $\scra$-move which transforms $S$ to a tree with fundamental domain a segment whose vertices are stabilised by $H$, $K_1$ and $t^{-1}K_1t$. Since this can be done so that $t$ is still a stable letter in the new $G$-tree, by Lemma~\ref{lem:same-type-is-iso} the element $\Phi(t)$ acts as $A$ on $\Phi(t^{-1}K_1t)$, and so this is a tree of type $\calt(\Lambda \cap \Phi(L_1))$.
    \item (Inductive step) Suppose we have constructed a tree $T_i$ of type $\calt(\Lambda \cap \Phi(L_i))$, where the stabilisers in the fundamental domain are $H$, $K_i$ and $t^{-1}K_it$. We have \[ K_i = K \cap L_i \leq K \cap L_{i+1} \leq t^{-1}Kt \cap L_{i+1} = t^{-1}(K \cap L_i)t = t^{-1}K_i t, \] 
    and note that the middle term is $K \cap L_{i+1} = K_{i+1}$. So there is an increasing induction transforming $T_i$ into $T_{i+1}$, which has a fundamental domain consisting of a segment whose vertex stabilisers are $H$, $K_{i+1}$ and $t^{-1}K_{i+1}t$, and this can be done so that $t$ is a stable letter of $T_{i+1}$. Since, by Lemma~\ref{lem:action-stable-letter}, $\Phi(t)$ acts as $A$ on $\Phi(t^{-1}K_{i+1}t)$, this is a tree of type $\calt(\Lambda \cap \Phi(L_{i+1}))$.
    \end{enumerate}

This shows that any tree of type $\cals$ or $\calt(\Lambda)$ is contained in $\cald$. 
By \cite[Theorem~1.1]{ClayForester2009}, it is now enough to show that the set of reduced trees of types $\cals$ and $\calt(\Lambda)$ is invariant under slides, inductions and $\scra^{\pm1}$-moves, described in Definition~\ref{def:slide_induction_A_moves}. 

Consider first a $G$-tree of type $\cals$. Then the only possible moves are $\scra$-moves, resulting in a $G$-tree $T$ which has a fundamental domain a segment of length two whose vertices are stabilised by $H$, $K$ and $\widehat{t}^{-1}K\widehat{t}$ for some $\widehat{t} \in G$.  Moreover, it follows from the construction that, after replacing $\widehat{t}$ by $\widehat{t}^{-1}$ if necessary, the edge joining vertices stabilised by $K$ and $\widehat{t}^{-1}K\widehat{t}$ is itself stabilised by $K$, meaning that $\widehat{t}$ conjugates $K$ to a proper subgroup of itself, and therefore is a stable letter of $T$; hence $T$ has type $\calt(\Lambda)$, where $\Lambda = \Phi(K)$. It then follows from the definition of an $\scra$-move that $L < K$ and so $\Phi(L) < \Lambda$, whereas Lemma~\ref{lem:action-stable-letter} implies that $\Lambda = \Phi(K) < \Phi(\widehat{t}^{-1}K\widehat{t}) = A^{-1}\Lambda$.

Now consider a $G$-tree of type $\calt(\Lambda)$ for some lattice $\Lambda < \Phi(L_\infty)$ with $\Phi(L) < \Lambda < A^{-1}\Lambda$; let $H$ and $K$ be the vertex groups of the corresponding graph of groups (as in Figure~\ref{sfig:gog-standard}). 
Then induction, slide and $\scra^{-1}$-moves may be possible. Any slide move is equivalent to an induction move (as noted after Definition~\ref{def:slide_induction_A_moves}), so it is enough to consider induction and $\scra^{-1}$-moves.

Note first that any induction move preserves the property of the loop-edge of the corresponding graph of groups being \emph{strictly ascending}, i.e.\ such that exactly one of the two edge group monomorphisms is surjective.  Moreover, if after a decreasing induction move a collapse move is possible (resulting in an $\scra^{-1}$-move), then the collapsed edge was stabilised by $L$ by Lemma~\ref{lem:find_H_and_L}, so in the resulting one-edge graph of groups the edge group monomorphisms have images $L$ (which is a finite-index subgroup of $H$) and a proper subgroup of $L$.  Therefore, in either case the requirement that the stable letter $\widehat{t}$ conjugates a finite-index subgroup subgroup of $H$ to a proper subgroup of itself (for type $\cals$ trees) or that $K' \leq \widehat{t}^{-1}K'\widehat{t}$ (for type $\calt(\Phi(K'))$ trees) is satisfied by the new $G$-tree.

Given a subgroup $K'$ of $G$ so that $K < K' \leq \widehat{t}^{-1}K\widehat{t}$, we have an increasing induction resulting in a $G$-tree with a fundamental domain which is a segment of length two whose stabilisers are $H$, $K'$ and $\widehat{t}^{-1}K'\widehat{t}$, and every increasing induction move is of this form; here $\widehat{t}$ is a stable letter of the original $G$-tree.  Setting $\Lambda' \coloneqq \Phi(K')$ we then have $\Lambda < \Lambda' \leq A^{-1}\Lambda$ since $\Phi(\widehat{t}^{-1}K\widehat{t}) = A^{-1}\Lambda$ by Lemma~\ref{lem:action-stable-letter}, so in particular $\Phi(L) < \Lambda' < A^{-1}\Lambda'$, and hence the new $G$-tree has type $\calt(\Lambda')$.

Given a subgroup $K' < K$ with $A\Phi(K) \leq \Phi(K')$ and $\Phi(L) < \Phi(K')$, we similarly have (by Lemma~\ref{lem:action-stable-letter}) a decreasing induction resulting in a $G$-tree with two-edge fundamental domain whose vertex stabilisers are $H$, $K'$ and $\widehat{t}^{-1}K'\widehat{t}$ for some $\widehat{t} \in G$, and every decreasing induction is of this form. 
Setting $\Lambda' \coloneqq \Phi(K')$ it then follows from the construction that $\Phi(L) < \Lambda'$ and $\Lambda' < \Lambda \leq A^{-1}\Lambda'$, and thus he new $G$-tree has type $\calt(\Lambda')$, as required.

Finally, if 
an $\scra^{-1}$-move is possible then this results in a $G$-tree with fundamental domain a single edge and where $H$ is a vertex stabiliser; such a $G$-tree must then have type $\cals$. 
\end{proof}

We now move on to the remaining points of $W$. Recall that the $G$-trees in $W$ are those where every edge is surviving: for each edge there is a collapse map to some reduced tree where this edge survives.

\begin{lemma} \label{lem:W}
Any $G$-tree $T \in W$ is either of type $\cals$ or of type $\calt(\Lambda_1,\ldots,\Lambda_k)$, where $\Phi(L) \leq \Lambda_1 <  \cdots < \Lambda_k \leq A^{-1}\Lambda_1$ are lattices in $\Phi(L_\infty)$.
\end{lemma}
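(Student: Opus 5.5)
Let $T \in W$. I would start from Lemma~\ref{lem:find_H_and_L}: $T$ has a unique vertex $v$ fixed by $H$, and some $H$-orbit of edges at $v$ is stabilised by $L$. By Lemma~\ref{lem:coll-same-red}, $T$ is the minimal common refinement $S_1\vee\dots\vee S_k$ of the reduced trees it collapses to. By Lemma~\ref{lem:reduced}, each $S_i$ has type $\cals$ or $\calt(\Lambda)$ with $\Phi(L)<\Lambda<A^{-1}\Lambda$. So the argument reduces to understanding the quotient graph of groups $\Gamma = G\backslash T$ and applying Clay's shelter characterisation (\cite[Proposition~1.13]{Clay09}) to each edge.

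\textbf{Shape of $\Gamma$.} Collapsing all edges to reach any $S_i$ leaves one or two edge orbits. Since $G$ maps onto $\pi_1(\Gamma)$ and the collapse to $S_i$ induces an isomorphism on $\pi_1$ of the underlying graphs, $\Gamma$ has first Betti number $1$. So $\Gamma$ is a single cycle $C$ with trees attached. Every edge lies in a shelter. A shelter of type (S1) is a segment whose interior edge inclusions are all surjective; an edge in a hanging tree with a leaf vertex other than $v$ would force that leaf's vertex group to equal its edge group, contradicting reducedness of some collapse. So every leaf of $\Gamma$ is the $H$-vertex. Since there is only one $H$-vertex, $\Gamma$ is either the cycle $C$ alone, or $C$ with a single arc $P$ joining $C$ to $v$.

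\textbf{Case analysis.} If $v\in C$ and there is no extra arc, I would show that $C$ has length one, giving type $\cals$. If $C$ had other vertices, their groups would be proper subgroups of conjugates of $H$. The shelters through these edges must be of type (S2) or (S3) along $C$, and these force all inclusions around the cycle to be surjective except possibly at $v$; that would make the other vertices collapsible without changing the deformation space, so they could not survive. Otherwise there is an arc $P$ from $v$ to a vertex $w\in C$. An edge on $P$ is surviving only if it is contained in an (S1) shelter, which forces inclusions at interior vertices of $P$ to be surjective. That contradicts surviving unless $P$ is a single edge, and by Lemma~\ref{lem:find_H_and_L} that edge has group $L$. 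Along the cycle, the shelters (S2)/(S3) combined with the stable-letter computation of Lemma~\ref{lem:action-stable-letter} should give a coherent orientation in which the vertex groups $K_1<\dots<K_k$ increase, each edge group equals the smaller endpoint group, and the stable letter $\widehat t$ closes up with $K_k\le \widehat t^{-1}K_1\widehat t$. Applying $\Phi$, which is injective on elliptic subgroups, produces $\Phi(L)\le\Lambda_1<\dots<\Lambda_k\le A^{-1}\Lambda_1$ inside $\Phi(L_\infty)$.

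\textbf{Main obstacle.} The hard step is the cycle: ruling out non-monotone patterns, i.e.\ a vertex of $C$ whose two cycle edges both include properly, or inclusions pointing in opposite directions. My first attempt would use $\det$/index counting. The product of the indices around the cycle in one direction must match $|\det A|$ via Lemma~\ref{lem:action-stable-letter}, and since $|\det A|>1$ the loop is strictly ascending. I would then argue that any "valley" vertex (both adjacent edges properly included) cannot lie in a shelter unless it is the attaching vertex $w$; this would give the monotone chain starting at $w$. If that argument fails, the fallback is to use that each edge survives in some $S_i$ of type $\calt(\Lambda)$, and to read off the inclusions directly from the collapse map to $S_i$.
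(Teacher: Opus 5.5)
\textbf{There is a genuine gap.} Your reduction to the two quotient shapes (a cycle $C$ through $v$, or $C$ plus one arc from $v$) is fine in outline. But the step that carries the lemma is only announced (``should give'', ``first attempt'', ``fallback''): that $C$ is an (S3) shelter whose orientation, read from the attaching vertex $w$, yields $K_1<\dots<K_k\le \widehat{t}^{-1}K_1\widehat{t}$. The arguments you suggest for it do not work.
\begin{itemize}
\item The index/determinant count gives one numerical constraint around the cycle and cannot exclude zig-zag patterns.
\item The ``valley'' claim is false as a statement about shelters. If a vertex $c$ of $C$ has both cycle edges properly included at both ends, each of those edges is a one-edge (S1) shelter containing $c$.
\item The same problem affects your case $G\backslash T=C$. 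You assert the relevant shelters must be (S2) or (S3), but nothing you say excludes one-edge (S1) shelters along $C$. Also, (S3) does not force surjectivity ``except at $v$''.
\end{itemize}

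\textbf{The missing idea.} You need the uniqueness of the conjugacy class of maximal elliptic subgroups, applied vertex by vertex. If $u\notin Gv$, then $G_u\le gHg^{-1}$, which fixes $gv\ne u$. So the first edge of $[u,gv]$ has stabiliser exactly $G_u$. (This, rather than reducedness of a collapse, is also what rules out extra leaves.)

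Combine this with the convention that no valence-$2$ vertex has stabiliser equal to both incident edge stabilisers. Then every vertex $c$ of $C$ other than $w$ (or $v$) is surjective along exactly one of its two cycle edges, say $f_c$.
\begin{itemize}
\item $f_c$ lies in no (S1) shelter. The vertex $c$ cannot be an endpoint, since the inclusion there is surjective. It cannot be interior either, since then both inclusions at $c$ would be surjective.
\item So $f_c$ lies in a cycle shelter, which must be $C$ itself. For $|C|\ge 2$, an (S2) structure forces all but one vertex to be surjective on both sides. That is either impossible or degenerates to (S3). Hence $C$ is (S3).
\item If $v\in C$, this would make $H$ stabilise its outgoing edge, contradicting Lemma~\ref{lem:find_H_and_L}. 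So $C$ is a single loop and $T$ has type $\cals$.
\item If $C$ is a loop at $w$, apply the same reasoning at $w$. If $G_w=L$ and the loop were not ascending, $e$ could lie in no (S1) shelter. So the loop is ascending.
\item Strictness $K_i<K_{i+1}$ away from $w$ is again the no-invisible-vertex convention.
\end{itemize}
The paper obtains this shape by appealing to the proof of \cite[Lemma~3.2]{Clay09}.

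\textbf{A smaller point.} Lemma~\ref{lem:action-stable-letter} applies only to trees already known to be of type $\cals$ or $\calt$. So to get $\Phi(\widehat{t}^{-1}K_1\widehat{t})=A^{-1}\Lambda_1$ and $\Lambda_k<\Phi(L_\infty)$, do as the paper does:
\begin{itemize}
\item collapse the orbits of the edges stabilised by $K_1,\dots,K_{k-1}$;
\item identify the result via Lemma~\ref{lem:reduced} as a reduced tree of type $\calt(\Lambda_k)$ with stable letter $\widehat{t}$;
\item apply Lemma~\ref{lem:action-stable-letter} to that tree.
\end{itemize}
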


\begin{proof}
Note first that since the deformation space $\cald$ contains a tree of type $\cals$, it follows that $G$ has a unique conjugacy class of maximal elliptic subgroups. This can be used to show, following the proof of \cite[Lemma~3.2]{Clay09} that $G \backslash T$ is either a single loop edge or consists of a single non-loop edge attached to a shelter of type (S3). (The statement is given for Baumslag--Solitar groups, but the description of the quotient graphs is valid under the assumptions that $\beta_1(G \backslash T)=1$, there is a unique conjugacy class of maximal elliptic subgroups, and there are no vertices of valence $2$ in $T$ whose stabiliser agrees with the incident edge stabilisers.) 

If $G \backslash T$ is a single loop edge then $T$ is reduced, implying by Lemma~\ref{lem:reduced} that $T$ is of type $\cals$. 

Thus, suppose $G \backslash T$ consists of a single non-loop edge attached to a shelter of type (S3). Suppose $v$ is some representative of the leaf vertex of $G \backslash T$: then $G_v$ must be a maximal elliptic subgroup (otherwise the adjacent edge is not surviving) and in particular a conjugate of $H$. This implies there is a fundamental domain for the $G$-tree $T$ consisting of a segment stabilised by (in order) $H, K_1, \dots, K_k, \widehat{t}^{-1}K_1\widehat{t}$, where $K_1 < \dots < K_k \leq \widehat{t}^{-1}K_1\widehat{t}$. (The choice is not unique, but provided we chose the increasing direction along an axis at distance 1 from $H$ projecting to the cycle, it does not matter.)

By Lemma~\ref{lem:find_H_and_L}, the edge adjacent to $v$ must be stabilised by $L$, so in fact we have a chain of inclusions $L \leq K_1 < \dots < K_k \leq \widehat{t}^{-1}K_1\widehat{t}$. Setting $\Lambda_i \coloneqq \Phi(K_i)$, in order to show that $T$ has type $\calt(\Lambda_1,\dots,\Lambda_k)$ it is then enough to show that $\Lambda_k \leq A^{-1}\Lambda_1 < \Phi(L_\infty)$. However this follows from the facts that $\Phi(\widehat{t}^{-1}K_1\widehat{t}) = A^{-1}\Lambda_1$ and $\Lambda_k < \Phi(L_\infty)$: indeed, collapsing the edges stabilised by $K_1,\ldots,K_{k-1}$ (and their orbits), we get a reduced $G$-tree, which must have type $\calt(\Lambda_k)$ by Lemma~\ref{lem:reduced}. Thus $\Lambda_k < \Phi(L_\infty)$, and such a reduced tree has $\widehat{t}$ as its stable letter, implying by Lemma~\ref{lem:action-stable-letter} that conjugation by $\Phi(\widehat{t}^{-1})$ acts as $A^{-1}$ on $\Lambda_k$, and so on $\Lambda_1$.
\end{proof}

\begin{remark}
    By starting with a tree of type $\calt(\Lambda_1)$ and applying $k-1$ expansion moves, one may construct a $G$-tree of type $\calt(\Lambda_1,\ldots,\Lambda_k)$ for any lattices $\Phi(L) \leq \Lambda_1 < \cdots < \Lambda_k \leq A^{-1}\Lambda$ in $\Phi(L_\infty)$.  In such a tree all edges are surviving except in the case of trees of type $\calt(\Phi(L))$, in which the non-loop edge is not surviving.  Therefore, the trees in $W$ are precisely those of types $\cals$ and $\calt(\Lambda_1,\ldots,\Lambda_k)$, where $\Phi(L) \leq \Lambda_1 <  \cdots < \Lambda_k \leq A^{-1}\Lambda_1$ are lattices in $\Phi(L_\infty)$ satisfying $\Phi(L) < \Lambda_k$. 
\end{remark}

\subsection{Unsubdividing}

We now give an alternative description of the geometric realisation of the simplicial complex $W$. In particular, given two reduced $G$-trees $T_1$ and $T_2$, we write $T_1 \sqsubsetdot T_2$ if $T_2$ can be obtained from $T_1$ by using an increasing induction or an $\scra$-move.  Note that this is not a transitive relation as hieroglyphed by the dot in the $\sqsubsetdot$ square.  We define $\widehat{W}$ to be a simplicial complex with vertices the reduced $G$-trees in $\cald$, so that $\{ T_0,\ldots,T_k \}$ forms a $k$-simplex in $\widehat{W}$ whenever $T_i \sqsubsetdot T_j$ for $0 \leq i < j \leq k$. 
We will make use of the following auxiliary result.

\begin{lemma} \label{lem:poset}
Let $\sqsubset$ be the transitive closure of the relation $\sqsubsetdot$ on $\widehat{W}$. Then $(\widehat{W},\sqsubset)$ is a poset, and for any $T \in \widehat{W}$ the set $\widehat{W}_T = \{ T' \in \widehat{W} \mid T' \sqsubset T \}$ is finite.
\end{lemma}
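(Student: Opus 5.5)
The plan is to attach to every reduced $G$-tree a numerical ``height'' that strictly increases along $\sqsubsetdot$. Acyclicity then gives antisymmetry, and bounding the number of predecessors gives finiteness. By Lemma~\ref{lem:reduced}, the vertices of $\widehat{W}$ are trees of type $\cals$ or $\calt(\Lambda)$ with $\Phi(L) < \Lambda < A^{-1}\Lambda$ and $\Lambda < \Phi(L_\infty)$. Since $\Phi(L_\infty)=\bigcup_i A^{-i}\Phi(L)$ and $\Lambda$ is finitely generated, each such $\Lambda$ lies in some $\Phi(L_j)$, so the index $[\Lambda:\Phi(L)]$ is a finite positive integer. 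I would define $h(T)=1$ for $T$ of type $\cals$ and $h(T)=[\Lambda:\Phi(L)]>1$ for $T$ of type $\calt(\Lambda)$.

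Next I would check that $h$ strictly increases along $\sqsubsetdot$. The analysis in the proof of Lemma~\ref{lem:reduced} shows which moves are available.
\begin{itemize}
\item An $\scra$-move takes a tree of type $\cals$ to a tree of type $\calt(\Lambda')$ with $\Phi(L)<\Lambda'$, so $h$ goes from $1$ to a value $>1$.
\item An increasing induction takes $\calt(\Lambda)$ to $\calt(\Lambda')$ with $\Lambda<\Lambda'\le A^{-1}\Lambda$, so $h$ strictly increases.
\end{itemize}
Note that the lattice $\Lambda$ is not an isomorphism invariant of the tree; it is attached to a chosen representative. I expect it is nonetheless well defined by the tree itself, since $\Phi(K)$ for the vertex group $K$ adjacent to the unique $H$-vertex is intrinsic. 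For the argument it suffices that each move is computed in a fixed representative, which is what Lemma~\ref{lem:reduced} does.

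With monotonicity in hand, the transitive closure $\sqsubset$ has no cycles, because $h$ strictly increases along any chain. Hence $\sqsubset$ is irreflexive and antisymmetric, and it is transitive by construction, so $(\widehat{W},\sqsubset)$ is a strict poset.

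For finiteness of $\widehat{W}_T$, observe that every chain ending at $T$ has length at most $h(T)$. It therefore suffices to show that each tree has only finitely many immediate predecessors under $\sqsubsetdot$, and then induct on $h(T)$. The predecessors of $T$ are exactly the trees from which $T$ arises by an increasing induction or an $\scra$-move, that is, the trees obtained from $T$ by a decreasing induction or an $\scra^{-1}$-move.

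I would bound these predecessors as follows.
\begin{itemize}
\item \emph{Decreasing inductions.} By Lemma~\ref{lem:reduced}, these correspond to subgroups $K'<K$ with $A\Phi(K)\le\Phi(K')$. Since $A\Lambda$ has finite index in $\Lambda$, there are finitely many such $K'$. For each choice of $K'$ the result is unique, by the same folding description as in Lemma~\ref{lem:unique}\ref{it:ind-unique} read in reverse: the choice of $K'$ determines which edges are unfolded.
\item \emph{$\scra^{-1}$-moves.} These involve a decreasing induction followed by a collapse, so finitely many choices again. Moreover a type $\cals$ tree reached this way is determined by $T$ up to the finitely many edge representatives $f$.
\item \emph{Trees of type $\cals$.} These have $h=1$ and no predecessors at all.
\end{itemize}

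The main obstacle is the uniqueness step: showing that a decreasing induction (or an $\scra^{-1}$-move) is determined by finitely many discrete choices. As the remark following Definition~\ref{def:slide_induction_A_moves} warns, the outcome depends on the particular edge chosen, not just its orbit. I would handle this by arguing as in Lemma~\ref{lem:unique}\ref{it:ind-exp}: the choices are $K$-orbits of edges at a vertex, and there are finitely many such orbits because $T$ is locally finite, all edge groups having finite index in the vertex groups.
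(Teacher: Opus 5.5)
Your proposal is correct and follows the paper's proof essentially step for step. The paper uses the same signature ($0$ for type $\cals$, $[\Lambda:\Phi(L)]$ for type $\calt(\Lambda)$), its strict increase along $\sqsubsetdot$, the resulting bound on chain length, and local finiteness to get finitely many immediate predecessors.

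The only real difference is how well-definedness of the signature is settled. The paper reads it off from vertex degrees: there is one vertex orbit for type $\cals$, and the degree is $[\Lambda:\Phi(L)]+\left|\det A\right|+1$ on the non-$H$ orbit for type $\calt(\Lambda)$. Your intrinsic argument also works: take $\Phi$ of the stabiliser of a neighbour of the unique $H$-vertex, and note that conjugation by the translations $\Phi(h)$, $h\in H$, is trivial on $\QQ^m$. Some such invariance is genuinely needed to chain moves, so you should not fall back on ``computing in a fixed representative''.
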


\begin{proof}
Since $\sqsubset$ is transitive by definition, in order to prove that $(\widehat{W},\sqsubset)$ is a poset it is enough to show that $\sqsubset$ is irreflexive, i.e.\ the relation $T \sqsubset T$ cannot hold for any $T \in \widehat{W}$. Together with transitivity, irreflexivity will then imply asymmetry as well.

Note that a $G$-tree of type $\cals$ has one orbit of vertices, and a $G$-tree of type $\calt(\Lambda)$ has two orbits, with degrees $[H:L]$ and $[\Lambda:\Phi(L)]+\left|\det A\right|+1$, respectively. Define the \emph{signature} of a reduced $G$-tree $T$ to be $s(T) = 0$ if $T$ is of type $\cals$, and $s(T) = [\Lambda:\Phi(L)]$ if $T$ is of type $\calt(\Lambda)$; by the previous sentence, the signature of a $G$-tree is determined uniquely, i.e.\ isomorphic $G$-trees have the same signature. However, if $T '\sqsubsetdot T$ for some $T',T \in \widehat{W}$, then we must have $s(T') < s(T)$, hence the same is true whenever $T' \sqsubset T$. It follows that one cannot have $T \sqsubset T$ for any $T \in \widehat{W}$, as required.

Now let $T \in \widehat{W}$, and suppose $T'$ is an element of $\widehat{W}_T$. This means that there is a sequence $(T' = T_0, T_1, \dots, T_r=T)$ so that $T_i \sqsubsetdot T_{i+1}$ (it is not necessarily unique). Since $s(T_i)<s(T_{i+1})$ for the trees in any such sequence, the depth is bounded: $r \leq s(T)$. Since all $G$-trees in $\widehat{W}$ are locally finite, there are finitely many decreasing inductions or $\scra^{-1}$-moves that can be used on a given $G$-tree $T' \in \widehat{W}$. It follows that given the sequence $(T_1, \dots, T_r=T)$ of length $r-1$ there are only finitely many trees $T_0 \sqsubsetdot T_1$ that extend it to a sequence of length $r$. So there are only finitely many trees $T'$ at each given depth, and since the depth is bounded there are only finitely many in total. \qedhere

\end{proof}

\begin{lemma} \label{lem:Psi-surjective}
    Let $T_1,\ldots,T_k$ be reduced $G$-trees such that $T_i \sqsubsetdot T_j$ for $1 \leq i < j \leq k$, for some $k \geq 1$.  Then there exists a unique tree $T$ in $W$ such that $T_1,\ldots,T_k$ are precisely the reduced collapses of $T$.  Moreover, 
    \begin{enumerate}
        \item if $k=1$ then $T=T_1$;
        \item if $k>1$ and $T_1$ has type $\cals$, then $T$ has type $\calt(\Lambda_1,\dots \Lambda_k)$ where $\Lambda_1=\Phi(L)$ and (for $i>1$) $T_i$ has type $\calt(\Lambda_i)$;
        \item if $k>1$ and $T_1$ does not have type $\cals$, then $T$ has type $\calt(\Lambda_1, \dots \Lambda_k)$ where $T_i$ has type $\calt(\Lambda_i)$.
    \end{enumerate}
\end{lemma}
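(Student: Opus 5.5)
The plan is to build $T$ by hand as an explicit tree of the required type, read off its reduced collapses, and get uniqueness from \Cref{lem:coll-same-red}. For $k=1$ everything is immediate: a reduced tree lies in $W$, and its only reduced collapse is itself. By \Cref{lem:coll-same-red}, any tree in $W$ is determined by its set of reduced collapses, namely as the join $T_1\vee\dots\vee T_k$. So uniqueness holds in all cases, and the real work is existence together with identifying the type.

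\emph{First step: record the lattices.} By the proof of \Cref{lem:reduced}, an increasing induction from a tree of type $\calt(\Lambda)$ yields a tree of type $\calt(\Lambda')$ with $\Lambda<\Lambda'\le A^{-1}\Lambda$. An $\scra$-move from a tree of type $\cals$ yields a tree of type $\calt(\Lambda')$ with $\Phi(L)<\Lambda'\le A^{-1}\Phi(L)=\Phi(L_1)$. Set $\Lambda_1=\Phi(L)$ if $T_1$ has type $\cals$. Using the relations $T_1\sqsubsetdot T_i$ and $T_i\sqsubsetdot T_j$, we then get a chain $\Phi(L)\le\Lambda_1<\Lambda_2<\dots<\Lambda_k\le A^{-1}\Lambda_1$; note that $T_1\sqsubsetdot T_k$ supplies the final inequality. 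Only $T_1$ can have type $\cals$, since no move in the definition of $\sqsubsetdot$ produces a type $\cals$ tree.

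\emph{Second step: construct $T$ when $T_1$ has type $\calt(\Lambda_1)$.} Fix a fundamental domain with stabilisers $H,K_1,\widehat t^{-1}K_1\widehat t$. Since $\Phi$ is injective on $\widehat t^{-1}K_1\widehat t$, there are unique subgroups $K_i\le\widehat t^{-1}K_1\widehat t$ with $\Phi(K_i)=\Lambda_i$. Equivariantly expand each vertex in the orbit of $\widehat t^{-1}w$ along the chain $K_2<\dots<K_k$, folding the upward edges exactly as in the description of increasing induction in the proof of \Cref{lem:unique}\ref{it:ind-unique}. This produces a tree $T$ of type $\calt(\Lambda_1,\dots,\Lambda_k)$.

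Next, identify the collapses of $T$. The leaf edge cannot be collapsed, because its stabiliser $L$ is properly contained in $H$; the one exception is $\Lambda_1=\Phi(L)$. A collapse is reduced exactly when a single orbit of loop edges remains. Collapsing all loop edges except the $i$-th one gives $\ind{T_1}{\Lambda_i}$; this uses \Cref{lem:unique}\ref{it:ind-ind} to see that inductions compose. By \Cref{lem:unique}\ref{it:ind-unique}, the tree obtained from $T_1$ by an increasing induction to type $\calt(\Lambda_i)$ is unique, so this collapse equals $T_i$. Since $\Lambda_1\neq\Phi(L)$ in this case, there are no further reduced collapses.

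\emph{Third step: the case where $T_1$ has type $\cals$.} By \Cref{lem:unique}\ref{it:ind-exp}, applied to $T_2$ with $\Lambda_2\le\Phi(L_1)$, there is a unique tree $T'$ of type $\calt(\Phi(L))$ that is an expansion of $T_1$ and from which $T_2$ arises by increasing induction. Carry out the construction of the second step starting from $T'$ with the chain $\Phi(L)<\Lambda_2<\dots<\Lambda_k$. The reduced collapses are then the trees $T_2,\dots,T_k$ as before, together with the collapse of the leaf edge, which is $T_1$.

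\emph{Main obstacle.} The delicate point is the third step. I must check that the expansion of $T_1$ compatible with $T_2$ is also compatible with every $T_j$, for $j\ge 3$, at the same time. A priori each $T_j$ might require a different expansion of $S$, that is, a different choice of the $L$-orbit $U$. I expect to settle this by combining the connectivity argument in \Cref{lem:unique}\ref{it:ind-exp} (which pins down $U$ independently of $j$) with the relation $T_2\sqsubsetdot T_j$ and \Cref{lem:unique}\ref{it:ind-ind}. I also need to be careful that identifying $T_j$ with the induced tree relies on uniqueness up to equivariant isometry, not merely on agreement of types.
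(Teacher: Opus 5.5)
Your proof is correct, but it runs in the opposite direction from the paper's. The paper never builds $T$ by hand. Since $T_i\sqsubsetdot T_j$ makes the $T_i$ pairwise compatible, the second half of \Cref{lem:coll-same-red} (i.e.\ the Guirardel--Levitt join) supplies a minimal common refinement $T\in W$. The paper then argues in four short steps:
\begin{enumerate}
\item \Cref{lem:W} shows that $T$ has type $\calt(\Lambda_1,\dots,\Lambda_\ell)$.
\item Minimality forces each loop edge $e_j$ to survive in some $T_i$, so $\ell\le k$.
\item $T$ has at most $\ell$ reduced collapses, and the $T_i$ are distinct by \Cref{lem:poset}, so $\ell=k$.
\item The ordering is matched because collapsing all edge orbits but those of $e_i,e_j$ exhibits $T_i'\sqsubsetdot T_j'$.
\end{enumerate}
That argument is uniform in whether $T_1$ has type $\cals$, and it uses nothing from \Cref{lem:unique}.

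You instead construct $T$ as an iterated expansion of $T_1$ (or of the intermediate tree $T'$ of the $\scra$-move). You then match its reduced collapses with the given $T_j$ via \Cref{lem:unique}\ref{it:ind-unique} and \ref{it:ind-ind}. This avoids the join for existence and avoids \Cref{lem:W}. It also gives an explicit picture of $T$ and of which edge survives in which collapse. The price is a case split and more move-level bookkeeping. Both routes rely on \Cref{lem:coll-same-red} for uniqueness.

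The step you flag as the main obstacle closes in a few lines with the tools you name. Neither \ref{it:ind-exp} nor the connectivity argument is needed:
\begin{itemize}
\item $T'$ is literally the expansion half of the $\scra$-move $T_1\to T_2$, so $T_2=\ind{T'}{\Lambda_2}$ by \ref{it:ind-unique}.
\item For $j\ge 3$, the relation $T_2\sqsubsetdot T_j$ gives $T_j=\ind{T_2}{\Lambda_j}$.
\item Since $\Phi(L)<\Lambda_2<\Lambda_j\le\Phi(L_1)$ (the last bound from $T_1\sqsubsetdot T_j$), \ref{it:ind-ind} gives $T_j=\ind{T'}{\Lambda_j}$. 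This is exactly your collapse retaining only $e_j$.
\end{itemize}

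Two small tidy-ups remain. First, the leaf edge is uncollapsible because $L$ is properly contained in both adjacent stabilisers ($H$ and, after the other collapses, $K_j$). So when $\Lambda_1=\Phi(L)$, the leaf edge is collapsible only in the collapse retaining $e_1$. Second, you should state explicitly that $T\in W$, which holds because every edge survives in one of the $T_j$.
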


\begin{proof}
    Note that, by the definition of the increasing induction and $\scra$-moves, the trees $T_1,\ldots,T_k$ are pairwise compatible. Therefore, by Lemma~\ref{lem:coll-same-red}, there exists a $G$-tree $T$ contained in $W$ that collapses to each $T_i$ and is minimal among the trees satisfying this property.
    
    We first claim that $T_1,\ldots,T_k$ are precisely the reduced collapses of $T$.  If $k = 1$ then $T = T_1$, and this is the only case when $T$ is reduced. Otherwise, by Lemma~\ref{lem:W}, $T$ has type $\calt(\Lambda_1,\ldots,\Lambda_\ell)$ for some $\ell \geq 2$, so that it has a fundamental domain that is a segment with vertices stabilised (in order) by $H,K_1,\dots,K_\ell,\widehat{t}^{-1}K_1\widehat{t}$, where $K_1 < \dots < K_\ell \leq \widehat{t}^{-1}K_1\widehat{t}$ and $\Phi(K_i) = \Lambda_i$; let $e,e_1,\ldots,e_\ell$ be the edges of this segment (in this order).  If an edge $e_j$ did not survive in any of the $T_i$ under the collapse map, then collapsing the orbit of $e_j$ would result in a tree $T' \prec T$ that has all its edges surviving and still collapses to each of the $T_i$, contradicting minimality of~$T$.  Thus each of the $e_j$ survives in some $T_i$; since exactly one of the edges $e_j$ survives in any reduced collapse of $T$, this implies that $\ell \leq k$.  But any reduced collapse of $T$ is obtained by collapsing the orbits of $e_1,\dots,e_{j-1},e_{j+1},\dots,e_\ell$ for some $j$ (and additionally collapsing the orbit of $e$ if $j=1$ and $\Lambda_1 = \Phi(L)$), so there are at most $\ell$ distinct reduced collapses of $T$.  Since the $G$-trees $T_1,\dots,T_k$ are pairwise non-isomorphic (for instance, since the relation $\sqsubsetdot$ is irreflexive, see Lemma~\ref{lem:poset}), it follows that there are no other reduced collapses of $T$, as claimed.

    It now follows from the paragraph above that $\ell = k$ and the reduced collapses of $T$ are trees $T_1',\dots,T_k'$ of types $\calt(\Lambda_1),\dots,\calt(\Lambda_k)$, respectively (with $\cals$ instead of $\calt(\Lambda_1)$ if $\Lambda_1 = \Phi(L)$).  These must therefore be the trees $T_1,\dots,T_k$, in some order.  But for any $1 \leq i < j \leq k$, collapsing all edge orbits apart from the orbits of $e_i$ and $e_j$ results in a tree of type $\calt(\Lambda_i,\Lambda_j)$ that collapses to both $T_i'$ and $T_j'$, showing that $T_i' \sqsubsetdot T_j'$.  Hence the order must be the same and we have $T_i = T_i'$, i.e.\ $T_i$ has type $\calt(\Lambda_i)$ (unless $i = 1$ and $\Phi(L) = \Lambda_1$, in which case $T_1$ has type $\cals$).  This proves the statement about the type of $T$.

    Finally, uniqueness of $T$ follows from its minimality: if $T' \in W$ is a tree whose reduced collapses are precisely $T_1,\dots,T_k$, then $T' \preceq T$ by Lemma~\ref{lem:coll-same-red} and so $T' = T$ by the minimality of $T$.
\end{proof}

Recall that an action of a group $H$ on a simplicial complex $X$ is said to be \emph{admissible} if any element of $H$ fixing a simplex in $X$ fixes that simplex pointwise. (If the simplicial complex is a tree, this is the standard condition of acting without inversions.)

\begin{prop} \label{prop:What}
$W$ is $\Out(G)$-equivariantly isomorphic to the barycentric subdivision of $\widehat{W}$. Moreover, the $\Out(G)$-action on $\widehat{W}$ is admissible.
\end{prop}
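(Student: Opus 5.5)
The plan is to construct an explicit map $\Psi$ from the barycentric subdivision $\widehat{W}'$ of $\widehat{W}$ to $W$ and show it is a simplicial isomorphism. Recall that the vertices of $\widehat{W}'$ are the simplices $\sigma = \{T_1,\dots,T_k\}$ of $\widehat{W}$, which we order so that $T_i \sqsubsetdot T_j$ for $i<j$; this is possible since $\sqsubset$ is a partial order by Lemma~\ref{lem:poset}. Its simplices are chains $\sigma_0 \subset \dots \subset \sigma_r$. For each simplex $\sigma$ we define $\Psi(\sigma)$ to be the unique tree $T \in W$ provided by Lemma~\ref{lem:Psi-surjective}, namely the one whose reduced collapses are exactly $T_1,\dots,T_k$.

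Next we check that $\Psi$ is a bijection on vertices. Injectivity is immediate, since $\sigma$ is recovered from $\Psi(\sigma)$ as its set of reduced collapses. For surjectivity, take $T \in W$. By Lemma~\ref{lem:W}, either $T$ has type $\cals$ (and then $T$ is reduced, so $T = \Psi(\{T\})$), or $T$ has type $\calt(\Lambda_1,\dots,\Lambda_k)$. In the latter case its reduced collapses are obtained by keeping the single loop edge $e_j$; they are of types $\calt(\Lambda_j)$, with $\cals$ replacing $\calt(\Lambda_1)$ when $\Lambda_1=\Phi(L)$. Collapsing all edge orbits except those of $e_i,e_j$ yields a tree of type $\calt(\Lambda_i,\Lambda_j)$. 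Its two reduced collapses differ by an increasing induction, or by an $\scra$-move when $\Lambda_i = \Phi(L)$. Hence these collapses pairwise satisfy $\sqsubsetdot$ and form a simplex $\sigma$ with $\Psi(\sigma)=T$; here the uniqueness part of Lemma~\ref{lem:Psi-surjective} is used.

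For simpliciality, we use Lemma~\ref{lem:coll-same-red}. For $T, T' \in W$ we have $T \preceq T'$ if and only if every reduced collapse of $T$ is a reduced collapse of $T'$, that is, if and only if $\Psi^{-1}(T) \subseteq \Psi^{-1}(T')$. Since the spine, and so $W$, is the realisation of the poset $\preceq$, chains in $W$ correspond exactly to chains of faces in $\widehat{W}$, which are precisely the simplices of $\widehat{W}'$. Equivariance follows because $\Out(G)$ acts on trees through precomposition, which commutes with collapse maps and carries increasing inductions and $\scra$-moves to moves of the same kind. Therefore $\widehat{W}$ is $\Out(G)$-invariant with $\sqsubsetdot$ preserved, and $\Psi(\sigma\cdot\varphi)=\Psi(\sigma)\cdot\varphi$ by uniqueness in Lemma~\ref{lem:Psi-surjective}.

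For admissibility, suppose $\varphi \in \Out(G)$ stabilises a simplex $\sigma$ of $\widehat{W}$. The action preserves $\sqsubsetdot$, hence $\sqsubset$, and $\sigma$ is totally ordered by $\sqsubset$. A permutation of a finite chain preserving the order must be the identity, so $\varphi$ fixes every vertex of $\sigma$, and hence fixes $\sigma$ pointwise. (Alternatively, $\varphi$ preserves the signature $s$ from Lemma~\ref{lem:poset}, which is strictly increasing along $\sigma$.)

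The main obstacle I expect is the surjectivity step. It requires checking carefully that the reduced collapses of a tree of type $\calt(\Lambda_1,\dots,\Lambda_k)$ are pairwise related by a single move $\sqsubsetdot$, rather than only by its transitive closure, including the degenerate case $\Lambda_1=\Phi(L)$. For this I would argue directly from the two-edge-orbit tree of type $\calt(\Lambda_i,\Lambda_j)$: collapsing the orbit of $e_j$, and the non-loop edge when $\Lambda_i = \Phi(L)$, should realise exactly the expansion half of an increasing induction or $\scra$-move from $T_i$ to $T_j$, as in Lemma~\ref{lem:unique}.
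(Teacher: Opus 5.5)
Your proposal is correct and is essentially the paper's proof. The paper uses the same bijection written in the opposite direction, sending $T \in W$ to the simplex of $\widehat{W}$ spanned by its reduced collapses: Lemma~\ref{lem:coll-same-red} gives injectivity and the order correspondence, Lemma~\ref{lem:W} together with inspection of the tree of type $\calt(\Lambda_i,\Lambda_j)$ shows these collapses form a simplex, and Lemma~\ref{lem:Psi-surjective} gives surjectivity, while admissibility comes from the asymmetry of $\sqsubsetdot$ in Lemma~\ref{lem:poset}, which your order-preserving-permutation argument spells out slightly more fully.
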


\begin{proof}
It is enough to find a bijective map $\Psi\colon \{ \text{vertices of } W \} \to \{ \text{simplices of } \widehat{W} \}$ which is $\Out(G)$-equivariant, such that $\{ T_0,\ldots,T_k \}$ forms a simplex in $W$ if and only if the collection $\{ \Psi(T_0),\ldots,\Psi(T_k) \}$ is totally ordered by inclusion. The simplest description of $\Psi(T)$ is that it is the subcomplex spanned by the set of reduced trees that $T$ collapses to. This map is injective, since by Lemma~\ref{lem:coll-same-red} a vertex in $W$ is uniquely defined by the reduced trees it collapses to. By the same lemma, $T_1 \prec T_2$ if and only if $\Psi(T_1) \subset \Psi(T_2)$, and the assignment is $\Out(G)$-equivariant since the $\Out(G)$-action on $\cald$ preserves the property of being reduced and gives that if $T_1 \prec T_2$ then $T_1 \cdot \phi \prec T_2 \cdot \phi$. 

It remains to check that $\Psi$ sends vertices of $W$ to simplices of $\widehat{W}$, that $\Psi$ is surjective to the set of simplices, and that the $\Out(G)$-action is admissible. The first one of these requires a more careful analysis of $\Psi(T)$, for which we use Lemma~\ref{lem:W} to describe the trees in $W$. Reduced trees are sent to $0$-simplices (themselves), so the interesting case is when $T$ is a tree of type $\calt(\Lambda_1, \dots, \Lambda_k)$. For each $i$, there is an edge in the chosen fundamental domain stabilised by $K_i$ satisfying $\Phi(K_i)=\Lambda_i$. Collapsing all the other orbits projecting to the cycle in $G \backslash T$ gives a tree of type $\calt(\Lambda_i)$, which is reduced unless $i=1$ and $\Lambda_1 = \Phi(L)$. In the latter case, collapsing the edge orbit not projecting to the cycle gives a (reduced!) $G$-tree of type $\cals$, and we take $T_1$ to be this tree. Then $\Psi(T)= \{T_1, \dots, T_k\}$: no other collapses to reduced trees exist, for instance because at least one edge projecting to the cycle survives in any such reduced tree.

For $1 \leq i < j \leq k$, the $G$-trees $T_i$ and $T_j$ are obtained by collapse moves from the same $G$-tree $T_{ij}$ of type $\calt(\Lambda_i,\Lambda_j)$ (since this tree---where the vertex stabilisers in a fundamental domain are $H, K_i,K_j,\widehat{t}^{-1}K_i\widehat{t}$---is, by Lemma~\ref{lem:Psi-surjective}, a tree in $W$ whose reduced collapses are exactly $T_i$ and $T_j$). Inspecting the sequence $T_i \prec T_{ij} \succ T_j$, one finds that the expansion(s) and collapse exactly match those involved in the increasing induction associated to $K_j$ (or an $\scra$-move associated to $K_j$, if $T_i$ is of type $\cals$), and so we have $T_i \sqsubsetdot T_j$. This shows that the image of $\Psi$ lies in the set of simplices of $\widehat{W}$.

Now surjectivity of $\Psi$ follows from Lemma~\ref{lem:Psi-surjective}.
Finally, to show that the $\Out(G)$-action is admissible, note that if $T_1 \sqsubsetdot T_2$ for some $T_1,T_2 \in \widehat{W}$, then by Lemma~\ref{lem:poset} $T_2 \nsqsubsetdot T_1$. \qedhere
\end{proof}

We are now almost ready to define a further subcomplex of $\widehat{W}$ and construct a deformation retract to it. First we prove a lemma giving various forms of uniqueness when we apply the moves of Definition~\ref{def:slide_induction_A_moves} to the reduced trees in $W$ (i.e. the vertex set of $\widehat{W}$).

\subsection{The subcomplex \texorpdfstring{$\widehat{X}$}{X} and a deformation retract}

We now define $\widehat{X}$ to be the subcomplex of $\widehat{W}$ spanned by the $G$-trees of types $\cals$ and $\calt(\Phi(L_i))$ for $i \geq 1$. We first show that $\widehat{X}$ is one-dimensional and invariant under the $\Out(G)$-action, and then that $\widehat{W}$ $\Out(G)$-equivariantly deformation retracts to $\widehat{X}$. Then contractibility of $\widehat{W}$ will imply that $\widehat{X}$ is a tree, and we can use Bass--Serre theory to study $\Out(G)$ through its action on this tree.

\begin{lemma} \label{lem:Xhat}
$\widehat{X}$ is a one-dimensional subcomplex invariant under the $\Out(G)$-action.
\end{lemma}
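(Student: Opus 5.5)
We split the statement into two independent parts: one-dimensionality, which is a combinatorial check on the relation $\sqsubsetdot$ restricted to the vertices of $\widehat{X}$, and $\Out(G)$-invariance, which follows from an intrinsic characterisation of those vertices.

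\emph{One-dimensionality.} A simplex of $\widehat{X}$ is a chain $T_0 \sqsubsetdot T_1 \sqsubsetdot \cdots \sqsubsetdot T_k$ of vertices of $\widehat{X}$ with $T_i \sqsubsetdot T_j$ for all $i<j$. It therefore suffices to show that $\widehat{X}$ contains no three vertices with $T_0 \sqsubsetdot T_1 \sqsubsetdot T_2$ and $T_0 \sqsubsetdot T_2$. We first record exactly when $\sqsubsetdot$ holds between vertices of $\widehat{X}$, using the lattice descriptions in Lemma~\ref{lem:reduced} and its proof.
\begin{enumerate}
\item From a tree of type $\cals$, an $\scra$-move produces a tree of type $\calt(\Lambda)$ with $\Phi(L) < \Lambda \leq A^{-1}\Phi(L) = \Phi(L_1)$.
\item From $\calt(\Lambda)$, an increasing induction produces $\calt(\Lambda')$ with $\Lambda < \Lambda' \leq A^{-1}\Lambda$.
\end{enumerate}
Among the trees of type $\calt(\Phi(L_i))$, the only possibilities are therefore $\cals \sqsubsetdot \calt(\Phi(L_1))$ and $\calt(\Phi(L_i)) \sqsubsetdot \calt(\Phi(L_{i+1}))$. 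The second holds because $\Phi(L_i) < \Lambda' \leq A^{-1}\Phi(L_i) = \Phi(L_{i+1})$ forces $\Lambda' = \Phi(L_{i+1})$ whenever $\Lambda'$ is itself some $\Phi(L_j)$. Since the signature $s$ of Lemma~\ref{lem:poset} strictly increases along $\sqsubsetdot$, a triangle would need $T_0 \sqsubsetdot T_2$ with $T_2$ two steps above $T_0$. That would require $\Phi(L_{i+2}) \leq A^{-1}\Phi(L_i) = \Phi(L_{i+1})$, or $\Phi(L_2) \leq \Phi(L_1)$ in the $\cals$ case. Both contradict the strictness $L_n < L_{n+1}$, which follows from $AL<L$. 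Hence $\widehat{X}$ has no $2$-simplices.

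There is one subtlety. The vertices of $\widehat{X}$ are all trees of the given types, not just the specific trees $S$ and $T_n$. The argument above only used the type, which is determined by the vertex stabilisers, so it applies to every such tree.

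\emph{Invariance.} Let $\varphi \in \Out(G)$. Since $\varphi$ preserves $\cald$ (Proposition~\ref{prop:D-AutG-inv}) and preserves reducedness, it permutes the reduced trees, and it preserves the relation $\sqsubsetdot$ by Proposition~\ref{prop:What}. The type $\cals$ is intrinsically characterised as a reduced tree with one orbit of vertices, so it is preserved. For a tree of type $\calt(\Lambda)$, the main obstacle is showing that $\Lambda = \Phi(L_i)$ for some $i$ is an invariant notion, since $\Phi$ is not $\Aut(G)$-equivariant. We plan to characterise these trees intrinsically.

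First we characterise the lattice. The index $[\Lambda : \Phi(L)] = [K:L]$ is determined by the tree, because it is the number of edges at a blue vertex coming from the non-loop edge. Write $K_T$ for the stabiliser of a blue vertex adjacent to the $H$-vertex; it contains the edge group $L$. We claim that $\Lambda = \Phi(L_i)$ exactly when $K_T$ equals the iterated stable-letter conjugate $\widehat{t}^{-i} L \widehat{t}^{\,i}$ of the edge group adjacent to the $H$-vertex, for some $i \geq 1$, where $\widehat{t}$ is a stable letter of the tree. Indeed, Lemma~\ref{lem:action-stable-letter} gives $\Phi(\widehat{t}^{-i} L \widehat{t}^{\,i}) = A^{-i}\Phi(L) = \Phi(L_i)$, and $\Phi$ is injective on elliptic subgroups.

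Next we check that this characterisation transfers. Write $K_T$ as a vertex stabiliser containing $L_T$, where $L_T$ is the edge group at the $H$-vertex. The condition is phrased purely in terms of the $G$-tree: it concerns vertex and edge stabilisers and the stable letter, which is defined up to elliptic elements via the quotient map to $\pi_1$ of the quotient graph. It therefore transports under $\varphi$, giving $K_{T\cdot\varphi} = \varphi^{-1}(K_T)$, and likewise for $L_T$ and the stable letter.

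If this characterisation proves delicate, we will use an alternative. We would prove by induction along the chain $\cals \sqsubsetdot \calt(\Phi(L_1)) \sqsubsetdot \cdots$ that the trees of type $\calt(\Phi(L_i))$ are exactly the reduced trees reachable from a type-$\cals$ tree by a chain of $i$ steps $\sqsubsetdot$, each step being the clockwise slide of Lemma~\ref{lem:unique}\ref{it:slide-unique}. Both the slide and the type $\cals$ are preserved by $\varphi$.
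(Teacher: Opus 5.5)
Your one-dimensionality argument is the paper's. You classify the relations $T\sqsubsetdot T'$ among vertices of $\widehat{X}$: they are only $\cals\sqsubsetdot\calt(\Phi(L_1))$ and $\calt(\Phi(L_i))\sqsubsetdot\calt(\Phi(L_{i+1}))$. You then observe that two steps up is never one step. You should also note that $\widehat{X}$ has an edge, e.g.\ $S\sqsubsetdot T_1$ via an $\scra$-move, to get dimension exactly one.

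For invariance you take a genuinely different route, and it is correct. The paper characterises the vertices of $\widehat{X}$ by moves: they are the reduced trees that have type $\cals$, or become type $\cals$ after a sequence of slides followed by a collapse. This uses Lemma~\ref{lem:unique}\ref{it:slide-unique} and~\ref{it:slide=ind}. Invariance then follows because slides, collapses and ``one vertex orbit'' are all compatible with precomposition by automorphisms. Its converse direction relies on knowing that slide sequences from a type $\calt(\Lambda)$ tree only reach types $\calt(A^j\Lambda)$. Your main argument instead detects $\Lambda=\Phi(L_i)$ algebraically via $K_T=\widehat{t}^{-i}L\widehat{t}^{\,i}$, and transports stabilisers and the stable letter under $\varphi$. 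This avoids any analysis of slide sequences and needs only Lemmas~\ref{lem:find_H_and_L} and~\ref{lem:action-stable-letter}.

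Two details should be made explicit.
\begin{enumerate}
\item Injectivity of $\Phi$ on each elliptic subgroup separately does not by itself give $\Phi(K_T)=\Phi(L_i)\Rightarrow K_T=\widehat{t}^{-i}L\widehat{t}^{\,i}$. You need both subgroups to lie in one elliptic subgroup. They do: both lie in $\widehat{t}^{-i}K_T\widehat{t}^{\,i}$, since $L\le K_T\le\widehat{t}^{-1}K_T\widehat{t}$.
\item $\varphi(H)$ is only conjugate to $H$, so first compose $\varphi$ with an inner automorphism to arrange $\varphi(H)=H$. Then the $H$-vertex is unchanged. Its edge group $\varphi^{-1}(L)$ equals $L$, by Lemma~\ref{lem:same-type-is-iso}(i) applied to the reduced two-orbit tree $T\cdot\varphi$. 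Moreover $\varphi^{-1}(\widehat{t})$ is a stable letter of $T\cdot\varphi$, so Lemma~\ref{lem:action-stable-letter} gives $\Phi(\varphi^{-1}(K_T))=A^{-i}\Phi(L)=\Phi(L_i)$.
\end{enumerate}

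Your fallback is essentially the paper's characterisation read backwards. Note, though, that its first step $\cals\to\calt(\Phi(L_1))$ is an $\scra$-move with the maximal subgroup, not a clockwise slide.
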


\begin{proof}
Note that clearly $\dim \widehat{X} \geq 1$ (for instance, $\widehat{X}$ contains an edge $\{ S,T \}$, where $S$ is a $G$-tree of type $\cals$ and $T$ is a $G$-tree of type $\calt(\Phi(L_1))$ obtained from $S$ using an $\scra$-move). Conversely, note that if $T \sqsubsetdot T'$ for some $T,T' \in \widehat{X}$, then $T$ and $T'$ have either types $\cals$ and $\calt(\Lambda)$, respectively (in which case $T'$ is obtained from $T$ using an $\scra$-move, and so $\Phi(L_0) < \Lambda \leq A^{-1}\Phi(L_0) = \Phi(L_1)$, hence $\Lambda = L_1$ since $T' \in \widehat{X}$), or types $\calt(\Phi(L_i))$ and $\calt(\Lambda)$, respectively (in which case $T'$ is obtained from $T$ using an increasing induction, and so $\Phi(L_i) < \Lambda \leq A^{-1}\Phi(L_i) = \Phi(L_{i+1})$, hence $\Lambda = \Phi(L_{i+1})$ since $T' \in \widehat{X}$). This implies that if $T \sqsubsetdot T'$ and $T' \sqsubsetdot T''$ for some $T,T',T'' \in \widehat{X}$, then $T$ and $T''$ have either types $\cals$ and $\calt(\Phi(L_2))$ or types $\calt(\Phi(L_i))$ and $\calt(\Phi(L_{i+2}))$ for some $i$, implying that $T \nsqsubsetdot T''$ and so $\{ T,T',T'' \}$ is not a simplex in $\widehat{X}$. Thus $\dim \widehat{X} = 1$, as required.

To show that $\widehat{X}$ is invariant under the $\Out(G)$-action, note first that the $G$-trees in $\widehat{X}$ have the following alternative description: a reduced $G$-tree is in $\widehat{X}$ if and only if it either has type $\cals$ or can be transformed to a $G$-tree of type $\cals$ using a sequence of slide moves followed by a collapse move. Indeed, if $T \in \widehat{X}$ has type $\calt(\Phi(L_i))$, then by Lemma~\ref{lem:unique}\ref{it:slide=ind} a sequence of $i$ anti-clockwise slides transforms $T$ to a $G$-tree of type $\calt(\Phi(L_0))$, after which a collapse transforms this to a $G$-tree of type $\cals$, showing the ``only if'' part. Conversely, if $T \in \widehat{W} \smallsetminus \widehat{X}$ has type $\calt(\Lambda)$, then by Lemma~\ref{lem:unique}\ref{it:slide-unique} and Lemma~\ref{lem:unique}\ref{it:slide=ind} any sequence of slide moves transforms $T$ to a $G$-tree $T'$ of type $\calt(A^j\Lambda)$ for some $j \in \ZZ$, and we have $A^j\Lambda \neq \Phi(L_0)$ (as otherwise $\Lambda = \Phi(L_j)$, contradicting the fact that $T \notin \widehat{X}$), implying that no collapse move is possible to convert $T'$ to a $G$-tree of type $\cals$; this shows the ``if'' part.

Now note that the set of $G$-trees of type $\cals$ is characterised (among all reduced $G$-trees in $\cald$) by the property that they have only one orbit of vertices, and therefore this set is invariant under the $\Out(G)$-action. Furthermore, if $T,T'$ are $G$-trees and $\phi \in \Out(G)$, then $T'$ can be obtained from $T$ using a slide (respectively collapse) move if and only if $T' \cdot \phi$ can be obtained from $T \cdot \phi$ using a slide (respectively collapse) move. Using the alternative description of $\widehat{X}$ given in the previous paragraph, it follows that $\widehat{X}$ is invariant under the $\Out(G)$-action, as required.
\end{proof}

\begin{prop} \label{prop:dr}
There exists an $\Out(G)$-equivariant deformation retraction of $\widehat{W}$ to~$\widehat{X}$.
\end{prop}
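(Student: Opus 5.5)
The plan is to build the retraction as a (possibly infinite, but locally finite in the poset sense) sequence of $\Out(G)$-equivariant elementary collapses, removing the vertices of $\widehat{W}\smallsetminus\widehat{X}$ one $\Out(G)$-orbit-invariant ``layer'' at a time. Every vertex of $\widehat{W}\smallsetminus\widehat{X}$ is a tree $T$ of type $\calt(\Lambda)$ with $\Lambda\neq\Phi(L_i)$ for all $i$. For such $T$ I will single out a canonical neighbour $p(T)\sqsubsetdot T$ using the lattice structure: set $p(T)$ to be the tree of type $\calt(\Lambda\cap\Phi(L_j))$, where $j$ is minimal with $\Lambda\leq\Phi(L_j)$ (so $\Lambda\cap\Phi(L_{j-1})<\Lambda$), or the type $\cals$ tree reached by an $\scra^{-1}$-move when $\Lambda\cap \Phi(L_{j-1})=\Phi(L)$. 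By Lemma~\ref{lem:unique}\ref{it:ind-unique} and \ref{it:ind-exp} this neighbour is uniquely determined by $T$ (not just its type). Because the construction is phrased purely in terms of moves and of the distinguished reduced trees in $\widehat{X}$ (which is $\Out(G)$-invariant by Lemma~\ref{lem:Xhat}, and on which slides/collapses are characterised intrinsically), the assignment $T\mapsto p(T)$ is $\Out(G)$-equivariant. I would actually rephrase it intrinsically: $p(T)$ is the maximal element below $T$ in $(\widehat{W},\sqsubset)$ that lies ``closer'' to $\widehat{X}$, with closeness measured by $\min_j[\Phi(L_j):\Lambda\cap \Phi(L_j)]$-type data.

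Next I order the vertices of $\widehat{W}\smallsetminus\widehat{X}$ by the signature $s(T)=[\Lambda:\Phi(L)]$ from Lemma~\ref{lem:poset}, which is $\Out(G)$-invariant (it is computed from vertex valences). I remove vertices in decreasing... rather, I argue that the link of $T$ in the subcomplex $\widehat{W}_{\le T}$ downward part (all $T'\sqsubset T$, a finite set by Lemma~\ref{lem:poset}) is a cone with apex $p(T)$: for any simplex $\sigma$ in $\widehat{W}$ with $T$ as its $\sqsubset$-maximal vertex, $\sigma\cup\{p(T)\}$ is again a simplex. This amounts to checking, for lattices $\Lambda_1<\dots<\Lambda_r<\Lambda\le A^{-1}\Lambda_1$, that inserting $\Lambda\cap\Phi(L_{j-1})$ keeps the chain inside $A^{-1}\Lambda_1$ and comparable with each $\Lambda_i$ — the latter is where I expect to need $\Lambda_i\le \Phi(L_{j-1})$ or $\Lambda_i\ge \Lambda\cap\Phi(L_{j-1})$, which I would deduce from $\Lambda_i\supseteq A\Lambda$ and the chain $\Phi(L_{j-1})=A\Phi(L_j)$. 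Given the cone property, the open star of $T$ relative to the simplices where $T$ is top can be collapsed onto the face opposite $T$; I perform these collapses simultaneously on an entire $\Out(G)$-orbit (distinct orbit members have disjoint ``top stars'' by admissibility, Proposition~\ref{prop:What}), and in order of increasing distance from $\widehat{X}$ so that simplices where $T$ is not the top vertex have already been handled — actually I will process upward-closed sets: handle trees with $T$ maximal among the remaining ones, i.e.\ proceed by descending a well-founded complexity such as $j(T)$ together with $s(T)$.

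Finally I assemble: each stage is an equivariant deformation retraction, only finitely many stages touch any given simplex (finiteness of $\widehat{W}_T$), so the concatenation (parametrised on $[0,1]$ with stages in shrinking intervals, or as a direct limit) is a well-defined continuous equivariant deformation retraction of $\widehat{W}$ onto $\widehat{X}$. The main obstacle is the cone/compatibility step: verifying that the canonical $p(T)$ is joinable to every simplex having $T$ as top vertex, and choosing the processing order so that the complexity is well-founded despite $\widehat{W}$ being infinite and trees of arbitrarily large $\Lambda$ existing; if the order by $s(T)$ fails, I would instead use the pair $(j(T),[\Phi(L_j):\Lambda])$ lexicographically.
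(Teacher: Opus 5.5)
Your proposal has genuine gaps, and they sit exactly at the two places you flag as obstacles.

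\textbf{There is no canonical lower apex.} As written, $\calt(\Lambda\cap\Phi(L_j))$ is just $\calt(\Lambda)$; you presumably mean $\Phi(L_{j-1})$. The uniqueness claim still fails. Lemma~\ref{lem:unique}\ref{it:ind-unique} gives uniqueness only for \emph{increasing} inductions, i.e.\ for upper neighbours. Part \ref{it:ind-exp} starts from a given $\scra^{-1}$-result $S'$ and says nothing about uniqueness of $S'$. And, as remarked after Definition~\ref{def:slide_induction_A_moves}, decreasing inductions depend on a choice of edge, not just an orbit.

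Here is a concrete failure. For $\BS(2,8)$ ($A=4$, $L=2\ZZ$), let $T$ be the tree of type $\calt(\ZZ)$ with vertex groups $H$ and $K=\langle t^{-1}a^4t\rangle$, obtained from $S$ by an $\scra$-move. The twist $a\mapsto a$, $t\mapsto (t^{-1}a^4t)t$ stabilises $T$ (argue as in Lemma~\ref{find_reps_of_stab_T}, with $K$ in place of $L_n$). It does not stabilise $S$: $\|at\|_S=1$, while the image $at^{-1}a^4t^2$ is cyclically reduced with three stable letters. So $T$ has at least two lower neighbours of type $\cals$, moved by $\Stab_{\Out(G)}(T)$. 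Hence $T\mapsto p(T)$ is neither well defined nor equivariant.

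The cone property also fails in rank $\geq 2$, because lattices in $[A\Lambda,\Lambda)$ are not totally ordered. Take $A=2I_2$, $L=2\ZZ^2$ and $\Lambda=\ZZ\oplus\frac12\ZZ$. Then $\Lambda\cap\Phi(L_1)=\ZZ^2$. But $\Lambda_1=2\ZZ\oplus\frac12\ZZ$ satisfies $\Phi(L)<\Lambda_1<\Lambda\leq A^{-1}\Lambda_1$ and is incomparable with $\ZZ^2$. So an edge $\{T_1,T\}$ of $\widehat W$ with $T_1$ of type $\calt(\Lambda_1)$ cannot be coned off to any tree of type $\calt(\ZZ^2)$.

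\textbf{The removal order cannot be well founded.} Coning towards any $p$ strictly below $T$ requires $T$ to be the top vertex of every remaining simplex containing it. In particular $U=\ind{T}{A^{-1}\Lambda}$ must already be gone: $p\sqsubsetdot U$ would force $A^{-1}\Lambda\leq A^{-1}\Lambda_0$ for some $\Lambda_0<\Lambda$ (or $\Lambda\leq\Phi(L)$ if $p$ has type $\cals$). But
\[
T\sqsubsetdot\ind{T}{A^{-1}\Lambda}\sqsubsetdot\ind{T}{A^{-2}\Lambda}\sqsubsetdot\cdots
\]
is an infinite chain in $\widehat W\smallsetminus\widehat X$. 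So no $\omega$-indexed sequence of stages, which your concatenated homotopy needs, can remove trees top-down, whatever complexity you use.

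The paper goes the other way. It enumerates $\widehat W\smallsetminus\widehat X$ so that $T_i\sqsubsetdot T_j$ implies $i<j$, which is possible because $\widehat W_T$ is finite, and removes lower trees first. It cones towards the \emph{unique upper} neighbours $\ind{T}{\Phi(L_{k+1})}$ and $\ind{T}{A^{-1}\Lambda}$. There the cone property is automatic by Lemma~\ref{lem:unique}\ref{it:ind-ind}, since $A^{-1}\Lambda$ is the top of the interval $(\Lambda,A^{-1}\Lambda]$. The final two collapses are ordered so that $T$ flows to $\ind{T}{\Lambda+\Phi(L_{r+1})}$, whose complexity $c=n-r$ is one less than that of $T$. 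It is this decrease, not finiteness of $\widehat W_T$, that makes every vertex reach $\widehat X$ after finitely many stages, so that the limit homotopy exists.
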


\begin{proof}
Our proof proceeds in the following steps:
\begin{enumerate}[label=(\roman*)]
\item \label{it:dr-enum} enumerating the vertices of $\widehat{W} \smallsetminus \widehat{X}$ as $T_0,T_1,\ldots$, so that if $T_i \sqsubsetdot T_j$ then $i < j$;
\item \label{it:dr-elcoll} finding a sequence of elementary collapses transforming $X_i$ to $X_{i+1}$, where $X_j$ is the subcomplex of $\widehat{W}$ induced by $\widehat{X} \cup \{ T_j,T_{j+1},\ldots \}$;
\item \label{it:dr-dr} using these sequences of elementary collapses to define a deformation retraction of $\widehat{W}$ to~$\widehat{X}$;
\item \label{it:dr-equiv} explaining how this construction can be modified so that the resulting deformation retraction is $\Out(G)$-equivariant.
\end{enumerate}

We now deal with each of the steps in order.

\medskip\pushQED{$\blackdiamond$}

\noindent\emph{Proof of \ref{it:dr-enum}.} Let $\sqsubset$ be the transitive closure of the relation $\sqsubsetdot$ on $\widehat{W}$, so that, by Lemma~\ref{lem:poset}, $(\widehat{W},\sqsubset)$ is a poset and $\widehat{W}_T := \{ T' \in \widehat{W} \mid T' \sqsubset T \}$ is finite for any $T \in \widehat{W}$. Since the $G$-trees in $\widehat{W}$ are locally finite, each of them admits only finitely many induction or $\scra^{\pm 1}$-moves, implying that the simplicial complex $\widehat{W}$ is locally finite; as $\widehat{W}$ is also connected, it follows that it has countably many vertices, so we can enumerate the vertices of $\widehat{W} \smallsetminus \widehat{X}$ by $S_0,S_1,\ldots$, say.

For any $i \geq 0$, let 
$Y_i\coloneqq \{T \in \widehat{W} \smallsetminus \widehat{X} \mid T \sqsubseteq S_i, T \nsqsubseteq S_j \text{ for any } j < i\}$.  Then the $Y_i$ are finite disjoint subsets of $\widehat{W}$, and $\bigsqcup_{i=0}^\infty Y_i = \widehat{W} \smallsetminus \widehat{X}$. As any finite non-empty poset has a minimal element, it follows that we can write any non-empty $Y_i = \{ T_{i,0},\ldots,T_{i,k_i} \}$ so that if $T_{i,j} \sqsubset T_{i,j'}$ then $j < j'$. In particular, if $Y_i$ is non-empty we have $T_{i,k_i}=S_i$.

Now consider the ordering $T_{0,0},\ldots,T_{0,k_0},T_{1,0},\ldots,T_{1,k_1},\ldots$ of vertices of $\widehat{W} \smallsetminus \widehat{X}$. We claim that if $T \sqsubsetdot T'$ for some $T,T' \in \widehat{W} \smallsetminus \widehat{X}$ then $T$ appears before $T'$ in this ordering. Indeed, suppose $T_{i,j} \sqsubsetdot T_{m,n}$. Since $T_{m,n} \sqsubseteq S_{m}$ we also have $T_{i,j} \sqsubset S_{m}$ and therefore $T_{i,j} \in \widehat{W}_{S_{m}}$; since $T_{i,j} \notin \widehat{X}$ this gives that $T_{i,j} \in \bigcup_{l=0}^{m} Y_l$, implying that $i \leq m$. This implies that either $i < m$, or $i = m$ and (by the choice of ordering of $Y_i$) $j < n$, thus proving the claim. Relabelling the $G$-trees in the given order as $T_0,T_1,\ldots$, we have $i < j$ whenever $T_i \sqsubsetdot T_j$, as required. \hfill\popQED

\medskip\pushQED{$\blackdiamond$}

\noindent\emph{Proof of \ref{it:dr-elcoll}.} Recall that given a simplicial complex $X$ and simplices $\sigma \subsetneq \tau \in X$, we say that $\sigma$ is a \emph{free face} of $\tau$ if $\sigma$ is not contained in any simplex other than itself and $\tau$; an \emph{elementary collapse} is then the removal of such a pair of faces $\{ \sigma,\tau \}$ from $X$. We aim to find a sequence of elementary collapses that transforms $X_i$ to $X_{i+1}$. We do this in two steps: we first transform $X_i$ to $X_i'$, where $X_i'$ consists of precisely those simplices of $X_i$ that do not contain edges of the form $\{S,T\}$ with $S \sqsubsetdot T_i$,  and then transform $X_i'$ to $X_{i+1}$. Let $T = T_i$, and let $\Lambda$ be the lattice such that $T$ has type $\calt(\Lambda)$.

The first of these transformations is trivial if $X_i' = X_i$, so suppose that $X_i' \neq X_i$, so that $S \sqsubsetdot T$ for some $S \in X_i$. By our choice of the ordering of the trees $T_j$, it follows that $S \in \widehat{X}$, and so $S$ is a $G$-tree of type $\calt(\Phi(L_k))$ for some $k \geq 1$ or of type $\cals$ (in the latter case, we set $k := 0$). In particular, we have $\Phi(L_k) < \Lambda < A^{-1}\Phi(L_k) = \Phi(L_{k+1})$, and so by Lemma~\ref{lem:unique}\ref{it:ind-unique} there exists a unique $G$-tree $U = \ind{T}{\Phi(L_{k+1})}$ of type $\calt(\Phi(L_{k+1}))$ such that $T \sqsubsetdot U$. It then follows from Lemma~\ref{lem:unique}\ref{it:ind-ind} (in conjunction with Lemma~\ref{lem:unique}\ref{it:ind-exp} when $k = 0$) that for any $S \in \widehat{X}$ with $S \sqsubsetdot T$ we have $S \sqsubsetdot U$; similarly, it follows that if $\widehat{T} \in X_i$ and $S \in \widehat{X}$ are such that $S \sqsubsetdot T \sqsubsetdot \widehat{T}$ and $S \sqsubsetdot \widehat{T}$, then either $\widehat{T} \sqsubsetdot U$ or $\widehat{T} = U$ (depending on whether $\widehat{T}$ is contained in $\widehat{X}$). We may thus deduce that if $\sigma$ is a simplex in $X_i$ but not in $X_i'$, then so is $\sigma \cup \{U\}$, and in particular if $\sigma \sqcup \{U\}$ is a maximal simplex then $\sigma$ is its free face (indeed, if $\sigma \subseteq \sigma'$ for a simplex $\sigma'$ then $\sigma' \cup \{U\} = \sigma \sqcup \{U\}$ by maximality, and therefore $\sigma' = \sigma$ or $\sigma' = \sigma \sqcup \{U\}$). This allows us to obtain $X_i'$ from $X_i$ using elementary collapses by repeatedly removing pairs of the form $\left\{ \sigma, \sigma \sqcup \{U\} \right\}$, where $\sigma \sqcup \{U\}$ is a simplex that is maximal (in an intermediate simplicial complex) among simplices containing edges of the form $\{S,T\}$ with $S \sqsubsetdot T$; see Figure~\ref{sfig:dr-xi'}.

In order to transform $X_i'$ to $X_{i+1}$, let $r \geq 0$ be the largest such that $\Phi(L_r) < \Lambda$, 
and note that $\Lambda < \Lambda+\Phi(L_{r+1}) = \Lambda + A^{-1}\Phi(L_r) \leq A^{-1}\Lambda$. Let $\widehat{T} \coloneqq \ind{T}{\Lambda+L_{r+1}}$ and $U \coloneqq \ind{T}{A^{-1}\Lambda}$, as in Lemma~\ref{lem:unique}\ref{it:ind-unique}, and note that $X_i'$ contains the face $\{ T, \widehat{T}, U \}$. Any maximal simplex of $X_i'$ that is not a simplex of $X_{i+1}$ has the form $\sigma = \{ T=S_0,S_1,\ldots,S_k \}$, where $S_j \sqsubsetdot S_{j'}$ for $j < j'$, so that $S_j$ has type $\calt(\Lambda_j)$ and $\Lambda = \Lambda_0 < \Lambda_1 < \cdots < \Lambda_k \leq A^{-1}\Lambda$; it then follows from Lemma~\ref{lem:unique}\ref{it:ind-ind} that adding the vertex $U$ would result in a simplex contained in $X_i'$, and so $U \in \sigma$ by maximality. Similarly as in the previous paragraph, we may use elementary collapses by repeatedly removing pairs of the form $\left\{ \sigma, \sigma \sqcup \{U\} \right\}$, where $\sigma \sqcup \{U\}$ is a simplex that is maximal (in an intermediate simplicial complex) among simplices not contained in $X_{i+1}$ or $\{ T, \widehat{T}, U \}$, to obtain the simplicial complex $X_i'' = X_{i+1} \cup \{ \{T\},\{T,\widehat{T}\},\{T,U\},\{T,\widehat{T},U\} \}$. Two further elementary collapses---removing $\{ \{T,U\},\{T,\widehat{T},U\} \}$ and then removing $\{ \{T\},\{T,\widehat{T}\} \}$---transform $X_i''$ into $X_{i+1}$, as required; see Figure~\ref{sfig:dr:xi+1}.  
\hfill\popQED

\smallskip

The order of the last two collapses is not unique.  Indeed, first removing $\{ \{T,\widehat{T}\}, \{T,\widehat{T},U\} \}$ and then $\{ \{T\}, \{T,U\} \}$ would also work.  However the choice we have made is crucial in \ref{it:dr-dr}.  In particular, we will show that the tree $\widehat{T}$ has lower \emph{complexity} than $T$, as defined below, whereas $U$ does not.

\begin{figure}[ht]
    \centering
\begin{subfigure}[t]{0.3\textwidth}
\centering
\begin{tikzpicture}
    \draw [dashed] (0,4) -- (0,-1) node [below] {$c=0$};
    \draw [dashed] (-2,3) -- (-2,0) node [below] {$c=1$};
    \fill [blue,fill opacity=0.2] (0,0) -- (-2,2) -- (0,3);
    \fill [blue,fill opacity=0.2] (-2,1) -- (-2,2) -- (0,3);
    \fill [red!50!blue,fill opacity=0.1] (0,0) -- (-2,1) -- (-1.5,1.5);
    \draw [ultra thick,green!70!black] (0,0) -- (0,3);
    \draw [very thick,red!50!blue] (0,0) -- (-2,1);
    \draw [very thick,blue] (0,0) -- (-2,2) (0,3) -- (-2,1) (-2,1) -- (-2,2) -- (0,3);
    \draw [fill=green!70!black] (0,0) circle (3pt) node [right] {$S$} (0,3) circle (3pt) node [right] {$U$};
    \draw [fill=blue] (-2,1) circle (2pt) node [left] {$T$} (-2,2) circle (2pt) node [left] {$\widehat{T}$};
\end{tikzpicture}
    \caption{Constructing $X_i'$ (blue) by collapsing the simplices of $X_i$ containing $\{S,T\}$ with $S \sqsubsetdot T$ (purple).}
\label{sfig:dr-xi'}
\end{subfigure}\hfill
\begin{subfigure}[t]{0.65\textwidth}
\centering
\begin{tikzpicture}[x=1.4cm]
    \foreach \i in {1,...,20}
        \fill [blue!\i] (0,{5.3-0.015*\i}) -- ({-4.3+0.015*\i},{3.15-0.0075*\i}) -- (-4,3) -- (0,5);
    \foreach \i in {1,...,15}
        \fill [black!\i] ({-4.3+0.02*\i},{3.15-0.01*\i}) -- ({-5.3+0.02*\i},{2.65-0.01*\i}) -- (-5,2.5) -- (-4,3);
    \draw [dashed] (0,5.5) -- (0,-0.5) node [below] {$c=0$};
    \draw [dashed] (-1,5) -- (-1,0) node [below] {$c=1$};
    \draw [dashed] (-2,4.5) -- (-2,0.5) node [below] {$c=2$};
    \draw [dashed] (-3,4) -- (-3,1) node [below] {$c=3$};
    \draw [dashed] (-4,3.5) -- (-4,1.5) node [below] {$c=4$};
    \draw [dashed] (-5,3) -- (-5,2) node [below] {$c=5$};
    \fill [blue!20] (0,1) -- (-4,3) -- (0,5);
    \fill [red!10] (0,0) -- (0,2) -- (-2,3) -- (-4,2);
    \fill [black!15] (-4,2) -- (-5,2.5) -- (-4,3) -- (-3,2.5);
    \fill [red!50!blue!20] (-2,2) -- (-3,2.5) -- (-2,3) -- (-1,2.5);
    \draw [very thick,red,densely dotted] (0,0) -- (-4,2) (0,1) -- (-2,2) (-2,2) -- (-3,1.5) (0,2) -- (-2,1) (0,1) -- (-1,0.5) (-1,0.5) -- (-1,2.5) (-2,1) -- (-2,2) (-3,1.5) -- (-3,2.5);
    \draw [very thick,blue] (-3,2.5) -- (-4.2,3.1) (0,2) -- (-3.2,3.6) (0,3) -- (-2.2,4.1) (0,4) -- (-1.2,4.6) (0,5) -- (-0.2,5.1) (0,5) -- (-4,3) (0,4) -- (-3,2.5) (0,3) -- (-1,2.5) (-1,2.5) -- (-1,4.7) (-2,3) -- (-2,4.2) (-3,2.5) -- (-3,3.7) (-4,3) -- (-4,3.2);
    \draw [very thick,red!50!blue] (-2,2) -- (-3,2.5) (-2,2) -- (-1,2.5) (-2,2) -- (-2,3);
    \draw [very thick,black!60] (-4,2) -- (-5.2,2.6) (-4,2) -- (-3,2.5) (-4,2) -- (-4,3) (-5,2.5) -- (-5,2.7) (-4,3) -- (-5,2.5);
    \draw [ultra thick,green!70!black] (0,0) -- (0,5.2);
    \draw [fill=green!70!black] (0,0) circle (3pt) node [right] {$\cals$} (0,1) circle (3pt) node [right] {$\calt(\Phi(L_1))$} (0,2) circle (3pt) node [right] {$\calt(\Phi(L_2))$} (0,3) circle (3pt) node [right] {$\calt(\Phi(L_3))$} (0,4) circle (3pt) node [right] {$\calt(\Phi(L_4))$} (0,5) circle (3pt) node [right] {$\calt(\Phi(L_5))$};
    \draw [fill=red] (-1,0.5) circle (2pt) (-1,1.5) circle (2pt) (-2,1) circle (2pt) (-3,1.5) circle (2pt);
    \draw [fill=black!60] (-4,2) circle (2pt) (-5,2.5) circle (2pt);
    \draw [fill=red!50!blue] (-2,2) circle (2pt) node [below,xshift=5pt] {$T$};
    \draw [fill=blue] (-2,3) circle (2pt) node [above,xshift=5pt] {$U$} (-1,2.5) circle (2pt) node [above,xshift=5pt] {$\widehat{T}$} (-1,3.5) circle (2pt) (-1,4.5) circle (2pt) (-2,4) circle (2pt) (-3,2.5) circle (2pt) (-3,3.5) circle (2pt) (-4,3) circle (2pt);
    \draw [red!50!blue,->] (-2.5,2.25) to [bend left] node [pos=0.3,above] {\tiny A} (-2.2,2.5);
    \draw [red!50!blue,->] (-2,2.5) to [bend left] node [pos=0.6,above] {\tiny B} (-1.7,2.35);
    \draw [red!50!blue,->,yshift=-0.1cm] (-1.7,2.15) to node [pos=0.6,below] {\tiny C} (-1.3,2.35);
\end{tikzpicture}
    \caption{Constructing $X_{i+1}$ (blue) by collapsing the simplices of $X_i'$ containing $T$ (purple). The red simplices had already been collapsed in the previous steps; the grey ones might have and might have not. The labelled arrows represent (A) constructing $X_i''$, (B) collapsing $\{T,U\}$, and finally (C) collapsing $T$.}
\label{sfig:dr:xi+1}
\end{subfigure}
    \caption{An example of the construction of $X_{i+1}$ from $X_i$, as in step~\ref{it:dr-elcoll} in the proof of \Cref{prop:dr}.  The pictures are drawn so that increasing induction and $\scra$-moves are always represented as going up (rather than down) along an edge. The complex $\widehat{X}$ is shown in green, and the values of $c$ refer to the complexity of trees, as defined in step~\ref{it:dr-dr}.}
    \label{fig:dr}
\end{figure}

\medskip\pushQED{$\blackdiamond$}

\noindent\emph{Proof of \ref{it:dr-dr}.} Note that removing a pair of simplices $\{ \sigma,\tau \}$ of a simplicial complex $X$ via an elementary collapse induces a deformation retraction of $X$ onto $X \smallsetminus \{\sigma,\tau\}$. Therefore, since there is a sequence of (finitely many) elementary collapses transforming $X_i$ to $X_{i+1}$ as above, we have induced deformation retractions $\calh_i\colon X_i \times [0,1] \to X_i$ of $X_i$ onto $X_{i+1}$. We construct a deformation retraction of $\widehat{W}$ to $\widehat{X}$ as a composition of the deformation retractions $\calh_i$ done at the interval $[1-2^{-i},1-2^{-i-1}]$; in particular, we define a function $\calh\colon \widehat{W} \times [0,1) \to \widehat{W}$ so that for $t \in [1-2^{-i},1-2^{-i-1})$ we have $\calh(x,t) = \calh_i((r_{i-1} \circ \cdots \circ r_0)(x), 2-2^{i+1}(1-t))$ for all $x \in \widehat{W}$, where $r_i\colon X_i \to X_{i+1}$ is defined by $r_i(x) = \calh_i(x,1)$.

It is clear that the function $\calh$ is continuous. We claim that it extends to a deformation retraction $\calh\colon \widehat{W} \times [0,1] \to \widehat{W}$ of $\widehat{W}$ to $\widehat{X}$. In order to show that, since $\widehat{W}$ is locally finite and $\calh$ is induced by a sequence of elementary collapses, it is enough to show that for any $T \in \widehat{W}$, there exists $\varepsilon > 0$ such that the function $t \mapsto \calh(T,t)$ is constant on $(1-\varepsilon,1)$ and equal to some $\widehat{T} \in \widehat{X}$, in which case we set $\calh(T,1) = \widehat{T}$.

We define the \emph{complexity} $c(T)$ of a $G$-tree $T \in \widehat{W}$ as follows. If $T$ is of type $\cals$, we set $c(T) = 0$. If $T$ is of type $\calt(\Lambda)$, then we have $\Phi(L_0) \leq \Lambda$, and so we may choose the largest $r \geq 0$ such that $\Phi(L_r) \leq \Lambda$. Since $\Lambda$ is finitely generated and $\Lambda \leq \bigcup_{n=0}^\infty \Phi(L_n)$, we may choose the smallest $n \geq r$ such that $\Lambda \leq \Phi(L_n)$. We then set $c(T) = n-r$. It is clear that $T \in \widehat{X}$ if and only if $c(T) = 0$.

Now let $T \in \widehat{W}$, and let $T = S_0,S_1,\ldots$ be all the distinct $G$-trees appearing (in this order) in the sequence $\left( (r_{i-1} \circ \cdots \circ r_0)(T) \right)_{i=0}^\infty$. Note that $S_{k+1} = r_i(S_k)$, where $i \geq 0$ is such that $S_k = T_i$, as long as $S_k \notin \widehat{X}$. Hence, by the construction of $\calh_i$, it follows that if $S_k$ has type $\calt(\Lambda)$ then $S_{k+1}$ has type $\calt(\Lambda')$, where $\Lambda' = \Lambda+\Phi(L_{r+1})$ for $r \geq 0$ largest such that $\Phi(L_r) < \Lambda$. 
We now compare $c(S_{k+1})$ to $c(S_k)$. Note that if $\Lambda \leq \Phi(L_n)$ but $\Lambda \nless \Phi(L_{n+1})$ then $\Lambda < \Lambda + \Phi(L_{r+1}) \leq \Phi(L_n)$, so $\Lambda'$ is contained in $\Phi(L_n)$ but not in $\Phi(L_{n-1})$. On the other hand, we now have $\Phi(L_{r+1}) \leq \Lambda'$, and we claim this is optimal: if we had $\Phi(L_{r+2}) \leq \Lambda' \leq A^{-1}\Lambda$, then by applying $A$ we see that $\Phi(L_{r+1})=A\Phi(L_{r+2}) \leq AA^{-1}\Lambda = \Lambda$. So $c(S_{k+1}) = n-(r+1) = c(S_k)-1$. 
This shows that the sequence $T = S_0,S_1,\ldots$ terminates in $c(T)$ steps, and we have $\widehat{T} \coloneqq S_{c(T)} \in \widehat{X}$, 
a tree of type $\calt(\Phi(L_n))$ obtained from $T$ by a sequence of increasing inductions. If $j \geq 0$ is such that $S_{c(T)-1} = T_j$, it then follows that the function $t \mapsto \calh(T,t)$ is constant and equal to $S_{c(T)} \in \widehat{X}$ on $(1-2^{-j-1},1)$, as required. \hfill\popQED

\medskip\pushQED{$\blackdiamond$}

\noindent\emph{Proof of \ref{it:dr-equiv}.} We now describe the modifications of the construction above to replace the deformation retraction defined in step~\ref{it:dr-dr} with an $\Out(G)$-equivariant one.

In step~\ref{it:dr-enum}, after enumerating the vertices of $\widehat{W} \smallsetminus \widehat{X}$ as $T_0,T_1,\ldots$, we also enumerate their orbits under the $\Out(G)$-action (which is possible since, by Lemma~\ref{lem:Xhat}, $\widehat{X}$ and so $\widehat{W} \smallsetminus \widehat{X}$ is $\Out(G)$-invariant), as follows. Given a $G$-orbit $\calo \subseteq \widehat{W} \smallsetminus \widehat{X}$, we write $\calo = \{ T_{i_0},T_{i_1},\ldots \}$ so that $i_j < i_{j'}$ for $j < j'$, and let $i(\calo) := i_0$. We then list the orbits as $\calo_0,\calo_1,\ldots$ in increasing order of the values of $i(\calo_j)$, so that if $j < j'$ then $i(\calo_j) < i(\calo_{j'})$. We claim that if $T \sqsubsetdot T'$ for some $T \in \calo$ and $T' \in \calo'$, where $\calo$ and $\calo'$ are $\Out(G)$-orbits in $\widehat{W} \smallsetminus \widehat{X}$, then $i(\calo) < i(\calo')$. Indeed, in this case we have $T = T_i \cdot \phi$ and $T' = T_{i'} \cdot \phi'$ for some $\phi,\phi' \in \Out(G)$, where $i = i(\calo)$ and $i' = i(\calo')$, implying that $T_i \cdot \phi (\phi')^{-1} \sqsubsetdot T_{i'}$. If $j \geq 0$ is such that $T_i \cdot \phi (\phi')^{-1} = T_j$, we then have $i(\calo) \leq j < i' = i(\calo')$, as claimed.

In step~\ref{it:dr-elcoll}, instead of making elementary collapses one-by-one, we make them $\Out(G)$-equivariantly: when we remove a pair $\{ \sigma,\tau \}$, where $\sigma$ is a free face of a maximal simplex $\tau$, we remove at the same time the pair $\{ \phi\cdot\sigma,\phi\cdot\tau \}$ for all $\phi \in \Out(G)$. Note that any two such distinct pairs $\{ \sigma,\tau \}$ and $\{ \phi\cdot\sigma,\phi\cdot\tau \}$ are pairwise disjoint: indeed, if $\sigma = \phi \cdot \sigma$ then $\tau$ and $\phi \cdot \tau$ both coincide with the unique maximal simplex containing $\sigma$, and so $\tau = \phi \cdot \tau$; conversely, if $\tau = \phi \cdot \tau$ then also $\sigma = \phi \cdot \sigma$ since the action of $\Out(G)$ on $\widehat{W}$ is admissible by Proposition~\ref{prop:What}. This implies that the simultaneous removal of the pairs $\{ \phi\cdot\sigma,\phi\cdot\tau \}$ for all $\phi \in \Out(G)$ induces an $\Out(G)$-equivariant deformation retraction. Hence the maps $\calh_i$ and therefore $\calh$ in step~\ref{it:dr-dr} become $\Out(G)$-equivariant as well, as required. \hfill\popQED

\medskip

As we have verified all four steps, the proof is complete.
\end{proof}

Summarising our results so far, we prove \Cref{thmx:deformation_retract}.

\setcounter{thmx}{3}
\begin{thmx}\label{thmx:deformation_retract}
    If $G = \LM(A,L)$ with either $L < AL < \ZZ^m$ or $AL < L < \ZZ^m$, then the deformation space of reduced $G$-trees $\Out(G)$-equivariantly deformation retracts to a simplicial tree.
\end{thmx}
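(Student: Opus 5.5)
The plan is to chain together the results already established in \Cref{sec:deform}. First I will reduce to the case $AL < L < \ZZ^m$. If instead $L < AL < \ZZ^m$, replacing $t$ by $t^{-1}$ in the presentation \eqref{eqn:LMAL} exhibits $G$ as $\LM(A^{-1},AL)$, and here $A^{-1}(AL) = L < AL$. This identification is an isomorphism of groups carrying the Bass--Serre tree $S$ to the analogous tree. It therefore identifies the two deformation spaces and intertwines the $\Out(G)$-actions, so it suffices to treat $AL < L$. In this case $G$ is not metabelian, because $L \neq \ZZ^m \neq AL$. Hence \Cref{prop:D-AutG-inv} applies, and the deformation space $\cald$ is $\Out(G)$-invariant.

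Next I will pass from $\cald$ to $W$. By definition the spine of $\cald$ is an $\Out(G)$-equivariant deformation retract of $\cald$. I will interpret ``the deformation space of reduced $G$-trees'' as $\cald$ or its spine; since all reduced trees lie in $W$, either reading works. By \Cref{thm:dr-to-W}, the spine equivariantly deformation retracts onto $W$. By \Cref{prop:What}, $W$ is $\Out(G)$-equivariantly isomorphic to the barycentric subdivision of $\widehat{W}$, so as topological $\Out(G)$-spaces we may replace $W$ by $\widehat{W}$. \Cref{prop:dr} then gives an $\Out(G)$-equivariant deformation retraction of $\widehat{W}$ onto $\widehat{X}$. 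Composing these three equivariant deformation retractions produces an equivariant deformation retraction of $\cald$ onto $\widehat{X}$.

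It remains to check that $\widehat{X}$ is a simplicial tree. By \Cref{lem:Xhat}, $\widehat{X}$ is a one-dimensional simplicial complex, i.e.\ a graph. Deformation spaces are contractible \cite[Theorem~6.7]{Clay05}, so $\widehat{X}$, as a deformation retract of the contractible space $\cald$, is contractible. A contractible graph is connected with trivial fundamental group, hence a tree. The same lemma shows that the $\Out(G)$-action preserves $\widehat{X}$. By \Cref{prop:What}, the action on $\widehat{W}$, and hence on $\widehat{X}$, is admissible, so $\Out(G)$ acts on the tree without inversions, which is what the later Bass--Serre analysis needs.

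The only step needing care is bookkeeping rather than new mathematics: making sure the equivariance is preserved at each stage of the composition. The one point to check explicitly is the passage $W \cong \mathrm{sd}(\widehat{W})$. An equivariant homeomorphism transports the equivariant deformation retraction of \Cref{prop:dr} back to $W$, so no further argument is required there. I also need the reduction $L < AL$ to $AL < L$ to be compatible with the $\Out(G)$-actions. It is, because it comes from an isomorphism $G \cong \LM(A^{-1},AL)$ that carries trees to trees.
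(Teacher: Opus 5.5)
Your proposal is correct and follows essentially the same route as the paper. You reduce to $AL<L$ by swapping $t$ and $t^{-1}$, then compose Clay's equivariant retraction onto $W$, the identification of $W$ with the barycentric subdivision of $\widehat{W}$ (Proposition~\ref{prop:What}), and the equivariant retraction of $\widehat{W}$ onto $\widehat{X}$ (Proposition~\ref{prop:dr}), which is a tree because it is one-dimensional, $\Out(G)$-invariant and contractible.
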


\begin{proof}
    We assemble the results in this section. The deformation space $\cald$ equivariantly deformation retracts to the subcomplex $W$ of its spine by Clay's Theorem~\ref{thm:dr-to-W}. By Proposition~\ref{prop:What}, the $W$  is $\Out(G)$-equivariantly isomorphic to the barycentric subdivision of $\widehat{W}$, where the action remains admissible. By Proposition~\ref{prop:dr} there is an $\Out(G)$-equivariant deformation retract from $\widehat{W}$ to $\widehat{X}$, which is a tree since it is one-dimensional by Lemma~\ref{lem:Xhat}, and contractible since this was true of the original deformation space.
\end{proof}

\subsection{The structure of \texorpdfstring{$\Out(G)$}{Out(G)}}

We now use Theorem~\ref{thmx:deformation_retract}, together with our analysis of stabilisers in Section~\ref{sec:non_metab_intro}, to prove the remainder of Theorem~\ref{thmx:not_metabelian}. Recall the specific $G$-trees $S$ (of type $\cals$, shown in Figure~\ref{fig:gog_for_LMAL}) and $T_n$ (of type $\calt(\Phi(L_n))$, shown in Figure~\ref{fig:BAD_trees}); these trees, unlike an arbitrary reduced tree, all have stable letter $t$.

\begin{prop}
    The quotient graph $\widehat{X}/\Out(G)$ is an infinite ray. Representatives of the vertices are given by the $G$-trees $S$ and $T_n$, and furthermore these trees span a connected fundamental domain for the action.
\end{prop}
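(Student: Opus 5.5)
We need to show three things: the vertices of $\widehat{X}$ fall into the $\Out(G)$-orbits of $S, T_1, T_2, \dots$, these orbits are pairwise distinct, and the edges $\{S,T_1\}$ and $\{T_n,T_{n+1}\}$ exist in $\widehat{X}$ and represent all edge orbits.

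\emph{Orbits of vertices.} By \Cref{lem:same-type-is-iso}(2), any two $G$-trees of the same type differ by an automorphism of $G$. Hence every vertex of $\widehat{X}$ of type $\cals$ is in the orbit of $S$, and every vertex of type $\calt(\Phi(L_n))$ is in the orbit of $T_n$. For distinctness of orbits we use invariants preserved by $\Out(G)$, which acts by precomposition and so preserves the quotient graph of groups up to isomorphism. A type $\cals$ tree has one vertex orbit, while a type $\calt$ tree has two. Moreover, $T_n$ has a vertex of degree $[L_n:L]+\left|\det A\right|+1$, and $[L_n:L]=\left|\det A\right|^n$ is strictly increasing in $n$ because $\left|\det A\right|>1$. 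This is the signature argument from \Cref{lem:poset}.

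\emph{Edges.} By the proof of \Cref{lem:Xhat}, every edge of $\widehat{X}$ joins a tree of type $\cals$ to one of type $\calt(\Phi(L_1))$, or a tree of type $\calt(\Phi(L_i))$ to one of type $\calt(\Phi(L_{i+1}))$. To show every edge is in the orbit of the standard ones, take an edge $\{T,T'\}$ with $T\sqsubsetdot T'$. Translate it by an element of $\Out(G)$ so that $T$ becomes $S$ or $T_i$.

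Then $T'$ is obtained from $S$ by an $\scra$-move, or from $T_i$ by an increasing induction with target lattice $\Phi(L_{i+1})=A^{-1}\Phi(L_i)$. In the induction case, \Cref{lem:unique}\ref{it:ind-unique} says the resulting tree is unique, so it must be $T_{i+1}$, since $T_i\sqsubsetdot T_{i+1}$ by the clockwise slide of \Cref{lem:unique}\ref{it:slide-unique}. In the $\scra$-move case, the result need not be unique: there may be several choices of $L$-orbit $U$, as the proof of \Cref{lem:unique}\ref{it:ind-exp} indicates. I would show that $\Stab_{\Out(G)}(S)$ permutes these choices transitively, so every such edge is in the orbit of $\{S,T_1\}$.

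\emph{Transitivity at $S$ (main obstacle).} The choices should correspond to $H$-orbits of upward edges at the base vertex $v'$, each stabilised by $AL$, refined into $L$-orbits. The $L$-orbits inside the $H$-orbit of upward edges are indexed by $H/L$. The automorphisms $\beta_h$ for $h\in H$ from \Cref{prop:H_to_aut_nonmet} fix $H$ and send $t\mapsto ht$. I would check that $\beta_h$ moves the upward edge $tv'$ to $htv'$, so these automorphisms act transitively on $H/L$.

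The expected difficulty is identifying precisely the set of $\scra$-move expansions at $S$ and checking that the $\beta_h$ realise all of them. An alternative, probably cleaner route avoids this bookkeeping by a translation-length argument. If $T'$ is adjacent to $S$ and of type $\calt(\Phi(L_1))$, then by \Cref{lem:same-type-is-iso} we have $T'=T_1\cdot\psi$ for some automorphism $\psi$. Applying \Cref{lem:unique}\ref{it:ind-exp} to $T_1\cdot\psi$, its unique $\cals$-neighbour obtained by an $\scra^{-1}$-move is $S\cdot\psi$. If that uniqueness holds, then $S\cdot\psi=S$, and so $\{S,T'\}=\{S,T_1\}\cdot\psi$. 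The analogous argument with clockwise slides handles the $T_i$–$T_{i+1}$ edges: each $T_{i+1}$ has a unique downward neighbour in $\widehat{X}$, by the uniqueness of the anti-clockwise slide (\Cref{lem:unique}\ref{it:slide=ind}).

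\emph{Conclusion.} Every vertex other than $S$ therefore has exactly one edge orbit going down. The vertex $S$ and each $T_n$ have edge orbits going up to $T_1$ and $T_{n+1}$ respectively. So the quotient $\widehat{X}/\Out(G)$ is the ray $S - T_1 - T_2 - \cdots$. The lift consisting of $S,T_1,T_2,\dots$ together with the edges $\{S,T_1\}$ and $\{T_n,T_{n+1}\}$ is a connected subtree mapping bijectively onto this ray, so it is a connected fundamental domain.
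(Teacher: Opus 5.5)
Your primary argument is correct and follows the paper's proof. The vertex orbits are handled exactly as in the paper, via \Cref{lem:same-type-is-iso} together with the number of vertex orbits and the degree $1+|\det A|+|\det A|^n$. For edges, you move the lower endpoint to $S$ or $T_i$. You then use \Cref{lem:unique}\ref{it:ind-unique} for the $T_i$--$T_{i+1}$ edges, and transitivity of the $\beta_h$ on the $[H:L]$ choices of the $L$-orbit $U$ for the $S$--$T_1$ edges; the subsequent induction up to $\Phi(L_1)$ is again unique by \Cref{lem:unique}\ref{it:ind-unique}. This actually justifies the paper's one-line assertion that there are no other edge orbits.

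You should, however, drop the ``alternative, probably cleaner route'', because the uniqueness it relies on is false.
\begin{itemize}
\item \Cref{lem:unique}\ref{it:ind-exp} fixes the type-$\cals$ tree $S'$ and asserts only that the intermediate tree is unique. It does not say that a tree of type $\calt(\Phi(L_1))$ has a unique $\cals$-neighbour, and in fact such a tree has $[L_1:L]=|\det A|\geq 2$ of them.
\item Concretely, take $\ell\in L_1\smallsetminus L$. Then $\beta_{1,\ell}$ stabilises $T_1$ by \Cref{find_reps_of_stab_T}, but not $S$: by \Cref{prop:stab_Tn} and \Cref{rem:inters}, $\Stab_{\Out(G)}(S)\cap\Stab_{\Out(G)}(T_1)$ has index $[L_1:L]$ in $\Stab_{\Out(G)}(T_1)$. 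Hence $S\cdot\beta_{1,\ell}\neq S$ is a second $\cals$-neighbour of $T_1$.
\item Similarly, $\Stab_{\Out(G)}(T_i)$ has index $|\det A|$ in $\Stab_{\Out(G)}(T_{i+1})$, so $T_{i+1}$ has $|\det A|$ distinct lower neighbours in $\widehat{X}$. An anti-clockwise slide depends on which of the $|\det A|$ upward edges one slides over, and \Cref{lem:unique}\ref{it:slide=ind} claims no uniqueness.
\end{itemize}
Only the upward neighbour of each $T_i$ is unique, which is why your first argument works. For the $S$--$T_1$ edges some transitivity argument is genuinely needed, either at $S$ via the $\beta_h$ as you did, or at $T_1$ via the $\beta_{1,\ell}$.
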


The quotient is shown in Figure~\ref{fig:quotient}.
\begin{figure}[ht]
    \centering
\begin{tikzpicture}
\draw [very thick] (0,0) -- (7,0);
\draw [very thick,dotted] (7,0) -- (8,0);
\draw [thick,fill=white] (0,0) circle (0.1) node [below=0.1] {$S$};
\draw [thick,fill=white] (2,0) circle (0.1) node [below=0.1] {$T_1$};
\draw [thick,fill=white] (4,0) circle (0.1) node [below=0.1] {$T_2$};
\draw [thick,fill=white] (6,0) circle (0.1) node [below=0.1] {$T_3$};
\end{tikzpicture}
    \caption{The quotient $\widehat{X}/\Out(G)$.}
    \label{fig:quotient}
\end{figure}

\begin{proof}
The vertices of $\widehat{X}$ are $G$-trees of types $\cals$ and $\calt(\Phi(L_n))$ for $n \geq 1$. Any two $G$-trees of the same type are in the same orbit under the $\Out(G)$-action by Lemma~\ref{lem:same-type-is-iso}. Conversely, if two $G$-trees $T,T' \in \widehat{X}$ are in the same orbit then they have the same type, because of the following two reasons.
\begin{itemize}
\item The $G$-trees of type $\cals$ (respectively $\calt(\Phi(L_n))$) have one (respectively two) $G$-orbits of vertices, showing that either both $T$ and $T'$ have type $\cals$, or they have types $\calt(\Phi(L_n))$ and $\calt(\Phi(L_r))$ for some $n,r \geq 1$.
\item If $T$ and $T'$ have types $\calt(L_n)$ and $\calt(L_r)$, respectively, then the degrees of the vertices in $T$ (respectively $T'$) in the two $G$-orbits are $[H:L_0]$ and $1+\left|\det(A)\right|+\left|\det(A)\right|^n$ (respectively $[H:L_0]$ and $1+\left|\det(A)\right|+\left|\det(A)\right|^r$), implying that $n=r$.
\end{itemize}
This shows that two $G$-trees in $\widehat{X}$ are in the same $\Out(G)$-orbit if and only if they have the same type. Since $S$ is of type $\cals$ and $T_n$ is of type $\calt(\Phi(L_n))$, these trees serve as orbit representatives. Note that there is a $\scra$-move transforming $S$ into $T_1$, and there are increasing inductions (in fact slides) transforming $T_n$ into $T_{n+1}$, and so there are corresponding edges in $\widehat{X}$ (see Lemma~\ref{lem:Xhat}). So the vertices $\{S,T_1,T_2,\dots\}$ span an infinite ray in $\widehat{X}$. There are no other edge orbits, and so this is a strict fundamental domain for the action, and the quotient graph is an infinite ray.
\end{proof}

We now use this graph of groups to describe $\Out(G)$. Recall that $H \cong \ZZ^m$ is the vertex group of $S$; that $L_n=t^{-n}Lt^n$ is the non-leaf vertex group of $T_n$ and $L_\infty=\bigcup_{n=1}^\infty L_n$; and that $\calc^+ = \{ M \in \GL_m(\ZZ) \mid MA=AM, M(L) = L \}$.

\begin{thm} \label{thm:OutG-semidir-nonfg}
We have a semidirect product splitting
\[
\Out(G) \cong \left(H/(I-A)L  \aster_{L/(I-A)L} L_\infty/(I-A)L\right) \rtimes \calc^+
\]
where the action is induced from $M \cdot v = M(v)$ for $M \in \calc^+$ and $v \in H$ or $v \in L_\infty$. In particular, $\Out(G)$ is not finitely generated.
\end{thm}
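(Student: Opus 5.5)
We apply Bass--Serre theory to the action of $\Out(G)$ on the tree $\widehat{X}$ from \Cref{thmx:deformation_retract}. By the preceding proposition, the quotient $\widehat{X}/\Out(G)$ is a ray with strict fundamental domain spanned by $S,T_1,T_2,\dots$. The action is without inversions, since it is admissible by \Cref{prop:What}. Hence $\Out(G)$ is the fundamental group of a tree of groups on this ray, and so it is the direct limit (an iterated amalgam) of the vertex stabilisers along the ray. Concretely,
\[
\Out(G)\cong \Stab(S)\aster_{\Stab(S)\cap\Stab(T_1)}\Stab(T_1)\aster_{\Stab(T_1)\cap \Stab(T_2)}\Stab(T_2)\aster\cdots .
\]

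The edge groups are the intersections of consecutive stabilisers, and we identify them as follows.
\begin{enumerate}
\item By \Cref{prop:stab_S}, $\Stab(S)\cong \overline{\calb}\rtimes\calc^+$ with $\overline\calb\cong H/(I-A)L$; here $\calc=\calc^+$ because $|\det A|>1$.
\item By \Cref{prop:stab_Tn}, $\Stab(T_n)\cong \overline\calb_n\rtimes \calc^+$ with $\overline\calb_n\cong L_n/(I-A)L$.
\item Everything is realised inside $\Aut(G)$ through the maps $\delta$ and $\delta_n$, which are compatible on overlaps since $\beta_{n,\ell}=\beta_{n+1,\ell}$ and $\beta_\ell$ agrees with $\beta_{n,\ell}$ for $\ell\in L$.
\item By \Cref{rem:inters}, $\Stab(S)\cap\Stab(T_1)\cong L/(I-A)L\rtimes\calc^+$, and $\Stab(T_n)\subseteq\Stab(T_{n+1})$.
\end{enumerate}
The inclusion in (iv) is the point to check with care. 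The stabiliser of the edge $\{T_n,T_{n+1}\}$ is $\Stab(T_n)\cap\Stab(T_{n+1})$, and we must confirm that this equals $\Stab(T_n)$. This follows from \Cref{find_reps_of_stab_T}: a representative preserving $H$ and $L$ and sending $t\mapsto \ell t$ with $\ell\in L_n\subseteq L_{n+1}$ stabilises both trees.

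The ray part therefore contributes the increasing union $\bigcup_n \overline\calb_n\rtimes\calc^+ = (L_\infty/(I-A)L)\rtimes\calc^+$. The whole group is then
\[
(\overline\calb\rtimes\calc^+)\aster_{(L/(I-A)L)\rtimes\calc^+}\bigl((L_\infty/(I-A)L)\rtimes\calc^+\bigr).
\]
This amalgam splits as stated, by a general fact: if $\calc^+$ acts compatibly on $P\supseteq R\subseteq Q$, then $(P\rtimes\calc^+)\aster_{R\rtimes\calc^+}(Q\rtimes\calc^+)\cong (P\aster_R Q)\rtimes\calc^+$. To verify this, map both sides onto $\calc^+$; the common section is $\gamma(\calc^+)$. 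The kernel is computed from the induced action of the kernel on the Bass--Serre tree, or directly from normal forms, and it is $P\aster_R Q$. The action is the one induced by $M$ on $H$ and on $L_\infty\subset \QQ^m$ via $\Phi$. This uses the identity $\gamma_M\beta_{\ell}\gamma_M^{-1}=\beta_{M\ell}$, which holds because $M$ commutes with $A$ and preserves $L$, hence preserves each $L_n$.

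It remains to show that $\Out(G)$ is not finitely generated. Since $AL<L$, the chain $L_n<L_{n+1}$ is strictly increasing, so $L_\infty/(I-A)L$ is not finitely generated. We want to transfer this to $\Out(G)$, and we have two routes.

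The first route uses the action on $\widehat{X}$ directly. A finitely generated group acting on a tree has a finitely generated subgroup $\Out(G)_0$ that acts with connected, finite-diameter quotient image. More simply, every finite subset of $\Out(G)$ lies in the subgroup generated by $\Stab(S)\cup\Stab(T_n)$ for some $n$. By the tree of groups structure this subgroup is $\Stab(S)\aster_{\cdots}\Stab(T_n)$, which is proper because $\Stab(T_{n+1})\not\subseteq$ it (normal forms in the amalgam). So $\Out(G)$ is a strictly increasing union of proper subgroups and cannot be finitely generated.

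The second route uses the quotient $\Out(G)\to$ the free product factor. This would also work, but the first route is cleaner.

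The main obstacle is the bookkeeping in the identifications: checking that the edge groups are exactly as in \Cref{rem:inters}, with compatible embeddings, and that $\Stab(T_{n+1})\neq\Stab(T_n)$. The latter reduces to $L_{n+1}\ne L_n$ modulo $(I-A)L$, which follows from $L_n<L_{n+1}$ and the injectivity statements of \Cref{prop:Ln_to_aut_nonmet}.
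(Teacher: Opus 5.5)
Your proposal is correct and follows the paper's proof essentially step for step. You apply Bass--Serre theory to the ray $\widehat{X}/\Out(G)$ with the stabilisers from Propositions~\ref{prop:stab_S} and~\ref{prop:stab_Tn}, collapse the ascending ray to the union $\bigcup_n \overline{\calb}_n\rtimes\calc^+$, move $\calc^+$ outside the amalgam, and get non-finite generation from the strictly increasing union of proper subgroups. (The garbled sentence about a subgroup with finite-diameter quotient and the unexplained ``second route'' can simply be dropped, since your ``more simply'' argument is the one the paper uses.)
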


\begin{proof}
    We use the results and notation of Section~\ref{sec:non_metab_intro}. Specifically: \begin{itemize}
        \item $\Stab_{\Out(G)}(S) \cong \overline \calb \rtimes \calc^+$, with the isomorphism induced by $\delta \colon H \rtimes \calc^+ \to \Aut(G)$ (Proposition~\ref{prop:stab_S}), and $\overline{\calb} \cong H/(I-A)L$ (Proposition~\ref{prop:H_to_aut_nonmet});
        \item $\Stab_{\Out(G)}(T_n) \cong \overline \calb_n \rtimes \calc^+$, with the isomorphism induced by $\delta_n \colon L_n \rtimes \calc^+ \to \Aut(G)$ (Proposition~\ref{prop:stab_Tn}), and $\overline{\calb}_n \cong L_n/(I-A)L$ (Proposition~\ref{prop:Ln_to_aut_nonmet});
        \item $\Stab_{\Out(G)}(S) \cap \Stab_{\Out(G)}(T_n) \cong  L/(I-A)L \rtimes \calc^+$ (Remark~\ref{rem:inters}(2));
        \item There are inclusions $\Stab_{\Out(G)}(T_n) \rightarrowtail \Stab_{\Out(G)}(T_{n+1})$ (Remark~\ref{rem:inters}(1)).
    \end{itemize}

We view $\widehat{X}/\Out(G)$ as a graph of groups and calculate its Bass--Serre theoretic fundamental group. Since $\Stab_{\Out(G)}(T_n) < \Stab_{\Out(G)}(T_{n+1})$, we have that \[\Out(G) \cong (\overline{\calb} \rtimes \calc^+) \ast_{L/(I-A)L \rtimes \calc^+} \bigcup_{n=1}^\infty (\overline{\calb}_n \rtimes \calc^+ ).\]
Note that the descriptions of $\overline\calb_n$ give that \[\bigcup_{n=1}^\infty \overline\calb_n = \bigcup_{n=1}^\infty L_n/(I-A)L = \left(\bigcup_{n=1}^\infty L_n\middle)\right/(I-A)L = L_\infty/(I-A)L.\] Finally, it follows from the explicit descriptions that the actions of $\calc^+$ in each factor agree on the intersections, allowing us to move it `outside' the amalgam.

The fact that $\Out(G)$ is not finitely generated can be seen geometrically from the that the action of $\Out(G)$ on $\widehat{X}$ is minimal (e.g. because a product of $\varphi \in \Stab(S)$ and $\psi \in \Stab(T_n)$ is a hyperbolic element whose axis includes the segment joining $S$ and $T_n$ in $\widehat{X}$) but is not cocompact. Algebraically, it is witnessed by the fact that $\Out(G)$ is the strictly increasing union of the groups $(\overline{\calb} \rtimes \calc^+) \ast_{L/(I-A)L \rtimes \calc^+} \bigcup_{n=1}^k (\overline{\calb}_n \rtimes \calc^+ ).$
\end{proof}

This completes the proof of Theorem~\ref{thmx:not_metabelian}.

\setcounter{thmx}{0}
\begin{thmx}
    \label{thmx:not_metabelian}
    Let $G=\LM(A,L)$ and suppose $G$ is not metabelian.  $\Out(G)$ is finitely generated if and only if $L$ and $AL$ do not properly contain each other.
\end{thmx}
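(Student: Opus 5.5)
The proof splits according to whether $L$ and $AL$ properly contain one another. Both directions are essentially assembled from results already established. Throughout, $G$ is non-metabelian, so by \cite[\S2.4]{Valiunas2022} we have $L \neq \ZZ^m \neq AL$; this is the standing hypothesis of \Cref{sec:non_metab_intro}.

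Suppose first that $L$ and $AL$ are equal or incomparable. By \Cref{prop:rigid_iff_no_proper_containment} the Bass--Serre tree $S$ is rigid, so it is the unique reduced tree of $\cald$. Since $\cald$ is $\Out(G)$-invariant by \Cref{prop:D-AutG-inv}, every outer automorphism fixes $S$, and therefore $\Out(G)=\Stab_{\Out(G)}(S)$. \Cref{prop:stab_S} then identifies this group with $\overline{\calb}\rtimes\calc$. Here $\overline{\calb}\cong H/(I-A)L$ is a finitely generated abelian group. The group $\calc$ contains $\calc^+$ with index at most two, and $\calc^+$ has finite index in a centraliser in $\GL_m(\ZZ)$, so it is arithmetic and hence finitely generated \cite{BorelHarishChandra1962}. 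A semidirect product of finitely generated groups is finitely generated. Alternatively, we may simply quote \Cref{thm:nonMet_VF_Foo_fin}\ref{it:Out-VF}: a virtually type $\mathsf F$ group is finitely generated.

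Now suppose that one of $L$, $AL$ properly contains the other. Replacing $t$ by $t^{-1}$ swaps $L$ and $AL$ up to the automorphism interchanging the presentations (we have $\LM(A,L)\cong\LM(A^{-1},AL)$), and does not change $\Out(G)$. So we may assume $AL<L<\ZZ^m$, which is the convention of \Cref{sec:deform}. \Cref{thm:OutG-semidir-nonfg} then gives
\[
\Out(G)\cong\bigl(H/(I-A)L\aster_{L/(I-A)L}L_\infty/(I-A)L\bigr)\rtimes\calc^+
\]
and asserts that this group is not finitely generated. To make this self-contained, we check that $\Out(G)$ is a strictly increasing union of the subgroups $P_k=(\overline{\calb}\rtimes\calc^+)\ast_{L/(I-A)L\rtimes\calc^+}(\overline{\calb}_k\rtimes\calc^+)$. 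Any finitely generated subgroup lies in some $P_k$, so it suffices to see that $P_k\neq P_{k+1}$.

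This strictness is the only point needing care. Since $\Stab(T_n)$ is a vertex group of the graph of groups over the ray $\widehat{X}/\Out(G)$, it is enough to show $\overline{\calb}_k\subsetneq\overline{\calb}_{k+1}$, i.e.\ that $L_k/(I-A)L\neq L_{k+1}/(I-A)L$. This follows from $L_k<L_{k+1}$, which holds because $\Phi(L_{k+1})=A^{-1}\Phi(L_k)$ properly contains $\Phi(L_k)$ (as $AL<L$). By Bass--Serre theory, an element of $\Stab(T_{k+1})\smallsetminus\Stab(T_k)$ does not lie in the fundamental group of the sub-ray ending at $T_k$. Hence $\Out(G)$ is not finitely generated.

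Combining the two cases gives the equivalence stated in the theorem.
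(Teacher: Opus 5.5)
Your proposal is correct and follows the paper's proof: the rigid case is \Cref{thm:nonMet_VF_Foo_fin} (via \Cref{prop:rigid_iff_no_proper_containment}, \Cref{prop:D-AutG-inv} and \Cref{prop:stab_S}), and the non-rigid case, after the $t \leftrightarrow t^{-1}$ reduction to $AL<L<\ZZ^m$, is \Cref{thm:OutG-semidir-nonfg}. Your explicit check that $P_k \subsetneq P_{k+1}$ (using $L_k<L_{k+1}$, $(I-A)L \leq L \leq L_k$, and $P_k \cap \Stab_{\Out(G)}(T_{k+1}) = \Stab_{\Out(G)}(T_k)$ in the amalgam) only spells out the paper's remark that $\Out(G)$ is a strictly increasing union of these subgroups.
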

\begin{proof}
    This follows from combining \Cref{thm:nonMet_VF_Foo_fin} with \Cref{thm:OutG-semidir-nonfg}.
\end{proof}

\section{The metabelian case}\label{sec:metabelian}

In this section we treat the case of metabelian Leary--Minasyan groups.  To this end we prove \Cref{thm:metabelian}, from which \Cref{thmx:metabelian} of the introduction easily follows.  Almost the entire section is dedicated to proving \Cref{thm:metabelian}, but before that to aid the reader we recall the definition of a matrix being essentially Jordan gap free.

\begin{defn}
Let $A \in \GL_n(\QQ)$ and let $\lambda$ be an eigenvalue of $A$. We say an integer $d \geq 1$ is a \emph{$\lambda$-Jordan gap} for $A$ if the Jordan normal form for $A$ has no $\lambda$-Jordan blocks of size $d$, but has at least one $\lambda$-Jordan block of size $d+1$. We say $A$ is \emph{essentially Jordan gap free} if all $\lambda$-Jordan gaps for $A$ occur when $\lambda$ is an algebraic unit.
\end{defn}

The main goal of this section is to prove the following theorem:

\begin{thm}\label{thm:metabelian}
    Let $A\in \GL_n(\QQ)$ have integer entries and let $G=\LM(A,L)$ with $L=\ZZ^n$.  Then, $\Out(G)$ is finitely generated if and only if $A$ is essentially Jordan gap free.
\end{thm}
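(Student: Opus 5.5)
The plan follows the outline in the introduction. Set $K=\bigcup_{s\geq 0}A^{-s}\ZZ^n\leqslant\QQ^n$, so that $G\cong K\rtimes_A\ZZ$ with $t$ acting by $A$.

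\emph{Step 1: reduce to a centraliser.} Any automorphism $\varphi$ of $G$ preserves the characteristic subgroup $K$, which is the set of elliptic elements, equivalently the torsion-free rank-$n$ abelian normal subgroup that is the kernel of the abelianisation to $\ZZ$ composed appropriately. The induced map on $G/K\cong\ZZ$ must send $t\mapsto t$; sending $t\mapsto t^{-1}$ is impossible unless all eigenvalues are units, which is the polycyclic case handled by Baues--Grunewald. Hence $\varphi(t)=kt$ for some $k\in K$, and $\varphi|_K=M\in\GL(K)$ must commute with $A$. This gives
\[
\Aut(G)\cong C\ltimes K,\qquad C=C_{\GL(K)}(A),
\]
and $\Inn(G)\cong\langle A\rangle\ltimes(I-A)K$. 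Moreover $K/(I-A)K$ is finitely generated, since $K$ has finite rank and $(I-A)$ has full rank when $1$ is not an eigenvalue (otherwise one handles the kernel separately). So $\Out(G)$ is finitely generated if and only if $C$ is. This is \Cref{prop:metab-Out}.

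\emph{Step 2: localise.} Let $S$ be the set of primes dividing $\det A$. Then $K=\ZZ[1/S]$-span$\cdot\ZZ^n$ locally at primes outside $S$, while at $p\in S$ the module $K_p$ depends on the $p$-adic valuations of the eigenvalues. Decompose $\QQ^n=\bigoplus V_f$ by the irreducible factors $f$ of the characteristic polynomial. Up to finite index, $C$ is the product of the centralisers on the primary pieces $V_f$, intersected with the stabiliser of $K$, and $K$ is commensurable with the direct sum of its intersections with the $V_f$. On $V_f$, write $A$ as a module over $R=\QQ[x]/(f^N)$, with Jordan structure given by the partition of block sizes for $f$. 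The centraliser of $A$ in $\GL(V_f)$ is the unit group of $\End_R(V_f)$, a matrix-like ring over $\QQ[x]/(f^{d})$ following the block sizes. The subgroup preserving $K\cap V_f$ is the unit group of an order over $\ZZ[x,1/S']$, where $S'$ consists of the primes at which $f$ has a non-unit root, roughly. If $f$ has only unit roots, $K\cap V_f$ is a lattice and the centraliser is arithmetic, hence finitely generated.

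\emph{Step 3: the key dichotomy.} This is the main obstacle. For a non-unit $\lambda$, I expect the unit group of $\End(K\cap V_f)$ to decompose, up to finite index, as a reductive part times a unipotent radical. The reductive part is a product of $\GL_{m_d}$ over orders, where $m_d$ is the multiplicity of blocks of size $d$; these are arithmetic or $S$-arithmetic and finitely generated. The unipotent radical comes from homomorphisms between blocks of different sizes. When the block sizes have no gap, meaning all sizes $1,\dots,d_{\max}$ occur, I expect the radical to be generated by conjugating a finitely generated piece by the reductive part, in particular by powers of $A$ (or $x$) itself, as in $\ZZ[1/2]\rtimes\langle 2\rangle$.

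When there is a gap at size $d$, the example $A=\begin{pmatrix}2&1\\0&2\end{pmatrix}$ is the model. The map from the block of size $d+1$ into itself or a larger block, given by multiplication by $(x-\lambda)$-powers, yields a summand $\ZZ[1/\lambda]$ in an abelianised quotient of $C$ on which the reductive part acts through units only, and hence trivially up to finite index. This produces a non-finitely-generated abelian quotient such as $\ZZ[1/2]$. I would first verify this on the $2\times2$ example, computing $C=\{aI+bN : a\in\ZZ[1/2]^\times,\ b\in\ZZ[1/2]\}$. The general case would then go by exhibiting a homomorphism from $C$ onto $\ZZ[1/S]$ that detects the gap, together with a matching finite generating set in the gap-free case. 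The hardest bookkeeping is showing that gap-freeness is exactly what allows the reductive part to act on the radical with finitely many orbit generators.
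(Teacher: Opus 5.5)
Your Steps 1 and 2 are essentially the paper's reduction: Proposition~\ref{prop:metab-Out}, the primary decomposition, and commensurability via Lemma~\ref{lem:dont-care-about-L}. However, Step 3 is the real content of the theorem, and you only state it as an expectation; the mechanisms you propose would not work as stated.

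\textbf{The gap-free direction.} Conjugating by powers of $A$ (equivalently of $x$) does nothing, because $A$ is central in $C=C_{\GL(K)}(A)$. What you need are the automorphisms $\gamma_{j,\lambda^{\pm n}}$, which scale a \emph{single} generator $w_{\delta,j}$ of a top-size block by $\lambda^{\pm n}$. Conjugating finitely many elementary maps by these gives all off-diagonal entries with coefficients in $Z=\ZZ[\lambda^{-1}]$. Even so, the diagonal unipotents $w_{\delta,j}\mapsto(1+zX)w_{\delta,j}$ commute with every such scaling, so conjugation cannot reach them. The paper obtains them from the product
\[
\psi^{(-u^{-1}z)}_{\delta-1,1,j}\circ\varphi_{\delta-1,1,j,1}\circ\psi^{(z)}_{\delta-1,1,j}\circ\varphi^{(-u^{-1}X)}_{\delta-1,1,j}.
\]
This product acts as $u^{-1}$ on a generator of a block of size $\delta-1$ and as $u=1+zX$ on $w_{\delta,j}$. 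The smaller block is then corrected using the inductive hypothesis. This is exactly where $m_{\delta-1}\geq 1$ (no gap) is used. Together with Lemma~\ref{lem:gaussian-elimination} and the finite generation of $Z^\times$ and of $\GL_{m_d}(Z)$, it drives the induction on the largest block size $\delta$. Your plan does not identify this step, so the ``if'' direction is not established.

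\textbf{The gap direction.} You never actually define a homomorphism. ``The map from the block of size $d+1$ into itself or a larger block'' is not one. Moreover, the invariant cannot be attached to the size-$(d+1)$ block alone: with blocks of sizes $2$ and $3$ only, the product above moves $1+zX$ between the two blocks, so only a combined quantity survives. The justification offered (the reductive part ``acts through units, hence trivially'') is not an argument either. On an abelian quotient of $C$ conjugation is automatically trivial; the whole issue is to show that such a quotient exists.

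The missing idea is as follows. If $d$ is a $\lambda$-Jordan gap, then $M'=X^{d-1}M/X^{d+1}M$ is a \emph{free} $F[X]/(X^2)$-module of rank $\sum_{d'>d}m_{d'}$: blocks of size $<d$ die, and there are none of size $d$. This subquotient is preserved by every module automorphism, and $\xi\mapsto\det(\xi|_{M'})$ maps $C$ onto $(Z[X]/(X^2))^\times\cong Z^\times\times(Z,+)$.

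\textbf{The coefficient ring in Step 2.} This is also wrong. Via a Jordan basis, $K\cap V_f$ corresponds to $R[\lambda^{-1}]$ with $Z=\ZZ[\lambda^{-1}]$. In general this is not obtained by inverting rational primes; see the remark following Lemma~\ref{lem:lattice-A-image}, with $A=\begin{pmatrix}0&3\\1&1\end{pmatrix}$. The correct target of the surjection is $(\ZZ[\lambda^{-1}],+)$, which fails to be finitely generated exactly when $\lambda$ is not a unit.
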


 Before we prove \Cref{thm:metabelian}, we prove \Cref{thmx:metabelian} from the introduction.

\setcounter{thmx}{1}
\begin{thmx}\label{thmx:metabelian}
     Let $G=\LM(A,L)$ and suppose $G$ is metabelian.  $\Out(G)$ is finitely generated if and only if $A$ is essentially Jordan gap free.
\end{thmx}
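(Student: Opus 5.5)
We deduce \Cref{thmx:metabelian} from \Cref{thm:metabelian} by reducing an arbitrary metabelian $\LM(A,L)$ to one of the form $\LM(A',\ZZ^n)$ with $A'$ an integer matrix, while keeping track of the Jordan structure.

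First, since $G=\LM(A,L)$ is metabelian, exactly one of $L=\ZZ^m$ or $AL=\ZZ^m$ holds, or $L=AL=\ZZ^m$ (the polycyclic case, by \cite[\S2.4]{Valiunas2022}). If $AL=\ZZ^m$ but $L\neq\ZZ^m$, we replace the stable letter $t$ by $t^{-1}$. This exhibits $G$ as $\LM(A^{-1},AL)=\LM(A^{-1},\ZZ^m)$. The Jordan normal form of $A^{-1}$ has, for each eigenvalue $\lambda$ of $A$, blocks for $\lambda^{-1}$ of exactly the same sizes. Moreover, $\lambda$ is an algebraic unit if and only if $\lambda^{-1}$ is. Hence $A$ is essentially Jordan gap free if and only if $A^{-1}$ is. So we may assume $L=\ZZ^m$.

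Second, once $L=\ZZ^m$, the condition $AL\leqslant\ZZ^m$ forces $A\ZZ^m\leqslant\ZZ^m$, so $A$ has integer entries. Since $A\in\GL_m(\QQ)$, we have $A\in\GL_m(\QQ)\cap M_m(\ZZ)$. This is precisely the hypothesis of \Cref{thm:metabelian}, with $n=m$.

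Third, we apply \Cref{thm:metabelian} to conclude that $\Out(G)$ is finitely generated if and only if $A$ is essentially Jordan gap free. In the polycyclic case $L=AL=\ZZ^m$, no inversion is needed, and \Cref{thm:metabelian} applies directly. As the remark in the introduction notes, the conclusion there is consistent with Baues--Grunewald, since every eigenvalue is then a unit.

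The only point needing care is the bookkeeping in the first step. We must check that inverting the stable letter really gives the presentation $\LM(A^{-1},AL)$: the relations $tx^vt^{-1}=x^{Av}$ for $v\in L$ are equivalent to $t^{-1}x^wt=x^{A^{-1}w}$ for $w\in AL$. We must also check that the Jordan-gap condition transfers under inversion, which follows from the block-size correspondence between $\lambda$ and $\lambda^{-1}$ noted above. Everything substantive is contained in \Cref{thm:metabelian}.
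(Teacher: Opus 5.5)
Your proof is correct and follows the paper's argument: when $AL=\ZZ^m$, pass to $\LM(A^{-1},AL)$ by inverting the stable letter so that $L=\ZZ^m$; observe that $A$ is then integral; and invoke \Cref{thm:metabelian}, which, like the paper, you use as stated, including for the polycyclic case. Your version is slightly more careful than the paper's, since you check explicitly that being essentially Jordan gap free is invariant under $A\mapsto A^{-1}$, and you note that no conjugation of $A$ is needed.
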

\begin{proof}
    By \Cref{thm:metabelian}, it suffices to show that any metabelian $\LM(A,L)$ is isomorphic to one with $A\in \GL_n(\QQ)\cap M_n(\ZZ)$ and with $L=\ZZ^n$.  The condition on $L$ is trivial since for $\LM(A,L)$ to be metabelian, it must be an ascending HNN extension so at least one of $L$ and $AL$ are equal (as sets) to $\ZZ^n$.  Thus, it remains to verify the condition on $A$.  To this end, we first note that we can always conjugate the matrix $A$ or its inverse to an integral one.  Indeed, since $AL\subseteq\ZZ^n$ we see that $A$ defines an embedding $AL\to L$.   Thus, up to replacing $L$ by $AL$ and $A$ by a conjugate and potentially its inverse we are in the situation of \Cref{thm:metabelian} as required.
\end{proof}

\subsection{Proof strategy and example}
   The proof of \Cref{thm:metabelian} is technical and of a distinct flavour to the rest of the article.  Before we begin, for the aid of the reader, we will describe the key steps and then demonstrate them with an example.  We now describe the strategy:
\begin{enumerate}[1.]
    \setcounter{enumi}{-1}
    \item (Notation) Let $K=\bigcup_{s\in \ZZ} A^s(L)$; $f=\prod_{i=1}^lp_i^{\alpha_i}(X)$ be the characteristic polynomial of $A$ with each $p_i$ irreducible over $\QQ$ and each $\alpha_i\geq 1$; $\QQ^n=\prod_{i=1}^l V_i$ with each $V_i$ an $A$-invariant subspace.
    \item (\Cref{metab:step1}) We show $\Out(G)$ is finitely generated if and only if $C=C_{\GL(K)}(A)$ is finitely generated.
    \item (\Cref{metab:step2}) We recast the problem into one of automorphisms of a module over a principal ideal domain.\\  More precisely: Fix some $p=p_i$, $\alpha=\alpha_i$ and let $F$ be the splitting field for $p$ over $\QQ$ (we work one $A$-indecomposable subspace at a time).  Let $\lambda$ be a root of $p$.  We choose a Jordan basis of the generalised $\lambda$-eigenspace in $M\leqslant F^n$ for $A$ and construct a preferred basis $\calb$ for $\QQ^n$. Using the basis we show that $M$ is an $F(X)$-module, where $X$ acts by $X(v)=Av-\lambda v$.  Using this we show that $C_{\GL_n(\QQ)}(A)\simeq \Aut_{F(X)}(M)$.
    \item (\Cref{metab:step3}) Here we show that changing the lattice $L<\QQ^n$ within its commensurability class does not greatly affect the set of elements of $\GL_n(\QQ)$ which preserve $\bigcup_{s\in\ZZ} A^s L$. 
    \item (\Cref{metab:step4}) In this section we conclude the proof.  There are two cases:
    \begin{enumerate}[(a)]
        \item \label{it:A-has-gap} If $A$ has a $\lambda$-Jordan gap, then there exists an epimorphism $C\twoheadrightarrow (\ZZ[\lambda^{-1}],+)$.  This essentially follows from analysing the module structure.
        \item If $A$ is essentially Jordan gap free, then $C$ is finitely generated.  Working one eigenvalue at a time there are two subcases: either $\lambda^{-1}$ is an algebraic integer, in which case $K$ is finitely generated and so $C = C_{\GL(K)}(A)$ is arithmetic; 
        or $A$ has no $\lambda$-Jordan gaps.   This latter case is more complicated, here we argue directly that the centraliser is generated by a finite list of elements and finitely many groups $\Gamma_d=\GL_{m_d}(Z)$ where $Z$ is a ring of $S$-integers.  Thus, finite generation follows from the classical fact that each $\Gamma_d$ is finitely generated \cite[Proposition~6.10]{BorelSerre1976}.
    \end{enumerate}
\end{enumerate}

\begin{example}
We now provide an example with
\[
A = \begin{pmatrix}
    2&0&0 \\ 0&2&0 \\ 0&1&2
\end{pmatrix}.
\]
We would like to (Proposition~\ref{prop:metab-Out}) compute $C_{\GL(K)}(A)$, where $K = \bigcup_{s \in \ZZ} A^s(\ZZ^3) = \ZZ[1/2]^3$. We have one eigenvalue ($2$) and two Jordan blocks (of sizes $1$ and $2$).  We will study the $Z[X]$-module $M = Z[X]/(X) \oplus Z[X]/(X^2)$, where $Z = \ZZ[1/2]$ and $X$ acts as $A-2I$. Then $C_{\GL(K)}(A)$ is commensurable with $\Aut_{Z[X]}(M)$, and an element $\xi$ of the latter can be written as a ``matrix'' (the entries come from different rings) of the form
\[
B(\xi) = \begin{pmatrix}
    \beta_{1,1} & \beta_{1,2} \\ \beta_{2,1} & \beta_{2,2}
\end{pmatrix},
\]
where the $\beta_{i,j} \colon Z[X]/(X^j) \to Z[X]/(X^i)$ are $Z[X]$-module homomorphisms. In particular, (after identifying a homomorphism of cyclic $Z[X]$-modules with an element of $Z[X]$ multiplication by which sends a generator to a generator) we may write 
\[
B(\xi) = \begin{pmatrix}
    z_1 & z_2 \\ z_3X & z_4+z_5X
\end{pmatrix}
\]
for some $z_1,\ldots,z_5 \in Z$, where the polynomials in the $i$-th row should be seen as elements of $Z[X]/(X^i)$. Matrix multiplication then works as usual to describe the group operation in $\Aut_{F[X]}(M)$.

Conversely, an element $\xi \in \End_{Z[X]}(M)$ described by such a ``matrix'' is an automorphism of $M$ if and only if the matrix consisting of constant coefficients, i.e. $c(\xi) = \begin{pmatrix}
    z_1 & z_2 \\ 0 & z_4
\end{pmatrix}$, is invertible over $Z$, i.e.\ an element of $\GL_2(Z)$.

[A reason for the previous paragraph (note that this is essentially the argument in Lemma~\ref{lem:inverse-condition}):  If $\xi$ is an automorphism of $M$ then it induces an automorphism $\overline\xi$ of $M/XM \cong Z^2$, and $c(\xi)$ is precisely the matrix representing $\overline\xi$, so it must be an element of $\GL_2(Z)$. Conversely, if $c(\xi)$ is invertible over $Z$, then we claim that $\xi$ is surjective (this will imply that $\xi$ is injective as well, since $\xi$ is a linear map acting on a subspace $K < \QQ^3$ of full rank, so if it is surjective, then it sends a $\QQ$-basis to a $\QQ$-basis). Since $c(\xi)$ represents a surjective linear map on $Z^2$, it follows given any $w_1,w_2 \in Z$ there exist $m \in M$ and $w' \in Z$ such that \[\xi(m) = \begin{pmatrix}
    w_1 \\ w_2+w'X
\end{pmatrix}.\] In particular, there exists $m' \in M$ such that \[\xi(m') = \begin{pmatrix}
    0\\1+yX
\end{pmatrix}\] for some $y \in Z$, and then given $w_3 \in Z$ we have \[\xi(m+(w_3-w')Xm') = \begin{pmatrix}
    w_1 \\ w_2+w_3X
\end{pmatrix},\] implying that $\xi$ is surjective, as claimed.]

We thus aim to show that the group of all $B(\xi)$ as above with $c(\xi)$ invertible is finitely generated. Note that $c(\xi) = \begin{pmatrix}
    z_1 & z_2 \\ 0 & z_4
\end{pmatrix}$ is invertible over $Z$ if and only if we have $z_1,z_4 \in Z^\times$. Note also that $Z^\times$ is generated by the finite set $\calu = \{ -1,2 \}$, and consider the automorphisms of $M$ (written in matrix form)
\[
\alpha_u = \begin{pmatrix} u & 0 \\ 0 & 1 \end{pmatrix}, \quad
\psi = \begin{pmatrix} 1 & 1 \\ 0 & 1 \end{pmatrix}, \quad
\varphi = \begin{pmatrix} 1 & 0 \\ X & 1 \end{pmatrix} \quad\text{and}\quad
\gamma_u = \begin{pmatrix} 1 & 0 \\ 0 & u \end{pmatrix}
\]
where $u \in \calu$ [the notation here, apart from the $\alpha_u$, is as in the proof of Proposition~\ref{prop:no-Jordan-gaps}, just without redundant indices]. It is easy to see that these six elements are indeed automorphisms of the $Z[X]$-module $M$; we denote by $C$ the group generated by them.

We will show, in this order, that the automorphisms of $M$ of the forms
\begin{gather*}
\begin{pmatrix} 1 & * \\ 0 & 1 \end{pmatrix}, \quad
\begin{pmatrix} 1 & 0 \\ *X & 1 \end{pmatrix}, \quad
\begin{pmatrix} 1 & 0 \\ 0 & 1+*X \end{pmatrix}, \quad
\begin{pmatrix} \dag & 0 \\ 0 & \dag+*X \end{pmatrix}
\quad\text{and}\quad
\begin{pmatrix} \dag & * \\ *X & \dag+*X \end{pmatrix}
\end{gather*}
are elements of $C$, where $*$ and $\dag$ stand for arbitrary elements of $Z$ and $Z^\times$, respectively. The last one of these will then imply that $C = \Aut_{Z[X]}(M)$, and in particular $\Aut_{Z[X]}(M)$ is finitely generated.

Computations give
\[
\psi_{2^sk} \coloneqq \begin{pmatrix} 1 & 2^sk \\ 0 & 1 \end{pmatrix} = \begin{pmatrix} 1 & 0 \\ 0 & 2^{-s} \end{pmatrix} \begin{pmatrix} 1 & k \\ 0 & 1 \end{pmatrix} \begin{pmatrix} 1 & 0 \\ 0 & 2^s \end{pmatrix} = \gamma_2^{-s} \psi^k \gamma_2^s \in C
\]
and
\[
\varphi_{2^sk} \coloneqq \begin{pmatrix} 1 & 0 \\ 2^skX & 1 \end{pmatrix} = \begin{pmatrix} 1 & 0 \\ 0 & 2^s \end{pmatrix} \begin{pmatrix} 1 & 0 \\ kX & 1 \end{pmatrix} \begin{pmatrix} 1 & 0 \\ 0 & 2^{-s} \end{pmatrix} = \gamma_2^s \varphi^k \gamma_2^{-s} \in C
\]
for any $s,k \in \ZZ$, implying that matrices of the forms $\begin{pmatrix} 1 & * \\ 0 & 1 \end{pmatrix}$ and $\begin{pmatrix} 1 & 0 \\ *X & 1 \end{pmatrix}$ are in $C$. Consequently, given any $z \in Z$ we have
\begin{equation*} 
\gamma'_z \coloneqq \begin{pmatrix} 1 & 0 \\ 0 & 1+zX \end{pmatrix} = \begin{pmatrix} 1 & -z \\ 0 & 1 \end{pmatrix} \begin{pmatrix} 1 & 0 \\ X & 1 \end{pmatrix} \begin{pmatrix} 1 & z \\ 0 & 1 \end{pmatrix} \begin{pmatrix} 1 & 0 \\ -X & 1 \end{pmatrix} = \psi_{-z} \varphi \psi_z \varphi^{-1} \in C, 
\end{equation*}
implying that matrices of the form $\begin{pmatrix} 1 & 0 \\ 0 & 1+*X \end{pmatrix}$ are in $C$. Now given arbitrary elements $u = (-1)^\varepsilon 2^\zeta$ and $u' = (-1)^{\varepsilon'} 2^{\zeta'}$ of $Z^\times$ and an arbitrary $z \in Z$, we have
\[
\chi_{u,u',z} \coloneqq \begin{pmatrix} u' & 0 \\ 0 & u+zX \end{pmatrix} = \begin{pmatrix} u' & 0 \\ 0 & 1 \end{pmatrix} \begin{pmatrix} 1 & 0 \\ 0 & u \end{pmatrix} \begin{pmatrix} 1 & 0 \\ 0 & 1+u^{-1}zX \end{pmatrix} = \alpha_{-1}^{\varepsilon'} \alpha_2^{\zeta'} \gamma_{-1}^\varepsilon \gamma_2^\zeta \gamma_{u^{-1}z}' \in C
\]
implying that $C$ contains all matrices of the form $\begin{pmatrix} \dag & 0 \\ 0 & \dag+*X \end{pmatrix}$. Finally, note that given any $u,u' \in Z^\times$ and $z,z',z'' \in Z$ we have
\[
\begin{pmatrix} u' & z'' \\ z'X & u+zX \end{pmatrix} = \begin{pmatrix} 1 & u^{-1}z'' \\ 0 & 1 \end{pmatrix} \begin{pmatrix} 1 & 0 \\ (u')^{-1}z'X & 1 \end{pmatrix} \begin{pmatrix} u' & 0 \\ 0 & u+zX \end{pmatrix} = \psi_{u^{-1}z''} \varphi_{(u')^{-1}z'} \chi_{u,u',z} \in C,
\]
implying that indeed $C = \Aut_{Z[X]}(M)$.\hfill$\blackdiamond$
\end{example}

\begin{table}[ht]
    \centering
    \begin{tabular}{|c|c|}
       \hline
        $G=\LM(A,L)$ & $A\in \MM_n(\ZZ)\cap \GL_n(\QQ)$ and $L=\ZZ^n$ \\
        $K=\bigcup_{s\in \ZZ}A^s(L)$  & this is a subgroup of $\LM(A,L)\cong K\rtimes \ZZ$ and of $\QQ^n$ \\
        $C=C_{\GL(K)}(A)$ & where $\GL(K)\leqslant \GL_n(\QQ)$ is the elements which preserve $K$.\\
        $f=\prod_{i=1}^lp_i^{\alpha_i}(X)$ & characteristic polynomial of $A$ with $p_i$ irreducible over $\QQ$\\
        $V_i$ & an $A$-invariant subspace of $\QQ^n$ \\
        $p(X)$ & an irreducible polynomial in $\QQ[X]$ \\
        $F$ & splitting field for $p$ over $\QQ$\\
        $\lambda$ & a root of $p(X)$ in $F$ \\
        $Z=\ZZ[\lambda^{-1}]$ & for $\lambda$ an algebraic integer \\
        $\widehat Z=Z[X]/(X^d)$ & for some $d\geq 1$\\
        \hline
    \end{tabular}
    \caption{Table of notation for \Cref{sec:metabelian}.}
    \label{tab:notation_metabelian}
\end{table}

\subsection{The structure of the automorphisms}\label{metab:step1}

Let $G$ be a commensurating HNN-extension of $H \cong \ZZ^n$ with associated subgroups $H_0 = H$ and $H_{-1} < H$ and stable letter $t$, and let $A \in \GL_n(\QQ)$ be a matrix as above, so that $A$ has integer entries and $\left|\det(A)\right| > 1$. Then we can identify $G$ with $K \rtimes_A \langle t \rangle$, where $K = \bigcup_{s \in \ZZ} A^s(\ZZ^n) < \QQ^n$. Explicitly, we can define the multiplication in $G$ by $(v,t^q)(w,t^r) = (v+A^qw,t^{q+r})$ for $v,w \in K$ and $q,r \in \ZZ$.

\begin{defn}
Given $k \in K$, define the \emph{$k$-twist automorphism} $\tau_k \in \Aut(G)$ by setting $\tau_k(v,1) = (v,1)$ for $v \in K$ and $\tau_k(0,t) = (k,t)$. Let $C = C_{\GL(K)}(A)$, and given $B \in C$ define the \emph{$B$-vertex automorphism} $\chi_B \in \Aut(G)$ by setting $\chi_B(v,1) = (Bv,1)$ for $v \in K$ and $\chi_B(0,t) = (0,t)$.
\end{defn}

We'll be using the following auxiliary Lemma a few times.

\begin{lemma} \label{lem:sounds-trivial-but-is-it?}
Let $A \in \GL_n(\QQ)$, let $L' \leq L < \QQ^n$ be lattices such that $AL \subseteq L$ and $AL' \subseteq L'$, and let $K = \bigcup_{s \in \ZZ} A^sL$ and $K' = \bigcup_{s \in \ZZ} A^sL'$. Then $K/K'$ is finite.
\end{lemma}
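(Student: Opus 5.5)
Since $L' \leq L$ are both lattices in $\QQ^n$, the index $N \coloneqq [L:L']$ is finite, so $NL \leq L'$. The plan is to show that $NK \leq K'$, and then to bound $K/K'$ by a quotient of $K/NK$, which we will show is finite.

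\emph{First step.} Let $k \in K$, say $k = A^s\ell$ with $\ell \in L$ and $s \in \ZZ$. Then $Nk = A^s(N\ell) \in A^sL' \subseteq K'$, so $NK \leq K'$. This reduces the lemma to showing that $K/NK$ is finite, because $K/K'$ is a quotient of it.

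\emph{Second step.} Since $AL \subseteq L$, the sets $A^sL$ form an increasing chain as $s \to -\infty$, namely
\[
L \subseteq A^{-1}L \subseteq A^{-2}L \subseteq \cdots,
\]
and $K = \bigcup_{s \geq 0} A^{-s}L$ is a directed union. For each $s \geq 0$, multiplication by $A^{-s}$ is an isomorphism $L/NL \to A^{-s}L/NA^{-s}L$, so $|A^{-s}L/NA^{-s}L| = N^n$.

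\emph{Third step.} The image of $A^{-s}L$ in $K/NK$ is a quotient of $A^{-s}L/N A^{-s}L$, since $NA^{-s}L \subseteq NK$. It therefore has at most $N^n$ elements. These images form an increasing chain of subgroups whose union is $K/NK$. An increasing chain of subgroups that are all bounded in size by $N^n$ stabilises, so $K/NK$ is finite with $|K/NK| \leq N^n$. Hence $K/K'$ is finite, in fact of order at most $N^n$.

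The only point that needs care is that the chain is increasing; this is where the hypothesis $AL \subseteq L$ is used. The hypothesis $AL' \subseteq L'$ is not needed for finiteness: it only ensures that $K'$ is the analogous directed union, and in particular that $K'$ is a subgroup. I do not expect a serious obstacle here.
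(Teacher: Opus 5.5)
Your proof is correct, and it shares its first reduction with the paper: both show $NK \le K'$ (the paper writes $p$ for your $N$) and reduce to finiteness of $K/NK$. The key step, finiteness of $K/NK$, is done differently.

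The paper factors $N$ into primes and reduces to $K/pK$ for a prime $p$. It then bounds $\dim_{\ZZ/p\ZZ} K/pK \le n$: any $n+1$ elements of $K \subset \QQ^n$ satisfy a $\QQ$-linear dependence, which can be scaled to integer coefficients not all divisible by $p$ and then reduced mod $p$.

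You instead use that $K$ is the increasing union of the lattices $A^{-s}L$. So $K/NK$ is an increasing union of subgroups, each of order at most $|A^{-s}L/NA^{-s}L| = N^n$, and such a chain must stabilise.

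Your route skips the prime-factorisation step and the mod-$p$ linear algebra, and it gives the explicit bound $|K/K'| \le N^n$ directly. The paper's route uses nothing about $K$ beyond its being a subgroup of $\QQ^n$. Your argument reaches that generality too if you replace the chain $A^{-s}L$ by the directed system of finitely generated subgroups of $K$, each free of rank at most $n$.
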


\begin{proof}
Since $L$ and $L'$ are lattices in $\QQ^n$, there exists $p \in \ZZ_{\geq 1}$ such that $pL \subseteq L'$. It follows that $pK \subseteq K'$, and so it is enough to show that $K/pK$ is finite. In particular, if $p = p_1 \cdots p_r$ for some prime numbers $p_i$, it is then enough to show that $K'/p_iK'$ is finite for each $i$, where $K' = p_1 \cdots p_{i-1}K < \QQ^n$; therefore, after replacing $L$ and $K$ with $p_1 \cdots p_{i-1}L$ and $K'$, if necessary, we may assume that $p$ is prime.

Now $K / pK$ is a $p$-periodic abelian group and so a vector space over the finite field $\ZZ/p\ZZ$, so it is enough to show that it has dimension $\leq n$. Suppose, on the contrary, that there exist $v_0,\ldots,v_n \in K$ such that $v_0+pK,\ldots,v_n+pK$ are linearly independent in $K/pK$. But since $K < \QQ^n$, there exist $\lambda_0,\ldots,\lambda_n \in \QQ$, not all zero, such that $\sum_{i=0}^n \lambda_i v_i = 0$. After multiplying all the $\lambda_i$ by a non-zero element of $\ZZ[1/p]$, we may assume that the $\lambda_i$ are integers and not all of them are divisible by $p$. But then 
$$\sum_{i=0}^n (\lambda_i+p\ZZ)(v_i+pK) = 0$$
is a linear dependence relation in the $(\ZZ/p\ZZ)$-vector space $K/pK$, contradicting the fact that $v_0+pK,\ldots,v_n+pK$ are linearly independent. Thus $K/pK$ is a finite-dimensional vector space over $\ZZ/p\ZZ$ and so finite, as required.
\end{proof}

\begin{prop} \label{prop:metab-Out}
We have $\Aut(G) \cong C \ltimes K$, and $\Inn(G)$ corresponds to $\langle A \rangle \ltimes (I-A)K$ under this isomorphism. In particular, we have
\[
\Out(G) \cong (C / \langle A \rangle) \ltimes D,
\]
where $D$ is a finitely generated abelian group equal to $K/(I-A)K$, and so $\Out(G)$ is finitely generated if and only if $C$ is.
\end{prop}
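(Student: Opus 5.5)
We first show that $K$ is characteristic in $G = K \rtimes_A \langle t\rangle$. The natural candidate is that $K$ is the set of elliptic elements. Alternatively, $K$ is the kernel of the homomorphism $G \to \ZZ$ given by the $t$-exponent. Since $\left|\det A\right|>1$, $G$ is a non-polycyclic ascending HNN extension, and $K$ is the Fitting subgroup, or the commutator-closure of $[G,G]$.

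A cleaner argument uses the fact that $G/[G,G]$ has free part $\ZZ$ coming from $t$, because $K/(I-A)K$ is finite. That finiteness holds when $1$ is not an eigenvalue of $A$, which is automatic here: $\det(I-A)\neq 0$ unless $A$ has eigenvalue $1$, and an integer matrix can have eigenvalue $1$. So in general we argue via ellipticity instead, citing \cite[Proposition 2.7]{Valiunas2022} as in \Cref{prop:D-AutG-inv}. Elements outside $K$ have nonzero $t$-exponent and act loxodromically on the Bass--Serre tree, while elements of $K$ are elliptic, since each lies in a conjugate of $H$. Ellipticity is characterised algebraically, so $\varphi(K)=K$ for all $\varphi \in \Aut(G)$.

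Given $\varphi$, set $B=\varphi|_K \in \GL(K)$. Since $\varphi$ induces an automorphism of $G/K\cong\ZZ$, we have $\varphi(t)=(k,t^{\pm1})$. The case $t\mapsto t^{-1}$ would force $BAB^{-1}=A^{-1}$ on $K$, contradicting $|\det A|>1$ by comparing determinants. So $\varphi(t)=(k,t)$, and the relation $\varphi(tvt^{-1})=\varphi(t)\varphi(v)\varphi(t)^{-1}$ gives $BA=AB$, so $B\in C$. Conversely, $\chi_B$ and $\tau_k$ are automorphisms: $\tau_k$ is surjective because $t=\tau_k((-k,1)(0,t))$ up to the appropriate product. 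Hence $\varphi=\tau_k\chi_B$ uniquely, and one computes $\chi_B\tau_k\chi_B^{-1}=\tau_{Bk}$. This yields $\Aut(G)\cong C\ltimes K$.

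For inner automorphisms, conjugation by $(w,t^q)$ acts on $K$ as $A^q$ and sends $t$ to $(w-Aw,t)$. It therefore corresponds to $(A^q,(I-A)w)$, and $\Inn(G)\cong\langle A\rangle\ltimes(I-A)K$; note $A\in C$. The subgroup $(I-A)K$ is $C$-invariant since elements of $C$ commute with $A$, and $A$ acts trivially on $K/(I-A)K$. So the quotient is $(C/\langle A\rangle)\ltimes K/(I-A)K$. This needs a little care: $\langle A\rangle\ltimes(I-A)K$ is normal, and the quotient of a semidirect product by such a subgroup splits as stated.

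It remains to show $D=K/(I-A)K$ is finitely generated, which I expect to be the main obstacle. The plan is to apply \Cref{lem:sounds-trivial-but-is-it?}-style reasoning to the $A$-invariant subgroup $(I-A)K$. If $I-A$ is injective, then $(I-A)K=\bigcup_s A^s(I-A)\ZZ^n$, with $L'=(I-A)\ZZ^n\le\ZZ^n$ a lattice and $AL'\subseteq L'$, so the lemma gives $D$ finite. In general, we split $\QQ^n=\ker(I-A)^N\oplus \im(I-A)^N$ by Fitting decomposition; these pieces are defined over $\QQ$ and $A$-invariant. On the generalised $1$-eigenspace $A$ is unipotent-ish, with eigenvalue $1$ a unit, so $K$ restricted there is commensurable with a finitely generated group. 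We then pass to finite-index, commensurable pieces using the lemma, and obtain that $D$ is (finitely generated)$\oplus$(finite) up to finite extension. Finally, since $D$ is a finitely generated abelian normal subgroup with quotient $C/\langle A\rangle$, $\Out(G)$ is finitely generated if and only if $C$ is, because $\langle A\rangle$ is cyclic.
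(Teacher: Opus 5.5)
Your proposal is correct and follows the paper's proof essentially step by step. In both, $K$ is characteristic via the algebraic characterisation of elliptic elements in \cite[Proposition~2.7]{Valiunas2022}, and every automorphism factors uniquely as $\tau_k\circ\chi_B$ with $\chi_B\tau_k\chi_B^{-1}=\tau_{Bk}$. Conjugation by $(w,t^q)$ is $\tau_{(I-A)w}\circ\chi_{A^q}$. Finite generation of $D$ comes from splitting off the generalised $1$-eigenspace, on which $A$ preserves $W\cap\ZZ^n$ with determinant $\pm1$, and applying Lemma~\ref{lem:sounds-trivial-but-is-it?} on the complement, where $I-A$ is invertible.

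The only local difference is how you exclude $\varphi(t)\in Kt^{-1}$. You take determinants in $BAB^{-1}=A^{-1}$, whereas the paper gets $\psi(\varphi(t))=1$ directly from the index ratio $\left|\det A\right|^{\psi(g)}$; both arguments work.
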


\begin{proof}
Given $k \in K$ and $B \in C$, it is easy to check that $\chi_B \circ \tau_k \circ \chi_B^{-1} = \tau_{Bk}$ and that $\chi_B \neq \tau_k$ unless $B = I$ and $k = 0$. Thus $C \ltimes K$ can be identified with a subgroup of $\Aut(G)$.

Let $\gamma \in \Aut(G)$ be arbitrary. Since $H_{-1} \neq H$, the group $G$ is not polycyclic, and so it follows from \cite[Proposition~2.7]{Valiunas2022} that $\gamma(K) = K$. Moreover, given any $g \in G$ and any lattice $L' < K$, it is easy to check that we have \[\frac{[L' : L' \cap gL'g^{-1}]}{[gL'g^{-1} : L' \cap gL'g^{-1}]} = \left|\det(A)\right|^{\psi(g)},\] where $\psi\colon G \to \ZZ$ is the homomorphism sending $K$ to $0$ and $(0,t)$ to $1$. In particular, it follows that $\psi(\gamma(0,t)) = \psi(0,t) = 1$, and so $\gamma(t) = (k,t)$ for some $k \in K$. Let $\gamma' = \tau_k^{-1} \circ \gamma$, so that $\gamma'(0,t) = (0,t)$ and $\gamma'(v,1) = (Bv,1)$ for all $v \in K$, where $B \in \GL(K)$. Moreover, we have
\begin{align*}
(BAv,1) &= \gamma'(Av,1) = \gamma'((0,t)(v,1)(0,t)^{-1}) = (0,t)\gamma'(v,1)(0,t)^{-1} \\ &= (0,t)(Bv,1)(0,t)^{-1} = (ABv,1)
\end{align*}
for all $v \in K$, implying that $BA = AB$ and so $\gamma' = \chi_B$ is a $B$-vertex automorphism for some $B \in C$. It follows that $\gamma = \tau_k \circ \chi_B \in C \ltimes K$, and so $\Aut(G) \cong C \ltimes K$, as required.

Now given any $g = (k,t^q) \in G$, it is easy to check that left conjugation by $g$ is precisely the automorphism $\tau_{(I-A)k} \circ \chi_{A^q}$, and so $\Inn(G)$ is identified with $\langle A \rangle \ltimes (I-A)K$ under the isomorphism $\Aut(G) \cong C \ltimes K$, as required.

Finally, it remains to show that the abelian group $D \coloneqq K/(I-A)K$ is finitely generated; the statement about finite generation of $\Out(G)$ will then follow from the existence of surjections $\Out(G) \to C/\langle A \rangle$ and $C \to C/\langle A \rangle$ with finitely generated kernels. Let $W < \QQ^n$ be the generalised $1$-eigenspace for $A$; it then follows that $\QQ^n = W \oplus W'$ for some $A$-invariant subspace $W'$ such that $A|_{W'}$ has no eigenvalues equal to $1$ (see \cite[Theorem~21.5.2]{BhattacharyaJainNagpaul}). In particular, $Z' \coloneqq L \oplus L'$, where $L = W \cap \ZZ^n$ and $L' = W' \cap \ZZ^n$, is a finite-index subgroup of $\ZZ^n$, and we have $AL \subseteq L$ and $AL' \subseteq L'$. It follows from Lemma~\ref{lem:sounds-trivial-but-is-it?} that $K' \coloneqq \bigcup_{s \in \ZZ} A^sZ'$ is a finite-index subgroup of $K$; applying Lemma~\ref{lem:sounds-trivial-but-is-it?} within the subspace $(I-A)(\QQ^n)$ of $\QQ^n$, it also follows that $(I-A)K'$ is a finite-index subgroup of $(I-A)K$. It is thus enough to show that $K'/(I-A)K'$ is finitely generated.

But indeed, since $W$ is the generalised $1$-eigenspace for $A$ we have $AL = L$ and consequently $K' = L \oplus K''$, where $K'' = \bigcup_{s \in \ZZ} A^sL'$. Moreover, since $A|_{W'}$ has no eigenvalues equal to $1$, it follows that $(I-A)|_{W'}$ is invertible and so, by Lemma~\ref{lem:sounds-trivial-but-is-it?} applied to $L'$ and $(I-A)L'$, the subgroup $(I-A)K''$ has finite index in $K''$. Thus $K'/(I-A)K'$ is the direct sum of the finitely generated group $L/(I-A)L$ and the finite group $K''/(I-A)K''$, so is finitely generated, as required.
\end{proof}

\subsection{Recasting the problem in terms of modules}\label{metab:step2}

Let $A \in \GL_n(\QQ)$, and let $f(X) \in \QQ[X]$ be the characteristic polynomial of $A$. Suppose $f = p_1^{\alpha_1} \cdots p_l^{\alpha_l}$, where $p_1(X),\ldots,p_l(X) \in \QQ[X]$ are distinct monic polynomials that are irreducible over $\QQ$ and where $\alpha_1,\ldots,\alpha_l \in \ZZ_{\geq 1}$. Then we have a decomposition
\[
\QQ^n = V_1 \oplus \cdots \oplus V_l
\]
into $A$-invariant subspaces, where $V_i = \ker(p_i^{\alpha_i}(A))$ (see \cite[Theorem~21.5.2]{BhattacharyaJainNagpaul}). This motivates working with each of the subspaces $V_i$ separately.

\begin{lemma} \label{lem:rational-Jordan}
Let $A \in \GL_n(\QQ)$, and let $f(X) \in \QQ[X]$ be the characteristic polynomial of~$A$. Suppose that $f = p^\alpha$, where $p(X) = X^m + a_{m-1}X^{m-1} + \cdots + a_0 \in \QQ[X]$ is an irreducible polynomial (over $\QQ$) and $\alpha \geq 1$, and let $F$ be the splitting field of $p$ over $\QQ$. Let $\lambda \in F$ be a root of $p(X)$, and let $(v_{i,j} \mid 0 \leq i < \beta, 0 \leq j < d_i)$ be a (lower triangular) Jordan basis
for the generalised $\lambda$-eigenspace $M \leq F^n$ for $A$, so that
\[
Av_{i,j} = \begin{cases}
    \lambda v_{i,j} & \text{if } j = d_i-1, \\
    \lambda v_{i,j} + v_{i,j+1} & \text{otherwise}.
\end{cases}
\]
Then $\calb = (w_{i,j,k} \mid 0 \leq i < \beta, 0 \leq j < d_i, 0 \leq k < m)$ is a basis for $\QQ^n$, where $w_{i,j,k} = \sum_{\sigma \in \Gal(F/\QQ)} \sigma(\lambda^k v_{i,j})$, and we have
\[
A w_{i,j,k} = \begin{cases}
w_{i,j+1,k} - \sum_{\ell=0}^{m-1} a_\ell w_{i,j,\ell} & \text{if } k = m-1, \\
w_{i,j+1,k} + w_{i,j,k+1} & \text{otherwise},
\end{cases}
\]
where by convention $w_{i,d_i,k} \coloneqq 0$.
\end{lemma}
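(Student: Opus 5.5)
The plan is to verify three things in turn: that the vectors $w_{i,j,k}$ lie in $\QQ^n$, that $A$ acts on them by the stated formula, and that they form a basis. Throughout, $\Gal(F/\QQ)$ acts on $F^n$ coordinatewise. Since $A$ has rational entries, $\sigma(Av)=A\sigma(v)$ for every $\sigma$ and every $v\in F^n$. Consequently $\sigma$ maps the generalised $\lambda$-eigenspace $M$ onto the generalised $\sigma(\lambda)$-eigenspace, and it maps a Jordan basis of the former to a Jordan basis of the latter.

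\emph{Rationality.} Each $w_{i,j,k}$ is a sum over the whole Galois group, so it is fixed by every element of $\Gal(F/\QQ)$. Hence its coordinates lie in $\QQ$, and $w_{i,j,k}\in\QQ^n$.

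\emph{Action of $A$.} We compute directly. For each $\sigma$,
\[
A\sigma(\lambda^k v_{i,j}) = \sigma(\lambda^k A v_{i,j}) = \sigma(\lambda^{k+1}v_{i,j}) + \sigma(\lambda^k v_{i,j+1}),
\]
where the second term is absent when $j=d_i-1$, consistent with the convention $w_{i,d_i,k}=0$. Summing over $\sigma$ gives $w_{i,j,k+1}+w_{i,j+1,k}$ when $k<m-1$. When $k=m-1$, we substitute $\lambda^m=-\sum_{\ell}a_\ell\lambda^\ell$ before summing, which produces the term $-\sum_\ell a_\ell w_{i,j,\ell}$.

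\emph{Basis: dimension count.} Since $p$ is irreducible over a field of characteristic $0$, it is separable, so its $m$ roots are distinct. Over $F$ the characteristic polynomial $p^\alpha$ splits, and each root has algebraic multiplicity $\alpha$. Therefore $\dim_F M=\alpha=\sum_i d_i$, and the family $\calb$ has exactly $m\alpha=n$ members. It therefore suffices to prove that $\calb$ is linearly independent over $\QQ$.

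\emph{Basis: linear independence.} This is the step I expect to be the main obstacle. If the $v_{i,j}$ are arbitrary vectors over $F$, then the $\sigma$ fixing $\lambda$ may move them, and the partial traces $\sum_{\sigma(\lambda)=\lambda}\sigma(u)$ could vanish. To avoid this, I would first note that the Jordan basis may be chosen with coordinates in $\QQ(\lambda)$. Indeed, $A-\lambda I$ is defined over $\QQ(\lambda)$ and is nilpotent on $\ker(A-\lambda I)^\alpha$, which is also defined over $\QQ(\lambda)$, so a Jordan basis exists over that field. This is the natural reading of the lemma, and it is harmless for the later use.

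With this choice, $\sigma(\lambda^k v_{i,j})$ depends only on $\sigma|_{\QQ(\lambda)}$. Hence
\[
w_{i,j,k}=[F:\QQ(\lambda)]\sum_{\mu}\mu^k v^{\mu}_{i,j},
\]
where $\mu$ runs over the roots of $p$, and $v^\mu_{i,j}$ is the image of $v_{i,j}$ under the embedding $\lambda\mapsto\mu$. Now suppose $\sum c_{i,j,k}w_{i,j,k}=0$ with $c_{i,j,k}\in\QQ$. Generalised eigenspaces for distinct eigenvalues form a direct sum, so we may project onto the $\lambda$-component. This gives
\[
\sum_{i,j}\Bigl(\sum_k c_{i,j,k}\lambda^k\Bigr)v_{i,j}=0.
\]
Since the $v_{i,j}$ are linearly independent over $F$, each coefficient $\sum_k c_{i,j,k}\lambda^k$ vanishes. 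But $1,\lambda,\ldots,\lambda^{m-1}$ are linearly independent over $\QQ$ because $\deg p=m$, so every $c_{i,j,k}=0$. Together with the dimension count, this shows that $\calb$ is a basis of $\QQ^n$.
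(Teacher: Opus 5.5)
Your proof is correct, but the linear-independence step follows a different route from the paper's. The paper first shows that $\calb'=(\sigma(v_{i,j}))_{\sigma,i,j}$ is an $F$-basis of $F^n$, with each $\calb'_\sigma$ spanning the generalised $\sigma(\lambda)$-eigenspace. It then observes that the transition matrix from $\calb'$ to $\calb$ is block diagonal with Vandermonde blocks $(\sigma(\lambda)^k)_{k,\sigma}$. That argument tacitly uses $[F:\QQ]=m$, i.e.\ that $\sigma\mapsto\sigma(\lambda)$ is a bijection from $\Gal(F/\QQ)$ onto the roots of $p$. This holds only when $F=\QQ(\lambda)$, since otherwise $\calb'$ has $\alpha[F:\QQ]>n$ members.

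You instead project a rational relation onto $M$ and use the $\QQ$-independence of $1,\lambda,\ldots,\lambda^{m-1}$. This needs neither Vandermonde determinants nor normality of $\QQ(\lambda)$. The cost is that you must choose the $v_{i,j}$ with entries in $\QQ(\lambda)$. (The factor $[F:\QQ(\lambda)]$ you drop after projecting is a nonzero integer, so that is harmless.)

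Your worry that this choice is needed is well founded. Take $p=X^3-2$, $A$ its companion matrix, $\lambda=2^{1/3}$ real, and $v_0\in\QQ(\lambda)^3$ a $\lambda$-eigenvector. Then $v_{0,0}=\sqrt{-3}\,v_0$ is a Jordan basis of $M$. But complex conjugation fixes $\lambda$ and $v_0$ and negates $\sqrt{-3}$, so the Galois sum defining each $w_{0,0,k}$ cancels in pairs and every $w_{0,0,k}$ is $0$.

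So the statement as written needs either your restriction or the paper's implicit assumption $F=\QQ(\lambda)$. Under that assumption your restriction is automatic and the two proofs establish the same thing. The paper's version additionally produces the explicit $F$-basis $\calb'$ and the splitting $F^n=\bigoplus_\sigma\sigma(M)$, which it reuses in Lemma~\ref{lem:centr-lambda}. Yours is the one that works for arbitrary irreducible $p$.
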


\begin{proof}
The vectors $w_{i,j,k}$ are fixed by the $\Gal(F/\QQ)$-action, and so are indeed elements of $\QQ^n$. Since $[F:\QQ] = \deg(p) = m$, there are exactly $n$ of the elements $w_{i,j,k}$, and so it is enough to show that they are linearly independent.

We first claim that $\calb' = (\sigma(v_{i,j}) \mid 0 \leq i < \beta, 0 \leq j < d_i, \sigma \in \Gal(F/\QQ))$ is a basis of $F^n$. Indeed, for any $\sigma \in \Gal(F/\QQ)$, the set $\calb'_\sigma = (\sigma(v_{i,j}) \mid 0 \leq i < \beta, 0 \leq j < d_i)$ is linearly independent, since otherwise the $\sigma^{-1}$-image of a linear dependence relation for $\calb'_\sigma$ would be a linear dependence relation for $\calb'_{\id}$, contradicting the fact that $\calb'_{\id}$ is a basis for the generalised $\lambda$-eigenspace in $F^n$. Since all the generalised eigenspaces in $F^n$ have dimension $\alpha =  \sum_{i=0}^{\beta-1} d_i$, it follows that each $\calb'_\sigma$ is a basis for the generalised $\sigma(\lambda)$-eigenspace. Finally, since $F^n$ splits as a direct sum of its generalised eigenspaces, any linear dependence relation for $\calb'$ would imply a linear dependence relation for some $\calb'_\sigma$. Thus $\calb'$ is linearly independent, and so a basis of $F^n$, as claimed.

Now note that the matrix $P$ transforming $\calb'$ to $\calb$ is block diagonal, where each diagonal block is the Vandermonde matrix associated to $(\sigma(\lambda) \mid \sigma \in \Gal(F/\QQ))$. In particular, $P$ is non-singular, so since $\calb'$ is a basis of $F^n$ so is $\calb$, as required. The explicit expression for $A w_{i,j,k}$ can be verified directly, using the fact that $p(\lambda) = 0$ in the case $k = m-1$.
\end{proof}

\begin{lemma} \label{lem:inverse-condition}
In the notation of Lemma~\ref{lem:rational-Jordan}, the map $X\colon M \to M$ defined by $X(v) = Av-\lambda v$ makes $M$ into an $F[X]$-module of the form $M = \bigoplus_{i=0}^{\beta-1} M_i$, where $M_i = F[X]/(X^{d_i})$; we write $m_i$ for a generator of $M_i$ as an $F[X]$-module.

An $F$-linear map $\xi \colon M \to M$ commutes with the action of $A$ if and only if it is an $F[X]$-module endomorphism. Given such a map $\xi$, we define the matrix $c(\xi) = (c_{i,j}(\xi))_{i,j = 0}^{\beta-1} \in GL_\beta(F)$, where $c_{i,j}(\xi) \in F$ is the unique element such that 
$$\xi(m_j) \in \sum_{i=0}^{\beta-1} (c_{i,j}(\xi)m_i + X \cdot M_i).$$ 
Then $\xi$ is invertible if and only if $c(\xi)$ is.
\end{lemma}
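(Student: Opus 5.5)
The plan is to treat the three claims in turn, using only linear algebra over $F$ together with the Jordan basis of Lemma~\ref{lem:rational-Jordan}.

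\emph{Module structure.} Since $M$ is the generalised $\lambda$-eigenspace, $X = A - \lambda I$ is nilpotent on $M$. This makes $M$ an $F[X]$-module, and every $F[X]$-submodule is $A$-invariant. The Jordan relations $Av_{i,j} = \lambda v_{i,j} + v_{i,j+1}$ say exactly that $X v_{i,j} = v_{i,j+1}$ and $X v_{i,d_i-1} = 0$. Hence for each $i$ the span of $v_{i,0},\dots,v_{i,d_i-1}$ is a cyclic module generated by $m_i \coloneqq v_{i,0}$ with annihilator $(X^{d_i})$, so it is isomorphic to $M_i = F[X]/(X^{d_i})$. Because the $v_{i,j}$ form a basis of $M$, we get $M = \bigoplus_i M_i$.

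\emph{Commuting maps.} For an $F$-linear map $\xi$, commuting with $A$ is equivalent to commuting with $A - \lambda I = X$, because $\lambda I$ commutes with every $F$-linear map. Commuting with $X$ is in turn equivalent to being $F[X]$-linear.

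\emph{Invertibility.} Let $\xi$ be an $F[X]$-endomorphism. Then $\xi(XM) \subseteq XM$, so $\xi$ induces an $F$-linear map $\overline{\xi}$ on $M/XM \cong \bigoplus_i M_i/XM_i \cong F^\beta$, with basis the images of the $m_i$. By definition, the matrix of $\overline{\xi}$ in this basis is $c(\xi)$. (The statement writes $c(\xi)\in\GL_\beta(F)$, but a priori $c(\xi)$ is only a $\beta\times\beta$ matrix over $F$.)

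If $\xi$ is invertible, its inverse is again $F[X]$-linear, since it commutes with $X$. So $\overline{\xi}$ is invertible, with inverse $\overline{\xi^{-1}}$, and therefore $c(\xi)$ is invertible.

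Conversely, suppose $c(\xi)$ is invertible. Then $\overline{\xi}$ is surjective, that is, $\xi(M) + XM = M$. I would finish with a Nakayama-type argument over the nilpotent ideal. Iterating the relation gives
\[
M = \xi(M) + X\bigl(\xi(M) + XM\bigr) = \xi(M) + X^2 M,
\]
using $X\xi(M) = \xi(XM) \subseteq \xi(M)$. Repeating this yields $M = \xi(M) + X^N M$ for every $N$. Since $X^N = 0$ on $M$ for $N \ge \max_i d_i$, we obtain $M = \xi(M)$. As $M$ is finite-dimensional over $F$, surjectivity of $\xi$ implies bijectivity.

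\emph{Main obstacle.} The only step needing care is this converse direction, specifically checking that $\xi(M)$ is $X$-stable so that the iteration is legitimate. This holds because $\xi$ commutes with $X$. Everything else is a direct translation of the Jordan basis into module language.
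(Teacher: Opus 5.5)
Your proposal is correct and follows the paper's proof: it uses the same module structure via $m_i = v_{i,0}$, the same reduction to the induced map $\overline{\xi}$ on $M/XM$, and the same nilpotency induction for the converse. The only difference is that you run the induction on submodules ($M=\xi(M)+X^NM$, using $X\xi(M)\subseteq\xi(M)$), whereas the paper lifts each generator $m_i$ element by element. Your remark that $c(\xi)$ is a priori just a $\beta\times\beta$ matrix over $F$, rather than an element of $\GL_\beta(F)$, is also correct.
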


\begin{proof}
The $F[X]$-module structure for $M$ can be easily verified by setting $m_i = v_{i,0}$ for $0 \leq i < \beta$, so that $X^j \cdot m_i = v_{i,j}$ for $0 \leq j < d_i$. Moreover, since multiplication by $\lambda$ commutes with any $F$-linear map $\xi\colon M \to M$, it follows that $\xi$ commutes with the action of $A$ if and only if it commutes with the action of $X$, which happens if and only if $\xi$ is an $F[X]$-module endomorphism.

Now let $\xi\colon M \to M$ be an $F[X]$-module endomorphism. Then $\xi(X \cdot M) = X \cdot \xi(M) \subseteq X \cdot M$; in particular, $\xi$ induces an $F[X]$-module endomorphism $\overline\xi$ of $M / (X \cdot M) \cong F^\beta$. Now if $\xi$ is invertible then so is $\overline\xi$; but it follows from the construction that $\overline\xi$ is represented by the matrix $c(\xi)$, hence if $\xi$ is invertible then so is $c(\xi)$.

Conversely, suppose that $c(\xi)$ is invertible. Note that in order to show that $\xi$ is invertible, it is enough to show that it is surjective. Indeed, if $\xi$ is surjective then it is invertible as an $F$-linear map, and the inverse $\xi^{-1}\colon M \to M$ commutes with the action of $A$, implying that $\xi^{-1}$ is an $F[X]$-module endomorphism.

We claim that for each $0 \leq i < \beta$ and each $j \geq 0$ we have $m_i \in \xi(M) + X^j \cdot M$; taking $j = \max \{ d_i \mid 0 \leq i < \beta \}$ will then imply that $m_i \in \xi(M)$ and so that $\xi$ is surjective, as required. We prove the claim by induction on $j$; the base case, $j = 0$, is trivial. Let $j \geq 1$, and suppose that $m_i \in \xi(M) + X^{j-1} \cdot M$; therefore there exist $v \in M$ and $u_0,\ldots,u_{\beta-1} \in F$ such that 
$$m_i \in \xi(v) + \sum_{k=0}^{\beta-1} u_kX^{j-1} \cdot m_k + X^j \cdot M.$$ 
Since $c(\xi) \in \GL_\beta(F)$ is invertible, and therefore surjective, there exists $w = (w_k)_{k=0}^{\beta-1}$ such that $c(\xi) \cdot w = (u_k)_{k=0}^{\beta-1}$. It then follows from the definition of $c(\xi)$ that we have 
$$\xi\left(\sum_{k=0}^{\beta-1} w_km_k\right) \in \sum_{k=0}^{\beta-1} u_km_k + X \cdot M,$$ 
implying that $$m_i \in \xi\left(v+\sum_{k=0}^{\beta-1} w_kX^{j-1} \cdot m_k\right) + X^j \cdot M \subseteq \xi(M) + X^j \cdot M,$$ as claimed.
\end{proof}

Given an $F$-linear subspace $M \leq F^n$, an $F$-linear map $\xi\colon M \to M$ and an element $\sigma \in \Gal(F/\QQ)$, we can define an $F$-linear map $\sigma \cdot \xi \colon \sigma(M) \to \sigma(M)$ by setting $(\sigma \cdot \xi)(v) = \sigma(\xi(\sigma^{-1}(v)))$ for $v \in \sigma(M)$. If $F^n = \bigoplus_{\sigma \in \Gal(F/\QQ)} \sigma(M)$, then we also define an $F$-linear map 
$$\widetilde\xi \colon F^n \to F^n \quad \text{by}  \quad\widetilde\xi\left(\sum_{\sigma \in \Gal(F/\QQ)} v_\sigma\right) = \sum_{\sigma \in \Gal(F/\QQ)} (\sigma \cdot \xi)(v_\sigma),$$ 
where $v_\sigma \in \sigma(M)$.

\begin{lemma} \label{lem:centr-lambda}
In the notation of Lemmas \ref{lem:rational-Jordan} and \ref{lem:inverse-condition}, we have
\[
C_{\GL_n(\QQ)}(A) = \{ \widetilde\xi|_{\QQ^n} \mid \xi \in \End_{F[X]}(M), \det c(\xi) \neq 0 \} \cong \Aut_{F[X]}(M).
\]
\end{lemma}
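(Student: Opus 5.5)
Here $F^n=\bigoplus_{\sigma\in\Gal(F/\QQ)}\sigma(M)$, as established in the proof of Lemma~\ref{lem:rational-Jordan}: each $\calb'_\sigma$ is a basis of the generalised $\sigma(\lambda)$-eigenspace $\sigma(M)$, and these eigenspaces are distinct since $p$ is separable. Hence $\widetilde\xi$ is well defined for every $F$-linear $\xi\colon M\to M$. The plan is to check three facts. First, for $\xi\in\End_{F[X]}(M)$, the map $\widetilde\xi$ commutes with $A$ and with the coordinatewise Galois action on $F^n$, so it restricts to a $\QQ$-linear endomorphism of $\QQ^n$ commuting with $A$. Second, every $B\in C_{\GL_n(\QQ)}(A)$ arises this way. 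Third, $\xi\mapsto\widetilde\xi|_{\QQ^n}$ is an injective homomorphism, and $\widetilde\xi|_{\QQ^n}$ is invertible exactly when $\det c(\xi)\neq0$.

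For the first point, take $v\in\sigma(M)$. Since $A$ has rational entries, $A\sigma = \sigma A$, so
\[
(\sigma\cdot\xi)(Av) = \sigma\xi(A\sigma^{-1}v) = \sigma A\xi(\sigma^{-1}v) = A(\sigma\cdot\xi)(v).
\]
For Galois equivariance, take $\tau\in\Gal(F/\QQ)$ and $v_\sigma\in\sigma(M)$. Then $\tau(v_\sigma)\in\tau\sigma(M)$ and
\[
\tau\bigl((\sigma\cdot\xi)(v_\sigma)\bigr) = ((\tau\sigma)\cdot\xi)(\tau v_\sigma),
\]
so $\tau\circ\widetilde\xi = \widetilde\xi\circ\tau$. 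Therefore $\widetilde\xi$ preserves the fixed set $(F^n)^{\Gal}=\QQ^n$.

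For the second point, let $B\in C_{\GL_n(\QQ)}(A)$ and extend it $F$-linearly to $F^n$. Because $B$ commutes with $A$, it preserves every generalised eigenspace, so $\xi:=B|_M$ is $F$-linear and commutes with $A$; by Lemma~\ref{lem:inverse-condition} it is an $F[X]$-module endomorphism. Since $B$ is rational, $B\sigma=\sigma B$, which gives $B|_{\sigma(M)}=\sigma\cdot\xi$, i.e.\ $B=\widetilde\xi$. Moreover $B$ is invertible, so $\xi$ is invertible, and Lemma~\ref{lem:inverse-condition} gives $\det c(\xi)\neq0$. This proves the inclusion of $C_{\GL_n(\QQ)}(A)$ in the right-hand side.

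For the reverse inclusion, suppose $\det c(\xi)\neq0$. Then $\xi$ is invertible by Lemma~\ref{lem:inverse-condition}, so each $\sigma\cdot\xi$ is invertible and $\widetilde\xi$ is invertible on $F^n$. Its inverse is $\widetilde{\xi^{-1}}$, which again preserves $\QQ^n$, so $\widetilde\xi|_{\QQ^n}\in\GL_n(\QQ)$. It commutes with $A$ by the first point.

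For the isomorphism, note that $\widetilde{\xi\eta}=\widetilde\xi\,\widetilde\eta$ holds blockwise, since $\sigma\cdot(\xi\eta)=(\sigma\cdot\xi)(\sigma\cdot\eta)$. Restriction to $\QQ^n$ is injective because $\QQ^n$ spans $F^n$ over $F$, and $\widetilde\xi$ determines $\xi=\widetilde\xi|_M$. The map is surjective onto $C_{\GL_n(\QQ)}(A)$ by the second point. Identifying $\{\xi\in\End_{F[X]}(M):\det c(\xi)\neq0\}$ with $\Aut_{F[X]}(M)$ via Lemma~\ref{lem:inverse-condition} then yields the stated isomorphism.

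The only delicate step is the bookkeeping of the Galois action: one must check that $\sigma\cdot\xi$ really maps $\sigma(M)$ into itself, which holds because $\sigma$ maps $M$ onto $\sigma(M)$ bijectively, and that $\widetilde\xi$ commutes with $\tau$ even though $\tau$ permutes the summands. Once these two checks are done, the remaining arguments are direct.
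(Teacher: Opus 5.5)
Your proof is correct and is essentially the paper's Galois-descent argument. The differences are only in bookkeeping: you get $\widetilde\xi(\QQ^n)\subseteq\QQ^n$ from Galois equivariance and $(F^n)^{\Gal(F/\QQ)}=\QQ^n$, where the paper uses the description $\QQ^n=\{\sum_\sigma\sigma(v)\mid v\in M\}$; you obtain the converse directly from $B\sigma=\sigma B$ instead of decomposing $\zeta$ into components $\zeta_\sigma$; and you spell out the homomorphism, injectivity and invertibility checks that the paper leaves implicit.

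One small correction: the direct sum $F^n=\bigoplus_{\sigma\in\Gal(F/\QQ)}\sigma(M)$ needs $\sigma\mapsto\sigma(\lambda)$ to be injective on $\Gal(F/\QQ)$. That means $[F:\QQ]=m$, equivalently $F=\QQ(\lambda)$, which is what Lemma~\ref{lem:rational-Jordan} asserts; separability of $p$ alone only makes the roots distinct.
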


\begin{proof}
By Lemma~\ref{lem:inverse-condition}, we have 
$$C_{\GL(M)}(A) = \{ \xi \in \End_{F[X]}(M) \mid \det c(\xi) \neq 0 \},$$ 
and a similar argument shows that 
$$C_{\GL(\sigma(M))}(A) = \{ \xi \in \End_{F[X]}(\sigma(M)) \mid \det c(\xi) \neq 0 \} = \{ {\sigma \cdot \xi} \mid \xi \in C_{\GL(M)}(A) \}$$ 
for any $\sigma \in \Gal(F/\QQ)$. 
Moreover, since maps commuting with $A$ preserve its generalised eigenspaces, we have 
$$C_{\GL_n(F)}(A) = \bigoplus_{\sigma \in \Gal(F/\QQ)} C_{\GL(\sigma(M))}(A).$$ 
In particular, since we have $\QQ^n = \{ \sum_{\sigma \in \Gal(F/\QQ)} \sigma(v) \mid v \in M \}$ by Lemma~\ref{lem:rational-Jordan}, it follows that given any $\xi \in \End_{F[X]}(M)$ we have $\widetilde\xi(\QQ^n) \subseteq \QQ^n$.  So if $\det c(\xi) \neq 0$ then we have $\widetilde\xi|_{\QQ^n} \in C_{\GL_n(\QQ)}(A)$.

Conversely, let $\zeta \in C_{\GL_n(\QQ)}(A)$, and for $\sigma \in \Gal(F/\QQ)$ define a  $\QQ$-linear map  $$ \zeta_\sigma \colon \sigma(M) \to \sigma(M) \quad \text{by} \quad  \zeta\left(\sum_{\sigma \in \Gal(F/\QQ)} \sigma(v)\right) = \sum_{\sigma \in \Gal(F/\QQ)} \zeta_\sigma(\sigma(v)) \text{ for all } v \in M.$$ Since $\zeta$ extends to an $F$-linear automorphism of $F^n$, it follows that the maps $\zeta_\sigma$ are $F$-linear as well. The facts that $\zeta(\QQ^n) = \QQ^n$ and that $F^n = \bigoplus_{\sigma \in \Gal(F/\QQ)} \sigma(M)$ then imply that for each $v \in M$ there exists $w_v \in M$ such that $\zeta_\sigma(\sigma(v)) = \sigma(w_v)$ for all $\sigma \in \Gal(F/\QQ)$, and therefore
\[
\zeta_\sigma(\sigma(v)) = \sigma(w_v) = \sigma(\zeta_{\id}(v)) = (\sigma \circ \zeta_{\id} \circ \sigma^{-1})(\sigma(v)) = (\sigma \cdot \zeta_{\id})(\sigma(v)),
\]
implying that $\zeta = \widetilde{\zeta_{\id}}|_{\QQ^n}$. Moreover, since $\zeta$ commutes with the action of $A$ so does $\zeta_{\id}$, and so we have $\zeta \in \{ \widetilde\xi|_{\QQ^n} \mid \xi \in C_{\GL(M)}(A) \}$, as required.
\end{proof}

\subsection{Varying the lattice in a commensurability class}\label{metab:step3}

The following result shows that for results preserved by commensurability (such as finite generation), it doesn't matter which lattice in $\QQ^n$ we start with. Given a subgroup $K \leq \QQ^n$ of rank $n$, we slightly abuse the notation and write $\GL(K)$ for the subgroup of $\GL_n(\QQ)$ consisting of elements that preserve $K$.

\begin{lemma} \label{lem:dont-care-about-L}
Let $A \in \GL_n(\QQ)$, let $L,L' < \QQ^n$ be lattices such that $AL \subseteq L$ and $AL' \subseteq L'$, and let $K = \bigcup_{s \in \ZZ} A^sL$ and $K' = \bigcup_{s \in \ZZ} A^sL'$. Then $\GL(K)$ and $\GL(K')$ are commensurable in $\GL_n(\QQ)$.
\end{lemma}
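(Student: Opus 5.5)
Reduce first to the case $L' \leq L$. Since $L$ and $L'$ are lattices, $L'' \coloneqq L \cap L'$ is a lattice, and $AL'' \subseteq AL \cap AL' \subseteq L''$. Setting $K'' = \bigcup_{s} A^s L''$, we have $K'' \leq K \cap K'$. Commensurability is an equivalence relation on subgroups, so it is enough to show that $\GL(K)$ and $\GL(K'')$ are commensurable, and symmetrically for $K'$. From now on assume $L' \leq L$, so that $K' \leq K$, and $K/K'$ is finite by Lemma~\ref{lem:sounds-trivial-but-is-it?}.

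Next, show that $\Gamma \coloneqq \GL(K) \cap \GL(K')$ has finite index in both groups. Fix $N = |K/K'|$, so $NK \leq K' \leq K$. Every $g \in \GL(K)$ preserves $NK$. The group $K/NK$ is finite: it is a quotient of $K/NK'$, and $K/NK'$ is finite by the same lemma applied to the pair $NL' \leq L$. Hence $\GL(K)$ acts on the finite set of subgroups of $K/NK$. The subgroup $K'/NK$ is one of these, and its stabiliser is exactly $\GL(K) \cap \GL(K')$. This stabiliser has finite index in $\GL(K)$ because the orbit is finite.

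For the other inclusion, use the same trick in reverse. There is $p \geq 1$ with $pL \subseteq L'$, so $pK \leq K' \leq K$. Every $g \in \GL(K')$ preserves $pK'$ and also $p^{-1}$-type overgroups, but a cleaner route is to note that $K' \leq K \leq p^{-1}K'$. Then $p^{-1}K'/K'$ is finite, since it is isomorphic to $K'/pK'$, which is finite by the argument in the proof of Lemma~\ref{lem:sounds-trivial-but-is-it?}. Elements of $\GL(K')$ preserve $p^{-1}K'$ and $K'$, so they act on the finite set of subgroups of $p^{-1}K'/K'$. The stabiliser of $K/K'$ is $\GL(K') \cap \GL(K)$ and has finite index.

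The main point needing care is that these intermediate quotients really are finite. This is exactly where $A$-invariance is used: without it, $K$ need not be commensurable-controlled in this way. I would handle it by invoking Lemma~\ref{lem:sounds-trivial-but-is-it?} for the pairs $pL \leq L$, which gives $K/pK$ finite, and for $L' \leq L$. The remaining steps are routine orbit-stabiliser arguments.
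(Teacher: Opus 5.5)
Your proof is correct and follows essentially the same route as the paper. Both reduce to $L' \leq L$, sandwich $pK \leq K' \leq K$ with $K/pK$ finite, and apply orbit--stabiliser to the finitely many intermediate subgroups. Your sandwich $K' \leq K \leq p^{-1}K'$ for the reverse index is just the paper's $pK' \leq pK \leq K'$ rescaled, with $\GL(p^{-1}K') = \GL(K')$ playing the role of the paper's $\GL(pK) = \GL(K)$.

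Your explicit reduction via $L \cap L'$ and transitivity of commensurability usefully justifies the paper's ``without loss of generality''. The separate integer $N = |K/K'|$ is unnecessary, since $pK \leq K'$ already serves for the first step.
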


\begin{proof}
Suppose, without loss of generality, that $L' \subseteq L$. Since $L$ and $L'$ are lattices, we have $pL \subseteq L'$ for some $p \in \ZZ_{\geq 1}$; it follows that $pK \subseteq K' \subseteq K$.

Now since the subgroup $\GL(K) < \GL_n(\QQ)$ preserves $K$, it also preserves its characteristic subgroup $pK$, and so $\GL(K)$ acts on the set of subgroups $K'' \leq K$ satisfying $pK \subseteq K''$. By Lemma~\ref{lem:sounds-trivial-but-is-it?}, the quotient $K/pK$ is finite, and so there are finitely many such subgroups $K''$. It follows that the stabiliser $\GL(K) \cap \GL(K')$ of $K'$ under this action has finite index in $\GL(K)$. Similarly, since $pK' \subseteq pK \subseteq K'$, the intersection $\GL(pK) \cap \GL(K')$ has finite index in $\GL(K')$. But $\GL(pK) = \GL(K)$, and so $\GL(K)$ and $\GL(K')$ are commensurable in $\GL_n(\QQ)$, as required.
\end{proof}

\begin{example}
Consider $A = PDP^{-1} \in \GL_2(\QQ)$, where
\[
A = \begin{pmatrix} 2&1\\1&2 \end{pmatrix}, \quad D = \begin{pmatrix} 1&0\\0&3 \end{pmatrix} \quad \text{and} \quad P = \begin{pmatrix} 1&1\\-1&1 \end{pmatrix}.
\]
We then have
\[
K \coloneqq \bigcup_{s \in \ZZ} A^s(\ZZ^2) = \left\{ \frac{1}{2}\begin{pmatrix} x+3^sy \\ -x+3^sy \end{pmatrix} \,\middle|\, x,y,s \in \ZZ, x+y \in 2\ZZ \right\}.
\]
On the other hand,
\[
V_1 \cap K = \begin{pmatrix} 1 \\ -1 \end{pmatrix} \ZZ \quad \text{and} \quad V_3 \cap K = \begin{pmatrix} 1 \\ 1 \end{pmatrix} \ZZ[1/3],
\]
where $V_\lambda < \QQ^2$ is the $\lambda$-eigenspace of $A$, implying that $(V_1 \cap K) \oplus (V_3 \cap K) \neq K$. Thus Lemma~\ref{lem:dont-care-about-L} is indeed necessary in our arguments.
\end{example}

\begin{lemma} \label{lem:lattice-A-image}
In the notation of Lemma~\ref{lem:rational-Jordan}, suppose $A$ has integer entries, and let $R < M$ be the free abelian group with basis $\calb' = (v_{i,j} \mid 0 \leq i < \beta, 0 \leq j < d_i)$. Then we have $$\bigcup_{s \in \ZZ} A^sR = R[\lambda^{-1}] \coloneqq \left\{ \sum_{v \in \calb'} c_v v \mid c_v \in \ZZ[\lambda^{-1}] \right\}.$$ In particular, if $L < \QQ^n$ is the lattice with basis $\calb$, then $$K \coloneqq \bigcup_{s \in \ZZ} A^sL = \left\{ \sum_{\sigma \in \Gal(F/\QQ)} \sigma(v) \mid v \in R[\lambda^{-1}] \right\}.$$
\end{lemma}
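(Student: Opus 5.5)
We prove the first identity by showing two inclusions, and then deduce the description of $K$ by applying the map $v \mapsto \sum_{\sigma} \sigma(v)$, which is $\ZZ$-linear and $A$-equivariant because $A$ has rational entries.

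\emph{Setup.} Write $X = A - \lambda I$ acting on $M$. Since $A$ has integer entries, $\lambda$ is an algebraic integer, so $\ZZ[\lambda] \subseteq \ZZ[\lambda^{-1}]$. Set $Z = \ZZ[\lambda^{-1}]$. On $R$ we have $Av_{i,j} = \lambda v_{i,j} + v_{i,j+1}$, so $A$ preserves $R[\lambda^{-1}]$. Moreover, on $R[\lambda^{-1}]$ we have $A = \lambda(I + \lambda^{-1}X)$, and $X$ is nilpotent with $X(R[\lambda^{-1}]) \subseteq R[\lambda^{-1}]$. Hence $A$ acts invertibly on $R[\lambda^{-1}]$, with inverse
\[
A^{-1} = \lambda^{-1}\sum_{k\geq 0}(-\lambda^{-1}X)^k,
\]
a finite sum whose coefficients lie in $Z$. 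So $A^{s}R \subseteq R[\lambda^{-1}]$ for all $s \in \ZZ$, which gives the inclusion $\subseteq$.

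\emph{Reverse inclusion.} We first note that $\bigcup_s A^sR$ is an increasing union, because $AR \subseteq R$. It is a subgroup, stable under $A^{\pm1}$, and it contains $R$. We also need it to be stable under multiplication by $\lambda$, i.e.\ that it is a $\ZZ[\lambda]$-module. One way to see this is to write $\lambda = A - X$ and argue that $X = A - \lambda$ preserves $R$; but this uses $\lambda$, which is circular. Instead, we argue directly that $\lambda R \subseteq R$: we have $\lambda v_{i,j} = Av_{i,j} - v_{i,j+1} \in R$. Since $A^{-1}$ commutes with $\lambda$, the union is therefore a $\ZZ[\lambda]$-module. Next, it is stable under $X = A - \lambda$. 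On it, $\lambda^{-1} = A^{-1}(I + \lambda^{-1}X)$, and we handle this by induction on the nilpotency degree: $\lambda^{-1}$ equals $A^{-1}\sum_k (-1)^k X^k A^{-k}$, obtained by inverting $\lambda = A(I - A^{-1}X)$ with $A^{-1}X$ nilpotent and commuting. All these operators preserve $\bigcup_s A^sR$, so the union is a $Z$-module containing $R$, and hence contains $R[\lambda^{-1}]$.

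\emph{Second statement.} We apply $\Sigma(v) = \sum_\sigma \sigma(v)$. By definition $L = \Sigma(\ZZ[\lambda]\text{-span of }\calb')$; we must check that the $\ZZ$-span of $\lambda^k v_{i,j}$ for $k < m$ equals $\ZZ[\lambda]R$. This is the main obstacle: $\ZZ[\lambda]$ need only be $\ZZ$-spanned by $1, \dots, \lambda^{m-1}$ when $p$ is monic integral, which holds here since $\lambda$ is an algebraic integer. Hence $L = \Sigma(R')$ with $R' = \ZZ[\lambda]R = R$ by the above. Since $\Sigma \circ A = A \circ \Sigma$ and $\Sigma$ is injective on $M$ (as $F^n = \bigoplus_\sigma \sigma(M)$), we get $A^sL = \Sigma(A^sR)$. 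Taking the union and using the first part gives the stated formula for $K$.
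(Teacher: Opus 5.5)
\textbf{Gap.} The step that fails is your claim that $R$ itself is stable under $A$ and under multiplication by $\lambda$. In the statement $R$ is the $\ZZ$-span of the $v_{i,j}$, and $Av_{i,j}=\lambda v_{i,j}+v_{i,j+1}$ has coefficient $\lambda$ on $v_{i,j}$. Since the $v_{i,j}$ are $F$-linearly independent, $Av_{i,j}\in R$ forces $\lambda\in\ZZ$, i.e.\ $m=1$. So for $m\geq 2$ both $AR\subseteq R$ and $\lambda R\subseteq R$ are false. Your ``direct'' justification $\lambda v_{i,j}=Av_{i,j}-v_{i,j+1}\in R$ presupposes $Av_{i,j}\in R$, which is the same assertion.

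Two consequences follow. First, $\bigcup_s A^sR$ is not an increasing union, and need not even be a subgroup. For a single $1\times 1$ block with $p=X^2-2$, $\lambda=\sqrt2$, one has $A^sR=\ZZ\lambda^s v$, and $(1+\lambda)v$ lies in none of these. Second, the equality $\ZZ[\lambda]R=R$ in your last paragraph is false, although $L=\Sigma(\ZZ[\lambda]R)$ itself is correct. As written, therefore, you have not shown that the union is a $\ZZ[\lambda]$-module, which is exactly what your operator trick for $\lambda^{-1}$ requires.

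\textbf{Repair.} The fix is small, and your identity is the right one once it is placed in the right group. It shows $\lambda R\subseteq AR+R$, hence $\lambda A^sR\subseteq A^{s+1}R+A^sR$, because $\lambda$ commutes with $A$. Let $U=\sum_{s\in\ZZ}A^sR$ be the subgroup generated by the $A^sR$; this is how the union has to be read, and the paper's inductive argument likewise treats $\widehat R$ as closed under addition. Then $U$ is a $\ZZ[\lambda]$-module stable under $A^{\pm1}$, and hence under $X=A-\lambda$. Since $X$ is nilpotent on $M$, there one has $\lambda^{-1}=(A-X)^{-1}=\sum_{k\geq 0}X^kA^{-k-1}$. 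Note there is no sign $(-1)^k$, because $\lambda=A(I-A^{-1}X)$. So $U$ is a $\ZZ[\lambda^{-1}]$-module containing $R$, giving $U\supseteq R[\lambda^{-1}]$, and your forward inclusion gives $U\subseteq R[\lambda^{-1}]$.

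Equivalently, work from the start with $R'=\ZZ[\lambda]R$, which does satisfy $AR'\subseteq R'$. Then $\bigcup_sA^sR'$ is an honest increasing union of $\ZZ[\lambda]$-modules, and the same argument shows it equals $R[\lambda^{-1}]$. This is precisely what is needed to obtain $K=\Sigma\bigl(\bigcup_sA^sR'\bigr)$.

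\textbf{Comparison with the paper.} With this repair your route genuinely differs from the paper's. The paper shows $\ZZ[\lambda^{-1}]v_{i,j}\subseteq\widehat R$ by downward induction on $j$ along each Jordan chain, using $\lambda^{-k}v_{i,j}\in A^{-k}v_{i,j}+R_{i,j+1}$; this needs only additive closure of $\widehat R$. You replace that induction by a single polynomial identity for $\lambda^{-1}$ in $A^{-1}$ and $X$, at the cost of first establishing $\lambda$-stability of the union.
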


\begin{remark}
It might seem that this last conclusion is the same as inverting $a_0$ (in notation of \Cref{lem:rational-Jordan}) but it is not.  For example\ if $A = \begin{pmatrix} 0&3\\1&1 \end{pmatrix}$, then the second coordinate of $A^k\!\begin{pmatrix} 0\\1 \end{pmatrix}$ is $1 \pmod 3$ for all $k > 0$, implying that $\begin{pmatrix} 0\\1/3 \end{pmatrix} \notin \bigcup_{s \in \ZZ} A^s(\ZZ^2)$.
\end{remark}

\begin{proof}[Proof of \Cref{lem:lattice-A-image}]
Note first that we have $\ZZ[\lambda^{-1}] = \ZZ[\lambda,\lambda^{-1}]$. Indeed, since $A$ has integer entries the coefficients $a_0,\ldots,a_{m-1}$ of $p(X)$ are integers, implying that
\[
\lambda = \lambda-\lambda^{1-m}p(\lambda) = -\sum_{\ell=1}^m a_{m-\ell} \lambda^{1-\ell} \in \ZZ[\lambda^{-1}].
\]

Now note that $R[\lambda^{-1}]$ is clearly preserved by $A^{\pm 1}$, so since $R \leq R[\lambda^{-1}]$ it follows that $R[\lambda^{-1}]$ contains $\widehat{R} \coloneqq \bigcup_{s \in \ZZ} A^sR$. Conversely, in order to show that $R[\lambda^{-1}] \subseteq \widehat{R}$, we claim that $R_{i,j} \subseteq \widehat{R}$ for $0 \leq i < \beta$ and $0 \leq j \leq d_i$, where 
$$R_{i,j} = \bigoplus_{j'=j}^{d_i-1} \ZZ[\lambda^{-1}] \cdot v_{i,j'}.$$ 
We will prove this by induction on $d_i-j$; the base case, $j = d_i$, is trivial. Now let $j < d_i$, and note that $R_{i,j+1}$ is preserved under the action of $A^{\pm 1}$ and multiplication by $\lambda^{\pm 1}$. The fact that $Av_{i,j} \in \lambda v_{i,j} + R_{i,j+1}$ then implies that $$A^kv_{i,j} \in \lambda^k v_{i,j} + R_{i,j+1} \quad \text{for all} k \geq 0.$$
In particular, if $R_{i,j+1} \subseteq \widehat{R}$ then we have 
$$\lambda^{-k} v_{i,j} \in A^{-k} v_{i,j} + R_{i,j+1} \subseteq \widehat{R} \quad \text{for all} k \geq 0,$$ 
and therefore 
$R_{i,j} \subseteq \widehat{R}$, as claimed. It follows that $R_{i,0} \subseteq \widehat{R}$ for $0 \leq i < \beta$ and therefore $R[\lambda^{-1}] = \widehat{R}$, as required.

Finally, the description of $K$ follows from the definitions of $\calb$ and $\calb'$, the fact that $\ZZ[\lambda^{-1}] = \ZZ[\lambda,\lambda^{-1}]$, and the fact $A$ commutes with the action of $\Gal(F/\QQ)$.
\end{proof}

\subsection{Describing the centraliser}\label{metab:step4}

In order to describe $C = C_{\GL(K)}(A)$ in terms of matrices, we will use the following well-known result.

\begin{lemma} \label{lem:units}
Let $Z$ be a commutative ring with $1$ and let $d \geq 1$. Then an element $u = u_0 + u_1X + \cdots + u_{d-1}X^{d-1}$, where $u_0,\ldots,u_{d-1} \in Z$, is a unit in $Z[X]/(X^d)$ if and only if $u_0 \in Z^\times$.
\end{lemma}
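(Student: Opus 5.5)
The plan is to prove the two implications separately, and the substantive step is the construction of an explicit inverse using nilpotence of $X$ in $Z[X]/(X^d)$.

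\emph{Necessity.} Consider the ring homomorphism $\epsilon\colon Z[X]/(X^d) \to Z$ that sends $X \mapsto 0$. It is well defined because $(X^d) \subseteq (X)$, and it maps $u$ to $u_0$. Ring homomorphisms preserving $1$ send units to units: if $uv = 1$, then $u_0 \epsilon(v) = 1$. Hence $u_0 \in Z^\times$.

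\emph{Sufficiency.} Suppose $u_0 \in Z^\times$. Then write
\[
u = u_0(1 + n), \qquad n \coloneqq u_0^{-1}(u_1X + \cdots + u_{d-1}X^{d-1}).
\]
The element $n$ lies in the ideal generated by $X$, so $n^d \in (X^d) = 0$ in the quotient ring. The ring is commutative, so the geometric series terminates:
\[
(1+n)\sum_{k=0}^{d-1} (-n)^k = 1 - (-n)^d = 1.
\]
It follows that $u^{-1} = u_0^{-1}\sum_{k=0}^{d-1}(-n)^k$, so $u$ is a unit.

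There is no real obstacle here. The only point requiring care is the observation that $X$ is nilpotent in $Z[X]/(X^d)$. This gives nilpotence of $n$ and hence finiteness of the geometric series, and commutativity of $Z$ is what lets us factor out $u_0$ and apply the telescoping identity.
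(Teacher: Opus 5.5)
Your proof is correct. The necessity direction matches the paper's, which phrases it as multiplicativity of constant coefficients, i.e.\ your augmentation map $X \mapsto 0$.

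For sufficiency you take a different route. You factor $u = u_0(1+n)$ with $n \in X\cdot Z[X]/(X^d)$ nilpotent, and write the inverse as a terminating geometric series. The paper instead inducts on $d$. Assuming $u$ is invertible modulo $X^{d-1}$, it picks $v$ with $uv \in 1 + zX^{d-1} + (X^d)$ and corrects the top-degree error: $u\bigl(v - zu_0^{-1}X^{d-1}\bigr) \in 1 + (X^d)$, because $uX^{d-1} \equiv u_0X^{d-1}$ modulo $X^d$.

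Your version is more direct and produces an explicit formula for $u^{-1}$. It is really the general fact that a unit plus a nilpotent element is a unit, so it applies verbatim to any ring modulo a nilpotent ideal. The paper's degree-by-degree lifting avoids writing down the inverse. It has the same shape as the inductive argument $m_i \in \xi(M) + X^j\cdot M$ in Lemma~\ref{lem:inverse-condition}.

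A minor remark on your last paragraph: the telescoping identity holds in any ring, since all its terms are powers of $n$. Commutativity is only used to place $u_0^{-1}$ on the left in your formula for $u^{-1}$.
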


\begin{proof}
Since, given $u,v \in Z[X]/(X^d)$, the constant coefficient of $uv$ is the product of the constant coefficients of $u$ and of $v$, it is clear that if $u$ is a unit then $u_0 \in Z^\times$.

Conversely, suppose that $u_0 \in Z^\times$. We will show that $u = u_0 + \cdots + u_{d-1}X^{d-1} \in Z[X]/(X^d)$ is a unit by induction on $d$. In the base case, $d = 1$, we have $Z[X]/(X^d) = Z$ and so the result is clear. Suppose now that $d > 1$ and assume, by the inductive hypothesis, that the image of $u = u_0 + \cdots + u_{d-1}X^{d-1} \in Z[X]$ in $Z[X]/(X^{d-1})$ is a unit. Then there exists $v \in Z[X]$ such that $uv \in 1+(X^{d-1})$, and therefore $uv \in 1 + zX^{d-1} + (X^d)$ for some $z \in Z$. But it then follows that $u(v-zu_0^{-1}X^{d-1}) \in 1 + (X^d)$ and therefore the image of $u$ in $Z[X]/(X^d)$ is a unit, as required.
\end{proof}

\begin{lemma} \label{lem:matrices}
Consider an $F[X]$-module $M = \bigoplus_{d=1}^\delta \left( F[X]/(X^d) \right)^{m_d}$. 
Then any $F[X]$-module endomorphism $\xi\colon M \to M$ can be represented by a block ``matrix'' (note that the entries may come from different rings)
\[
B(\xi) = \begin{pmatrix}
    B_{1,1}(\xi) & \cdots & B_{1,\delta}(\xi) \\
    \vdots & \ddots & \vdots \\
    B_{\delta,1}(\xi) & \cdots & B_{\delta,\delta}(\xi)
\end{pmatrix},
\]
where $B_{d,d'}(\xi)$ is an $m_d \times m_{d'}$ matrix whose entries consist of $F[X]$-module homomorphisms $F[X]/(X^{d'}) \to F[X]/(X^d)$, i.e.\ elements of $F[X]/(X^d)$ if $d \leq d'$, and of $X^{d-d'}F[X]/(X^d)$ if $d > d'$; conversely, any such ``matrix'' represents an $F[X]$-module endomorphism of $M$. Under this representation, the matrix $c(\xi)$ is block upper triangular, with the $(d,d')$-th block consisting of the constant coefficients of the entries of $B_{d,d'}(\xi)$.
\end{lemma}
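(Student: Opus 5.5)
The plan is to reduce everything to the standard description of homomorphisms between cyclic modules over $F[X]$, and then assemble these into a block ``matrix''. First I will use that $M$ is a direct sum of cyclic modules $F[X]/(X^d)$, indexed by pairs $(d,r)$ with $1 \le r \le m_d$, with a fixed generator $m_{d,r}$ of each summand. Every $F[X]$-module endomorphism $\xi$ is determined by the images $\xi(m_{d',r'})$. Projecting to each summand gives homomorphisms $F[X]/(X^{d'}) \to F[X]/(X^d)$, and $\xi$ is recovered as the sum of these components. This gives the matrix $B(\xi)$ with blocks $B_{d,d'}(\xi)$ of size $m_d\times m_{d'}$. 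Conversely, any family of such component homomorphisms defines an endomorphism of the direct sum, because $M$ has only finitely many summands. Composition then corresponds to matrix multiplication in the usual way, although this is not required by the statement.

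Next I will identify $\mathrm{Hom}_{F[X]}(F[X]/(X^{d'}), F[X]/(X^d))$. Such a homomorphism is determined by the image $y\in F[X]/(X^d)$ of the generator $1$, and $y$ is admissible exactly when $X^{d'}y=0$ in $F[X]/(X^d)$, that is, when $X^{d}$ divides $X^{d'}\tilde y$ for a lift $\tilde y$. If $d\le d'$ this condition is automatic, so every element of $F[X]/(X^d)$ occurs. If $d>d'$ the condition is $X^{d-d'}\mid \tilde y$, so the admissible images form $X^{d-d'}F[X]/(X^d)$.

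Finally I will compute $c(\xi)$, which is the matrix of the map induced by $\xi$ on $M/XM\cong F^{\sum m_d}$ in the basis of images of the generators. Its $((d,r),(d',r'))$ entry is the constant coefficient of the corresponding entry of $B(\xi)$. When $d>d'$ every such entry lies in $X^{d-d'}F[X]/(X^d)\subseteq XF[X]/(X^d)$, so its constant coefficient is $0$. Ordering the blocks by increasing $d$, this makes $c(\xi)$ block upper triangular, and its $(d,d')$-th block consists of the constant coefficients of the entries of $B_{d,d'}(\xi)$.

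There is no real obstacle in this argument. The only point needing care is consistency of conventions: each entry of the $d$-th block row must be read as an element of $F[X]/(X^d)$, and the ordering and indexing of generators must match the definition of $c(\xi)$ in Lemma~\ref{lem:inverse-condition}.
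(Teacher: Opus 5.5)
Your proposal is correct and is exactly the routine verification the paper has in mind when it says the lemma ``follows immediately by construction'': decompose $\xi$ along the cyclic summands, identify $\mathrm{Hom}_{F[X]}(F[X]/(X^{d'}),F[X]/(X^d))$ via the image of the generator (annihilated by $X^{d'}$), and read off $c(\xi)$ from the induced map on $M/XM$. The entries with $d>d'$ lie in $XF[X]/(X^d)$, so their constant coefficients vanish, which gives block upper triangularity with blocks ordered by increasing $d$.
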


\begin{proof}
Follows immediately by construction.
\end{proof}

\begin{corollary} \label{cor:matrices-int}
In the notation of Lemma~\ref{lem:matrices}, the set of $Z[X]$-module endomorphisms $\xi$ of $R[\lambda^{-1}]$ (where $Z = \ZZ[\lambda^{-1}]$) can be identified with the matrices $B(\xi)$, so that the entries of $B_{d,d'}(\xi)$ are $Z[X]$-module homomorphisms $Z[X]/(X^{d'}) \to Z[X]/(X^d)$, i.e.\ elements of $Z[X]/(X^d)$ if $d \leq d'$, and of $X^{d-d'}Z[X]/(X^d)$ if $d > d'$. Such an endomorphism $\xi\colon R[\lambda^{-1}] \to R[\lambda^{-1}]$ is an automorphism if and only if the matrix consisting of constant coefficients of $B_{d,d}(\xi)$ is an element of $\GL_{m_d}(Z)$, i.e.\ has determinant in $Z^\times$, for all $1 \leq d \leq \delta$.
\end{corollary}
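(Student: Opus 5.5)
The plan is to deduce Corollary~\ref{cor:matrices-int} from Lemma~\ref{lem:matrices}, Lemma~\ref{lem:inverse-condition} and Lemma~\ref{lem:units}, by working over $Z=\ZZ[\lambda^{-1}]$ instead of $F$.

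\textbf{Step 1: $R[\lambda^{-1}]$ is a free $Z[X]$-module of the same shape as $M$.} By Lemma~\ref{lem:lattice-A-image}, $R[\lambda^{-1}]$ is the free $Z$-module on the Jordan basis $v_{i,j}$. It is preserved by $A^{\pm1}$ and by multiplication by $\lambda$ (recall $\lambda\in Z$), so it is preserved by $X=A-\lambda$. Since $X^j m_i=v_{i,j}$, grouping the cyclic summands by size $d$ gives
\[
R[\lambda^{-1}]\cong\bigoplus_{d}\left(Z[X]/(X^d)\right)^{m_d},
\]
and this sits inside $M$ compatibly with the $F[X]$-decomposition, via $R[\lambda^{-1}]\otimes_Z F=M$.

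\textbf{Step 2: the matrix description of endomorphisms.} A $Z[X]$-homomorphism $Z[X]/(X^{d'})\to Z[X]/(X^d)$ is determined by the image $g$ of the generator. This $g$ is subject only to $X^{d'}g=0$ in $Z[X]/(X^d)$, which means $g\in Z[X]/(X^d)$ if $d\le d'$ and $g\in X^{d-d'}Z[X]/(X^d)$ if $d>d'$. This is exactly the argument of Lemma~\ref{lem:matrices} with $F$ replaced by $Z$. Conversely, every such matrix defines an endomorphism, and each $Z[X]$-endomorphism extends $F$-linearly to an $F[X]$-endomorphism of $M$ with the same matrix $B(\xi)$.

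\textbf{Step 3: the invertibility criterion.} This is the only substantive step. Reduce modulo $X$: $\xi$ induces an endomorphism $\overline\xi$ of $R[\lambda^{-1}]/X R[\lambda^{-1}]\cong Z^{\beta}$, and $\overline\xi$ is represented by $c(\xi)$. By Lemma~\ref{lem:matrices}, $c(\xi)$ is block upper triangular, with diagonal blocks the constant-coefficient matrices $c_d$ of $B_{d,d}(\xi)$. Hence $c(\xi)\in\GL_\beta(Z)$ if and only if every $c_d\in\GL_{m_d}(Z)$, since $\det c(\xi)=\prod_d\det c_d$.

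For necessity: if $\xi$ is an automorphism, then $\overline\xi$ is invertible, so $\det c(\xi)\in Z^\times$.

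For sufficiency, I would copy the surjectivity induction in the proof of Lemma~\ref{lem:inverse-condition} over $Z$. One shows $m_i\in\xi(R[\lambda^{-1}])+X^j R[\lambda^{-1}]$ by induction on $j$, using at each stage that $c(\xi)$ is surjective on $Z^\beta$. This is where invertibility over $Z$, rather than over $F$, is needed. Surjectivity then gives bijectivity, because $\xi$ is injective: it extends to an $F$-linear endomorphism of $M$ with $\det c(\xi)\neq0$, which is invertible by Lemma~\ref{lem:inverse-condition}. The inverse map automatically commutes with $X$.

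I expect the only point needing care to be checking that the induction steps in Lemma~\ref{lem:inverse-condition} use nothing beyond surjectivity of $c(\xi)$ over the coefficient ring. Lemma~\ref{lem:units} handles the special case $\beta=1$, where the condition reduces to the constant coefficient being a unit of $Z$.
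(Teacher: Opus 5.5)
Your proof is correct and follows essentially the paper's route, which is just the one-line citation of Lemmas~\ref{lem:centr-lambda}, \ref{lem:lattice-A-image} and~\ref{lem:matrices}. Your Step~3 supplies explicitly the passage from invertibility over $F$ to invertibility over $Z$, which none of those lemmas states and the paper leaves implicit: you rerun the surjectivity induction of Lemma~\ref{lem:inverse-condition} over $Z$ and get injectivity from the $F$-linear extension. The paper's appeal to Lemma~\ref{lem:centr-lambda} serves to identify these $Z[X]$-automorphisms with elements of $C_{\GL(K)}(A)$, which is how the corollary is used later but is not needed for the statement as written.
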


\begin{proof}
This follows from combining Lemmas \ref{lem:centr-lambda}, \ref{lem:lattice-A-image} and \ref{lem:matrices}.
\end{proof}

\begin{lemma}
In the notation of Lemma~\ref{lem:lattice-A-image}, let $C = C_{\GL(K)}(A)$. If $A$ has a $\lambda$-Jordan gap for some eigenvalue $\lambda$, then there exists a surjective group homomorphism $C \to (\ZZ[\lambda^{-1}],+)$. In particular, if $A$ is not essentially Jordan gap free, then $C$ is not finitely generated.
\end{lemma}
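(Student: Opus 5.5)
The plan is to reduce to a single primary component, pass to the module description of Corollary~\ref{cor:matrices-int}, and then read off a $Z$-valued ``derivative'' of one diagonal entry at the gap. Write $\QQ^n = V_1\oplus\cdots\oplus V_l$ with $V_i=\ker p_i^{\alpha_i}(A)$, and let $p=p_i$ be the factor having $\lambda$ as a root. Take $L' = \bigoplus_i (V_i\cap\ZZ^n)$, a lattice with $AL'\subseteq L'$. By Lemma~\ref{lem:dont-care-about-L}, $C$ is commensurable with $C' = C_{\GL(K')}(A)$, where $K'=\bigoplus_i K'_i$. We may further replace $V_i\cap \ZZ^n$ by the lattice with basis $\calb$ of Lemma~\ref{lem:rational-Jordan}; this is again allowed by Lemma~\ref{lem:dont-care-about-L}, after scaling $\calb$ so that it consists of integral vectors. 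Centralisers of $A$ preserve each $V_i$, so $C'=\prod_i C_{\GL(K'_i)}(A|_{V_i})$. It therefore suffices to produce a surjection from the $V_i$-factor onto $(Z,+)$ with $Z=\ZZ[\lambda^{-1}]$, and to pass it through commensurability. Here I will need that a surjection to $Z$ from a finite-index subgroup yields a non-finitely-generated group. Indeed, commensurable groups are simultaneously finitely generated, and $Z$ is not finitely generated because $\lambda$ is not an algebraic unit. (If the gap is at a unit, there is nothing to prove for the ``in particular'' statement.) So for the ``in particular'' I only need the non-finite-generation conclusion, which is robust under commensurability; the literal surjection from $C$ itself is what I would aim for, but I would be content with one from a finite-index subgroup.

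On the $V_i$-factor, Lemmas~\ref{lem:centr-lambda} and~\ref{lem:lattice-A-image} identify $C_{\GL(K_i)}(A)$ with $\Aut_{Z[X]}(R[\lambda^{-1}])$, where $R[\lambda^{-1}] = \bigoplus_d (Z[X]/(X^d))^{m_d}$. Here I would need to check that the Galois-trace map $v\mapsto\sum_\sigma\sigma(v)$ restricts to a bijection $R[\lambda^{-1}]\to K_i$, so that $\xi\mapsto\widetilde\xi$ matches $Z[X]$-automorphisms with $C_{\GL(K_i)}(A)$. Let $d$ be the gap, so that $m_d=0$ and $m_{d+1}\ge 1$. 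By Corollary~\ref{cor:matrices-int}, each $\xi$ has a diagonal block $B_{d+1,d+1}(\xi)$ with entries in $\widehat Z = Z[X]/(X^{d+1})$. I propose the map
\[
\theta(\xi) = \text{the coefficient of } X^{d}\text{ in } \det\nolimits_{\widehat Z} B_{d+1,d+1}(\xi)\cdot\bigl(\text{constant term}\bigr)^{-1},
\]
taken as a logarithmic derivative. More cleanly, first send $\xi$ to the unit $\det B_{d+1,d+1}(\xi)\in \widehat Z^\times$, then project modulo $(X^d)$ and reduce via $1+X^dZ \cong (Z,+)$ after killing the constant unit.

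The main obstacle is showing that $\xi\mapsto B_{d+1,d+1}(\xi)$ is multiplicative modulo an appropriate ideal. In the product, the cross terms are $B_{d+1,e}B_{e,d+1}$. For $e\le d-1$, such a homomorphism factors through $Z[X]/(X^e)$ and lands in $X^{d+1-e}$, so it is divisible by $X^{2}$, which is not enough in general. The right ideal is the one generated by images of maps that factor through modules of smaller length. Since there is no block of size $d$, a composite $Z[X]/(X^{d+1})\to Z[X]/(X^e)\to Z[X]/(X^{d+1})$ has image in $X^{d+1-e}\widehat Z$ for $e<d$, hence in $X^2\widehat Z$. For $e>d+1$, the maps $Z[X]/(X^{d+1})\to Z[X]/(X^e)$ land in $X^{e-d-1}$, and the return map kills nothing new, so the image is in $X\widehat Z$. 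Thus modulo the ideal $X^2\widehat Z$ these contribute only through $e=d+2$, which is delicate. I would therefore instead use the quotient $N = X^{d}M/ (X^{d}M\cap \text{stuff})$. Namely, consider the functorial subquotient $Q=(\ker X\cap X^{d}M)/(\ker X\cap X^{d+1}M)\cong Z^{m_{d+1}}$, whose automorphism group is $\GL_{m_{d+1}}(Z)$ and on which $\xi$ acts. Together with the analogous subquotient $(X^{d-1}M\cap\ker X^2)$, the absence of size-$d$ blocks should give a $\widehat Z/(X^2)$-module structure $Z[X]/(X^2)^{m_{d+1}}$, whose automorphism group surjects onto $(Z,+)$ via $\det$ followed by taking the $X$-coefficient divided by the constant. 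Surjectivity is immediate from the diagonal automorphisms $\operatorname{diag}(1+zX,1,\dots)$ in $B_{d+1,d+1}$, which are units by Lemma~\ref{lem:units}. Verifying that this subquotient really has the claimed $Z[X]/(X^2)$-structure precisely because $d$ is a gap is the step I expect to be hardest, and I would check it first on the example $A=\begin{pmatrix}2&1\\0&2\end{pmatrix}$.
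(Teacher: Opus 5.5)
Your approach is essentially the paper's, but the step you left open is aimed at a target that does not exist. The common strategy is:
\begin{itemize}
\item find a subquotient of $R[\lambda^{-1}]$, preserved by all $Z[X]$-automorphisms, that is free over $Z[X]/(X^2)$ precisely because $m_d=0$;
\item take its determinant, landing in $(Z[X]/(X^2))^\times\cong Z^\times\times(Z,+)$;
\item compose with $z_0+z_1X\mapsto z_0^{-1}z_1$;
\item get surjectivity from $w_{d+1,1}\mapsto(1+zX)w_{d+1,1}$ via \Cref{lem:units}.
\end{itemize}
The paper's subquotient is $M'=X^{d-1}M/X^{d+1}M$. In the notation of \Cref{lem:lattice-A-image}, $A$ already has a single primary factor and $L$ is spanned by $\calb$, so the surjection is from $C$ itself. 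Your commensurability reduction is only needed to make the ``in particular'' precise for general $A$, where it is a sensible addition.

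You expect a subquotient isomorphic to $(Z[X]/(X^2))^{m_{d+1}}$, and this fails whenever there are blocks of size $\geq d+2$. Blockwise, $X^{d-1}(Z[X]/(X^e))\cap\ker X^2$ is:
\begin{itemize}
\item $0$ for $e<d$;
\item $Z$ with $X$ acting as zero for $e=d$;
\item $X^{e-2}Z[X]/(X^e)\cong Z[X]/(X^2)$ for every $e\geq d+1$.
\end{itemize}
So your candidate $X^{d-1}M\cap\ker X^2$, like the paper's $M'$, is free of rank $\sum_{e\geq d+1}m_e$, not $m_{d+1}$.

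The larger blocks also cannot be split off functorially. The automorphism $w_{d+2,1}\mapsto w_{d+2,1}+w_{d+1,1}$ (all other generators fixed) shows that any characteristic submodule containing $X^dw_{d+2,1}$ also contains $X^dw_{d+1,1}$. Hence quotienting out the $(d+2)$-contribution forces $X$ to act as zero on what survives of the $(d+1)$-contribution.

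The subquotient $Q$ you named first has trivial $X$-action. It therefore only sees $\GL_{m_{d+1}}(Z)$, whose determinant lands in the finitely generated group $Z^\times$, so it cannot produce a map onto $(Z,+)$.

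The repair is simply to drop the rank requirement: the determinant argument works for a free $Z[X]/(X^2)$-module of any positive rank. With that change, $X^{d-1}M\cap\ker X^2$, or the paper's $M'$, completes the proof exactly as you outline.
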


\begin{proof}
Let $d \geq 1$ be a $\lambda$-Jordan gap for $A$. Note that the submodules $X^{d \pm 1}M$ of the $F[X]$-module $M$ are preserved by $F[X]$-module automorphisms, implying that we have a homomorphism $\Phi\colon \Aut_{F[X]}(M) \to \Aut_{F[X]}(M')$, where $M' = X^{d-1}M/X^{d+1}M$. Moreover, since $d$ is a $\lambda$-Jordan gap, it follows that we have $M' \cong (F[X]/(X^2))^m$ for some $m \geq 1$, and so $\Aut_{F[X]}(M') \cong \GL_m(F[X]/(X^2))$. In particular, the determinant $\det\colon \Aut_{F[X]}(M') \to (F[X]/(X^2))^\times$ is a well-defined homomorphism. In terms of Lemma~\ref{lem:matrices}, the map $\Phi$ has the following description: given $\xi \in \Aut_{F[X]}(M)$, we have
\[
B(\Phi(\xi)) = \begin{pmatrix}
    \overline{B}_{d+1,d+1}(\xi) & \cdots & \overline{B}_{d+1,\delta}(\xi) \\
    \vdots & \ddots & \vdots \\
    \overline{B}_{\delta,d+1}(\xi) & \cdots & \overline{B}_{\delta,\delta}(\xi)
\end{pmatrix},
\]
where the entries of $\overline{B}_{d',d''}(\xi)$ are the images of the entries of $B_{d',d''}(\xi)$ under the ring homomorphism $F[X]/(X^{d'}) \to F[X]/(X^2)$.

Now by Corollary~\ref{cor:matrices-int}, given an automorphism $\xi$ of $R[\lambda^{-1}]$ the matrix $B(\Phi(\xi^{\pm 1}))$ has entries in $Z[X]/(X^2)$, and so we have $\Phi(\xi) \in \GL_m(Z[X]/(X^2))$. Consider the group homomorphisms
\[
\alpha\colon C \xrightarrow{\Phi} \GL_m(Z[X]/(X^2)) \xrightarrow{\det} \left( Z[X]/(X^2) \right)^\times \text{ and } \beta\colon Z^\times \times (Z,+) \to \left( Z[X]/(X^2) \right)^\times,
\]
where $\beta(z,w) = z(1+wX)$. Note that $\beta$ is well-defined and surjective by Lemma~\ref{lem:units}; in particular, $\beta$ is an isomorphism. Note also that $\alpha$ is surjective: indeed, given any $u \in \left( Z[X]/(X^2) \right)^\times$ we have $u = z+wX$ for some $z \in Z^\times$ and $w \in Z$, and then $u' = z+wX$ is also invertible in $Z[X]/(X^{d+1})$ by Lemma~\ref{lem:units}; given any generator $v$ of the $Z[X]$-submodule 
$$\left( Z[X]/(X^{d+1}) \right)^{m_{d+1}} \leq \bigoplus_{d'=1}^\delta \left( Z[X]/(X^{d'}) \right)^{m_{d'}} = R[\lambda^{-1}],$$ 
we have an automorphism $\xi$ of $R[\lambda^{-1}]$ sending $v$ to $u'v$ and all the other generators of $R[\lambda^{-1}]$ to themselves, and therefore $\alpha(\xi) = u$. It follows that $C$ surjects onto $\left( Z[X]/(X^2) \right)^\times \cong Z^\times \times (Z,+)$, and so onto $(Z,+)$, as required.

Finally, if $A$ is not essentially Jordan gap free, then there exists an eigenvalue $\lambda$ of $A$ such that $\lambda^{-1}$ is not an algebraic integer and $C$ surjects onto $(\ZZ[\lambda^{-1}],+)$. The fact that $\lambda^{-1}$ is not an algebraic integer is equivalent to the fact that the abelian group $(\ZZ[\lambda^{-1}],+)$ is not finitely generated, and so in this case $C$ is not finitely generated either.
\end{proof}

\begin{lemma} \label{lem:gaussian-elimination}
Let $\lambda$ be an algebraic integer, let $Z = \ZZ[\lambda^{-1}]$, let $d,m \geq 1$ be integers, and let $\widehat{Z} = Z[X]/(X^d)$. Then $\GL_m(\widehat{Z})$ is generated by the subgroup $\GL_m(Z)$ together with the maps
\[
\varphi_{i,j,k}\colon w_i \mapsto w_i+X^kw_j \quad\text{and}\quad \gamma_{i,u}\colon w_i \mapsto uw_i,
\]
where $(w_1,\ldots,w_m)$ is the standard generating set for $\widehat{Z}^m$, and where $1 \leq i,j \leq m$ with $i \neq j$, $0 \leq k < d$ and $u \in \widehat{Z}^\times$. 
\end{lemma}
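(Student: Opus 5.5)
The plan is to run Gaussian elimination over the local ring $\widehat{Z}$, reducing an arbitrary $g \in \GL_m(\widehat{Z})$ to the identity using the listed generators. Let $H$ denote the subgroup generated by $\GL_m(Z)$ (viewed as constant matrices), the $\varphi_{i,j,k}$ and the $\gamma_{i,u}$.

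\textbf{Step 1: all elementary matrices lie in $H$.} The matrix $\varphi_{i,j,k}$ is the elementary matrix adding $X^k$ times one coordinate to another. Conjugating it by diagonal maps $\gamma_{j,u}$ scales the added multiple by $u$ or $u^{-1}$. Together with the constant elementary matrices $E_{ij}(z)$ for $z \in Z$, which lie in $\GL_m(Z)$, and additivity $E_{ij}(a)E_{ij}(b)=E_{ij}(a+b)$, we get every $E_{ij}(zX^k)$ with $z\in Z$. Indeed, $E_{ij}(zX^k)$ is a commutator of $E_{il}(z)$ and $E_{lj}(X^k)$ when $m\ge 3$. For $m=2$ (and $m=1$) we instead use $\gamma_{j,u}$ with $u=1+zX$, or simply note that $E_{ij}(zX^k)=\gamma\varphi\gamma^{-1}$ with $\gamma$ a diagonal unit such as $\gamma_{i,z'}$ when $z$ is a unit. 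For general $z$, write it as a sum of such terms: $Z$ is additively generated by units times integers, since $Z=\ZZ[\lambda^{-1}]$ is spanned by powers $\lambda^{-s}$, and $\lambda^{-1}$ is a unit of $Z$ because $\lambda$ is an algebraic integer, so $\lambda\in Z$ by the computation in Lemma~\ref{lem:lattice-A-image}. Summing over $k$ then gives $E_{ij}(a)\in H$ for every $a\in\widehat{Z}$.

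\textbf{Step 2: reduce to a diagonal matrix.} By Lemma~\ref{lem:units} and the argument of Lemma~\ref{lem:inverse-condition}, $g$ is invertible exactly when its constant-term matrix $c(g)$ lies in $\GL_m(Z)$. Multiplying $g$ by $c(g)^{-1}\in\GL_m(Z)$ yields a matrix $g'$ with $c(g')=I$. Every diagonal entry of $g'$ is then $1+(\text{multiple of }X)$, hence a unit by Lemma~\ref{lem:units}, and every off-diagonal entry lies in $X\widehat{Z}$. Clearing column $1$ by row operations $E_{i1}(-g'_{i1}(g'_{11})^{-1})$ uses only elementary matrices, which lie in $H$ by Step 1. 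Clearing row $1$ and then inducting on $m$ leaves a diagonal matrix whose diagonal entries are units of $\widehat{Z}$. Each such matrix is a product of the $\gamma_{i,u}$, so $g\in H$.

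The main obstacle is Step 1 in the case $m=2$, where commutator tricks are unavailable and $Z$ need not be a Euclidean domain. The statement deliberately includes all $\gamma_{i,u}$ and all $\varphi_{i,j,k}$ (with $k=0$ permitted) together with $\GL_m(Z)$, and Step 2 only requires elementary matrices with entries in $X\widehat{Z}$ plus constant ones. So I would check carefully that conjugating $\varphi_{i,j,k}$ by $\gamma_{i,u}$ and $\gamma_{j,u'}$ yields $E_{ij}(u'^{-1}uX^k)$. Combined with additivity and the fact that $Z^\times$ contains $\pm1$ and $\lambda^{\pm1}$, whose integer combinations give all of $Z$, this handles every $z\in Z$. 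That completes Step 1 without relying on $m\ge 3$.
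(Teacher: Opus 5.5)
Your proposal is correct and follows essentially the same route as the paper. You scale $\varphi_{i,j,k}$ by conjugation with $\gamma_{i,\lambda^{\pm n}}$ and use additivity to get every elementary matrix with entry in $\widehat{Z}$. You then multiply by the inverse of the constant-term matrix, which lies in $\GL_m(Z)$, to reduce to $I+DX$. Finally, elimination with pivots of the form $1+X(\cdots)$ finishes the job, since these are units by Lemma~\ref{lem:units}.

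Two minor remarks. The commutator detour for $m\ge 3$ is unnecessary, because the unit-conjugation argument works for every $m$. Also, $\widehat{Z}$ is not actually local (its quotient $Z$ has many maximal ideals), but your elimination never uses locality, only that those pivots are units.
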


\begin{proof}
Let $G$ be the group generated by $\GL_m(Z)$ together with the maps $\varphi_{i,j,k}$ and $\gamma_{i,u}$. For all $n \in \NN$, the map $\gamma_{i,\lambda^n} \circ \varphi_{i,j,k} \circ \gamma_{i,\lambda^{-n}}$ sends 
$$w_i \mapsto w_i + \lambda^{-n}X^kw_j\quad \text{and} \quad w_{i'} \mapsto w_{i'}\quad  \text{for } i' \neq i.$$ 
Given $z \in \widehat{Z}$, let $\varphi_{i,j}^{(z)}$ be the automorphism sending $w_i \mapsto w_i + zw_j$ and $w_{i'} \mapsto w_{i'}$ for $i' \neq i$; since $\varphi_{i,j}^{(z)} \circ \varphi_{i,j}^{(z')} = \varphi_{i,j}^{(z+z')}$ for all $z,z' \in \widehat{Z}$ and since $\{ \lambda^{-n}X^k \mid n \in \NN, 0 \leq k < d \}$ generates $(\widehat{Z},+)$, it follows that $\varphi_{i,j}^{(z)} \in G$ for all $1 \leq i,j \leq m$ with $i \neq j$ and all $z \in \widehat{Z}$.

Now let $B \in \GL_m(\widehat{Z})$; we claim that $B \in G$. Indeed, looking at $B$ as a matrix, we can write $B = B_0 + B'X$, where $B_0$ has entries in $Z$. It is clear that $\det B \in \det B_0 + X\widehat{Z}$; therefore, since $\det B \in \widehat{Z}^\times$, it follows from Lemma~\ref{lem:units} that we have $\det B_0 \in Z^\times$, and therefore $B_0 \in \GL_m(Z)$. We then have $B_0^{-1}B = I + B_0^{-1}B'X \in G$, and so it is enough to show that $I+DX \in G$ for any $m \times m$ matrix $D$ with entries in $\widehat{Z}$.

We now use the elements $\varphi_{i,j}^{(z)}$ and $\gamma_{i,u}$ to convert the columns of $I+DX$ to the standard basis vectors, one by one. Indeed, suppose the $i'$-th column of $I+DX$ is the $i'$-th basis vector for $1 \leq i' < i$. Now the $(i,i)$-th entry of $I+DX$ is of the form $u = 1+Xz$ for some $z \in \widehat{Z}$ and so is a unit in $\widehat{Z}$ by Lemma~\ref{lem:units}; after composing $I+DX$ with $\gamma_{i,u^{-1}}$, i.e.\ multiplying the $i$-th row of $I+DX$ by $u^{-1}$, we can then assume that $u = 1$. Now for each $1 \leq j \leq m$ with $j \neq i$, we can apply $\varphi_{i,j}^{(-D_{j,i}X)}$ to subtract $D_{j,i}X$ times the $i$-th row from the $j$-th row, and therefore setting the $(j,i)$-th entry of $I+DX$ to zero. Thus, continuing this procedure we obtain the identity matrix, and so $B \in G$, as claimed.
\end{proof}

\begin{prop} \label{prop:no-Jordan-gaps}
Suppose that a matrix $A \in \GL_n(\QQ)$ with integer coefficients is essentially Jordan gap free, and let $C = C_{\GL(K)}(A)$, where $K = \bigcup_{s \in \ZZ} A^s(\ZZ^n)$. Then $C$ is finitely generated.
\end{prop}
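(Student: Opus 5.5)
We first reduce to one irreducible factor of the characteristic polynomial at a time. Write $\QQ^n = V_1\oplus\cdots\oplus V_l$ with $V_i=\ker p_i^{\alpha_i}(A)$, and take lattices $L_i\le V_i$ with $AL_i\subseteq L_i$. Put $L'=\bigoplus_i L_i$ and $K'=\bigcup_s A^sL'$. By Lemma~\ref{lem:dont-care-about-L}, $\GL(K)$ and $\GL(K')$ are commensurable, so $C$ is commensurable with $C_{\GL(K')}(A)$. Commensurability preserves finite generation, so it suffices to treat $C_{\GL(K')}(A)$. Since any element commuting with $A$ preserves each $V_i$, we get $C_{\GL(K')}(A)=\prod_i C_{\GL(K'\cap V_i)}(A|_{V_i})$. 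We may therefore assume the characteristic polynomial is $p^\alpha$ with $p$ irreducible. Applying Lemma~\ref{lem:dont-care-about-L} once more, we may replace $L$ by the lattice with basis $\calb$ from Lemma~\ref{lem:rational-Jordan} (after scaling so that it is $A$-invariant, taking the $v_{i,j}$ to have algebraic integer entries). By Lemmas~\ref{lem:centr-lambda} and~\ref{lem:lattice-A-image} and Corollary~\ref{cor:matrices-int}, $C$ is then identified, up to finite index, with $\Aut_{Z[X]}(R[\lambda^{-1}])$, where $Z=\ZZ[\lambda^{-1}]$ and $R[\lambda^{-1}]\cong\bigoplus_{d=1}^{\delta}(Z[X]/(X^d))^{m_d}$. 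Here one should check that the Galois-invariance condition in Lemma~\ref{lem:centr-lambda} only leads to a finite-index discrepancy.

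There are two cases. If $\lambda^{-1}$ is an algebraic integer, then $Z$ is a finitely generated abelian group. Hence $K$ is finitely generated, i.e.\ a lattice, and $C$ is a finite-index subgroup of the centraliser of $A$ in some $\GL_n(\ZZ)$-conjugate. That centraliser is arithmetic by \cite[Theorem~6.5]{BorelHarishChandra1962}, so it is finitely generated. Otherwise there are no $\lambda$-Jordan gaps, so the block sizes $d$ that occur form an interval $\{1,\dots,\delta\}$ with every $m_d\ge1$.

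In the gap-free case we mimic the worked example. By Corollary~\ref{cor:matrices-int} and Lemma~\ref{lem:matrices}, an automorphism $\xi$ is a block matrix $B(\xi)$ whose diagonal blocks lie in $\GL_{m_d}(Z[X]/(X^d))$ and whose off-diagonal entries are arbitrary in the appropriate $X^{\max(0,d-d')}Z[X]/(X^d)$. The candidate finite generating set is:
\begin{enumerate}
\item finite generating sets of $\GL_{m_d}(Z)$ for each $d$; these exist because $Z$ is a ring of $S$-integers in $F\cap\QQ(\lambda)$ \cite[Proposition~6.10]{BorelSerre1976};
\item generators of the unit group of $Z$, which is finitely generated by Dirichlet's $S$-unit theorem;
\item the elementary maps $\psi$ sending a generator of a size-$d'$ summand to itself plus a generator of an adjacent size-$d'\pm1$ summand, multiplied by $1$ or by $X$ as required, in the manner of $\psi,\varphi$ in the example.
\end{enumerate}

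The proof then runs in four steps.
\begin{enumerate}
\item Conjugating the elementary maps by the $\gamma_{\lambda^{\pm1}}$ produces all $Z$-multiples, because $\{\lambda^{-n}\}$ generates $(Z,+)$. Hence we obtain all elementary transvections between adjacent block sizes with any coefficient in $Z$.
\item Taking commutators and compositions of adjacent-size transvections (passing through the intermediate sizes, which exist by gap-freeness) yields transvections between any two block sizes, and also the entries $1+zX^k$ on diagonal blocks. This is the commutator $\psi_{-z}\varphi\psi_z\varphi^{-1}$ trick, iterated to reach higher powers of $X$.
\item Lemma~\ref{lem:gaussian-elimination} then shows that each diagonal block group $\GL_{m_d}(Z[X]/(X^d))$ is generated.
\item Finally, Gaussian elimination over the block matrix, using the fact that the constant-coefficient matrix $c(\xi)$ is invertible and block triangular, reduces any $\xi$ to the identity.
\end{enumerate}

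The main obstacle is step 2. We must show that from transvections between adjacent sizes $d$ and $d+1$ (coefficients $1$ one way and $X$ the other) we can produce every diagonal unipotent $1+zX^k$ with $k<d$, and every transvection between distant sizes. The first attempt is an induction on $k$: the commutator of the transvections $w_d\to w_{d+1}$ and $w_{d+1}\to Xw_d$ gives $1+zX$ on the size-$d$ summand, modulo corrections in the other blocks. Iterating the same commutator through the chain $d,d+1,\dots,d+k$ should then give the factor $X^k$, and this is exactly where the absence of gaps is needed.
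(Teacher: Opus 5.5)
Your reduction to a single irreducible factor, the arithmetic case, and the overall shape of the gap-free case all agree with the paper. That shape is: conjugate by $\gamma_{\lambda^{\pm1}}$, then apply Lemma~\ref{lem:gaussian-elimination}, then do block elimination. However, step~2, which you flag yourself, is the whole content of the gap-free case, and it is left unproved. Moreover, the mechanism you sketch for it is not the one that works. For summands of sizes $d$ and $d+1$, write $\psi^{(z)}\colon w_{d+1}\mapsto w_{d+1}+zw_d$ and $\varphi^{(y)}\colon w_d\mapsto w_d+yw_{d+1}$ with $y\in XZ[X]$. Put $u=1+zX$. The natural product $\psi^{(-u^{-1}z)}\circ\varphi^{(X)}\circ\psi^{(z)}\circ\varphi^{(-u^{-1}X)}$ is exactly $w_d\mapsto u^{-1}w_d$, $w_{d+1}\mapsto uw_{d+1}$. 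So the unit lands on the larger summand. What is left on the size-$d$ summand is not an off-diagonal correction but the diagonal unit $u^{-1}$, which is in general non-trivial in $Z[X]/(X^d)$ once $d\ge 2$. Removing it is the same problem one size lower. It disappears only when $d=1$, because $X$ kills the size-$1$ summand. That is why the worked example closes up while a naive iteration of it does not.

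What is missing is an induction on the largest block size $\delta$, as in the paper. Assume that $C_{\delta-1}$ already lies in the group generated. Here $C_{\delta-1}$ is the full automorphism group of $\bigoplus_{d<\delta}(Z[X]/(X^d))^{m_d}$, extended by the identity on the size-$\delta$ summands. Take $d=\delta-1$ above and compose with the element of $C_{\delta-1}$ that multiplies $w_{\delta-1,1}$ by $u$. This gives $\gamma_{j,u}$ for every $u\in 1+XZ[X]/(X^\delta)$. Only then do Lemma~\ref{lem:gaussian-elimination} and your step~4 apply.

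This also locates gap-freeness correctly. Higher powers of $X$ are harmless, since $z$ may be an arbitrary element of $Z[X]/(X^{\delta-1})$. Indeed, the generators $\psi_{d,i,j,k}$ for all $0\le k<d$, conjugated by $\gamma_{j,\lambda^{\pm n}}$, give every such $z$. The real constraint is the linear term of $u$, which requires a summand of size exactly $\delta-1$. Pairing with a summand of size $d'\le\delta-2$ only produces units $1+X^{\delta-d'}(\cdots)$ on the size-$\delta$ summands. This is the obstruction that the determinant map to $(Z[X]/(X^2))^\times$ detects in the paper's Jordan-gap lemma. Iterating commutators along a chain $d,d+1,\dots,d+k$ to raise the power of $X$ does not touch this issue.
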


\begin{proof}
Suppose first that $A$ has characteristic polynomial $f = p_1^{\alpha_1} \cdots p_l^{\alpha_l}$, where the $p_i$ are distinct monic polynomials irreducible over $\QQ$ and $\alpha_i \geq 1$. Then $\QQ^n = \bigoplus_{i=1}^l V_i$, where $V_i = \ker(p_i^{\alpha_i}(A))$, and so $\ZZ^n$ has a finite-index subgroup $L = \bigoplus_{i=1}^l (\ZZ^n \cap V_i)$ such that $AL \subseteq L$. By Lemma~\ref{lem:dont-care-about-L}, we may, without loss of generality, replace $C$ with $C_{\GL(K')}(A)$, where $K' = \bigoplus_{i=1}^l K_i'$ and $K_i' = \bigcup_{s \in \ZZ^n} A^s(\ZZ^n \cap V_i)$. But since any matrix commuting with $A$ preserves the generalised eigenspaces of $A$, it follows that we then have $C = \prod_{i=1}^l C_{\GL(K_i')}(A)$ and so it is enough to consider the case when $l = 1$. We thus adopt the notation of Lemma~\ref{lem:rational-Jordan}. Since $A$ is essentially Jordan gap free, it follows that either $\lambda^{-1}$ is an algebraic integer, or $A$ has no $\lambda$-Jordan gaps.

Suppose first that $\lambda^{-1}$ is an algebraic integer. Then the additive group of $\ZZ[\lambda^{-1}]$ is finitely generated, implying by Lemma~\ref{lem:lattice-A-image} that $K$ is a finitely generated abelian group and therefore isomorphic to $\ZZ^n$. Thus $C$ is the centraliser of a $\GL_n(\QQ)$-conjugate of $A$ in the arithmetic group $\GL_n(\ZZ)$, and so is an arithmetic group and hence finitely generated \cite[Theorem~6.5]{BorelHarishChandra1962} (see also \cite[Algorithm B]{GrunewaldSegal1980} and \cite{EickHofmannOBrien2019}).

Suppose now that $\lambda^{-1}$ is not an algebraic integer, and therefore $A$ has no $\lambda$-Jordan gaps---that is, $A$ has $\lambda$-Jordan blocks of sizes $1,\ldots,\delta$ for some $\delta \geq 1$. We will prove, by induction on $\delta$, that $C = C_\delta$ is finitely generated. We adopt the matrix representation of elements of $C_\delta$ as in Corollary~\ref{cor:matrices-int}. In particular, $C_{\delta-1}$ can be realised as a subgroup of $C_\delta$, consisting of matrices whose $(d,\delta)$-th and $(\delta,d)$-th blocks are zero for $d < \delta$ and whose $(\delta,\delta)$-th block is the identity. We can also realise $\GL_{m_\delta}(Z)$ as a subgroup of $C_\delta$, consisting of matrices which are the identity except for a matrix in $\GL_{m_\delta}(Z)$ in the $(\delta,\delta)$-th block.

Let $\calw = (w_{d,i} \mid 1 \leq d \leq \delta, 1 \leq i \leq m_d)$ be a generating set for the $Z[X]$-module $R[\lambda^{-1}]$, so that $(w_{d,1},\ldots,w_{d,m_d})$ is the canonical generating set for the submodule $\left( Z[X]/(X^d) \right)^{m_d}$. By Dirichlet's $S$-unit theorem, the group $Z^\times$ is finitely generated; let $\calu$ be a finite generating set for $Z^\times$.

\begin{claim}\label{claim:generating_set} $C_\delta$ is generated by $\langle C_{\delta-1}, \GL_{m_\delta}(Z) \rangle$ together and the finite set of automorphisms
\begin{gather*}
\psi_{d,i,j,k}\colon w_{\delta,j} \mapsto w_{\delta,j} + X^k w_{d,i}, \qquad \varphi_{d,i,j,k'}\colon w_{d,i} \mapsto w_{d,i} + X^{k'} w_{\delta,j}, \\ \text{and}\qquad
\gamma_{j,u}\colon w_{\delta,j} \mapsto uw_{\delta,j},
\end{gather*}
where the automorphisms send all the unnamed elements of $\calw$ to themselves, and where $1 \leq d \leq \delta$, $1 \leq i \leq m_d$, $1 \leq j \leq m_\delta$, $0 \leq k < d$, $\delta-d \leq k' < \delta$, $u \in \calu$ and $(d,i) \neq (\delta,j)$.
\end{claim}

In terms of matrices, these automorphisms differ from the identity matrix as follows: $\psi_{d,i,j,k}$ has entry $X^k$ in the $(i,j)$-th position of the $(d,\delta)$-th block; $\varphi_{d,i,j,k'}$ has entry $X^{k'}$ in the $(j,i)$-th position of the $(\delta,d)$-th block; and $\gamma_{j,u}$ has entry $u$ in the $(j,j)$-th position of the $(\delta,\delta)$-th block. It is easy to check that these are indeed automorphisms of $R[\lambda^{-1}]$; moreover, note that we have $\varphi_{\delta,i,j,k} = \psi_{\delta,j,i,k}$.

\begin{claimproof}[Proof of \Cref{claim:generating_set}]
Note first that if $\delta = 1$ then we have $C_1 \cong \GL_{m_1}(Z)$, and so the claim follows. Thus, suppose that $\delta \geq 2$. We aim to show, in this order, that the following automorphisms are elements of the group $C_\delta'$ generated by $\langle C_{\delta-1}, \GL_{m_\delta}(Z) \rangle$ together with the elements $\psi_{d,i,j,k}$, $\varphi_{d,i,j,k'}$ and $\gamma_{j,u}$:
\begin{enumerate}[label=(\roman*)]
\item \label{it:autos-elem-offdiag} $\psi_{d,i,j}^{(z)}\colon w_{\delta,j} \mapsto w_{\delta,j} + z w_{d,i}$ and $\varphi_{d,i,j}^{(z')}\colon w_{d,i} \mapsto w_{d,i} + z' w_{\delta,j}$, with $d$, $i$ and $j$ as above, where $z \in Z[X]/(X^d)$ and $z' \in X^{\delta-d}Z[X]/(X^\delta)$;
\item \label{it:autos-elem-diag} $\gamma_{j,u'}\colon w_{\delta,j} \mapsto u' w_{\delta,j}$, where $1 \leq j \leq m_\delta$ and $u' = 1 + zX \in Z[X]/(X^\delta)$ for some $z \in Z[X]/(X^{\delta-1})$;
\item \label{it:autos-block-diag} all $Z[X]$-module automorphisms of $R[\lambda^{-1}]$ that preserve the submodules \[S \coloneqq \bigoplus_{d=1}^{\delta-1} \left( Z[X]/(X^d) \right)^{m_d}\quad\text{ and }\quad S' \coloneqq \left( Z[X]/(X^\delta) \right)^{m_\delta};\]
\item \label{it:autos-block-triang} all $Z[X]$-module automorphisms of $R[\lambda^{-1}]$ that preserve $S' = \left( Z[X]/(X^\delta) \right)^{m_\delta}$;
\item \label{it:autos-all} all $Z[X]$-module automorphisms of $R[\lambda^{-1}]$.
\end{enumerate}
The claim will then follow from \ref{it:autos-all}.

\medskip\pushQED{$\blackdiamond$}

\noindent\emph{Proof of \ref{it:autos-elem-offdiag}.} Note that the isomorphisms defined in \ref{it:autos-elem-offdiag} satisfy $$\psi_{d,i,j}^{(z_1)} \circ \psi_{d,i,j}^{(z_2)} = \psi_{d,i,j}^{(z_1+z_2)} \quad \text{and} \quad \varphi_{d,i,j}^{(z_1')} \circ \varphi_{d,i,j}^{(z_2')} = \varphi_{d,i,j}^{(z_1'+z_2')}.$$
Since the elements $\{ \lambda^{-n} X^k \mid n \in \NN, 0 \leq k < d \}$ (respectively $\{ \lambda^{-n} X^{k'} \mid n \in \NN, \delta-d \leq k' < \delta \}$) generate the additive subgroup of $Z[X]/(X^d)$ (respectively $X^{\delta-d}Z[X]/(X^\delta)$), in order to show \ref{it:autos-elem-offdiag} it is enough to show that we have $\psi_{d,i,j}^{(\lambda^{-n}X^k)} \in C_\delta'$ and $\varphi_{d,i,j}^{(\lambda^{-n}X^{k'})} \in C_\delta'$ for all $n \in \NN$, $0 \leq k < d$ and $\delta-d \leq k' < \delta$. But indeed, since $\lambda^{\pm n} \in Z^\times$ we have 
$$\psi_{d,i,j}^{(\lambda^{-n}X^k)} = \gamma_{j,\lambda^n} \circ \psi_{d,i,j,k} \circ \gamma_{j,\lambda^{-n}} \in C_\delta'$$
and 
$$\varphi_{d,i,j}^{(\lambda^{-n}X^{k'})} = \gamma_{j,\lambda^{-n}} \circ \varphi_{d,i,j,k'} \circ \gamma_{j,\lambda^n} \in C_\delta',$$
which proves \ref{it:autos-elem-offdiag}. \hfill\popQED

\medskip\pushQED{$\blackdiamond$}

\noindent\emph{Proof of \ref{it:autos-elem-diag}.} Let $z \in Z[X]/(X^{\delta-1})$ and consider the automorphism
\begin{align*}
\chi \coloneqq \varphi_{\delta-1,1,j,1} \circ \psi_{\delta-1,1,j}^{(z)} \colon w_{\delta-1,1} &\mapsto w_{\delta-1,1} + Xw_{\delta,j}, \\
w_{\delta,j} &\mapsto (1+zX)w_{\delta,j} + zw_{\delta-1,1},
\end{align*}
which is well-defined since the fact that $A$ has no $\lambda$-Jordan gaps implies that $m_{\delta-1} \geq 1$. By Lemma~\ref{lem:units}, the element $u = 1+zX$ is a unit in $Z[X]/(X^\delta)$ and thus has an inverse~$u^{-1}$. We then have an automorphism
\begin{align*}
\chi' \coloneqq \psi_{\delta-1,1,j}^{(-u^{-1}z)} \circ \chi \circ \varphi_{\delta-1,1,j}^{(-u^{-1}X)} \colon w_{\delta-1,1} &\mapsto u^{-1}w_{\delta-1,1}, \\
w_{\delta,j} &\mapsto uw_{\delta,j}.
\end{align*}
Since $C_{\delta-1}$ contains the automorphism $\gamma'\colon w_{\delta-1,1} \mapsto uw_{\delta-1,1}$, it follows from \ref{it:autos-elem-offdiag} that we have $\gamma_{j,u} = \gamma' \circ \chi' \in C_\delta'$, proving \ref{it:autos-elem-diag}. \hfill\popQED

\medskip\pushQED{$\blackdiamond$}

\noindent\emph{Proof of \ref{it:autos-block-diag}.} It follows from \ref{it:autos-elem-diag} that given any $u = u_0+u'X \in \left( Z[X]/(X^\delta) \right)^\times$, where $u_0 \in Z^\times$ and $u' \in Z[X]/(X^{\delta-1})$, the map 
$$\gamma_{j,u} \coloneqq \gamma_{j,u_0} \circ \gamma_{j,u_0^{-1}u} \quad \text{by} \quad w_{\delta,j} \mapsto uw_{\delta,j}$$ 
is an element of $C_\delta'$. Since $C_\delta'$ also contains the elements $\varphi_{\delta,i,j,k}$ for $1 \leq i,j \leq m_\delta$ with $i \neq j$ and $0 \leq k < \delta$ and the subgroup $\GL_{m_\delta}(Z)$, it follows from Lemma~\ref{lem:gaussian-elimination} that $C_\delta'$ contains the group $\GL_{m_\delta}(Z[X]/(X^\delta))$ of all automorphisms of $R[\lambda^{-1}]$ that preserve $S'$ and act as the identity on $S$. Since $C_\delta'$ also contains the group $C_{\delta-1}$ of automorphisms that act as identity on $S'$ and preserve $S$, this proves \ref{it:autos-block-diag}. \hfill\popQED

\medskip\pushQED{$\blackdiamond$}

\noindent\emph{Proof of \ref{it:autos-block-triang}.} Now given any automorphism $\xi$ as in \ref{it:autos-block-triang}, we can write $\xi(w) = \chi(w) + \varphi'(w)$ for $w \in S$ and $\xi(w') = \chi'(w')$ for $w' \in S'$, for some endomorphisms $\chi\colon S \to S$, $\chi'\colon S' \to S'$ and $\varphi'\colon S \to S'$. Since $\xi$ is an automorphism, it follows from Corollary~\ref{cor:matrices-int} that so are $\chi$ and $\chi'$, and in particular the endomorphism $\xi'$ of $R[\lambda^{-1}]$, defined by $\xi'(w) = \chi(w)$ for $w \in S$ and $\xi'(w') = \chi'(w')$ for $w' \in S'$, is an automorphism, and hence $\xi' \in C_\delta'$ by \ref{it:autos-block-diag}. But note that we have $\xi = \varphi \circ \xi'$, where $\varphi(w) = w + (\varphi' \circ \chi^{-1})(w)$ for $w \in S$ and $\varphi(w') = w'$ for $w' \in S'$. It is then clear from the matrix descriptions that $\varphi$ is a composite of automorphisms of $R[\lambda^{-1}]$ of the form $\varphi_{d,i,j}^{(z)}$, where $1 \leq d < \delta$, $1 \leq i \leq m_d$, $1 \leq j \leq m_\delta$ and $z \in X^{\delta-d}Z[X]/(X^\delta)$, and therefore $\varphi \in C_\delta'$ by \ref{it:autos-elem-offdiag}. Thus we have $\xi \in C_\delta'$, proving \ref{it:autos-block-triang}. \hfill\popQED

\medskip\pushQED{$\blackdiamond$}

\noindent\emph{Proof of \ref{it:autos-all}.} Consider an arbitrary automorphism $\xi$ of $R[\lambda^{-1}]$, so that we can write $\xi(w) = \chi(w) + \varphi'(w)$ for $w \in S$ and $\xi(w') = \chi'(w') + \psi'(w')$ for $w' \in S'$, for some endomorphisms $\chi\colon S \to S$, $\chi'\colon S' \to S'$, $\varphi'\colon S \to S'$ and $\psi'\colon S' \to S$. Now it follows from Corollary~\ref{cor:matrices-int} that $\chi'$ is an automorphism of $S'$, and we can verify that $\xi = \psi \circ \xi'$, where
\begin{align*}
\psi\colon w &\mapsto w, & \xi'\colon w &\mapsto \chi(w) - (\psi' \circ (\chi')^{-1} \circ \varphi')(w) + \varphi'(w), \\
w' &\mapsto w' + (\psi' \circ (\chi')^{-1})(w'), & w' &\mapsto \chi'(w'),
\end{align*}
for $w \in S$ and $w' \in S'$; in particular, since $\xi$ is an automorphism, so are $\psi$ and $\xi'$. But we have $\xi' \in C_\delta'$ by \ref{it:autos-block-triang}, whereas by construction $\psi$ is a composite of automorphisms of $R[\lambda^{-1}]$ of the form $\psi_{d,i,j}^{(z)}$, where $1 \leq d < \delta$, $1 \leq i \leq m_d$, $1 \leq j \leq m_\delta$ and $z \in Z[X]/(X^d)$, and therefore $\psi \in C_\delta'$ by \ref{it:autos-elem-offdiag}. This shows that $\xi \in C_\delta'$, and so \ref{it:autos-all} holds. \hfill\popQED

\medskip

As we have verified \ref{it:autos-all}, the claim is proven.
\end{claimproof}

We have thus shown that $C_\delta$ is generated by $\langle C_{\delta-1}, \GL_{m_\delta}(Z) \rangle$ together with a finite collection of elements and therefore, by induction on $\delta$, we have that $C = C_\delta$ is generated by a finite collection of elements together with copies of $\GL_{m_d}(Z)$ for $1 \leq d \leq \delta$. However, the groups $\GL_{m_d}(Z)$ are $S$-arithmetic subgroups of $\GL_n(\overline \QQ)$, and therefore finitely generated \cite[Proposition~6.10]{BorelSerre1976}. It follows that $C$ is finitely generated, as required.
\end{proof}

\setcounter{thmx}{2}
\begin{examplex}\label{ex:intro}
    Let $G=\langle a,b,t \mid [a,b],\ a^t=a^2b,\ b^t=b^2\rangle.$  Note that this is the Leary--Minasyan group $\LM(A,L)$ where $A = \begin{pmatrix}
    2 & 1 \\
    0 & 2
\end{pmatrix}$ and $L=\ZZ^2$.  The group $G$ is metabelian but not polycyclic and is of type $\mathsf{F}$ since it is an ascending HNN extension of $\ZZ^2$.  Moreover, $\Out(G)\cong \ZZ[1/2]\times\ZZ/2\ZZ$, which we now verify.

By \Cref{prop:metab-Out}, we only need to compute $C/\langle A\rangle$ where $C=C_{\GL(K)}(A)$ and  $K=\ZZ[1/2]^2$.  But this is straightforward in such a low dimension.  We have 
\[
\begin{pmatrix} 2 & 1\\ 0& 2\end{pmatrix}
=\begin{pmatrix} a & b \\ c & d \end{pmatrix}
\begin{pmatrix} 2 & 1\\ 0& 2\end{pmatrix}
\begin{pmatrix} a &b\\ c & d \end{pmatrix}^{-1}
=\frac{1}{ad-bc}\begin{pmatrix}2ad -2bc -ac & a^2\\ -c^2 & ac+2ad-2bc\end{pmatrix}
\]
Solving the four equations gives $a=d$ and $c=0$.  Thus, $C$ is given by the set of matrices
\[
\left\{ \begin{pmatrix}
    x & y \\ 0 & x
\end{pmatrix} \:\middle|\: x\in\ZZ[1/2]^\times, y\in \ZZ[1/2]\right\}
\]
which is isomorphic to $\ZZ \times \ZZ/2\ZZ \times \ZZ[1/2]$.  Now, $C/\langle A\rangle \cong \ZZ/2\ZZ \times \ZZ[1/2]$. Observe, the matrix $I-A$ is invertible over $\ZZ$.  Thus, we see that the twist group $D=K/(I-A)K$ (in the notation of \Cref{prop:metab-Out}) is trivial.  Hence, by \Cref{prop:metab-Out} we conclude that $\Out(G)\cong \ZZ[1/2] \times \ZZ/2\ZZ$ as required. \hfill$\blackdiamond$
\end{examplex}

Finally, we record \Cref{thmx:not_metabelian} and \Cref{thmx:metabelian} specialised to the case of HNN extensions of $\ZZ$.   This recovers the well-known result below which was previously proven over the papers \cite{Collins1978,CollinsLevin1983,GilbertHowieMetaftsisRaptis2000,Clay09}

\begin{corollary}\label{cor.BS}
    Let $1\leq r\leq |s|$ be non-zero integers.  The following conclusions hold:
    \begin{enumerate}
        \item \label{it:BS23} if $r\not\mid s$, then $\Out(\BS(r,s)) \cong D_{\mid r-s \mid}$;
        \item \label{it:BS2-2} if $r=-s\neq 1$, then $\Out(\BS(r,-r)) \cong D_{2r} \times \ZZ/2\ZZ$;
        \item \label{it:BS22} if $r=s \neq 1$, then $\Out(\BS(r,r)) \cong D_\infty \times \ZZ/2\ZZ$;
        \item \label{it:BS24} if $r \neq 1$, $r\neq |s|$ and $r\mid s$ then $\Out(\BS(r,s))$ is not finitely generated;
        \item \label{it:BS12} if $r=1$ and $|s| \neq 1$ then $\Out(\BS(1,s)) \cong \ZZ/(s-1)\ZZ \rtimes (\ZZ^k \times \ZZ/2\ZZ)/\ZZ$, where $k$ is the number of distinct prime factors of $s$, and the copy of $\ZZ$ is ``skew diagonally'' embedded recording the multiplicity of each factor;
        \item \label{it:BS11} if $r=s=1$, $\Out(\BS(1,1)) \cong \GL_2(\ZZ)$, and if $r=-s=1$ then $\Out(\BS(1,-1)) \cong \ZZ/2\ZZ \times \ZZ/2\ZZ$.
    \end{enumerate}
\end{corollary}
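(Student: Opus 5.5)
Most of the work consists of matching the data $(A,L)$ for $m=1$ against the two main theorems, so the plan is to go case by case. Write $\BS(r,s)\cong\LM(\tfrac{s}{r},r\ZZ)$. Here $L=r\ZZ$ and $AL=s\ZZ$, both lattices in $H\cong\ZZ$. Metabelianness is governed by the criterion quoted from \cite[\S2.4]{Valiunas2022}: $L=\ZZ$ or $AL=\ZZ$ holds exactly when $r=1$ (recall $r\le|s|$). So items \ref{it:BS23}--\ref{it:BS24} belong to the non-metabelian setting, and items \ref{it:BS12}--\ref{it:BS11} to the metabelian one.

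\emph{Non-metabelian cases, $r\ge2$.} The lattices $r\ZZ$ and $s\ZZ$ properly contain one another exactly when $r\mid s$ and $r\ne|s|$. In that situation, item \ref{it:BS24} follows at once from \Cref{thmx:not_metabelian}, or more explicitly from \Cref{thm:OutG-semidir-nonfg}. In the other cases the tree $S$ is rigid, so $\Out(G)\cong\Stab_{\Out(G)}(S)\cong\overline\calb\rtimes\calc$ by \Cref{prop:stab_S}, with $\overline\calb\cong\ZZ/(I-A)L$ by \Cref{prop:H_to_aut_nonmet}. Since $(I-A)L=(r-s)\ZZ$, we get $\overline\calb\cong\ZZ/(r-s)\ZZ$, or $\ZZ$ when $r=s$.

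Next I compute $\calc\le\GL_1(\ZZ)\times\{\pm1\}$, where $M=\pm1$ always commutes with $A$ and preserves $L$, so $\calc^+=\{\pm1\}$. The set $\calc^-$ is nonempty precisely when $A^{-1}=A$, i.e.\ $s=\pm r$, and $M(L)=AL$ then holds automatically. This gives $\calc\cong\ZZ/2$ when $r\nmid s$ and $\calc\cong(\ZZ/2)^2$ when $s=\pm r$.

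The action of $\calc$ on $\overline\calb$ is given by $M\cdot h=Mh$, so $-1$ acts by inversion. The element $(M,-1)$ also acts on $\overline\calb$ through $M$, but conjugating $\beta_h$ by $\gamma_{M,-1}$ sends $t\mapsto t^{-1}$, and this contributes an extra twist. I will compute this twist by hand: $\gamma\beta_h\gamma^{-1}(t)=\gamma\beta_h(t^{-1})=\gamma(t^{-1}h^{-1})=tM(h)^{-1}$, which is conjugate to $M(h)^{-1}t$ modulo inner automorphisms only up to the $A$-action. Since $A=\pm1$ here, this should yield a product decomposition in which one $\ZZ/2$ factor acts trivially. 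The expected outcome is then $D_{|r-s|}$, $D_{2r}\times\ZZ/2$ and $D_\infty\times\ZZ/2$ respectively; the main obstacle is making sure the semidirect product with $\calc^-$ really splits off a direct $\ZZ/2$ factor.

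\emph{Metabelian cases, $r=1$.} For $|s|\ne1$, \Cref{prop:metab-Out} gives $\Out(G)\cong(C/\langle s\rangle)\ltimes K/(1-s)K$, where $K=\ZZ[1/s]$ and $C=\ZZ[1/s]^\times\cong\ZZ^k\times\ZZ/2$. Moreover $K/(1-s)K\cong\ZZ/(s-1)$, because $s-1$ is coprime to $s$. Quotienting by $\langle s\rangle$ gives the skew-diagonal $\ZZ$, which is item \ref{it:BS12}. For $s=\pm1$ the group is polycyclic: $\BS(1,1)\cong\ZZ^2$ has $\Out=\GL_2(\ZZ)$, and the Klein bottle group can be handled by a direct computation with \Cref{prop:metab-Out}, noting that $K=\ZZ$, $C=\{\pm1\}$, $\langle A\rangle=\{\pm1\}$ and $D=\ZZ/2$ (this last computation goes through, since \Cref{prop:metab-Out} only used that $\gamma(K)=K$, which is clear for the torsion-free abelian normal subgroup).
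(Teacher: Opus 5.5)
For items (i)--(v) you follow the paper's route: rigidity of $S$ together with \Cref{prop:stab_S} for (i)--(iii), \Cref{thmx:not_metabelian} for (iv), and \Cref{prop:metab-Out} for (v). In (ii)--(iii) the hedge ``only up to the $A$-action'' is unnecessary.

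The automorphism $\gamma_{M,-1}\beta_h\gamma_{M,-1}^{-1}$ fixes $H$ pointwise and sends $t\mapsto tM(h)^{-1}$. Composing it with conjugation by $M(h)^{-1}\in H$ gives exactly $\beta_{-M(h)}$. So $(M,\varepsilon)$ acts on $\overline\calb$ by $h\mapsto \varepsilon M(h)$. It follows that $(-1,-1)$ acts trivially and is central in $\overline\calb\rtimes\calc$, which is the direct $\ZZ/2\ZZ$ factor, while $(-1,1)$ and $(1,-1)$ both invert. Avoid rewriting $tM(h)^{-1}$ as $(AM(h))^{-1}t$ via the defining relation; that is only legitimate when $M(h)\in L$.

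The genuine gap is $\BS(1,-1)$, where \Cref{prop:metab-Out} is not available. Its setup assumes $\left|\det A\right|>1$, and the proof uses this essentially: the index ratio $\left|\det A\right|^{\psi(g)}$ is what forces $\psi(\gamma(t))=1$, i.e.\ $\gamma(t)\in Kt$. For $A=-1$ this gives no information. Indeed $a\mapsto a$, $t\mapsto t^{-1}$ is an automorphism of $\langle a,t\mid tat^{-1}=a^{-1}\rangle$ that does not lie in $C\ltimes K$.

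Plugging your data into the proposition gives $(C/\langle A\rangle)\ltimes D=\{1\}\ltimes\ZZ/2\ZZ\cong\ZZ/2\ZZ$. This contradicts the claimed $\ZZ/2\ZZ\times\ZZ/2\ZZ$; the missing factor is the analogue of $\calc^-$.

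Your justification of $\gamma(K)=K$ is also wrong as stated, since $\langle a\rangle$ is not the only torsion-free abelian normal subgroup ($\langle t^2\rangle$ is another). The conclusion is still true: $K$ is characteristic because it is exactly the set of elements conjugate to their own inverse.

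To repair the case, either cite the classical computation as the paper does, or compute directly. Every automorphism has the form $a\mapsto a^{\epsilon}$, $t\mapsto a^kt^{\eta}$ with $\epsilon,\eta\in\{\pm1\}$ and $k\in\ZZ$. The inner ones are exactly those with $\eta=1$ and $k$ even. Hence $\phi\mapsto(\eta,k \bmod 2)$ induces $\Out(\BS(1,-1))\cong\ZZ/2\ZZ\times\ZZ/2\ZZ$.
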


\begin{proof}
Throughout we have $A = s/r$, $L=r\ZZ$ and $AL = s\ZZ$. Parts \ref{it:BS23}--\ref{it:BS24} correspond to non-metabelian groups, and so we apply material from Sections~\ref{sec:non_metab_intro} and~\ref{sec:deform}. In each case $\calc^+ = GL_1(\ZZ) \cong \ZZ/2\ZZ$, acting as inversion on the vertex group generator $a$. We therefore discuss the kernel of the appropriate semidirect product structure, and determine if $\calc$ strictly contains $\calc^+$. \begin{enumerate}
    \item Here $L$ and $AL$ are incomparable. We have $(I-A)L =((r-s)/r)r\ZZ = (r-s)\ZZ$, giving $\overline{\calb} \cong \ZZ/(r-s)\ZZ$. We have $\calc = \calc^+$. This gives $\Out(G) \cong \ZZ/(r-s)\ZZ \rtimes \ZZ/2\ZZ \cong D_{\mid r-s \mid}$ (the last isomorphism since the action is non-trivial).
    \item We have $A=-I$ and $L=AL$, and so $\calc^+ < \calc \cong \ZZ/2\ZZ \times \ZZ/2\ZZ$. For the kernel, note that $(I-A)L =2r\ZZ$. This gives that $\Out(G) \cong \ZZ/2r\ZZ \rtimes (\ZZ/2\ZZ \times \ZZ/2\ZZ)$. Inspecting the action, both generators of the right hand group (one inverts $a$, the other $t$) invert the generator $t \mapsto at$ of the left hand group (working up to inner automorphisms), and so this is isomorphic to $D_{2r} \times \ZZ/2\ZZ$.
    \item Here we have $A=I$ and $L=AL$, so $\calc^+ < \calc \cong \ZZ/2\ZZ \times \ZZ/2\ZZ$. For the kernel, we have $(I-A)L= 0$, giving $\overline{\calb}=\ZZ$. As in the previous item, $\Out(G) \cong \ZZ \rtimes (\ZZ/2\ZZ \times \ZZ/2\ZZ)$, with both generators of the right hand group acting as inversion on the kernel. So $\Out(G)\cong D_\infty \times \ZZ/2\ZZ$.
    \item Here we have $AL < L$, and so we are in the situation analysed in Section~\ref{sec:deform}. The groups $L_n/(I-A)L$ are finite cyclic groups 
    $\ZZ/(s/r)^n(s/r-1)\ZZ$
    with strict containments.
\end{enumerate}

The last two cases are metabelian, with the final one being the two polycyclic examples.

\begin{enumerate}[resume]
    \item First note that $D = K/(I-A)K \cong \ZZ[1/s]/(1-s)\ZZ[1/s] \cong \ZZ/(1-s)\ZZ$. Now we study the subgroup of $\GL_1(\QQ) = \QQ^\times$ preserving $\ZZ[1/s]$. This is generated by $-1$ and $1/p_i$ with $p_i$ ranging over the distinct prime factors of $s$, which gives $C \cong \ZZ^k \times \ZZ/2\ZZ$, where $k$ is the number of distinct prime factors of $s$. We then quotient by $\langle 1/s \rangle$, giving the claim.
    \item This is classical. (For $\BS(1,-1)$ see e.g. \cite[Lemma 2.1]{GoncalvesWong2009}; for $\BS(1,1)$ this is nothing but the isomorphism $\Out(\ZZ^2)\cong \GL_2(\ZZ)$.) 
\end{enumerate}

\end{proof}

\bibliographystyle{halpha}
\bibliography{ref}

\end{document}